\theoremstyle{plain}
\newtheorem{them}{Theorem}[section]
\newtheorem{pro}[them]{Proposition}
\newtheorem{cor}[them]{Corollary}
\newtheorem{lem}[them]{Lemma}
\theoremstyle{definition}
\newtheorem{defi}[them]{Definition}
\theoremstyle{remark}
\newtheorem{rem}[them]{Remark}
\newtheorem{ex}[them]{Example}
\DeclareMathOperator{\id}{Id} 
\DeclareMathOperator{\proj}{proj}
\DeclareMathOperator{\spt}{spt}
\DeclareMathOperator{\law}{Law}
\newcommand{\eps}{\varepsilon}
\newcommand{\Lg}{\lambda}
\renewcommand{\P}{\mathbb{P}}
\newcommand{\p}{\mathcal{P}}
\newcommand{\M}{\Pi_M} 
\newcommand{\m}{\mathcal{M}}
\newcommand{\R}{\mathbb{R}}
\newcommand{\N}{\mathbb{N}}
\newcommand{\V}{\mathcal{V}}
\newcommand{\diag}{\mathrm{Diag}}
\newcommand{\B}{\mathcal{B}}
\newcommand{\AND}{\quad\text{and}\quad}
\newcommand{\dd}{\mathrm{d}}
\newcommand{\E}{\mathbb{E}}
\newcommand{\sto}{\mathrm{sto}}
\newcommand{\leqp}{\preceq_+} 
\newcommand{\leqc}{\preceq_{C}} 
\newcommand{\leqs}{\preceq_\sto} 
\newcommand{\leqcp}{\preceq_{C,+}} 
\newcommand{\leqe}{\preceq_{C,+}} 
\newcommand{\leqcs}{\preceq_{C,\sto}} 
\newcommand{\leqps}{\preceq_{+,\sto}} 
\newcommand{\leqcps}{\preceq_{C,+,\sto}} %
\newcommand{\geqc}{\succeq_{C}} 
\newcommand{\Top}{\mathrm{Top}}
\newcommand{\down}{\mathrm{Down}}
\newcommand{\lc}{\mathrm{lc}}
\newcommand{\LIM}{\mathrm{LimCurt}}
\newcommand{\LFD}{\mathrm{LimCurtFD}}
\newcommand{\curt}{\mathrm{Curt}}
\newcommand{\e}{\mathrm{e}}
\renewcommand{\subset}{\subseteq}
\renewcommand{\supset}{\supseteq}
\title[Stability of shadows and curtain couplings.]{Stability of the shadow projection and the left-curtain coupling.}
\author{Nicolas JUILLET}
\address{Institut de Recherche Math\'ematique Avanc\'ee\\
UMR 7501, Universit\'e de Strasbourg et CNRS\\
7 rue Ren\'e Descartes\\
67000 Strasbourg, France}
\email{nicolas.juillet@math.unistra.fr}
\thanks{The author is partially supported by the ``Programme ANR ProbaGeo'' (ANR-09-BLAN-0364) and the ``Programme ANR JCJC GMT'' (ANR 2011 JS01 011 01).}
\subjclass[2010]{60G42, 60G44, 28A33, 60B10}
\keywords{couplings, martingales, peacocks, convex order, optimal transport}
\begin{document}

\begin{abstract}

The (left-)curtain coupling, introduced by Beiglb\"ock and the author is an extreme element of the set of ``martingale'' couplings between two real probability measures in convex order. It enjoys remarkable properties with respect to order relations and a minimisation problem inspired by the theory of optimal transport. An explicit representation and a number of further noteworthy attributes have recently been established by Henry-Labord\`ere and Touzi. In the present paper we prove that the curtain coupling depends continuously on the prescribed marginals and quantify this with Lipschitz estimates. Moreover, we investigate the Markov composition of curtain couplings as a way of associating Markovian martingales with peacocks. 
\end{abstract}
\maketitle

\section*{Introduction}

There are at least two standard methods to couple real random variables, that is to obtain a joint law. The first one is the product (or independent coupling) $(\mu,\nu)\mapsto\mu\otimes\nu$, and the other is the quantile coupling $(\mu,\nu)\mapsto\law(G_\mu, G_\nu)$ where $G_\mu,G_\nu$ are the generalised inverse of the cumulative functions of $\mu,\nu$ (also called quantile functions, see Paragraph \ref{plusgrand}). One can easily convince one-self that both operators are continuous in the weak topology. In this paper we are interested in the continuity of another method, 
namely the left-curtain coupling $\pi_\lc=\curt(\mu,\nu)$. It was recently introduced in \cite{BJ} by Beiglb\"ock and the author\footnote{Right-curtain couplings can be defined symmetrically and the corresponding result can be deduced easily. In this paper \emph{curtain coupling} and \emph{monotone coupling} indicate {left-curtain couplings} and {left-monotone couplings} respectively} and further studied by Henry-Labord\`ere and Touzi \cite{HT}. As defined in \cite{BJ}, $\pi_\lc$ is the measure with marginals $\mu$ and $\nu$ such that for every $x\in\R$, the two marginals of $\pi_\lc|_{]-\infty,x]\times\R}$ are $\mu_{]-\infty,x]}$ and the so-called {\it shadow} (see Definition \ref{shad_thm}) of the latter measure in $\nu$. We advocated that under the additional constraint $\E(Y|X)=X$ on $\law(X,Y)$ (that can be satisfied  neither by $\law(X)\otimes\law(Y)$ nor by the quantile coupling, except in degenerated cases), $\pi_\lc$ can be considered as the more natural coupling of $\mu=\law(X)$ and $\nu=\law(Y)$. Indeed it is distinct from the quantile coupling but can be considered as its natural counterpart under the martingale constraint. Moreover it enjoys remarkable optimality properties with respect to the natural martingale variant of the usual transport problem on $\R$, the martingale transport problem that was introduced in the context of mathematical finance in \cite{HoNe12, BeHePe11, GaLaTo}. See Proposition \ref{synthese} for more details on $\pi_\lc$. One of our main results is that the operator $\curt:(\mu,\nu)\mapsto \pi_\lc$ is continuous. Furthermore, we quantify the continuity with Lipschitz estimates. 

Let us emphasise that in close situations the continuity of a coupling operator may also be false. For this purpose, we consider the Markov composition of two probability measures on $\R^2$. If $\pi$ and $\pi'$ have respectively marginals $\mu_1,\mu_2$ and $\mu_2,\mu_3$ the Markov composition $\pi\circ\pi'$ is the law of $(X_1,X_2,X_3)$ where $\law(X_i)=\mu_i$ for $i=1,2,3$ and $X_1,\,X_3$ are conditionally independent given $X_2$. As observed by Kellerer \cite{Ke72}, this composition $(\pi,\pi')\mapsto \pi\circ \pi'$ is not a continuous operator. This observation provides a bridge to the next topic of this paper, namely the recent elaborations on a famous theorem established by Kellerer also in \cite{Ke72}. In the latter paper the author restricts the joint laws $\pi,\pi'$ to a certain space $\mathcal{N}$ of couplings and proves the continuity of $(\pi,\pi')\mapsto \pi\circ \pi'$ on $\mathcal{N}\times \mathcal{N}$. This is the core of the proof in the celebrated Kellerer theorem that states that if $(\mu_t)_{t\geq 0}$ is non-decreasing in the convex order (see Definition \ref{orders} and the definitions of Chapter 3) there exists a {\it Markovian} martingale $(M_t)_{t\geq 0}$ with $\law(M_t)=\mu_t$ for every $t\geq 0$. In our article we roughly replace $\mathcal{N}$ with the space of left-curtain couplings and try to build a Markovian martingale with prescribed $1$-marginals such that, roughly speaking, the $2$-marginals between times $t$ and $t+\mathrm{d} t$ are left-curtain couplings. Because of the lack of continuity of the Markov composition for left-curtain couplings, we can not apply Kellerer's strategy to obtain the complete Kellerer theorem. Nevertheless we prove the Markovianity and also the uniqueness in specific cases. We also provide examples of non-Markovian martingales. Our approach that is detailed in the last chapter is parallel to the one by Henry-Labord\`ere, Tan, and Touzi \cite{HTT}. We explore new classes of examples and address new questions as the ones related to (non-)Markovianity. We postpone the discussion of the overlap as well as the differences to the respective chapter.

Let us write a short summary on the martingale transport problem seen from the perspective of the modern theory of optimal transportation on geodesic metric spaces. A first remark is that the uniqueness of the minimiser $\pi_\lc$ of the problem is typical in the theory. It is usual that the minimiser possesses some structure. In particular, the usual shape is $\pi=(\id\otimes T)_\#\mu$ where $\mu$ and $\nu=T_\#\mu$ are the marginals of the problem. Such structure results are usually called Brenier--McCann's theorems after \cite{Br2,McC}. The main result in \cite{BJ} shows a similar behaviour for continuous measures $\mu$ because the Markov kernel characterising $\pi_\lc$ is concentrated on two points. This is the minimal support for martingales. The dual theory of the problem can also be adapted as explained in \cite{BeHePe11}. Brenier--McCann theorems are the first step toward further developments. The next step is to consider probability measures $(\mu_t)_{t\in[0,1]}$ and in particular the {\it displacement interpolation} introduced by McCann in \cite{McCgas}. In this interpolation $\mu_0$ and $\mu_1$ are the given marginals of the transport problem. Instead of considering the usual convex interpolation $t\mapsto t\mu_1+(1-t)\mu_0$, McCann considers the displacement interpolation $t\mapsto \law(tX_1+(1-t)X_0)$ where the law of $(X_0,X_1)$ is the optimal transport plan. In the martingale setting, this interpolation does not provide a continuous martingale. The search for an appropriate martingale displacement interpolation may be related to the Skorokhod embedding problem. See \cite{BCH} for an approach to this classical problem using the optimal transport methods. Let us now relate the optimal transport theory to the peacock problem, which is the modern name given by Hirsch, Profeta, Roynette, and Yor in \cite{HPRY} for studies related to Kellerer's Theorem (see Chapter 3 and above). In fact an important result in the theory of optimal transportation is the possibility of deriving a process $(X_t)_t$ from any curve $(\mu_t)_{t\in[0,1]}$, $K$-Lipschitz with respect to the Wasserstein distance (the distance of optimal transport), such that for every $t\in[0,1]$, the mean quadratic speed $(\E(|\dot{X}_t|^2))^{1/2}$ is smaller than $K$ and $\law(X_t)$ is $\mu_t$. See \cite{Lis} for a more precise statement and the monograph \cite{AGS} where it plays the role of a key-result in the theory of gradient flows of functionals defined on Wasserstein spaces. Replacing a curve of probability measures with a probability measure on trajectories is also our topic in the present paper, but in the martingale setting.

The paper is organised in three quite separate chapters. The type of results and proofs are different. Nevertheless, lemmas introduced in part one are required in part two and the continuity of the left-curtain coupling, proved in part two is invoked for finitely supported measures in part three (see the discussion before Lemma \ref{lemlocal}). In the first part we introduce several notions related to positive measures on $\R$. In particular we introduce the convex order $\leqc$ and the extended order $\leqe$. We investigate these two orders as well as five other more or less classical orders on the space of finite real measures with finite first moment. We prove that they can all be formulated in terms of random variables and deduce relations between them. Theorem \ref{echelle} may be considered the main result of this chapter.

We start Chapter 2 with the definitions of the shadow projection (Definition \ref{shad_thm}) and the left-curtain coupling (Definition \ref{lc}), and recall their main properties. The main theorem and more satisfactory result of this part is Theorem \ref{lips}. It states that the shadow projection $(\mu,\nu)\mapsto S^{\nu}(\mu)$ is a Lipschitzian map for the Kantorovich metric $W$. Corollary \ref{doubidou} is a reformulation of this result for the left-curtain coupling $\curt$. Another important feature of this chapter is the development of a new modified support $\spt^*(\pi)$ in the theory of \cite{BJ}. It enhances the rough definition of left-monotone couplings that appeared in the equivalence between the three property for couplings: optimal, left-monotone or left-curtain (Proposition \ref{synthese} in the present paper). With Proposition \ref{car_etoile}, it is now possible to determine whether a coupling $\pi$ belongs to this category only by considering the triples of points in a well-defined set, namely $\spt^*(\pi)\subset\R^2$, while in the previous definition of left-monotone one had to show that {\it there exists} $\Gamma\subseteq \R^2$ of full $\pi$-measure with the desired property for all triples in $\Gamma^3$. A similar characterisation of left-monotone couplings is not available with $\Gamma=\spt(\pi)$ as explained in Example \ref{countex}. The developments of the object $\spt^*$ enables us in Theorem \ref{corstab} to extrapolate the usual proof of the continuity of the left-curtain coupling (see  \cite[Theorem 5.20]{Vi2}). However the quantitative Theorem \ref{lips} does not rely on it but on the important Lemma \ref{ahyo}, on the monotonicity of the shadow projection with respect to the stochastic order.

In Chapter 3 we consider the construction of a martingale $(M_t)_t$ fitting a given curve $(\mu_t)_t$ of probability measures, that is such that $\law(M_t)=\mu_t$ for every $t$. It is the famous peacock problem of Hirsch, Profeta, Roynette, and Yor \cite{HPRY} that we present extensively in the introduction of this chapter. Our method is similar to that of by Henry-Labord\`ere, Tan, and Touzi in \cite{HTT}. It consists of considering discretisations of $\mu_t$ and composing the transitions given by the left-curtain coupling using Markov composition. The resulting process is a piecewise constant Markovian martingale. Letting the mesh of the time partition tend to zero, one may obtain martingale processes that are or are not Markovian. In the final part we give some typical examples with several different behaviours. For instance in Theorem \ref{Poisson_fini} the measures $\mu_t$ are assumed to be finitely supported and we prove uniqueness and Markovianity. In Proposition \ref{reprise} we merely assume that $t\mapsto\mu_t$ is right continuous and, inspired by \cite{HiRo13}, prove that there exists at least one limit martingale for the finite dimensional convergence. We close the paper by suggesting open questions on this topic.

\section{Reminders about the stochastic and convex orders}\label{premier}

We consider the space $\m$ of positive measures on $\R$ with finite first moments. The subspace of probability measures with finite expectation is denoted by $\p$. For $\mu,\,\nu\in\m$, the Kantorovich distance defined by
\begin{align}\label{kanto}
W(\mu,\nu)=\sup_{f\in\mathrm{Lip}(1)}\left|\int\,f\,\dd\mu-\int\,f\,\dd\nu\right|
\end{align}
endows $(\p,W)$ with the usual topology $\mathcal{T}_1$ for probability measures with finite first moments. In the definition, the supremum is taken among all $1$-Lipschitzian functions $f:\R\to\R$. We also consider $W$ with the same definition on the the subspace $m\p=\{\mu\in\m|\,\mu(\R)=m\}\subset\m$ of measures of mass $m$.

According to the Kantorovich duality theorem, an alternative definition in the case $\mu,\nu\in\p$ is
\begin{align}\label{wass}
\inf_{(\Omega,X,Y)}\E(|Y-X|)
\end{align}
where $X,\,Y:(\Omega,\mathcal{F},\P)\to\R$ are random variables with marginals $\mu$ and $\nu$. The infimum is taken among all joint laws $(X,Y)$, the probability space $(\Omega,\mathcal{F},\P)$ being part of the minimisation problem. Note that without loss of generality $(\Omega,\mathcal{F},\P)$ can be assumed to be $([0,1],\mathcal{B},\lambda)$ where $\lambda$ is the Lebesgue measure and $\mathcal{B}$ the $\sigma$-algebra of Borel sets on $[0,1]$.

A special choice of 1-Lipschitzian function is the function $f_t:x\in\R\to |x-t|\in \R$. Therefore if $\mu_n\to\mu$ in $\m$, the sequence of functions $u_{\mu_n}:t\mapsto \int f_t(x)\dd\mu_n(x)$ pointwise converges to $u_\mu$. The converse statement also holds if all the measures have the same mass and barycenter (see \cite[Proposition 4.2]{BJ} or directly \cite[Proposition 2.3]{HiRo12}). For every $\mu\in\m$, the function $u_\mu$ is usually called the potential function of $\mu$. 

A measure $\pi$ on $\R^2$ is called a {\it transport plan} or a {\it coupling}. Let $\Pi(\mu,\nu)$ be the space of transport plans with marginals $\mu$ and $\nu$. The subspace $\M(\mu,\nu)$ is defined as follows
$$\M(\mu,\nu)=\{\pi=\law(X,Y)\in\Pi(\mu,\nu),\,\E(Y|X)=X\},$$
where the constraint $\E(Y|X)=X$ means: $\E(Y|X=x)=x$ for $\mu$-almost every $x\in \R$. 

We need to define $W^{\R^2}$, the Kantorovich metric on $\R^2$ in order to compare transport plans. It is defined identically to the $1$-dimensional version in \eqref{kanto} and \eqref{wass}, except that $|.|$ is replace with a norm $\|.\|$ of $\R^2$. Indeed the choice of a norm  is required in the definition of the $1$-Lipschitzian functions in \eqref{kanto} and more directly in \eqref{wass}. In the same way, we introduce $W^{\R^d}$ for the Euclidean spaces of greater dimension. It is a metric on $\p(\R^d)$, the space of probability measures with finite first moment. We denote by $\mathcal{T}_1(\R^d)$ the topology induced by $W^{\R^d}$ and $\mathcal{T}_{\mathrm{cb}}(\R^d)$ the usual weak topology.  The letters ``$\mathrm{cb}$'' stay for continuous bounded functions because they define the weak topology while the former topology is induced by the continuous functions growing at most linearly at infinity.

\begin{rem}\label{rem_topo}
Let us make precise what is the link between the topologies $\mathcal{T}_{\mathrm{cb}}(\R^d)$ and $\mathcal{T}_1(\R^d)$. As explained in \cite[Theorem 7.12 with the comments]{Vi1}, the two topologies coincide on each set $\mathcal{C}$ made of uniformly integrable measures, as for instance $\bigcup_k\{\gamma_k\}\cup\{\gamma\}$ where $(\gamma_k)_k$ weakly converges to $\gamma$. A consequence of this fact is that for a familly $\mathcal{C}_1,\ldots,\mathcal{C}_j$ of subsets of $\p$ of this type, the set $\mathcal{C}$ of measures $\pi$ with the $i$-th marginal in $\mathcal{C}_i$ for every $i\leq j$ satisfies itself the uniform integrability. In fact
$$\iiint_{|x_1|+\cdots+|x_j|\geq R}|x_1|+\cdots+|x_j|\,\dd\pi(x_1,\ldots,x_j)\leq \sum_{i=1}^j\int_{x_i\geq R/j}|x_i|\dd\mu_i(x_i)$$
where for every $i\leq j$, $\mu_i\in\mathcal{C}_i$ is the $i$-th marginal of $\pi$. This tends to zero uniformly on $\mathcal{C}\ni \pi$ when $R$ goes to infinity.
\end{rem}

\subsection{Seven partial orders on $\m$.}
We introduce seven partial orders on $\m$, investigate their dependance, and explain their meaning in terms of couplings. These definitions will be useful for a synthetic formulation in Chapter 2, like for instance in Lemma \ref{add_queue}. The results of this chapter continue the extension of the convex order started with the extended order in \cite{BJ} to other cones of functions. They are applied in Chapter 2 but may also be interesting in themselves. Even if the results like Theorem \ref{echelle} and Corollary \ref{coro} may sound classical and the proofs are easy, they are to our knowledge the first apparition in the literature.
\begin{defi}\label{orders}
The letter $E$ is a variable for a set of real functions growing linearly at most in $-\infty$ and $+\infty$. We introduce the set of non-negative functions $E_+$, the set of non-increasing functions $E_\sto$ and the set of convex functions $E_C$, all three are restricted to functions with the growing constraint. For $\mu,\nu\in\m$ we introduce the property $P(E)$.
$$P(E):\quad\forall \phi\in E,\, \int \phi\,\dd\mu\leq\int \phi\,\dd\nu.$$
For $\mu,\,\nu\in\m$,
\begin{itemize}
\item if $P(E_+)$ holds, we write $\mu\leqp \nu$ (usual order),
\item if $P(E_\sto)$ holds, we write $\mu\leqs \nu$ (stochastic order or first order stochastic dominance),
\item if $P(E_C)$ holds, we write $\mu\leqc \nu$ (convex order, Choquet order or second order stochastic dominance),
\item if $P(E_{C}\cap E_+)$ holds, we write $\mu\leqcp \nu$,
\item if $P(E_+\cap E_\sto)$ holds, we write $\mu\leqps \nu$,
\item if $P(E_C\cap E_\sto)$ holds, we write $\mu\leqcs \nu$,
\item if $P(E_C \cap E_+ \cap E_\sto)$ holds, we write $\mu\leqcps \nu$. 
\end{itemize}
\end{defi}

\begin{rem}[Usual notations]\label{extended}
The usual notation for $\mu\leqp\nu$ is $\mu\leq \nu$. In \cite{Th00}, $\stackrel{D}{\leq}$ is the notation for the stochastic order $\leqs$. In \cite{Ke72}, the author simply denotes $\leqcs$ by $\prec$. In \cite{BJ}, Beiglb\"ock and the author introduced the extended order $\preceq_E$. The latter is the same as $\leqcp$ in this paper.
\end{rem}

\subsection{Complements to the stochastic order}\label{plusgrand}
Recall that the Lebesgue measure is denoted by $\lambda$. For a measure $\nu$, we note $F_\nu$, the cumulative distribution function and $G_\nu$, the quantile function. Recall that $G_\nu(t)=\inf_{x\in \R}\{F_\nu(x)\geq t\}$. This function can be seen as a general inverse of $F_\nu$. It is  left-continuous and defined on $[0,\nu(\R)]$. Recall also $\nu=(G_\nu)_\#\lambda|_{[0,\nu(\R)]}$, which will be used extensively in this paper. 

The following standard proposition can for instance be found in \cite[Theorem 3.1]{Th00}. See also the introduction of paragraph \ref{comp_conv}. The proof makes use of the quantile functions.
\begin{pro}\label{ordre_sto}
For $\mu,\,\nu\in\p$, the relation $\mu\leqs\nu$ holds, if and only if there exists a pair of random variables $(X,Y)$ on a probability space $(\Omega,\mathcal{F},\P)$ with marginals $\mu$ and $\nu$, such that $X\leq Y$, $\P$-almost surely.
\end{pro}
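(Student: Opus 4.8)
\emph{Proof plan.} I would argue the two implications separately: the backward one ($X\le Y$ $\Rightarrow$ $\mu\leqs\nu$) is routine, and the forward one is the substance. As the hint suggests, the forward direction rests on the quantile representation $\nu=(G_\nu)_\#\lambda|_{[0,\nu(\R)]}$ recalled just above, specialised here to $\mu,\nu\in\p$, where $\lambda$ is restricted to $[0,1]$.

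For the backward implication, assume $(X,Y)$ has marginals $\mu$ and $\nu$ and $X\le Y$ holds $\P$-almost surely. Let $\phi\in E_\sto$. Composing the pointwise inequality $X\le Y$ with the monotone function $\phi$ and taking expectations — which is licit because $\phi$ grows at most linearly and $\mu,\nu$ have finite first moment, so $\phi(X)$ and $\phi(Y)$ are integrable — yields exactly the inequality $\int\phi\,\dd\mu\le\int\phi\,\dd\nu$ occurring in $P(E_\sto)$. Since $\phi\in E_\sto$ was arbitrary, $\mu\leqs\nu$.

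For the forward implication, assume $\mu\leqs\nu$. I would proceed in three steps. First, convert the integral inequalities $P(E_\sto)$ into a pointwise comparison of the cumulative distribution functions $F_\mu$ and $F_\nu$: it suffices to test $P(E_\sto)$ against the bounded monotone indicators of half-lines (which lie in $E_\sto$, being bounded, hence of at most linear growth), or, if one prefers continuous test functions, against $1$-Lipschitz monotone approximations of those and pass to the limit by dominated convergence. Second, transfer this comparison to the quantile functions: from the infimum definition $G_\nu(u)=\inf\{x\in\R:\ F_\nu(x)\ge u\}$ it is immediate that a pointwise inequality between $F_\mu$ and $F_\nu$ forces the reverse pointwise inequality between $G_\mu$ and $G_\nu$, which here reads $G_\mu(u)\le G_\nu(u)$ for every $u\in(0,1)$. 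Third, construct the coupling: on $(\Omega,\mathcal{F},\P)=([0,1],\B,\lambda)$ put $X=G_\mu$ and $Y=G_\nu$; by the recalled identity $\mu=(G_\mu)_\#\lambda|_{[0,1]}$ (and its analogue for $\nu$) these variables have marginals $\mu$ and $\nu$, and by the second step $X\le Y$ everywhere on $(0,1)$, hence $\P$-almost surely.

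I do not expect a genuine obstacle, this being a classical fact; the only point needing a little care is the first step, namely justifying that testing $P(E_\sto)$ against the (possibly discontinuous) half-line indicators, or their smooth approximants, does produce the comparison of $F_\mu$ and $F_\nu$ at every $t\in\R$ (using the one-sided continuity of distribution functions), and keeping track of which of the two orderings is obtained so that the resulting quantile coupling is ordered in the direction stated.
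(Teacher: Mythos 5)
Your proof is correct and follows exactly the route the paper indicates: the paper states this as a standard result (citing Thorisson), notes that "the proof makes use of the quantile functions," and records that one can take $\P=\lambda_{[0,1]}$, $X=G_\mu$, $Y=G_\nu$ — which is precisely your construction via $F_\nu\le F_\mu$ and hence $G_\mu\le G_\nu$. No gaps; the only cosmetic point is that the paper's Definition 1.3 says "non-increasing" for $E_\sto$ where the intended (and everywhere-used) convention is non-decreasing, which is the one you correctly adopt.
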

We can actually choose $\P=\lambda_{[0,1]}$, $X=G_\mu$ and $Y=G_\nu$. Furthermore, note that with this representation the pair $(X,Y)$ gives the minimal value in \eqref{wass}. Indeed the bound $|\E(Y)-\E(X)|\leq\E(|Y-X|)$ is always satisfied but if $X\leq Y$ we also have $\E(|Y-X|)=\E(Y)-\E(X)$. Actually we have more generally
\begin{lem}\label{stokanto}
Let $\mu,\,\nu$ be in $\p$. The coupling $(G_\mu,G_\nu)$ defined on $\Omega=([0,1],\mathcal{B},\Lg)$ is optimal in the definition \eqref{wass} of $W(\mu,\nu)$. More generally if $\mu,\,\nu$ have mass $m\neq 1$ we have also
$$W(\mu,\nu)=\int |G_\nu-G_\mu|\dd\lambda_{[0,m]}=\|G_\nu-G_\mu\|_1.$$
Moreover if $\mu\leqs \nu,$ 
$$W(\mu,\nu)=\int_0^m (G_\nu-G_\mu)\dd\lambda= m\left(\frac1m\int x\,\dd\mu-\frac1m\int x\,\dd\nu\right).$$
\end{lem}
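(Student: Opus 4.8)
The plan is to derive all three statements from the single classical identity $W(\mu,\nu)=\|G_\nu-G_\mu\|_1$, proving it first for $\mu,\nu\in\p$ and then passing to arbitrary common mass $m$ by homogeneity. One inequality is free: since $(G_\mu)_\#\lambda_{[0,1]}=\mu$ and $(G_\nu)_\#\lambda_{[0,1]}=\nu$, the pair $(G_\mu,G_\nu)$ is an admissible coupling in \eqref{wass}, whence $W(\mu,\nu)\le\int_0^1|G_\nu(t)-G_\mu(t)|\,\dd t$. The whole task is the reverse inequality; this will automatically show that $(G_\mu,G_\nu)$ is optimal.

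I would obtain the lower bound through the cumulative distribution functions. From the layer-cake formula $|a-b|=\int_\R|\mathbf 1_{\{a\le s\}}-\mathbf 1_{\{b\le s\}}|\,\dd s$, the equivalence $G_\rho(t)\le s\iff t\le F_\rho(s)$, and Tonelli's theorem one gets
\[
\int_0^1|G_\mu-G_\nu|\,\dd t=\int_\R\Big(\int_0^1\big|\mathbf 1_{\{t\le F_\mu(s)\}}-\mathbf 1_{\{t\le F_\nu(s)\}}\big|\,\dd t\Big)\,\dd s=\int_\R|F_\mu(s)-F_\nu(s)|\,\dd s,
\]
the inner integral being $|F_\mu(s)-F_\nu(s)|$ because $\{t\le F_\mu(s)\}$ and $\{t\le F_\nu(s)\}$ are nested initial segments of $[0,1]$, and all integrals being finite since $\int_0^{+\infty}(1-F_\rho)+\int_{-\infty}^0 F_\rho=\int|x|\,\dd\rho<\infty$ for $\rho\in\{\mu,\nu\}$. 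Applying the same layer-cake identity to an \emph{arbitrary} coupling $(X,Y)$ of $(\mu,\nu)$ and using Tonelli again,
\[
\E|Y-X|=\int_\R\big(\P(X\le s<Y)+\P(Y\le s<X)\big)\,\dd s\ \ge\ \int_\R\big|\P(X\le s)-\P(Y\le s)\big|\,\dd s=\int_\R|F_\mu-F_\nu|\,\dd s,
\]
since the difference of the two probabilities under the integral equals $F_\mu(s)-F_\nu(s)$. Taking the infimum over couplings and invoking the Kantorovich duality \eqref{wass}, $W(\mu,\nu)\ge\int_\R|F_\mu-F_\nu|=\int_0^1|G_\mu-G_\nu|$, so equality holds throughout.

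For $\mu,\nu\in m\p$ I would apply the previous case to $\mu/m$ and $\nu/m$, combine it with the homogeneity $W(m\sigma,m\tau)=mW(\sigma,\tau)$ of \eqref{kanto} and with $G_\mu(t)=G_{\mu/m}(t/m)$, and change variables by $t=ms$; this turns $mW(\mu/m,\nu/m)$ into $\int_0^m|G_\nu-G_\mu|\,\dd\lambda_{[0,m]}$. Finally, when $\mu\leqs\nu$ the quantiles are ordered, $G_\mu\le G_\nu$ $\lambda$-a.e.\ on $[0,m]$ — this is precisely the coupling of Proposition \ref{ordre_sto} and the remark following it, applied after normalisation — so $|G_\nu-G_\mu|=G_\nu-G_\mu$ and $W(\mu,\nu)=\int_0^m(G_\nu-G_\mu)\,\dd\lambda$; rewriting $\int_0^m G_\rho\,\dd\lambda=\int x\,\dd\rho$ via the image-measure formula (legitimate since the first moments are finite) identifies the right-hand side with the difference of barycentres in the statement.

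The argument is essentially all bookkeeping: the one step carrying genuine content is the universal lower bound above — that no coupling improves on $\int_\R|F_\mu-F_\nu|$, which the quantile coupling attains — and the only point needing slight care is that the Kantorovich functional on the mass-$m$ layer $m\p$, for which \eqref{wass} was written only when $m=1$, is still an infimum over couplings; the rescaling in the third paragraph takes care of this. I do not anticipate any real difficulty.
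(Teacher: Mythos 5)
Your proof is correct. Note that the paper itself gives no proof of the general identity $W(\mu,\nu)=\|G_\nu-G_\mu\|_1$: it only argues the stochastically ordered case, and by a different route — since $x\mapsto x$ is $1$-Lipschitz, \eqref{kanto} (equivalently, $|\E(Y)-\E(X)|\le\E|Y-X|$ for every coupling in \eqref{wass}) gives the universal lower bound $|\int x\,\dd\nu-\int x\,\dd\mu|$, which the comonotone coupling attains because $G_\mu\le G_\nu$ forces $\E|Y-X|=\E(Y)-\E(X)$; the unordered case is then asserted as classical. You instead prove the general case from scratch via the layer-cake/cdf representation, showing $\int_0^1|G_\nu-G_\mu|\,\dd\lambda=\int_\R|F_\mu-F_\nu|\,\dd s\le\E|Y-X|$ for \emph{every} coupling; this is the standard argument for $W_1$ on the line, it subsumes the ordered case, and all the steps check out (the equivalence $G_\rho(t)\le s\iff t\le F_\rho(s)$, the two Tonelli computations, the identity $F_\mu(s)-F_\nu(s)=\P(X\le s<Y)-\P(Y\le s<X)$, the homogeneity reduction to mass one, and $\int_0^mG_\rho\,\dd\lambda=\int x\,\dd\rho$). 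What your approach buys is a complete proof of the first two assertions rather than only of the third. One remark: in the ordered case your computation yields $W(\mu,\nu)=\int x\,\dd\nu-\int x\,\dd\mu$, which is the correct non-negative value; the last display of the lemma as printed has the barycentres in the wrong order (a sign typo in the paper), so you should state the conclusion with your sign rather than silently identifying it with the statement.
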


Let us define the {\it rightmost} and {\it leftmost measure of mass $\alpha$ smaller that $\nu$}. Denoting the mass of $\nu$ by $m$ and assuming $\alpha\leq m$, we consider the set $S=\{\mu\in\m|\,\mu(\R)=\alpha\text{ and }\mu\leqp\nu\}$. Let us prove that for any $\mu\in S$, we have $\mu\leqs \nu_{\alpha}$ where $\nu_\alpha$ denotes $G_\#\lambda|_{[m-\alpha,m]}$.
Let $\varphi:\R\to\R$ be a non-decreasing function, integrable for the elements of $\m$. Hence
\begin{align*}
\int \varphi\,\dd \mu&=\int \varphi\, \dd(\mu\wedge\nu_\alpha)+\int \varphi\, \dd[\mu-(\mu\wedge\nu_\alpha)]\\
&\leq \int \varphi \,\dd(\mu\wedge\nu_\alpha)+\varphi(G_\nu(m-\alpha))[\mu-(\mu\wedge\nu_\alpha)](\R)\\
&\leq \int \varphi \,\dd(\mu\wedge\nu_\alpha)+\varphi(G_\nu(m-\alpha))[\nu_\alpha-(\mu\wedge\nu_\alpha)](\R)\\
&\leq \int \varphi\,\dd \nu_\alpha.
\end{align*}
Indeed $\nu_\alpha-\mu$ admits a density with respect to $\nu$ that is non-positive on $]-\infty,G_\nu(m-\alpha)]$ and non-negative on $[G_\nu(m-\alpha),+\infty[$. The measure $\nu_\alpha$ is \emph{the rightmost measure of mass $\alpha$ smaller than $\nu$}. Symmetrically $(G_\nu)_\#\lambda|_{[0,\alpha]}$ is the leftmost measure.

\subsection{Complements to the convex order}\label{comp_conv}
 
In \cite[Theorem 8]{St65}, Strassen establishes a statement on the marginals of $k$-dimensional martingales indexed on $\N$. For our purposes, we restrict the statement to $1$-dimensional martingales with one time-step.  This result is related to the convex order $\leqc$ in the same way as Proposition \ref{ordre_sto} is associated with $\leqs$. Actually, in particular for more general ordered spaces than $\R$, Proposition \ref{ordre_sto} is widely referred to as Strassen's Theorem on stochastic dominance. The theorem is attributed to Strassen because of \cite{St65}. However, the statement of this result in the paper by Strassen is very elusive. It corresponds to two lines on page 438 after the proof of Theorem 11. See a paper by Lindvall \cite{Lin}, where a proof relying on Theorem 7 by Strassen is restituted with all the details. Therefore, we prefer to reserve the name Strassen's Theorem for the domination in convex order and we later call similar results, like Proposition \ref{ordre_sto}, Strassen-type theorems.
\begin{pro}[Theorem of Strassen]\label{strassen1}
For $\mu,\,\nu\in\p$, the relation $\mu\leqc\nu$ holds if and only if there exists a pair of random variables $(X,Y)$ on a probability space $(\Omega,\mathcal{F},\P)$ with marginals $\mu$ and $\nu$, such that $\E(Y|\,X)=X$, $\P$-almost surely.
\end{pro}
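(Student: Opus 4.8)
The plan is to prove the two implications separately.

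\medskip

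The implication ``martingale coupling $\Rightarrow$ convex order'' is the soft one. Let $(X,Y)$ be as in the statement and let $\phi\in E_C$, so that $|\phi(x)|\le a|x|+b$ for suitable constants. Since $\mu,\nu$ have finite first moments, $\phi$ is $\mu$- and $\nu$-integrable, and since a convex function dominates an affine one, $\E(\phi(Y)\mid X)$ is well defined; the conditional Jensen inequality then gives $\phi(X)=\phi\bigl(\E(Y\mid X)\bigr)\le\E(\phi(Y)\mid X)$ almost surely, and integrating against $\P$ yields $\int\phi\,\dd\mu\le\int\phi\,\dd\nu$. As $\phi\in E_C$ is arbitrary, $P(E_C)$ holds, i.e. $\mu\leqc\nu$.

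\medskip

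For the converse I would reduce to finitely supported measures and then pass to a limit. First assume $\mu,\nu$ finitely supported with $\mu\leqc\nu$, and induct on $\card(\spt\mu)$. If $\mu=\nu$ the diagonal coupling works; otherwise peel off the leftmost atom $a\delta_{x_0}$ of $\mu$ and match it with a sub-measure $\eta\leqp\nu$ of mass $a$ and barycenter $x_0$ — concretely a piece $\nu|_{]a',b]}$ with the boundary atoms split — which exists because the leftmost mass-$a$ sub-measure of $\nu$ has barycenter $\le x_0$ while the rightmost one has barycenter $\ge$ the common barycenter of $\mu$ and $\nu$, hence $\ge x_0$, so some intermediate window realises $x_0$ exactly. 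The heart of the step is to check that, for this particular $\eta$, one has $\mu-a\delta_{x_0}\leqc\nu-\eta$; granting this, the inductive hypothesis gives a martingale coupling of the two remainders, and adding to it $a\cdot\bigl(\delta_{x_0}\otimes(\eta/a)\bigr)$ — a legitimate martingale coupling of $a\delta_{x_0}$ with $\eta$ since $\eta/a$ has barycenter $x_0$ — produces the desired element of $\M(\mu,\nu)$. (This $\eta$ is precisely the shadow of $a\delta_{x_0}$ in $\nu$; I keep the construction by hand to avoid circularity, the shadow of Chapter~2 being itself founded on this proposition.) For general $\mu\leqc\nu$ in $\p$, discretise $\mu$ ``from the inside'' and $\nu$ ``from the outside'', i.e. choose finitely supported $\mu_n,\nu_n$ with $\mu_n\leqc\mu\leqc\nu\leqc\nu_n$ and $\mu_n\to\mu$, $\nu_n\to\nu$ for $W$; let $\pi_n\in\M(\mu_n,\nu_n)$ be the couplings just built and take a limit. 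The families $\{\mu_n\}\cup\{\mu\}$ and $\{\nu_n\}\cup\{\nu\}$ are uniformly integrable, so by Remark~\ref{rem_topo} the set of couplings whose marginals lie among these is $\mathcal{T}_1$-compact; a cluster point $\pi$ of $(\pi_n)$ has marginals $\mu,\nu$, and the martingale constraint, rewritten as $\int(y-x)h(x)\,\dd\pi=0$ for all bounded continuous $h$, survives because $(x,y)\mapsto(y-x)h(x)$ has at most linear growth and $\mathcal{T}_1$-convergence integrates such test functions.

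\medskip

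An alternative to the approximation step is a Hahn--Banach separation: $\Pi(\mu,\nu)$ is convex and $\mathcal{T}_1$-compact (Remark~\ref{rem_topo}), the affine map $\pi\mapsto\bigl(h\mapsto\int(y-x)h(x)\,\dd\pi\bigr)$ into the dual of $C_b(\R)$ is weak-$*$ continuous, so its image is convex and weak-$*$ compact; were $0$ not in it, a separating functional would yield $h\in C_b(\R)$ with $\int(y-x)h(x)\,\dd\pi\ge c>0$ for \emph{every} coupling, which one must contradict using $\mu\leqc\nu$. The main obstacle lies exactly in these crux steps. In the approximation route it is the verification that the convex order is preserved by the remainders $\mu-a\delta_{x_0}\leqc\nu-\eta$: this is the most primitive case of the ``stability of the shadow'' that the rest of the paper develops, and it really uses that $\eta$ is the shadow rather than an arbitrary mass-$a$ piece of $\nu$ with barycenter $x_0$. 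In the Hahn--Banach route the difficulty moves but does not lighten: the separating $h$ is merely bounded and continuous, neither convex nor monotone, so one must fabricate from it an admissible convex test function — or analyse, via a rearrangement inequality, the coupling minimising the linear cost $(x,y)\mapsto y\,h(x)$ — in order to turn $\int(y-x)h(x)\,\dd\pi\ge c>0$ into a violation of $P(E_C)$.
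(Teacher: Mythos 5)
The paper offers no proof of this proposition to compare against: it is imported wholesale from Strassen \cite[Theorem 8]{St65} (restricted to one-dimensional, one-step martingales), and the surrounding discussion is only about attribution. So your proposal has to stand on its own. The ``only if'' direction (conditional Jensen, integrability from linear growth and finite first moments) is complete and correct.

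The converse, however, contains a gap that you identify yourself and then do not close: the inequality $\mu-a\delta_{x_0}\leqc\nu-\eta$ for your quantile window $\eta$. This is not a bookkeeping step --- it is precisely the content of the shadow existence and structure results (Definition \ref{shad_thm}, Example \ref{un_atome}, Proposition \ref{shadowsum}, i.e.\ Lemma 4.6 and Theorem 4.8 of \cite{BJ}), and since you explicitly refuse to invoke that machinery ``to avoid circularity'', you owe a direct argument. In terms of potential functions the claim reads $u_\nu(t)-u_\mu(t)\geq u_\eta(t)-a|t-x_0|$ for every $t$; both sides are non-negative and the right-hand side vanishes outside $\Conv(\spt\eta)$, but on that window the inequality genuinely uses that $x_0$ is the \emph{leftmost} atom of $\mu$ and that $\eta$ is a quantile restriction of $\nu$ (an arbitrary mass-$a$ piece with barycenter $x_0$ would not do, as you note). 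Writing ``granting this'' at the one point where the induction could fail means the induction is not established; everything downstream (the approximation $\mu_n\leqc\mu\leqc\nu\leqc\nu_n$, the $\mathcal{T}_1$-compactness, the passage to the limit of the constraint $\int(y-x)h(x)\,\dd\pi=0$) is fine but rests on it. The Hahn--Banach variant is in the same state: you correctly observe that the separating $h\in C_b(\R)$ is neither convex nor monotone and that one must still manufacture a contradiction with $P(E_C)$, and you do not do it. In short, the proposal is an accurate map of where the difficulty lives, but the difficult step is asserted rather than proved, so it is not yet a proof.
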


In the same article \cite[Theorem 9]{St65} Strassen states a result on submartingales that we recall for submartingales indexed on two times.
\begin{pro}[Theorem of Strassen 2]\label{strassen2}
For $\mu,\,\nu\in\p$, the relation $\mu\leqcs\nu$ holds if and only if there exists a pair of random variables $(X,Z)$ on a probability space $(\Omega,\mathcal{F},\P)$ with marginals $\mu$ and $\nu$, such that $\E(Z|\,X)\geq X$, $\P$-almost surely.
\end{pro}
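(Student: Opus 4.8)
The plan is to treat the two implications separately. The direction ``a submartingale coupling of $\mu$ and $\nu$ exists $\Rightarrow\mu\leqcs\nu$'' is the easy one: if $(X,Z)$ has marginals $\mu,\nu$ with $\E(Z|X)\geq X$ a.s.\ and $\phi$ is convex and non-decreasing with at most linear growth---so that $\phi(X)$ and $\phi(Z)$ are integrable---then conditional Jensen gives $\E(\phi(Z)\mid X)\geq\phi(\E(Z|X))\geq\phi(X)$, the second inequality because $\phi$ is non-decreasing; integrating gives $\int\phi\,\dd\nu\geq\int\phi\,\dd\mu$, i.e.\ $\mu\leqcs\nu$.

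For the converse I would reduce to the theorem of Strassen (Proposition~\ref{strassen1}) and to the representation of the stochastic order (Proposition~\ref{ordre_sto}) by inserting an intermediate measure. The key step is the one-dimensional claim: \emph{if $\mu\leqcs\nu$ then there is $\rho\in\p$ with $\mu\leqs\rho$ and $\rho\leqc\nu$} (such a $\rho$ automatically has the common mass of $\mu,\nu$ and the barycentre of $\nu$). Granting it, the explicit form of Proposition~\ref{ordre_sto} provides $G_\mu\leq G_\rho$ on $[0,1]$, so $(X,X'):=(G_\mu,G_\rho)$ on $([0,1],\mathcal{B},\lambda)$ couples $\mu$ with $\rho$ and satisfies $X\leq X'$; Proposition~\ref{strassen1}, applied to $\rho\leqc\nu$, yields a Markov kernel $(\kappa_{x'})_{x'}$ with $\int z\,\kappa_{x'}(\dd z)=x'$ for $\rho$-almost every $x'$ and $\int\kappa_{G_\rho(u)}\,\dd u=\nu$. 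Building $Z$ on $([0,1]^2,\lambda\otimes\lambda)$ with conditional law $\kappa_{X'}$ given the first coordinate (for instance $Z=G_{\kappa_{G_\rho(u)}}(v)$) one obtains $\law(X,Z)\in\Pi(\mu,\nu)$; and since $Z$ is conditionally independent of $X$ given $X'$ while $X'\geq X$,
\[
\E(Z|X)=\E\big(\E(Z|X,X')\mid X\big)=\E(X'\mid X)\geq X\qquad\text{a.s.},
\]
the desired submartingale coupling.

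The main obstacle is the construction of $\rho$, which I regard as the technical heart. Normalising masses to $1$ and using the partial potentials $\psi_\sigma(t)=\int(x-t)^+\,\dd\sigma(x)$, one has: $\sigma\leqc\sigma'$ iff $\psi_\sigma\leq\psi_{\sigma'}$ with $\sigma,\sigma'$ of equal mass and barycentre; $\sigma\leqs\sigma'$ iff $\psi_\sigma-\psi_{\sigma'}$ is non-decreasing with $\sigma,\sigma'$ of equal mass; and $\mu\leqcs\nu$ iff $\psi_\mu\leq\psi_\nu$ with $\mu,\nu$ of equal mass. One therefore seeks a potential $\psi_\rho$---convex, non-increasing, of slope $-1$ at $-\infty$, with limit $0$ at $+\infty$, and with $\psi_\rho(t)+t\to\int x\,\dd\nu$ as $t\to-\infty$---satisfying $\psi_\rho\leq\psi_\nu$ and $\psi_\rho'\leq\psi_\mu'$. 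I would build it by an envelope procedure: roughly, take $\psi_\rho$ to be the largest convex minorant of $\psi_\nu$ compatible with the derivative constraint $\psi_\rho'\leq\psi_\mu'$ and the prescribed behaviour at $-\infty$, the point being that $\mu\leqcs\nu$ is exactly what makes this class non-empty, while a compactness/extremality argument produces an admissible element; verifying that the resulting $\psi_\rho$ is the potential of a genuine probability measure realising both $\mu\leqs\rho$ and $\rho\leqc\nu$ is the delicate step. (The naive guess $\psi_\rho:=\max(\psi_\mu,\psi_\nu-\delta)$, with $\delta$ the gap between barycentres, is convex and gives $\mu\leqc\rho$, but already on three-atom measures fails $\rho\leqs\nu$; hence the need for a genuine envelope.) Alternatively one could first settle the finitely supported case by linear-programming duality---there an infeasibility certificate is precisely a convex non-decreasing function $\phi$ with $\int\phi\,\dd\mu>\int\phi\,\dd\nu$---and then pass to a weak limit along finitely supported approximations respecting $\leqcs$, the uniform integrability noted in Remark~\ref{rem_topo} ensuring that the marginals and the submartingale inequality both survive. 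In any event the statement is also contained in \cite[Theorem~9]{St65}.
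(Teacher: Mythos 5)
First, a point of comparison: the paper does not prove this proposition at all --- it is quoted from Strassen \cite[Theorem 9]{St65} --- so your attempt at a self-contained argument goes beyond the paper's treatment. Your first implication (conditional Jensen plus monotonicity of $\phi$) is complete and correct. Your reduction of the converse to the existence of an intermediate measure $\rho$ with $\mu\leqs\rho\leqc\nu$, followed by the quantile coupling of Proposition \ref{ordre_sto} and the gluing of the kernel from Proposition \ref{strassen1}, is also sound; and you are right not to borrow that decomposition from Theorem \ref{echelle}, since the paper proves it (step 3 there) precisely by invoking the present proposition, so doing so would be circular.

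The genuine gap is the construction of $\rho$, which you yourself flag as the technical heart: the envelope you describe is not shown to be taken over a non-empty class with the prescribed behaviour at $-\infty$ (this is exactly what forces $\rho$ to have the barycentre of $\nu$, without which $\rho\leqc\nu$ fails --- note that $\psi_\mu$ itself satisfies all your constraints except this one), and neither the compactness/extremality step nor the verification that the envelope is the potential of a probability measure is carried out; the alternative LP-plus-weak-limit route is equally unexecuted. Since this is the entire content of the converse, the proof as written is incomplete. The gap is, however, closable by an explicit formula in your own framework: with $\psi_\sigma(t)=\int(x-t)^+\,\dd\sigma(x)$ and $b=\int x\,\dd\nu$, set $\psi:=\max\bigl(\psi_\mu,\,b-t\bigr)$. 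Then $\psi$ is convex and non-increasing, $\psi(t)+t=\max(\psi_\mu(t)+t,\,b)$ is non-decreasing with limit $b$ at $-\infty$ (using $\int x\,\dd\mu\leq b$, which is $\mu\leqcs\nu$ applied to $x\mapsto x$) and $\psi\to0$ at $+\infty$, so $\psi=\psi_\rho$ for a probability measure $\rho$ of mean $b$; moreover $\psi\leq\psi_\nu$ because $\psi_\mu\leq\psi_\nu$ and $b-t=\int(x-t)\,\dd\nu\leq\psi_\nu(t)$, whence $\rho\leqc\nu$; and $\psi-\psi_\mu=\max\bigl(0,\,b-(t+\psi_\mu(t))\bigr)$ is non-increasing, whence $\mu\leqs\rho$. (Concretely, $\rho$ collapses all the mass of $\mu$ lying to the left of a suitable threshold onto an atom at that threshold, the threshold being chosen so that the mean becomes $b$.) With this inserted, your argument becomes a complete and self-contained proof.
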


Note that if we introduce $Y=\E(Z|\,X)$, one has $\mu\leqs\law(Y)$ and $\law(Y)\leqc\nu$. This kind of decomposition will be investigated in the next section.

\subsection{Strassen-type theorems}
Before we state Theorem \ref{echelle}, let us clarify a point of notation. One may permute the subscripts of $\preceq$ without changing the meaning of the partial orders. For instance $\preceq_{+,\sto,C}$ does not appear in Definition \ref{orders} but it denotes the same order as $\leqcps$. More than one notation for the same object seems useless but the arrangement of the indices makes sense in the following theorem.

\begin{them}[Chain of relations]\label{echelle}All the relations of Definition \ref{orders} are antisymmetric and transitive, making them partial orders.

Moreover, for any sequence $(\mu_i)_{i=0,\ldots,n}$ (with $n=2$ or $3$) satisfying the relations $\mu_{i-1}\preceq_{r_i}\mu_{i}$ for $i=1,\ldots,n$ one has $\mu_0\preceq_{r_1,\ldots,r_n}\mu_n$.
 
Conversely if $\mu_0\preceq_{r_1,\ldots,r_n}\mu_n$ one can find a sequence $(\mu_i)_{i=0,\ldots,n}$ such that $\mu_{i-1}\preceq_{r_i}\mu_{i}$ for every $i\geq 1$.
\end{them}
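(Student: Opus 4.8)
The statement has three parts: (1) antisymmetry and transitivity of all seven orders; (2) a "composition" direction — chaining $\preceq_{r_i}$ relations gives $\preceq_{r_1,\dots,r_n}$; (3) a "decomposition" direction — a $\preceq_{r_1,\dots,r_n}$ relation can be split through intermediate measures. I would dispatch (1) and (2) quickly and concentrate the real work on (3), which is the Strassen-type factorisation and the genuine obstacle.

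For (1), transitivity of each $\preceq_E$ is immediate from the definition $P(E)$: if $\int\phi\,\dd\mu\le\int\phi\,\dd\nu$ and $\int\phi\,\dd\nu\le\int\phi\,\dd\rho$ for all $\phi\in E$, then the same holds for $\mu,\rho$. Antisymmetry is the point requiring an argument: I must check that each of the seven cones $E$ is "rich enough" to separate measures in $\m$. For $E_C$ this is classical (the potential functions $u_\mu$, or all functions $x\mapsto (x-t)_+$ together with $x$ and $-x$, determine $\mu\in\m$; indeed $\mu\leqc\nu$ and $\nu\leqc\mu$ force equal mass, equal barycentre, equal potential functions, hence $\mu=\nu$ by the converse statement recalled after \eqref{wass}). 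For $E_+$: testing against $\mathbf 1_A$ for Borel $A$ (approximable by nonnegative Lipschitz functions of linear growth) gives $\mu(A)=\nu(A)$. The orders involving $E_\sto$ contain, after sign change, indicators of half-lines $\mathbf 1_{(-\infty,t]}$ or $\mathbf 1_{[t,\infty)}$, which pin down the cumulative distribution function. Since the smaller cones ($E_C\cap E_+$, etc.) are intersections, antisymmetry for them follows because they still contain a separating family — I would just note that $E_C\cap E_+\cap E_\sto$ already contains all functions $x\mapsto(t-x)_+$ (convex, nonnegative, nonincreasing), whose integrals are $u_\mu$ restricted suitably, plus enough to recover mass; this needs a short lemma but is routine.

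For (2), the composition direction, this is pure logic: if $E=E_{r_1}\cap\dots\cap E_{r_n}$ then $\phi\in E$ lies in each $E_{r_i}$, so $\int\phi\,\dd\mu_{i-1}\le\int\phi\,\dd\mu_i$ for each $i$, and summing the telescope gives $\int\phi\,\dd\mu_0\le\int\phi\,\dd\mu_n$. Here one must check that the multi-index cones compose correctly — e.g. that $E_{+,\sto,C}:=E_+\cap E_\sto\cap E_C$ is exactly the cone named by $\leqcps$ — which is why the author remarks that permuting subscripts is harmless; I would record this observation and move on.

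**The main obstacle: part (3).** This is where Strassen's theorems (Propositions \ref{strassen1}, \ref{strassen2}), Proposition \ref{ordre_sto}, and the rightmost/leftmost-measure construction of Section \ref{plusgrand} come into play. The idea is to realise each multi-index relation by a sequence of random variables and then read off the intermediate measures as laws of intermediate variables. For the length-two cases: given $\mu_0\leqcs\mu_2$, Proposition \ref{strassen2} yields $(X,Z)$ with $\E(Z|X)\ge X$; setting $Y=\E(Z|X)$ gives $\mu_0\leqs\law(Y)$ (by Proposition \ref{ordre_sto}, since $X\le Y$ a.s.\ after a quantile rearrangement — or directly, $\law(X)\leqs\law(Y)$ because $X\le Y$) and $\law(Y)\leqc\mu_2$ (Jensen: $\E(\phi(Z)|X)\ge\phi(\E(Z|X))=\phi(Y)$ for convex $\phi$). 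That handles $\preceq_{\sto,C}$; the case $\preceq_{+,\sto}$, $\preceq_{C,+}$, etc., each need their own little factorisation — typically "split off an excess mass" using the leftmost/rightmost measures, or use a pointwise-domination coupling. The truly delicate point, and what I expect to be the crux, is the three-index case $\mu_0\preceq_{r_1,r_2,r_3}\mu_3$ and making sure the intermediate measures inherit the right *pairwise* relations simultaneously (not just consecutively): one needs, say, a single probability space carrying $(X_0,X_1,X_2,X_3)$ realising all the constraints at once, which forces a careful order of construction — first build the martingale/submartingale backbone, then insert the $\leqp$-type steps by a measurable "mass reallocation" that does not disturb the already-established convex-order relations. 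Getting this compatibility right, rather than any single estimate, is the heart of the proof; once the joint coupling exists, the intermediate $\mu_i$ are just its one-dimensional marginals and all the claimed two-index relations between *consecutive* $\mu_i$ follow from the corresponding Strassen-type characterisation applied to the relevant pair.
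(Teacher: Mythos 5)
Your parts (1) and (2) are sound and match the paper: transitivity is formal, the forward implication is a telescoping sum over the intersected cones, and for antisymmetry it suffices to treat the smallest cone $E_C\cap E_+\cap E_\sto$ (the paper uses continuous piecewise affine functions writable in all three required forms; your family $x\mapsto(t-x)_+$ works equally well, since it recovers $\int_{-\infty}^t F_\mu$ for every $t$ and hence $\mu$, mass included).

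The gap is in part (3), in two places. First, the decomposition of $\preceq_{C,+}$ --- given $\mu\leqcp\nu$, produce $\eta$ with $\mu\leqc\eta\leqp\nu$ --- is not one of the ``little factorisations'' you defer: it is essentially the existence of a shadow-type intermediate measure, and the paper simply cites \cite[Proposition 4.4]{BJ} for it; your sketch offers no construction. Second, and more seriously, you have misidentified the crux of the three-index case. The theorem only asks for \emph{consecutive} relations $\mu_{i-1}\preceq_{r_i}\mu_i$, so there is no ``simultaneous pairwise compatibility'' to enforce; moreover the joint law $(X_0,\ldots,X_3)$ you propose to build is circular, since constructing it presupposes exactly the intermediate marginals whose existence is the content of the statement (once the marginals and the consecutive couplings exist, the gluing lemma produces the joint law for free). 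The paper's actual argument is an iterated peeling: because permuting subscripts does not change the order, every three-index relation is reduced to $\preceq_{\sto,C,+}$ or $\preceq_{C,\sto,+}$; the trailing ``$+$'' is stripped by passing to the rightmost measure $\nu'=(G_\nu)_\#\lambda|_{[\nu(\R)-\mu(\R),\nu(\R)]}$ and checking $\mu\preceq_{\sto,C}\nu'$ with the test functions $(\varphi-\varphi(x))\chi_{[x,+\infty[}$; and then the Strassen decomposition of $\leqcs$ (which you do have) finishes. Without this peeling step, together with the companion trick $\mu_1'=\mu_0+(\mu_2-\mu_1)$ for relocating a ``$+$'' step to the other end of the chain, your plan does not close.
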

\begin{proof}
1. The transitivity is obvious. For the reflexivity, it is enough to prove that $\leqcps$ is reflexive. Let $\mu$ and $\nu$ satisfy $\mu\leqcps \nu$ and $\nu\leqcps \mu$. Hence integrating with respect to $\mu$ or $\nu$ provides the same value for any function that can be written in all three forms --- (i) the difference of two non-negative functions, (ii) the difference of two non-decreasing functions, (iii) the difference of two convex functions. All the three spaces are restricted to functions growing at most linearly in $\pm \infty$. Continuous piecewise affine functions with finitely many pieces satisfy the three conditions. Thus $\mu=\nu$.

2. The first implication is obvious, the converse statement is not. We have to prove it for twelve different partial orders. For $\leqcp$ (see Remark \ref{extended}) we simply quote \cite[Proposition 4.4]{BJ}. From this, we can easily deduce the statement for $\preceq_{+,C}$. We consider $\mu_0\preceq_{+,C}\mu_1$. As the order is the same as $\leqcp$, we can find $\mu_1$ with $\mu_0\leqc \mu_1$ and $\mu_1\leqp \mu_2$. We set $\mu'_1=\mu_0+(\mu_2-\mu_1)$. As $\mu_2-\mu_1$ is a positive measure one has $\mu_0\leqp\mu'_1$. Let $\varphi$ be a convex function. Therefore
\begin{align*}
\int \varphi \dd\mu'_1&=\int\varphi \dd\mu_0+\int\varphi \dd(\mu_2-\mu_1)\\
&\leq\int\varphi \dd\mu_1+\int\varphi \dd(\mu_2-\mu_1)\leq \int \varphi \dd\mu_2,
\end{align*}
which means $\mu_1'\leqc \mu_2$.
The last argument can be used for stating the decomposition of $\preceq_{+,C,\sto}$ and $\preceq_{C,+,\sto}$ provided we can prove it for $\preceq_{C,\sto,+}$. The place of the index ``$+$'' does not matter. Similarly the decomposition of $\preceq_{+,\sto,C}$ and $\preceq_{\sto,+,C}$ will be a corollary of the property for $\preceq_{\sto,C,+}$.  In the same way $\preceq_{+,\sto}$ reduces to the study of $\leqps$. 

3. We prove here the two wanted decompositions of $\mu\leqcs\nu$. For probability measures, the Strassen theorem (Proposition \ref{strassen2}) states that there exists $(X,Z)$ with $\law(X)=\mu$, $\law(Z)=\nu$ and $Y:=\E(Z|\,X)\geq X$. For $\mu_1$ defined as the law of $Y$ and $\mu'_1$ as the law of $Y':=Z-(Y-X)$ we have $\mu=\mu_0\leqs\mu_1\leqc\mu_2=\nu$ and $\mu=\mu_0\leqc\mu'_1\leqs\mu_2=\nu$. If $\mu,\,\nu$ are not probability measures, they must have the same mass. Indeed, every constant function is element of $E_C\cap E_\sto$. One can easily obtain the statement by normalising the measures.

4. We are left with $\preceq_{\sto,+}$, $\preceq_{\sto,C,+}$ and $\preceq_{C,\sto,+}$. Having in mind the possibility to transpose ``$C$'' and ``$\sto$'' proved in the last paragraph, it is sufficient to consider $\mu\preceq_{\sto,+}\nu$ and $\mu\preceq_{\sto,C,+}\nu$. For that purpose we consider $\nu'=(G_\nu)_\#\lambda|_{[\nu(\R)-\mu(\R),\nu(\R)]}$. Recall that it is the rightmost measure of mass $\mu(\R)$ smaller than $\nu$ introduced in paragraph \ref{plusgrand}. Of course $\nu'\leqp\nu$. We now prove $\mu\leqs \nu'$ and $\mu\preceq_{\sto,C} \nu'$ respectively. Let $\varphi\in E$ with $E=E_\sto$ or $E=E_\sto\cap E_C$ respectively. Because of the dominated convergence theorem, we can assume without loss of generality that $\varphi$ is bounded from below. We denote $G_\nu(\nu(\R)-\mu(\R))$ by $x\in[-\infty,+\infty[$ so that $\varphi-\varphi(x)$ is non-negative on $]x,+\infty[$. For simplicity, one considers that $\mu$ is a probability measure. By applying $\mu\leq_{\sto,+} \nu$ or $\mu\leq_{\sto,C,+} \nu$ for $(\varphi-\varphi(x))\chi_{[x,+\infty[}$ respectively, one obtains
\begin{align*}
\int \varphi\,\dd \mu&=\varphi(x)+\int[\varphi-\varphi(x)]\chi_{[x,+\infty[}\,\dd \mu\\
&\leq \varphi(x)+\int [\varphi-\varphi(x)]\chi_{[x,+\infty[}\,\dd \nu=\int\varphi\,\dd \nu'.
\end{align*}
Hence $\mu\leqs\nu'$ and $\mu\preceq_{\sto,C} \nu'$ respectively. For the latter we recall point 3 so that we have $\mu=\mu_0\leqs\mu_1\leqc\mu_2=\nu'$ and $\mu_2\leqp\mu_3=\nu$ for some intermediate measure $\mu_1$.
\end{proof}

Theorem \ref{echelle} opens the door to a translation of all the partial orders in Definition \ref{orders} in terms of couplings. For this purpose we use what is known for $\leqs$ and $\leqc$ (Proposition \ref{ordre_sto} and Proposition \ref{strassen1}) together with the following characterisation: if $\nu\in\p$ then $\mu\leqp\nu$ if and only if there exists a random variable $Y$ defined on a probability space $(\Omega,\mathcal{F},\P)$ and an event $A$ such that $\mu(\R)=\P(A)$ and $\law(Y|\,A)=\mu(\R)^{-1}\mu$. The statement also requires the composition of joint laws, called gluing lemma in \cite{Vi2}. As an example let us reprove the converse statement of Proposition \ref{strassen2}. We start with $\mu_0,\,\mu_2\in\p$ satisfying $\mu_0\leqcp\mu_2$. With Theorem \ref{echelle}, we find $\mu_1$ satisfying $\mu_0\leqc\mu_1$ and $\mu_1\leqp\mu_2$. Hence on some probability space $\Omega_X$ we have a coupling $(X_0,X_1)$ of $\mu_0$ and $\mu_1$ that satisfies $\E(X_1|\,X_0)=X_0$ and on some probability space $\Omega_Y$ we have a coupling $(Y_1,Y_2)$ of $\mu_1$ and $\mu_2$ that satisfies $Y_1\leq Y_2$. Therefore by using the Markov composition, or the gluing lemma \cite[Chapter 1]{Vi2}, there exists some probability space $\Omega_Z$ and $(Z_0,Z_1,Z_2)$ such that $\law(Z_0,Z_1)=\law(X_0,X_1)$ and $\law(Z_1,Z_2)=\law(Y_1,Y_2)$. It follows $\E(Z_2|\,Z_0)\geq \E(Z_1|\,Z_0)=Z_0$. 

We give another illustration on how to apply Theorem \ref{echelle} in the case of an order made of three subscripts.

\begin{cor}\label{coro}
Let $\mu,\nu$ be elements of $\m$. The relation $\mu\leqcps \nu$ holds if and only if there exists a probability space $(\Omega,\mathcal{F},\P)$ with a measurable set $A$ and two random variables $(X,T)$ satisfying
$$\P_A\text{-almost surely }X\leq \E(T|\,X,A)$$
where
\begin{enumerate}
\item $\law(X|\,A)=\mu(\R)^{-1}\mu$
\item $\law(T)=\nu(\R)^{-1}\nu$,
\item $\P(A)=\mu(\R)\nu(\R)^{-1}$,
\end{enumerate}
\end{cor}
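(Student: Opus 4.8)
The plan is to read the statement as a Strassen-type description of $\leqcps$ and to obtain it directly from Theorem~\ref{echelle} applied to the order $\preceq_{\sto,C,+}$, which coincides with $\leqcps$ after permuting indices. Write $m=\mu(\R)$ and $M=\nu(\R)$. Since the constant $1$ lies in $E_C\cap E_+\cap E_\sto$, the relation $\mu\leqcps\nu$ forces $m\leq M$; the case $m=0$ being trivial (take $A=\emptyset$), I assume $m>0$, set $\bar\mu=m^{-1}\mu$, $\bar\nu=M^{-1}\nu$, and write $\P_A=\P(\,\cdot\,|A)$.

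For the implication ``coupling $\Rightarrow$ order'', I would start from $(\Omega,\mathcal F,\P)$, $A$ and $(X,T)$ as in the statement and put $Y=\E(T\,|\,X,A)$, a $\sigma(X)$-measurable random variable on $A$ with $X\leq Y$ $\P_A$-almost surely. By Proposition~\ref{ordre_sto} (easy direction) this gives $\law(X|A)\leqs\law(Y|A)$. Because $Y$ is $\sigma(X)$-measurable, $\E(T\,|\,Y,A)=\E\bigl(\E(T|X,A)\,\big|\,Y,A\bigr)=Y$, so $(Y,T)$ is a martingale coupling under $\P_A$ and Proposition~\ref{strassen1} (easy direction) gives $\law(Y|A)\leqc\law(T|A)$. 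Finally $\P(A)\int\phi\,\dd\law(T|A)=\E[\phi(T)\chi_A]\leq\E[\phi(T)]=\int\phi\,\dd\bar\nu$ for every non-negative $\phi$, so $\P(A)\law(T|A)\leqp\bar\nu$. Multiplying these three relations by $m$ and using $\P(A)=m/M$ and $\law(X|A)=\bar\mu$ yields
$$\mu=m\,\law(X|A)\ \leqs\ m\,\law(Y|A)\ \leqc\ m\,\law(T|A)\ \leqp\ \nu,$$
whence $\mu\preceq_{\sto,C,+}\nu$ by Theorem~\ref{echelle}, i.e.\ $\mu\leqcps\nu$.

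For the converse I would start from $\mu\leqcps\nu$, i.e.\ $\mu\preceq_{\sto,C,+}\nu$, and use the decomposition part of Theorem~\ref{echelle} to produce measures with $\mu=\mu_0\leqs\mu_1\leqc\mu_2\leqp\mu_3=\nu$. Here $\mu_0,\mu_1,\mu_2$ all have mass $m$ (the orders $\leqs$ and $\leqc$ preserve the total mass, since $\pm1\in E_\sto\cap E_C$), and $\nu-\mu_2$ is a non-negative measure of mass $M-m$. Applying Proposition~\ref{ordre_sto} to $\bar\mu\leqs\bar\mu_1$ and Proposition~\ref{strassen1} to $\bar\mu_1\leqc\bar\mu_2$, then gluing along the common marginal $\bar\mu_1$, gives on one probability space a triple $(X,Y,S)$ of respective laws $\bar\mu,\bar\mu_1,\bar\mu_2$ with $X\leq Y$, with $S$ conditionally independent of $X$ given $Y$, hence $\E(S|X)=\E(Y|X)\geq X$. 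It then remains to dilute $S$: on an auxiliary space carry $V$ of law $(M-m)^{-1}(\nu-\mu_2)$ (needed only when $M>m$; if $M=m$ then $\mu_2=\nu$ and one simply takes $A=\Omega$, $T=S$), let $\Omega$ be the disjoint union of the two spaces with weights $m/M$ and $(M-m)/M$, call $A$ the first piece, extend $X$ by $0$ off $A$, and set $T=S$ on $A$, $T=V$ on $A^c$. Then $\P(A)=m/M$, $\law(X|A)=\bar\mu$,
$$\law(T)=\frac{m}{M}\,\bar\mu_2+\frac{M-m}{M}\cdot\frac{\nu-\mu_2}{M-m}=\frac{\mu_2}{M}+\frac{\nu-\mu_2}{M}=M^{-1}\nu=\bar\nu,$$
and on $A$ one has $\E(T|X,A)=\E(S|X)\geq X$, which is exactly (1)--(3) together with the required inequality.

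The mathematics here is routine once Theorem~\ref{echelle}, Propositions~\ref{ordre_sto}--\ref{strassen1} and the gluing lemma are invoked, in the spirit of the example preceding the statement; the one point demanding care is the bookkeeping of masses in the converse, namely assembling $(\Omega,\P)$ so that $\P(A)=m/M$, the conditional law of $T$ on $A$ is $\bar\mu_2$ and its conditional law on $A^c$ absorbs $\nu-\mu_2$ --- and, in the direct implication, making sure every conditioning is performed under $\P_A$ so that the three sub-relations genuinely chain into $\mu\leqs\mu_1\leqc\mu_2\leqp\nu$.
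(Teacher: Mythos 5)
Your proof is correct, and it follows the same broad architecture as the paper (decompose the order via Theorem \ref{echelle}, realise each link by a Strassen-type coupling, glue), but with a genuinely different decomposition. The paper takes the chain $\mu\leqc\mu_1\leqp\mu_2\leqs\nu$ (i.e.\ $\preceq_{C,+,\sto}$): the set $A$ is introduced in the \emph{middle}, attached to the $\leqp$ link between $\mu_1$ and $\mu_2$, the martingale step comes first, and the final inequality $X\leq\E(T|X,A)$ is obtained from $X=\E(Z|X,A)$ and $Z\leq T$. You instead take $\mu\leqs\mu_1\leqc\mu_2\leqp\nu$ (i.e.\ $\preceq_{\sto,C,+}$), which pushes the mass-addition to the \emph{end}; this lets you build the whole ordered-plus-martingale part on a single probability space of total mass $m/M$ and then dilute by an independent piece carrying $(\nu-\mu_2)/(M-m)$, with the inequality coming from $\E(T|X,A)=\E(Y|X)\geq X$. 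Both decompositions are legitimately supplied by Theorem \ref{echelle} (point 4 of its proof treats $\preceq_{\sto,C,+}$ explicitly, point 2 treats $\preceq_{C,+,\sto}$), so this is a matter of which link carries the set $A$; your version arguably makes the mass bookkeeping more transparent, at the price of an explicit disjoint-union construction. The two proofs also diverge on the easy implication: the paper integrates a test function $\varphi\in E_C\cap E_+\cap E_\sto$ directly against the coupling in three one-line steps, whereas you reconstruct intermediate measures $m\,\law(Y|A)$ and $m\,\law(T|A)$ and re-invoke the composition direction of Theorem \ref{echelle}; yours is slightly more roundabout but equally valid, and all the intermediate verifications (tower property giving $\E(T|Y,A)=Y$, the mass identities, $\E(S|X)=\E(Y|X)$ via conditional independence) check out.
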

\begin{proof}
1. According to Theorem \ref{echelle}, setting $\mu_0=\mu$ and $\mu_3=\nu$, we can find $\mu_1,\,\mu_2\in\m$ with $\mu_0\leqc \mu_1\leqp\mu_2\leqs\mu_3$. We first assume $\mu_3\in\p$  for simplicity. We apply Proposition \ref{ordre_sto} and Proposition \ref{strassen1} to the pairs $(\mu_2,\mu_3)$ and $(\mu_0,\mu_1)$. According to the usual compositions rules of the probability theory, we can find a pair $(Z,T)$ for $(\mu_2,\mu_3)$ and $(X,Y)$ for $(\mu_0(\R)^{-1}\mu_0,\mu_1(\R)^{-1}\mu_1)$ satisfying the relations explained in these propositions. Moreover usual properties of the probability theory allow us to couple these random variables in a probability space $(\Omega,\mathcal{F},\P)$ and its restriction $(A,\mathcal{F}_A,\P_A)$ where $A\subseteq \Omega$ is a Borel set adapted to the relation $\mu_1\leqp\mu_2$. It satisfies $\P(A)\mu_2(\R)=\mu_1(\R)$ and we have
\begin{itemize}
\item $\law(T)=\mu_3$
\item $\law(Z)=\mu_2$
\item $\law(Y)=\P(A)^{-1}\mu_1$
\item $\law(X)=\P(A)^{-1}\mu_0$
\end{itemize}
and
\begin{itemize}
\item $Z\leq T$
\item $Y=Z$, $\P_A$-almost surely
\item $X=\E_A(Y|\,X)$, $\P_A$-almost surely
\end{itemize}
The last line also writes $X=\E(Z|\,X,A)$, $\P_A$-almost surely. Thus $X\leq\E(T|\,X,A)$, $\P_A$-almost surely.

2. We prove the converse statement. We assume that $\P_A\text{-almost surely }X\leq \E(T|\,X,A)$ is satisfied and consider $\varphi\in E_C\cap E_+ \cap E_\sto$. We have
$\E_A(\varphi(X))\leq \E_A(\varphi(\E(T|\,X,A)))$ because $\varphi$ is non-decreasing. This is smaller than $\E_A(\varphi(T))$ because $\varphi$ is convex. Finally this is smaller than $\P(A)\E(\varphi(T))$ because $\varphi$ is non negative. We conclude with (1)---(3) that $\int\varphi\,\dd \mu\leq\int\varphi\,\dd \nu$.

3. The statement is established if $\nu=\mu_3$ is a probability measure. Using the usual normalisation of finite measures to probability measures, we get the other cases.
\end{proof}

\begin{rem}
The Strassen-type theorems admit equivalent translations in terms of transport of measure in place of couplings of random variables. Indeed, starting with a relation $\mu\preceq \nu$ and after using the decomposition provided by Theorem \ref{echelle}, the translation of each single relation can be made as follow : the relation $\leqs$ means that the elements of mass are transported in the direction of $+\infty$. The relation $\leqp$ means that some mass is created. Finally $\leqc$ denotes a dilation: each element of mass in position $x$ is spread in both directions in a way such that for any $x$ the barycenter of the mass transported from $x$ is still $x$.
\end{rem}

\section{Lipschitz continuity of the curtain coupling with respect to its marginals}\label{lips_esti}

In this section we recall the properties of the martingale curtain coupling $\pi_\lc=\curt(\mu,\nu)$ between two measures $\mu\leqc \nu$. We prove that it is a continuous map by using the property of monotonicity satisfied by curtain couplings. We establish a Lipschitz estimate for the shadow projection $(\mu,\nu)\mapsto S^\nu(\mu)$ and deduce that $\curt:(\mu,\nu)\in\p\times\p\longrightarrow \M$ is Lipschitzian when $\M$ is considered with the ad hoc (semi)metric $Z$. We also prove that such an estimate does not hold in $(\M,W^{\R^2})$. An important mathematical object introduced in this chapter is the reduced support that we denote $\spt^*\pi$. This set of full mass contributes to a better understanding of the property of monotonicity.

\subsection{Definitions of the shadows and the curtain coupling}

In \cite[Lemma 4.6]{BJ} the following important theorem-definition is proven.
\begin{defi}[Definition of the shadow]\label{shad_thm}
If $\mu\leqcp \nu$, there exists a unique measure $\eta$ such that
\begin{itemize}
\item $\mu\leqc \eta$
\item $\eta\leqp \nu$
\item If $\eta'$ satisfies the two first conditions (i.e $\mu\leqc \eta'\leqp \nu$), one has $\eta\leqc \eta'$.
\end{itemize}
This measure $\eta$ is called the shadow of $\mu$ in $\nu$ and we denote it by $S^\nu(\mu)$.
\end{defi}
The shadows are sometimes difficult to determine. An important fact is that they have the smallest variance among the set of measures $\eta'$. Indeed, $\eta\leqc\eta'$ implies $\int x \dd\eta=\int x \dd\eta'$ and $\int x^2 \dd\eta\leq\int x^2 \dd\eta'$ with equality if and only if $\eta=\eta'$ or $\int x^2 \dd\eta=+\infty$. 
\begin{ex}[Shadow of an atom, Example 4.7 in \cite{BJ}]\label{un_atome}
Let $\delta$ be an atom of mass $\alpha$ at a point $x$. Assume that $\delta\leqe \nu$. Then $S^\nu(\delta)$ is the restriction of $\nu$ between two quantiles, more precisely it is
$\nu'=(G_\nu)_\#\lambda_{]s;s'[}$ where $s'- s =\alpha$ and the barycenter of $\nu'$ is $x$.
\end{ex}
The next lemma describes the tail of the shadows. 
\begin{lem}\label{ombre_a_droite}
Let $\mu,\,\nu\in \m$ satisfy $\mu\leqcp \nu$. Assume that $y=\sup[\spt \mu]$ is finite. Then the restriction of $(S^\nu(\mu)-\mu)_+$ to $[y,+\infty[$ is the stochastically leftmost measure $\theta$ among the measures of the same mass satisfying $\theta\leqp(\nu-\mu)_+|_{[y,+\infty[}$.

The corresponding statement holds in the case $\inf[\spt \mu]>-\infty$.
\end{lem}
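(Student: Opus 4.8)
The plan is to reduce the statement to the case of an atom, where the shadow is explicit, and then bootstrap by finite superpositions and a limiting argument. First note that there is nothing to prove in the ``$\theta\leqp(\nu-\mu)_+|_{[y,+\infty[}$'' part: from $S^\nu(\mu)\leqp\nu$ one gets $S^\nu(\mu)-\mu\leq\nu-\mu$ as signed measures, hence $(S^\nu(\mu)-\mu)_+\leqp(\nu-\mu)_+$ and, after restriction to $[y,+\infty[$, the required inequality. So the content is the \emph{minimality in the stochastic order}. Recall from Paragraph~\ref{plusgrand} that the leftmost measure of mass $m$ below a measure $\rho$ is $(G_\rho)_\#\lambda_{[0,m]}$, i.e.\ the restriction of $\rho$ to an initial quantile segment; I will call such a measure a \emph{left segment} of $\rho$. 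Two elementary facts will be used repeatedly: if $\sigma_1$ is a left segment of $\rho$ and $\sigma_2$ is a left segment of $\rho-\sigma_1$ then $\sigma_1+\sigma_2$ is a left segment of $\rho$; and the restriction to $[y,+\infty[$ of a left segment of $\rho$ is a left segment of $\rho|_{[y,+\infty[}$.

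First I would treat $\mu=\beta\delta_x$ with $x\leq y$. By Example~\ref{un_atome}, $S^\nu(\mu)=(G_\nu)_\#\lambda_{]s,s'[}$ with $s'-s=\beta$ and barycentre $x$. Since $G_\nu$ is non-decreasing, $G_\nu(s^+)\leq x\leq y$, so $s\leq F_\nu(y)$; hence, writing $q=\nu(]-\infty,y])$, the restriction of $S^\nu(\mu)$ to the values $>y$ is $(G_\nu)_\#\lambda_{]q,s'[}$ (and $0$ if $s'\leq q$), which is a left segment of $\nu|_{]y,+\infty[}$. The only point where $\mu$ and $S^\nu(\mu)$ can both charge $[y,+\infty[$ is $y$, which happens only if $x=y$, and there one checks directly from Example~\ref{un_atome} (using $S^\nu(\beta\delta_y)=\beta\delta_y$ when $\beta\leq\nu(\{y\})$, and $S^\nu(\beta\delta_y)(\{y\})=\nu(\{y\})$ otherwise) that $(S^\nu(\mu)-\mu)_+(\{y\})$ and $(\nu-\mu)_+(\{y\})$ match the left-segment pattern. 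So $(S^\nu(\mu)-\mu)_+|_{[y,+\infty[}$ is a left segment of $(\nu-\mu)_+|_{[y,+\infty[}$.

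Next I would pass to $\mu=\sum_{i=1}^n\beta_i\delta_{x_i}$ with $x_1\leq\dots\leq x_n=y$ (regrouping atoms at $y$, one may take $x_i<y$ for $i<n$). Using the associativity of the shadow $S^\nu(\mu_1+\mu_2)=S^\nu(\mu_1)+S^{\nu-S^\nu(\mu_1)}(\mu_2)$ from \cite{BJ}, with $\nu_0=\nu$ and $\nu_i=\nu_{i-1}-S^{\nu_{i-1}}(\beta_i\delta_{x_i})$, one has $S^\nu(\mu)=\sum_iS^{\nu_{i-1}}(\beta_i\delta_{x_i})$ and $\nu_i|_{[y,+\infty[}=\nu_{i-1}|_{[y,+\infty[}-S^{\nu_{i-1}}(\beta_i\delta_{x_i})|_{[y,+\infty[}$; by the atom case each summand's restriction to $[y,+\infty[$ is a left segment of $\nu_{i-1}|_{[y,+\infty[}$, so by induction (left segments compose) $S^\nu(\mu)|_{[y,+\infty[}$ is a left segment of $\nu|_{[y,+\infty[}$, and subtracting $\mu$ gives the claim here. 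For general $\mu$ I would approximate it in $W$ by finitely supported $\mu_k\leqc\mu$ (conditional-barycentre discretisations), so that $\mu_k\leqe\mu\leqe\nu$; from $\mu_k\leqc S^\nu(\mu)\leqp\nu$ and the defining minimality of the shadow one gets $S^\nu(\mu_k)\leqc S^\nu(\mu)$, which together with the relative compactness of $\{S^\nu(\mu_k)\}$ (all dominated by $\nu$) forces $S^\nu(\mu_k)\to S^\nu(\mu)$ in $W$. Applying the finite case to each $\mu_k$ and restricting to the fixed half-line $[y,+\infty[$ (which preserves the left-segment property), one lets $k\to\infty$: left segments of a measure form a weakly closed family. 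The case $\inf[\spt\mu]>-\infty$ then follows by the reflection $x\mapsto -x$.

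The hard part will be the last passage to the limit, since positive parts and restrictions to half-lines are not weakly continuous in general; here one must check that no mass of $S^\nu(\mu_k)$ escapes to the boundary point $y$, which follows from $S^\nu(\mu_k)|_{]y,+\infty[}\leqp\nu|_{]y,+\infty[}$ and the fact that $\nu|_{]y,+\infty[}$ puts no mass at $y$, so every weak subsequential limit of $S^\nu(\mu_k)|_{]y,+\infty[}$ is again dominated by $\nu|_{]y,+\infty[}$ and must coincide with $S^\nu(\mu)|_{]y,+\infty[}$. An alternative route, avoiding the approximation entirely, is to argue with potential functions: $u_{S^\nu(\mu)}$ is the pointwise smallest convex $w$ with $w\geq u_\mu$, with $u_\nu-w$ convex, and with the potential asymptotics of mass $\mu(\R)$ and barycentre $\int x\,\dd\mu$; since $u_\mu$ is affine on $[y,+\infty[$, such a $w$ coincides with that affine function from some point $c$ on and has second derivative $\nu$ on $]y,c[$, which is precisely the left-segment statement. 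The subtle point there is that the optimality of $w$ on $[y,+\infty[$ is coupled to its behaviour on $]-\infty,y]$ through the value and slope at $y$; in particular, keeping $\int x\,\dd\mu$ fixed (equivalently, the asymptotics) is what prevents one from simply shifting mass of $S^\nu(\mu)$ leftward.
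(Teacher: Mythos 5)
Your route is genuinely different from the paper's. The paper never computes the shadow explicitly: it writes $S^\nu(\mu)=\int\bigl[\tfrac{b-x}{b-a}\delta_a+\tfrac{x-a}{b-a}\delta_b\bigr]\,\dd\zeta_0(x,a,b)$ via a two-point martingale disintegration, takes an arbitrary competitor $\theta'\leqp(\nu-\mu)_+|_{[y,+\infty[}$ with $\theta'\leqs\theta$, and performs an exchange — pulling each upper endpoint $b$ down to $b'$ — to produce a measure $\eta$ with $\mu\leqc\eta\leqp\nu$ and $\eta\leqc S^\nu(\mu)$; the defining minimality of the shadow and the antisymmetry of $\leqc$ then force $\eta=S^\nu(\mu)$, hence $b=b'$ almost surely and $\theta'=\theta$, and applying this to the leftmost competitor gives the lemma. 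Your argument — explicit identification of the shadow of an atom as a quantile band (Example \ref{un_atome}), propagation through the associativity of shadows (Proposition \ref{shadowsum}), then a limit — is closer in spirit to the proof of Lemma \ref{ahyo} later in the paper. It buys a concrete description (the part of $S^\nu(\mu)$ above $y$ \emph{is} a left quantile segment of $\nu|_{]y,+\infty[}$), at the price of an approximation step; the paper's exchange argument treats general $\mu$ in one stroke and never touches weak limits.

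Two points in your limit step need repair. First, ``$S^\nu(\mu_k)\leqc S^\nu(\mu)$ together with relative compactness forces $S^\nu(\mu_k)\to S^\nu(\mu)$'' is not a valid inference: a tight sequence dominated in the convex order by $S^\nu(\mu)$ need not converge to it. What you want is exactly Proposition \ref{limit_measure} (Proposition 4.15 of \cite{BJ}), which applies because the conditional-barycentre discretisations are increasing in the convex order. Second, and more substantively, your fix for the non-continuity of restrictions only controls $S^\nu(\mu)|_{]y,+\infty[}$, whereas the statement concerns the closed half-line, and in particular the mass of $(S^\nu(\mu)-\mu)_+$ at the point $y$ itself: for $\theta$ to be a left segment of $(\nu-\mu)_+|_{[y,+\infty[}$ you must also show that $S^\nu(\mu)(\{y\})=\nu(\{y\})$ whenever $S^\nu(\mu)$ charges $]y,+\infty[$. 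This does not follow from domination of the restrictions to the open half-line. It can be recovered by combining the finite case (which yields $S^\nu(\mu_k)(\{y\})=\nu(\{y\})$ for large $k$ once $S^\nu(\mu_k)(]y,+\infty[)>0$, the latter holding for large $k$ by lower semicontinuity on open sets) with the portmanteau inequality $S^\nu(\mu)(\{y\})\geq\limsup_k S^\nu(\mu_k)(\{y\})$ on the closed set $\{y\}$ and the bound $S^\nu(\mu)(\{y\})\leq\nu(\{y\})$ — but as written this piece is missing, and your potential-function alternative faces the same delicacy at $y$ through the one-sided derivatives of $u_{S^\nu(\mu)}$ there.
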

Before we write the proof, let us make clear that if $\nu$ has no atom in $y$, the measures $(S^\nu(\mu)-\mu)_+|_{[y,+\infty[}$
is simply $S^\nu(\mu)|_{[y,+\infty[}$ while $(\nu-\mu)_+|_{[y,+\infty[}=\nu_{[y,+\infty[}$.
\begin{proof}
Using Strassen's Theorem (Proposition \ref{strassen1}), let $\pi$ be a martingale transport plan with marginals $\mu$ and $S^\nu(\mu)$. Let $(\pi_x)_{x\in\R}$ be a disintegration where the measures $\pi_x$ are probability measures. Each $\pi_x$ can again be disintegrated in a family of probability measures concentrated on two points and with barycenter $x$. Observe now that for $a<x<b$ and $b'\in]x,b]$, one can compare $\frac{b-x}{b-a}\delta_a+\frac{x-a}{b-a}\delta_b$ with $\frac{b'-x}{b'-a}\delta_a+\frac{x-a}{b'-a}\delta_{b'}$ in the following way:
\begin{itemize}
\item both measures have mass $1$ and barycenter $x$,
\item $\frac{b'-x}{b'-a}\delta_a\leqp\frac{b-x}{b-a}\delta_a$ (inequality for the mass in $a$),
\item $\frac{b'-x}{b'-a}\delta_a+\frac{x-a}{b'-a}\delta_{b'}\leqc\frac{b-x}{b-a}\delta_a+\frac{x-a}{b-a}\delta_b$. 
\end{itemize}
Remind that $S^\nu(\mu)=\int [\frac{b-x}{b-a}\delta_a+\frac{x-a}{b-a}\delta_b]\, \dd \zeta_0(x,a,b)$ where $\zeta_0$ is a positive measure with first marginal $\mu$ that is concentrated on $\{(x,a,b)\in\R^3,\,a< x< b\text{ or }a=x=b\}$. For $a=x=b$, we adopt the convention $\frac{b-x}{b-a}\delta_a+\frac{x-a}{b-a}\delta_b=\delta_x$. The measure $(S^\nu(\mu)-\mu)_+|_{[y,+\infty[}$ of the statement can be written $\theta=\int [\frac{x-a}{b-a}\delta_b]\, \dd \zeta(x,a,b)$ where $\zeta\leqp \zeta_0$ and $\zeta$ is concentrated on $\{(x,a,b)\in\R^3,\,x<b\}$. Let $\theta'$ satisfy $\theta'\leqp(\nu-\mu)_+|_{[y,+\infty[}$ and $\theta'\leqs\theta$. Hence one can consider a measure $\bar\zeta$ concentrated on $\{(x,a,b,b')\in\R^4,\,x\leq b'<b\}$ such that $\theta'=\int [\frac{x-a}{b-a}\delta_{b'}]\, \dd \bar\zeta(x,a,b,b')$ and the projection of $\bar\zeta$ on the three first coordinates is $\zeta$. We denote by $\zeta'$ the measure $\frac{b'-a}{b-a}\bar\zeta$ and with a slight abuse of notation we denote also by $\zeta'$ its projection on the first three coordinates. We set 
\begin{align*}
\eta=&\int \left[\frac{b'-x}{b'-a}\delta_a+\frac{x-a}{b'-a}\delta_{b'}\right]\, \dd \zeta'(x,a,b,b')\\
&+\int \left[\frac{b-x}{b-a}\delta_a+\frac{x-a}{b-a}\delta_b\right]\,\dd (\zeta_0-\zeta')(x,a,b)
\end{align*}
Recall that
\begin{align*}
S^\nu(\mu)=&\int \left[\frac{b-x}{b-a}\delta_a+\frac{x-a}{b-a}\delta_{b}\right] \,\dd \zeta'(x,a,b,b')\\
&+\int \left[\frac{b-x}{b-a}\delta_a+\frac{x-a}{b-a}\delta_b\right] \,\dd (\zeta_0-\zeta')(x,a,b).
\end{align*}
Therefore according to the three remarks above, one has
\begin{itemize}
\item $\mu\leqc \eta$,
\item $\eta\leqp \nu$,
\item $\eta\leqc S^\nu(\mu).$
\end{itemize}
The second relation relies on 
$$\int \left[\frac{b'-x}{b'-a}\delta_a+\frac{x-a}{b'-a}\delta_{b'}\right] \dd \zeta'(x,a,b,b')= \int \left[\frac{b'-x}{b-a}\delta_a\right]d\zeta'+\theta'.$$
The last relation is in fact an equality. Indeed, the domination $\eta\geqc S^\nu(\mu)$ is a consequence of the two first relations and the definition of the shadow. Moreover $\leqc$ is antisymmetric so that $\eta=S^\nu(\mu)$. Hence $\zeta'$-almost surely we have $b=b'$, which means $\theta'=\theta$. We have proven that the restriction of $(S^\nu(\mu)-\mu)_+$ to $[y,+\infty[$ is the stochastically leftmost  measure smaller than $(\nu-\mu)_+|_{[y,+\infty[}$.
\end{proof}

The following result is one of the most important on the structure of shadows. It is Theorem 4.8 of \cite{BJ}.
\begin{pro}[Structure of shadows]\label{shadowsum}
Let $\gamma_1,\gamma_2$ and $\nu$ be elements of $\m$ and assume that $\mu=\gamma_1+\gamma_2\leqcp\nu$. Then we have $\gamma_2\leqcp \nu-S^\nu(\gamma_1)$ and
$$S^\nu(\gamma_1+\gamma_2)=S^\nu(\gamma_1)+S^{\nu-S^\nu(\gamma_1)}(\gamma_2).$$
\end{pro}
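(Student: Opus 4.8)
Write $S_1:=S^\nu(\gamma_1)$ and $\sigma:=S^\nu(\gamma_1+\gamma_2)$. The plan is to dispose of the auxiliary comparison $\gamma_2\leqcp\nu-S_1$ (needed merely for the second shadow to be defined) by a short disintegration argument, and then to prove the identity itself by passing to potential functions, where it collapses to a one-line fact about convex envelopes. Throughout I would use that $u_\mu(t)=\int|t-x|\,\dd\mu(x)$ is additive in $\mu$, convex, non-negative, asymptotically affine with slopes $\pm\mu(\R)$, together with the dictionary $\mu\leqc\mu'\iff\big(\mu(\R)=\mu'(\R),\ \int x\,\dd\mu=\int x\,\dd\mu',\ u_\mu\le u_{\mu'}\big)$ and $\mu\leqp\mu'\iff u_{\mu'}-u_\mu$ is convex (the latter because $(u_{\mu'}-u_\mu)''=2(\mu'-\mu)$ as distributions).

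First I would check $\gamma_2\leqcp\nu-S_1$. Since $\gamma_1+\gamma_2\leqc\sigma$, Strassen's theorem (Proposition~\ref{strassen1}) supplies a martingale coupling; disintegrating it along the first coordinate and integrating the kernel against $\gamma_1$ and against $\gamma_2$ separately yields $\sigma_1,\sigma_2\in\m$ with $\gamma_i\leqc\sigma_i$ and $\sigma_1+\sigma_2=\sigma\leqp\nu$. In particular $\gamma_1\leqc\sigma_1\leqp\nu$, so minimality of the shadow forces $S_1\leqc\sigma_1$, hence (subtracting from $\nu$ reverses $\leqc$, both differences being non-negative) $\nu-\sigma_1\leqc\nu-S_1$. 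Thus $\gamma_2\leqc\sigma_2\leqp\nu-\sigma_1\leqc\nu-S_1$, i.e. $\gamma_2\preceq_{C,+,C}\nu-S_1$ by Theorem~\ref{echelle}; since the associated test sets satisfy $E_C\cap E_+\cap E_C=E_C\cap E_+$, this is precisely $\gamma_2\leqcp\nu-S_1$, so $S_2':=S^{\nu-S_1}(\gamma_2)$ makes sense. I note that one inequality of the identity is then immediate: $S_1+S_2'$ is an admissible competitor for $\sigma$ — indeed $\gamma_1+\gamma_2\leqc S_1+S_2'$ by additivity of $\leqc$, and $S_1+S_2'\leqp S_1+(\nu-S_1)=\nu$ — so $\sigma\leqc S_1+S_2'$ by minimality.

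For the converse, the key step, I would pass to potentials. First record the potential description of the shadow implicit in Definition~\ref{shad_thm}: if $\mu\leqcp\nu$ then
\[
u_{S^\nu(\mu)}=u_\nu-\mathrm{conv}(u_\nu-u_\mu),
\]
where $\mathrm{conv}$ is the largest convex minorant — $\mathrm{conv}(u_\nu-u_\mu)$ being exactly the largest convex function encoding both "$\mu\leqc\eta$" ($u_\eta\ge u_\mu$) and "$\eta\leqp\nu$" ($u_\nu-u_\eta$ convex). The one delicate point is that $u_\nu-\mathrm{conv}(u_\nu-u_\mu)$ is again the potential of a measure of the \emph{same} mass and barycentre as $\mu$: convexity must be checked at the junctions between the contact set and the bridges of $\mathrm{conv}$, using the convexity of $u_\mu$; and the asymptotic slopes come out right because $\mu\leqcp\nu$ forces $u_\nu-u_\mu$ to lie above its own affine asymptotes (apply $P(E_C\cap E_+)$ to the functions $x\mapsto(x-t)_+$), so those asymptotes are convex minorants and $\mathrm{conv}(u_\nu-u_\mu)$ inherits them. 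Granting this, put $g:=u_\nu-u_{\gamma_1}$ and $\phi:=u_{\gamma_2}$ (a convex function). Then $u_{S_1}=u_\nu-\mathrm{conv}(g)$, hence $u_{\nu-S_1}=\mathrm{conv}(g)$, hence $u_{S_2'}=\mathrm{conv}(g)-\mathrm{conv}\!\big(\mathrm{conv}(g)-\phi\big)$, so that
\[
u_{S_1+S_2'}=u_\nu-\mathrm{conv}\!\big(\mathrm{conv}(g)-\phi\big),\qquad u_{\sigma}=u_\nu-\mathrm{conv}(g-\phi),
\]
and the whole proposition reduces to the convex-analytic identity $\mathrm{conv}\!\big(\mathrm{conv}(g)-\phi\big)=\mathrm{conv}(g-\phi)$ for $\phi$ convex. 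This is short: "$\le$" follows from monotonicity of $\mathrm{conv}$ and $\mathrm{conv}(g)\le g$; "$\ge$" because $\mathrm{conv}(g-\phi)+\phi$ is convex and $\le g$, hence $\le\mathrm{conv}(g)$, so $\mathrm{conv}(g-\phi)\le\mathrm{conv}(g)-\phi$, and one takes $\mathrm{conv}$ of the (already convex) left-hand side. Since measures with equal potentials coincide, $S_1+S_2'=\sigma$.

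The main obstacle is not the final bookkeeping but recognising that a global argument is unavoidable: a plain disintegration of $\gamma_1+\gamma_2\leqc\sigma$ does \emph{not} recover the converse inequality, since the pieces $\sigma_1,\sigma_2$ above dominate $\gamma_1,\gamma_2$ in the convex order yet in general dominate neither $S_1,S_2'$ in $\leqc$ nor in $\leqp$ (the shadow may be strictly more spread out than $\sigma_1$). The substantive part of the write-up is therefore the potential-function description of $S^\nu$ — in particular the verification that $u_\nu-\mathrm{conv}(u_\nu-u_\mu)$ is a genuine potential of a measure with the right mass and barycentre; once that is in place, the threefold application and the convex-envelope identity finish the proof in a few lines.
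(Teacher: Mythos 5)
Your argument is correct, but there is nothing in this paper to compare it with: Proposition \ref{shadowsum} is quoted here without proof as Theorem 4.8 of \cite{BJ}, and the argument given there works at the level of measures and order relations, not potential functions. Your route is genuinely different and, I think, cleaner. The easy half coincides with the standard one: $S^\nu(\gamma_1)+S^{\nu-S^\nu(\gamma_1)}(\gamma_2)$ is a competitor, so it dominates $S^\nu(\gamma_1+\gamma_2)$ in convex order; and your preliminary step $\gamma_2\leqcp\nu-S^\nu(\gamma_1)$, obtained by disintegrating a Strassen coupling of $\gamma_1+\gamma_2\leqc S^\nu(\gamma_1+\gamma_2)$ and using that $S^\nu(\gamma_1)\leqc\sigma_1$ flips to $\nu-\sigma_1\leqc\nu-S^\nu(\gamma_1)$ by linearity of the integral (both differences being non-negative elements of $\m$), is sound. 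The substance is the envelope formula $u_{S^\nu(\mu)}=u_\nu-\mathrm{conv}(u_\nu-u_\mu)$, which appears neither in this paper nor in \cite{BJ}, after which the proposition collapses to $\mathrm{conv}(\mathrm{conv}(g)-\phi)=\mathrm{conv}(g-\phi)$ for convex $\phi$ --- and your two-line proof of that identity is right. The two delicate points you flag are exactly the right ones and both close: the asymptote control comes from testing $\mu\leqcp\nu$ against $x\mapsto(x-t)_+\in E_C\cap E_+$, as you say; and the convexity of $u_\nu-\mathrm{conv}(g)$ amounts to $(\mathrm{conv}\,g)''\leq 2\nu$ as measures, which holds because $(\mathrm{conv}\,g)''$ vanishes on the bridges while at any contact point $c$ one has $g'(c^-)\leq(\mathrm{conv}\,g)'(c^-)\leq(\mathrm{conv}\,g)'(c^+)\leq g'(c^+)$, so that on the contact set $(\mathrm{conv}\,g)''\leq g''=2(\nu-\mu)\leq 2\nu$. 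What your approach buys is a self-contained, essentially algebraic proof of the associativity of shadows (and, as a by-product, an explicit formula for $u_{S^\nu(\mu)}$ that is useful elsewhere); what it costs is the one-sided-derivative bookkeeping behind the envelope formula, which in a complete write-up would be the longest part. Two minor cosmetic points: the appeal to Theorem \ref{echelle} for the chain $\preceq_{C,+,C}$ only uses the trivial composition direction, so you could drop the reference; and since the potential computation already yields equality of $u_\sigma$ and $u_{S^\nu(\gamma_1)+S^{\nu-S^\nu(\gamma_1)}(\gamma_2)}$, the separate competitor argument is redundant (though harmless).
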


\begin{ex}[Shadow of a finite sum of atoms]\label{more_atoms}
Let $\mu$ be the measure $\sum_{i=1}^n \alpha_i\delta_{x_i}$ and $\nu=G_\#\lambda_{]0,m]}$ such hat $\mu\leqcp\nu$. We can apply Proposition \ref{shadowsum} to this sum as well as Example \ref{un_atome} on the shadow of one atom. We obtain recursively the following description. There exists an increasing sequence of sets $J_1\subseteq\cdots J_n\subseteq ]0,m]$ satisfying that $J_k$ has measure $\sum_{i=1}^k \alpha_i$ and $J_k\setminus J_{k-1}$ is a pseudo-interval of $]0,m]\setminus J_{k-1}$, that is $J_k\setminus J_{k-1}=]s,t]\setminus J_{k-1}$ for some $0\leq s,t\leq m$. These pseudo-intervals satisfy $S^\nu(\sum_{i=1}^k\alpha_i\delta_{x_i})=G_\#\lambda_{J_k}$ for every $k\leq n$.

Conversely any increasing sequence $(J_i)_{i=1,\ldots, n}$ such that $J_{k}\setminus J_{k-1}$ is a pseudo-interval of $]0,m]\setminus J_{k-1}$ is associated with a family of atoms $\alpha_i\delta_{x_i}$ with $\alpha_i=\lambda(J_i)-\lambda(J_{i-1})$ and $x_i$ is the barycenter of $G_\#\lambda_{J_i\setminus J_{i-1}}$ such that $G_\#\lambda_{J_k}$ is the shadow of $\sum_{i=1}^k \alpha_i\delta_{x_i}$ in $\nu$.
\end{ex}

With the shadow projections, we can introduce the left-curtain coupling. For atomic measures it is related to Example \ref{more_atoms} when we assume that $(x_i)_i$ is an increasing sequence.
\begin{defi}[Left-curtain coupling, Theorem 4.18 in \cite{BJ}]\label{lc}
Let $\mu,\,\nu\in \m$ satisfy $\mu\leqc \nu$. There exists a unique measure $\pi\in \M(\mu,\nu)$ such that for any $x\in \R$ the measure $\pi_{]-\infty,x]\times \R}$ has first marginal $\mu_{]-\infty,x]}$ and second marginal $S^\nu(\mu_{]-\infty,x]})$. We denote it by $\pi_{\lc}$ and call it \emph{left-curtain coupling}.
\end{defi}

One of the main theorems of \cite{BJ} is the equivalence of three properties of couplings: left-curtain, left-monotone and optimal. Let us define left-monotone couplings.
\begin{defi}[Left-monotone coupling]\label{lm}
Let $\pi$ be an element of $\M(\mu,\nu)$. The coupling $\pi$ is {\it left-monotone} if there exists a Borel set $\Gamma$ with
\begin{itemize}
\item $\pi(\Gamma)=1$
\item for every $(x,y^-)$, $(x,y^+)$ and $(x',y')$ elements of $\Gamma$ satisfying $x<x'$ and $y^-<y^+$, the real $y'$ is not an element of $]y^-,y^+[$. 
\end{itemize}
\end{defi}

We can now state the result.

\begin{pro}[Theorem 1.9 in \cite{BJ}]\label{synthese}
Let $\pi\in \M(\mu,\nu)$. We introduce $c:(x,y)\in\R^2\to [1+\tanh(-x)]\sqrt{y^2+1}$. The properties are equivalent.
\begin{itemize}
\item Left-curtain: the transport plan $\pi$ is the left-curtain coupling,
\item Left-monotone: the transport plan $\pi$ is left-monotone,
\item Optimal: for any $\tilde\pi\in\M(\mu,\nu)$, if $\tilde\pi\neq \pi$, then $\int c\,\dd\pi<\int c\,\dd\tilde\pi$
\end{itemize}
\end{pro}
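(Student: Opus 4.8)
The plan is to establish the equivalence through the cycle \emph{left-curtain} $\Rightarrow$ \emph{left-monotone} $\Rightarrow$ \emph{optimal} $\Rightarrow$ \emph{left-curtain}, the last arrow being a formality. First, $1+\tanh(-x)\in(0,2)$ and $\sqrt{y^2+1}\le|y|+1$, so $\int c\,\dd\pi\le 2\int(|y|+1)\,\dd\nu(y)<\infty$ for every $\pi\in\M(\mu,\nu)$; since $\M(\mu,\nu)$ is weakly compact and $c$ is continuous and bounded below, $\pi\mapsto\int c\,\dd\pi$ attains its infimum. Because ``optimal'' means being the \emph{strict} minimiser --- a property at most one coupling can enjoy --- the first two implications, once proved, identify $\pi_\lc$ with that strict minimiser and force any optimal coupling to coincide with it. So the content is in the first two implications.

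\textbf{Left-curtain $\Rightarrow$ left-monotone.} This is read off the structure of shadows. For $\mu=\sum_{k=1}^{n}\alpha_k\delta_{x_k}$ with $x_1<\dots<x_n$, Example~\ref{more_atoms} and Definition~\ref{lc} show that $\pi_\lc$ transports $\alpha_k\delta_{x_k}$ onto $(G_\nu)_\#\lambda_{J_k\setminus J_{k-1}}$, where $J_1\subset\dots\subset J_n$ and $J_k\setminus J_{k-1}=]s_k,t_k]\setminus J_{k-1}$, hence $]s_k,t_k]\subset J_k$. If the target of $x_k$ meets $y^-<y^+$, these are $G_\nu(q^\pm)$ with $s_k<q^-<q^+\le t_k$, whereas for $k'>k$ the target of $x_{k'}$ lies in the $G_\nu$-image of a subset of $]0,m]\setminus J_{k'-1}\subset ]0,m]\setminus J_k$; hence any value $y'$ it meets equals $G_\nu(q')$ with $q'\le s_k$ or $q'>t_k$, so $y'\le y^-$ or $y'\ge y^+$ --- the open interval in Definition~\ref{lm} being precisely what absorbs the ties produced by atoms of $\nu$. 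Thus $\spt\pi_\lc$ witnesses left-monotonicity when $\mu,\nu$ are finitely supported. For general $\mu,\nu$ one argues directly from Proposition~\ref{shadowsum} and Lemma~\ref{ombre_a_droite}: for $x<x'$, the conditional target of $\mu_{]x,x']}$ under $\pi_\lc$ is a shadow inside $\nu-S^\nu(\mu_{]-\infty,x]})$ whose tails, pinned down by Lemma~\ref{ombre_a_droite}, avoid the interval already spanned by the image of $\mu_{]-\infty,x]}$; assembling this over a countable set of thresholds $x$ produces a full-measure $\Gamma$ with the no-crossing property (an approximation argument also works but requires care, as $\spt\pi_\lc$ itself need not serve as $\Gamma$ --- see Example~\ref{countex}).

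\textbf{Left-monotone $\Rightarrow$ optimal.} The decisive feature of $c$ is that for $x_1<x_2$
\[
c(x_1,y)-c(x_2,y)=\bigl[\tanh(-x_1)-\tanh(-x_2)\bigr]\sqrt{y^2+1}
\]
is a strictly positive multiple of the strictly convex function $y\mapsto\sqrt{y^2+1}$ --- equivalently, $\partial_x\partial_y^2 c<0$ everywhere. This is the martingale Spence--Mirrlees condition, under which the left-monotone arrangement is the cost-minimising one. Concretely, a minimiser $\pi$ cannot carry positive mass near a forbidden triple $(x_1,y^-),(x_1,y^+),(x_2,y')$ with $x_1<x_2$ and $y^-<y'<y^+$: let $\rho$ be the signed measure on $\{y^-,y',y^+\}$ with $\rho(\R)=\int y\,\dd\rho=0$ and $\rho(\{y'\})>0$ (so $\rho(\{y^-\})<0$ and $\rho(\{y^+\})<0$); strict convexity gives $\int\sqrt{y^2+1}\,\dd\rho<0$. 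Placing $\rho$ on the $x_1$-fibre and $-\rho$ on the $x_2$-fibre yields a signed measure $\sigma$ that preserves both marginals and the martingale constraint and removes mass from each of the three configuration cells, so $\pi+t\sigma\in\M(\mu,\nu)$ for small $t\ge0$, with first-order variation of $\int c\,\dd\pi$ along $\sigma$ equal to $[\tanh(-x_1)-\tanh(-x_2)]\int\sqrt{y^2+1}\,\dd\rho<0$; hence $\pi$ is not minimal. So every minimiser is left-monotone (with $\Gamma=\spt\pi$ up to a null set; the bookkeeping over the uncountably many triples is what $\spt^*$ and Proposition~\ref{car_etoile} streamline, and was carried out by hand in \cite{BJ}). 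Using in addition that the left-monotone coupling is unique --- part of the structural theory of \cite{BJ}, deduced there from the at-most-two-point shape of the conditional laws --- and recalling from the first implication that $\pi_\lc$ is left-monotone, we conclude: the unique left-monotone coupling is $\pi_\lc$; a minimiser exists and is left-monotone, so the minimiser is unique and equals $\pi_\lc$; hence $\pi_\lc$, and with it any left-monotone coupling, beats every other element of $\M(\mu,\nu)$ strictly, i.e.\ is optimal.

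\textbf{Main obstacle.} The hard part is the second implication, and within it the passage from the one-line variational computation to a rigorous argument: one must (i) extract from a non-left-monotone $\pi$ an honest forbidden configuration of positive mass --- a measurable-selection step --- and realise the perturbation with genuine small boxes over which the convexity gap of $\sqrt{y^2+1}$ is uniformly bounded below, and (ii) organise the ``no forbidden triple'' conclusion into a single full-measure $\Gamma$ and establish the at-most-two-point structure of left-monotone couplings that underlies the uniqueness. Both are routine in spirit but technical; the cleanest rigorous route for non-atomic marginals is to prove everything first for finitely supported $\mu,\nu$, where $\M(\mu,\nu)$ is a polytope and the perturbations are honest edge moves, and then pass to general $\mu,\nu$ via weak lower semicontinuity of $\pi\mapsto\int c\,\dd\pi$ together with the continuity of $\curt$ (which rests only on the shadow estimates, not on the present equivalence).
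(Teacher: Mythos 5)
The paper does not actually prove Proposition \ref{synthese}: it is quoted from \cite{BJ} (Theorem 1.9 there), and the only original content surrounding it is the remark explaining that the specific cost $c(x,y)=[1+\tanh(-x)]\sqrt{y^2+1}$ falls under \cite[Theorem 6.3]{BJ} because it has the product form $\varphi(x)\psi(y)$ with $\varphi$ positive decreasing and $\psi$ positive strictly convex, and because the finite first moments of $\mu,\nu$ make $\inf_{\tilde\pi}\int c\,\dd\tilde\pi$ finite. Your verification of exactly these two points (the factorisation with $\partial_x\partial^2_y c<0$, and the bound $c\leq 2(|y|+1)$) is precisely the adaptation the paper has in mind, and your overall cycle --- shadow structure for ``left-curtain $\Rightarrow$ left-monotone'', a two-fibre marginal- and martingale-preserving perturbation exploiting strict convexity for ``minimiser $\Rightarrow$ left-monotone'', compactness for existence of a minimiser --- is a faithful reconstruction of the architecture of the proof in \cite{BJ}. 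The variational computation itself is correct: with $\rho$ supported on $\{y^-,y',y^+\}$, balanced in mass and barycentre and positive at $y'$, strict convexity gives $\int\sqrt{y^2+1}\,\dd\rho<0$, and the sign of $\tanh(-x_1)-\tanh(-x_2)$ for $x_1<x_2$ makes the first-order variation of the cost strictly negative.

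The one place where the argument is not self-contained is where you close the cycle: you invoke the uniqueness of the left-monotone coupling as ``part of the structural theory of \cite{BJ}''. Given your first implication, that uniqueness is equivalent to the implication ``left-monotone $\Rightarrow$ left-curtain'', which is the deepest of the three arrows (in \cite{BJ} it is obtained by showing that left-monotonicity forces the second marginal of $\pi|_{]-\infty,x]\times\R}$ to be the shadow $S^\nu(\mu_{]-\infty,x]})$ for every $x$). Without it, your statements that $\pi_\lc$ is left-monotone and that every minimiser is left-monotone do not exclude the coexistence of several left-monotone couplings, only one of which is optimal; so as a stand-alone proof the proposal has a genuine hole at exactly that point. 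As a proof modulo the structural results of \cite{BJ} it is sound, which puts it on essentially the same footing as the paper, since the paper cites the whole statement. Two smaller caveats: your ``left-curtain $\Rightarrow$ left-monotone'' is complete only for finitely supported $\mu$ (the passage to general $\mu$ through countably many thresholds needs the care you yourself flag, cf.\ Example \ref{countex} and Proposition \ref{car_etoile}), and the measurable-selection step extracting a positive-mass forbidden configuration is acknowledged rather than carried out.
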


\begin{rem}
See Example \ref{countex} about  the fact that the left-monotonicity may not be satisfied for $\Gamma=\spt\pi$ even if it is realised for another $\Gamma$. 
\end{rem}

\begin{rem}
Actually Theorem 1.9 in \cite{BJ} is written for another kind of cost $c$. However replacing Theorem 6.1 by Theorem 6.3, both of this paper, leads to this version. Actually the latter theorem states that if $c$ is defined as $(x,y)\mapsto\varphi(x)\psi(y)$ where $\varphi$ is positive and decreasing, $\psi$ is positive and strictly convex the implication ``optimal $\Rightarrow$ left-curtain'' still holds provided $\min_{\tilde\pi\in\M(\mu,\nu)}\int c\,\dd\tilde\pi$ is finite. In Proposition \ref{synthese} this condition is satisfied without more assumptions because $\mu,\,\nu$ have finite first moments and the given $c$ grows at most linearly in $\pm\infty$.

In \cite{HT}, Henry-Labord\`ere and Touzi have proved that functions $c$ such that the partial derivative $\partial_{yyx}c$ is identically negative also lead to the left-curtain coupling if $\min_{\tilde\pi\in\M(\mu,\nu)}\int c\,\dd\tilde\pi$ is finite. This contains, in the case of smooth functions $c$, both the kind of costs in \cite[Theorem 6.1]{BJ} and \cite[Theorem 6.3]{BJ}.
\end{rem}

\subsection{Qualitative continuity of the curtain coupling map}\label{quali}
In this paragraph we show that $\mathrm{Curt}:(\mu,\nu)\mapsto\pi_\lc$ is continuous. We are using the second property of left-curtain couplings: according to Proposition \ref{synthese} they are the left-monotone couplings. The next example illustrates that for a left-monotone $\pi$ the set $\Gamma=\spt(\pi)$ may not fulfil the desired properties in Definition \ref{lm}, which contrasts with the support in the classical transport problem. The modified support $\spt^*(\pi)$ that we define below does not suffer from this difficulty.
\begin{ex}\label{countex}
Consider $\mu=(1/2)\Lg_{[-1,1]}$ and $\nu=(\delta_{-1}+2\delta_0+\delta_1)/4$. For these marginals, considering the transport plan given by the left-curtain coupling, the mass contained in $[-1,0]$ is mapped to $\{-1,0\}$ while the mass in $[0,1]$ is mapped to $\{0,1\}$. Thus $(0,-1)$, $(0,1)$ and $(1,0)$ are elements of $\spt(\pi)$.
\end{ex}
This example is typical for difficulties that may arise on the diagonal set $\{(x,y)\in\spt(\pi),\,y=x\}$, for instance for points $(x,x)$ satisfying $u_\mu(x)=u_\nu(x)$. Here $(0,0)$ is such a point. In the Proposition \ref {car_etoile} we will see how to fix this problem with the reduced support $\spt^*(\pi)$ that we define now.

First, let $A$ be the set of $x\in\R$ such that $\pi(]-\infty,x[\times]x,\infty[)=0$. Second, we denote the subset of $A$ of points that are isolated in $A$ on the right by $A^-$. Note that $A^-$ is countable. Finally we set
$$\spt^*(\pi)=\left(\spt(\pi)\backslash (A^-\times \R)\right)\cup\bigcup_{\mu(x)>0}\{x\}\times\spt\pi_x.$$
We have subtracted countably many vertical lines from $\spt(\pi)$ so that 
\begin{align*}
\pi(\spt^*\pi)=&\int\pi_x(\{y\in\R^2|\,(x,y)\in\spt^*(\pi)\})\dd \mu(x).\\
=&\int_{\R\backslash A^-}\pi_x(\{y\in\R^2|\,(x,y)\in\spt(\pi)\})\dd \mu(x)\\
&+\sum_{x\in A^-,\,\mu(x)>0}\mu(x)\pi_x(\spt\pi_x)\\
=&\mu(\R\backslash A^-)+\sum_{x\in A^-}\mu(x)=1.
\end{align*}

Another important property is that $\spt^*(\pi)\subset \spt(\pi)$.

In Proposition \ref{car_etoile} and Theorem \ref{corstab} we will use many times Lemma \ref{help2} that relies on the following statement.
\begin{lem}\label{help}
Let $(x,y)\in\spt\pi$ where $\pi$ is a martingale transport plan and $G$ a Borel set such that $\pi(G)=1$.

If $x<y$, for any $\eps>0$ there exist $(x_1, y_1^-),(x_1,y_1^+)\in G$ with $y_1^-\leq y_1^+$, such that the point $(x_1,y^+_1)$ is in the ball of centre $(x,y)$ and radius $\eps$ and $y^-_1<x+\eps$.

If $x>y$, the symmetric statement holds as well. There exists $(x_1, y_1^-),(x_1,y_1^+)\in G$ with $y_1^-\leq y_1^+$, such that the point $(x_1,y^-_1)$ is in the ball of centre $(x,y)$ and radius $\eps$ and $y^+_1>x-\eps$.
\end{lem}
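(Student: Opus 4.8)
The plan is to use the martingale property of $\pi$ together with the fact that $(x,y) \in \spt\pi$, i.e.\ every neighbourhood of $(x,y)$ has positive $\pi$-mass. I will treat the case $x < y$; the case $x > y$ follows by the symmetric argument (reflecting $y \mapsto -y$, or just repeating the reasoning with inequalities reversed). Fix $\eps > 0$. The key point is that $\pi$-almost every fibre $\pi_{x'}$ is a probability measure with barycentre $x'$ (the martingale condition $\E(Y|X=x')=x'$), so a fibre cannot be concentrated strictly above $x'$: if it charges a point $y^+ > x'$ it must also charge mass at or below $x'$.

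Here are the steps. First, consider the open ball $B$ of centre $(x,y)$ and radius $\eps$; shrinking $\eps$ if necessary I may assume $B$ is contained in the open region $\{(x',y') : y' > x' \}$, since $y > x$. By definition of the support, $\pi(B) > 0$, hence $\pi(B \cap G) > 0$ as $\pi(G)=1$. Disintegrating $\pi = \int \pi_{x'}\,\dd\mu(x')$, there is a set of positive $\mu$-measure of first coordinates $x_1$ for which $\pi_{x_1}(\{y' : (x_1,y') \in B \cap G\}) > 0$ \emph{and} $\pi_{x_1}$ is a probability measure of barycentre $x_1$; pick such an $x_1$. Pick $y_1^+$ in the fibre set just described, so $(x_1, y_1^+) \in B \cap G$ and $y_1^+ > x_1$ (because $B$ lies in the region $y'>x'$). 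Second, since $\pi_{x_1}$ has barycentre $x_1 < y_1^+$, the measure $\pi_{x_1}$ cannot be supported on $[y_1^+,\infty)\supseteq (x_1,\infty)$ minus\,\ldots; more simply, $\pi_{x_1}$ must put positive mass on $(-\infty, x_1]$, and in particular on $(-\infty, x_1+\eps)$. By the same disintegration argument (intersecting again with $G$, whose fibres have full $\pi_{x_1}$-measure for a.e.\ $x_1$, and refining the choice of $x_1$ accordingly), I may choose $y_1^- \in (-\infty, x_1+\eps)$ with $(x_1, y_1^-) \in G$. Finally, relabel so that $y_1^- \le y_1^+$: this holds automatically since $y_1^- < x_1 + \eps$ while $y_1^+$ is close to $y > x$, so after a harmless further shrinking of $\eps$ at the outset we get $y_1^- < y_1^+$; and $(x_1, y_1^+)$ lies in $B$ and $y_1^- < x_1 + \eps$, as required. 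For $x > y$ the symmetric statement is obtained verbatim with the roles of ``above'' and ``below'' $x_1$ exchanged, using that a barycentre-$x_1$ fibre charging a point $y_1^- < x_1$ must also charge $(x_1,\infty)$, hence $(x_1 - \eps,\infty)$.

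The only delicacy — and the step I expect to require the most care in writing up — is the bookkeeping in the double application of the disintegration: I must choose a single $x_1$ that simultaneously (i) has $\pi_{x_1}$ a genuine probability measure of barycentre $x_1$, (ii) gives positive fibre-mass to the part of $G\cap B$ over $x_1$, and (iii) gives positive fibre-mass to the part of $G$ over $x_1$ lying in $(-\infty, x_1+\eps)$. Each of (i)–(iii) fails only on a $\mu$-null set of $x_1$'s — for (iii) one uses that the set $\{x' : \pi_{x'}((-\infty,x'+\eps))=0\}$ is $\mu$-null because on it the barycentre would exceed $x'$, contradicting the martingale property on a positive-mass set — so their intersection still has positive $\mu$-measure, and any $x_1$ in it works. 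Everything else is routine.
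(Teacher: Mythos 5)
Your proof is correct and follows essentially the same route as the paper's: disintegrate $\pi$ over its first marginal, use $(x,y)\in\spt\pi$ to get a positive-$\mu$-measure set of fibres $\pi_{x_1}$ charging a neighbourhood of $y$ lying above the diagonal, invoke the barycentre condition to force each such fibre to also charge $(-\infty,x_1)$, and intersect with the $\mu$-a.e.\ full fibres of $G$. One small caution: your closing paragraph only secures $y_1^-<x_1+\eps$, which yields $y_1^-<x+2\eps$ rather than the required $y_1^-<x+\eps$; but the stronger fact already proved in your second step --- that $\pi_{x_1}$ puts positive mass on $(-\infty,x_1)$ --- gives $y_1^-<x_1<x+\eps$, so the argument goes through.
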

\begin{proof}
It is sufficient to prove the first statement because the second is proved in the same way. We also can assume without loss of generality that $x<y-\eps$.
We consider the usual disintegration of $\pi$ with respect to $\mu=\proj^x_\#\pi$.
Let us denote the cut of $G\cap(\{s\}\times\R)$ by $\{s\}\times G_s$. For $\mu$-almost every $s$, we have $\pi_s(G_s)=1$ and the expectation of $\pi_s$ is $s$. Moreover as $x$ is in the support of $\mu$, and $(x,y)$ in the support of $\pi$, we have also $\pi_s(]y-\eps,y+\eps[)>0$ for any $s$ in a subset $S\subset]x-\eps,x+\eps[$ of positive $\mu$-measure. As $x\notin [y-\eps,y+\eps]$, for almost every element $s\in S$, we have $\max(x-\eps,\inf\spt\pi_s)<s<\min(x+\eps,\sup\spt\pi_s)$.  Hence, we can find $(x_1,y_1^-)$ and $(x_1,y_1^+)$ in $G$ with $\max(|x-x_1|,|y-y_1^+|)\leq \eps$ and $y_1^-<x+\eps$.
\end{proof}
\begin{lem}\label{help2}
Let $\pi$ be a martingale transport plan, $(x,y)$ and $(x',y')$ elements of $\spt(\pi)$ and assume $x<x'$. If $x<y'<y$ or $x>y'>y$, then $\pi$ is not left-monotone.
\end{lem}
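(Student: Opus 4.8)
The plan is to argue by contradiction. Suppose $\pi$ were left-monotone, and let $\Gamma$ be a Borel set with $\pi(\Gamma)=1$ enjoying the property in Definition~\ref{lm}. My aim is to exhibit three points $(x_1,y_1^-)$, $(x_1,y_1^+)$, $(x_2,y_2')$ of $\Gamma$ with $x_1<x_2$, $y_1^-<y_1^+$ and $y_2'\in\,]y_1^-,y_1^+[$, which directly contradicts that property. Note that under either hypothesis the quantities $x'-x$, $|y-x|$, $|y'-x|$, $|y-y'|$ are all strictly positive, and I will choose a parameter $\eps>0$ smaller than one quarter of their minimum.

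The pair $(x_1,y_1^-),(x_1,y_1^+)$ sharing a first coordinate is exactly what Lemma~\ref{help} produces near a support point of a martingale transport plan, and I would apply it at the point $(x,y)$ --- the one whose first coordinate $x$ is the smaller of the two. In the case $x<y'<y$ one has $x<y$, so the first alternative of Lemma~\ref{help}, applied with $G=\Gamma$, gives points $(x_1,y_1^-),(x_1,y_1^+)\in\Gamma$ with $y_1^-\le y_1^+$, with $(x_1,y_1^+)$ within $\eps$ of $(x,y)$, and with $y_1^-<x+\eps$. In the case $x>y'>y$ one has $x>y$, and the symmetric alternative of Lemma~\ref{help} gives such a pair with $(x_1,y_1^-)$ within $\eps$ of $(x,y)$ and $y_1^+>x-\eps$.

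For the third point I would only use that $(x',y')\in\spt(\pi)$: every ball $B\bigl((x',y'),\eps\bigr)$ then has positive $\pi$-mass, hence meets $\Gamma$ since $\pi(\Gamma)=1$; pick $(x_2,y_2')\in\Gamma$ in it. It then remains to read off the three required inequalities from the choice of $\eps$. For instance in the case $x<y'<y$: $x_1\le x+\eps<x'-\eps<x_2$ gives $x_1<x_2$; $y_1^-<x+\eps<y-\eps\le y_1^+$ gives $y_1^-<y_1^+$; and $y_1^-<x+\eps<y'-\eps<y_2'$ together with $y_2'<y'+\eps<y-\eps\le y_1^+$ places $y_2'$ strictly between $y_1^-$ and $y_1^+$. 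The case $x>y'>y$ is handled the same way, now with the chain $y_1^-\le y+\eps<y'-\eps<y_2'<y'+\eps<x-\eps<y_1^+$ and $x_1\le x+\eps<x'-\eps<x_2$. Either way the triple $(x_1,y_1^-),(x_1,y_1^+),(x_2,y_2')$ contradicts the defining property of $\Gamma$, so $\pi$ cannot be left-monotone.

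The proof is short once Lemma~\ref{help} is available, and the only delicate point --- the reason the lemma is invoked rather than mere density of $\Gamma$ in $\spt(\pi)$ --- is that refuting left-monotonicity requires \emph{two} points of $\Gamma$ on the same vertical line, which is where the martingale property of $\pi$ enters (a fibre of the disintegration over a base point $s$ has barycentre $s$, so a support point off the diagonal forces positive fibre mass on both sides of $s$). Apart from that the argument is pure $\eps$-bookkeeping, and I foresee no real obstacle.
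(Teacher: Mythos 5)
Your proof is correct and follows essentially the same route as the paper's: argue by contradiction, apply Lemma~\ref{help} with $G=\Gamma$ at the point $(x,y)$ to produce the two points of $\Gamma$ on a common vertical line, approximate $(x',y')$ by a point of $\Gamma$ (the paper phrases this as $\spt(\pi)\subset\bar\Gamma$), and conclude by the $\eps$-bookkeeping. Your write-up is in fact slightly more explicit than the paper's about the choice of $\eps$ and the final chain of inequalities.
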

\begin{proof}
It is sufficient to prove the first statement. The second statement is proved in the same way. Let $(x,y)$ and $(x',y)$ be elements of $\spt(\pi)$ with $x<y'<y$ and assume by contradiction that $\pi$ is left-monotone. We introduce $\Gamma$ as in Definition \ref{lm}. The proof relies on Lemma \ref{help} where we choose $G=\Gamma$. According to this theorem for $\eps<\min(|y'-x|,|y'-y|,|x'-x|)$, there exists $x_1$ and $y^-_1,y^+_1$ with $(x_1,y^\pm_1)\in \Gamma$ such that $|x_1-x|<\eps$, $|y_1^+-y|<\eps$ and $y_1^-<x+\eps$. We are in the situation forbidden in the definition of $\Gamma$ because $x_1<x'$ and $y_1^-<y'<y^+_1$. This is not  directly a contradiction because $(x',y')$ may not be an element of $\Gamma$. Nevertheless $(x',y')\in\spt(\pi)\subset\bar{\Gamma}$ so that we can replace it with some element $(x_1',y_1')\in \Gamma$.
\end{proof}
\begin{pro}\label{car_etoile}
A martingale transport plan $\pi$ is the left-monotone coupling of $\Pi_M(\mu,\nu)$ if and only if it satisfies the following condition
\begin{itemize}
\item for every $(x,y^+)$, $(x,y^-)$ and $(x',y')$ elements of $\spt^*(\pi)$, if $x<x'$ and $y^-<y^+$, we have $y'\notin]y^-,y^+[$.
\end{itemize}
\end{pro}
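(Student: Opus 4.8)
The statement is an equivalence, and I would prove the two implications separately; the first is almost immediate, the second carries the whole difficulty. \emph{Sufficiency.} Suppose $\pi$ satisfies the displayed condition. The set $\spt^*(\pi)$ is Borel (a closed set with countably many vertical lines deleted, to which countably many closed segments are added) and $\pi(\spt^*(\pi))=1$ by the computation made just above the statement. Hence $\Gamma:=\spt^*(\pi)$ is an admissible witness in Definition \ref{lm}, and the property required there of $\Gamma$ is word for word the hypothesis. So $\pi$ is left-monotone, and since by Proposition \ref{synthese} the left-monotone element of $\M(\mu,\nu)$ is unique, $\pi$ is \emph{the} left-monotone coupling.

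\emph{Necessity.} Conversely, assume $\pi$ is the left-monotone coupling, fix a Borel set $\Gamma$ with $\pi(\Gamma)=1$ witnessing Definition \ref{lm}, and suppose for contradiction that there are points $(x,y^-),(x,y^+),(x',y')\in\spt^*(\pi)$ with $x<x'$, $y^-<y^+$ and $y^-<y'<y^+$. All three lie in $\spt(\pi)$ since $\spt^*(\pi)\subset\spt(\pi)$. If $y'>x$, then $x<y'<y^+$ and Lemma \ref{help2} applied to $(x,y^+)$ and $(x',y')$ contradicts left-monotonicity; symmetrically, if $y'<x$, then $y^-<y'<x$ and Lemma \ref{help2} applied to $(x,y^-)$ and $(x',y')$ does the same. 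The remaining, and only delicate, case is $y'=x$, so the third point is $(x',x)$ with $x<x'$.

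In this case I would distinguish on the value of $\pi(]-\infty,x[\times]x,\infty[)$. If it is positive, the open set $]-\infty,x[\times]x,\infty[$ meets $\spt(\pi)$, hence contains a point $(s,t)$ with $s<x<t$; then $s<x<x'$ and $s<x<t$, so Lemma \ref{help2} applied to $(s,t)$ and $(x',x)$ yields the contradiction. If it is zero, i.e. $x\in A$, I would first observe that $(x,y^+)\in\spt(\pi)$ with $x<y^+$ forces $]x,y^+[\cap A=\emptyset$: a small open rectangle around $(x,y^+)$ has positive $\pi$-mass and is contained in $]-\infty,u[\times]u,\infty[$ for every $u\in]x,y^+[$. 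Thus $x$ is isolated in $A$ on the right, $x\in A^-$, and by the very definition of $\spt^*(\pi)$ the line $\{x\}\times\R$ has been removed from $\spt(\pi)$; therefore $(x,y^-),(x,y^+)\in\spt^*(\pi)$ is only possible if $\mu(\{x\})>0$ and $y^-,y^+\in\spt\pi_x$. Since $\{x\}$ has positive $\mu$-mass, $\pi_x(\Gamma_x)=1$ where $\Gamma_x=\{t:(x,t)\in\Gamma\}$, so $\spt\pi_x\subset\overline{\Gamma_x}$. Choosing $t_0\in\Gamma_x$ near $y^-$, $t_1\in\Gamma_x$ near $y^+$ and $(x_2,y_2)\in\Gamma$ near $(x',x)\in\spt(\pi)\subset\overline{\Gamma}$, for a small enough approximation radius one gets $(x,t_0),(x,t_1),(x_2,y_2)\in\Gamma$ with $x<x_2$, $t_0<t_1$ and $y_2\in]t_0,t_1[$, contradicting the defining property of $\Gamma$.

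The routine ingredients here are Lemmas \ref{help} and \ref{help2} together with elementary facts on supports and disintegrations. The one genuinely delicate point is the diagonal case $y'=x$: one must recognise that this configuration is a real obstruction for the choice $\Gamma=\spt(\pi)$ (cf. Example \ref{countex}), that it is precisely what the deletion of the countably many lines $A^-\times\R$ — and the re-insertion of $\{x\}\times\spt\pi_x$ over the atoms of $\mu$ — is designed to neutralise, and then to convert the combinatorial information ``$x$ is isolated on the right in $A$'' into the analytic information ``$\mu(\{x\})>0$'' that makes the fibrewise argument available.
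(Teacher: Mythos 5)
Your proof is correct and follows essentially the same route as the paper's: the sufficiency direction takes $\Gamma=\spt^*(\pi)$ directly, and the necessity direction splits into the off-diagonal case handled by Lemma \ref{help2} and the diagonal case $y'=x$, where one shows $x\in A^-$, deduces $\mu(\{x\})>0$ and $y^\pm\in\spt\pi_x$ from the definition of $\spt^*(\pi)$, and concludes by approximating the three points within $\Gamma$. The only (harmless) additions are the explicit appeal to uniqueness via Proposition \ref{synthese} and the spelled-out rectangle argument for $]x,y^+[\cap A=\emptyset$, both of which the paper leaves implicit.
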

\begin{proof}
Let us first prove that $\pi$ is left-monotone. The set $\spt^*\pi$ fulfils the requirements for $\Gamma$. Indeed $\pi(\spt^*\pi)=1$ and the second condition is assumed in the statement.

Conversely, we assume now that there exists some $\Gamma$ of mass $1$  that satisfies the conditions in Definition \ref{lm}. Without loss of generality, we can assume $\Gamma\subset \spt(\pi)$: just take $\Gamma\cap\spt(\pi)$. By contradiction we consider $(x,y^+)$ and $(x,y^-)$ and $(x',y')$ in $\spt^*(\pi)$ such that $x<x'$ and $y'\in]y^-,y^+[$. 
Note that $\spt^*(\pi)\subseteq \spt(\pi)\subseteq\bar{\Gamma}$. In particular each point of $\spt^*(\pi)$ can be approximated by a point of $\Gamma$.

We distinguish two cases

{\it Case 1: $y'\neq x$.} We can easily conclude applying Lemma \ref{help2} to $(x',y')$ and $(x,y^+)$ or $(x,y^-)$ depending respectively whether $x<y'$ or $x>y'$.

{\it Case 2: $y'=x$.} We can also assume $\pi(]-\infty,x[\times]x,+\infty[)=0$ because if not there exists $(x_1,y_1)\in\spt\pi$ with $x_1<x$ and $x_1<y'<y_1$, which permits us to apply Lemma \ref{help2} to $(x',y')$ and $(x_1,y_1)$ and provides a contradiction with the fact that $\pi$ is left-monotone. Hence we have $x\in A$. Remind that $(x,y^+)\in \spt\pi$ and $y^+>y'=x$. Therefore there is locally no element of $A$ on the right of $x$. Hence $x\in A^-$. According to the definition of $\spt^*(\pi)$ it implies $\pi(\{x\}\times\R)=\mu(x)>0$ and $y^-,y^+\in\spt(\pi_x)$. As $\mu(x)>0$ we must have $\pi_x(\Gamma_x)=1$ where $\Gamma_x=\{y\in\R,\,(x,y)\in\Gamma\}$. Hence we can find two points in $y_1^\pm\in\Gamma_x$ that are close to $y^\pm$. The points $(x,y_1^-)$, $(x,y^+_1)$ together with some point of $\Gamma$ close to $(x',y')$ provide a contradiction.

\end{proof}

\begin{cor}\label{car_etoile2}
A martingale transport plan $\pi$ is the left-monotone coupling of $\Pi_M(\mu,\nu)$ if and only if it satisfies the following condition
\begin{itemize}
\item for every $(x^-,y^+)$, $(x^+,y^-)$ and $(x',y')$ elements of $\spt^*(\pi)$, if $x^-\leq x^+<x'$ and $y^-<y^+$, we have $y'\notin]y^-,y^+[$.
\end{itemize}
\end{cor}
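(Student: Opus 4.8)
The plan is to deduce Corollary \ref{car_etoile2} from Proposition \ref{car_etoile} by showing that the two stated triple-point conditions are equivalent for a fixed martingale transport plan $\pi$. One direction is immediate: the condition in Corollary \ref{car_etoile2} is formally stronger, since taking $x^-=x^+=x$ recovers exactly the hypothesis $x<x'$, $y^-<y^+$ of Proposition \ref{car_etoile}. So the condition of Corollary \ref{car_etoile2} implies that of Proposition \ref{car_etoile}, hence (by the ``if'' part of Proposition \ref{car_etoile}) implies $\pi$ is left-monotone. For the converse, I would assume $\pi$ is the left-monotone coupling and derive the apparently stronger statement. So really the work is: \emph{left-monotone $\Rightarrow$ the condition of Corollary \ref{car_etoile2}}.

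For that converse, suppose for contradiction that there are points $(x^-,y^+)$, $(x^+,y^-)$, $(x',y')$ in $\spt^*(\pi)$ with $x^-\le x^+<x'$, $y^-<y^+$, and $y'\in\,]y^-,y^+[$. If $x^-=x^+$ we are exactly in the situation of Proposition \ref{car_etoile} and left-monotonicity is contradicted, so assume $x^-<x^+$. The idea is that the martingale property forces enough mass ``above'' the point $(x^-,y^+)$ and ``below'' $(x^+,y^-)$ to produce, by the same disintegration argument as in Lemma \ref{help}, points that genuinely sit on a single vertical line $\{x_1\}\times\R$ straddling $y'$, with $x_1<x'$. Concretely: since $(x^+,y^-)\in\spt(\pi)$ and $x^+$ lies in $\spt(\mu)$, for $\mu$-a.e.\ $s$ near $x^+$ the conditional $\pi_s$ has barycentre $s$ and charges a neighbourhood of $y^-$; as $y^-<y'<y^+$ and $s$ can be chosen above $y^-$ (because $s\approx x^+>x^-$ and, after reducing to the case $x^+>y^-$ exactly as in the proof of Lemma \ref{help}, also $s>y^-$), such $\pi_s$ must also put mass strictly above $s$, hence we can find $(x_1,y_1^-),(x_1,y_1^+)\in\Gamma$ with $x_1<x'$, $y_1^-$ near $y^-$ (so $y_1^-<y'$) and $y_1^+$ large enough that $y_1^+>y'$. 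Then $(x_1,y_1^-),(x_1,y_1^+)$ together with a point of $\Gamma$ close to $(x',y')$ violate the defining property of $\Gamma$ in Definition \ref{lm}. The point $(x^-,y^+)$ is used only to guarantee, via the diagonal analysis of Case 2 in the proof of Proposition \ref{car_etoile}, that $x^+$ genuinely lies in $A$ or carries an atom when $y'$ happens to equal some $x$-coordinate, so that the relevant points can be chosen in $\Gamma$ rather than merely in $\overline{\Gamma}$.

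The main obstacle I anticipate is the bookkeeping when $y'$ coincides with one of the $x$-coordinates (the analogue of Case 2 above), because then $(x',y')$ or the $x^\pm$-points may fail to lie in $\Gamma$ and one must fall back on the structure of $\spt^*(\pi)$ — in particular on the fact that the offending vertical lines carry atoms of $\mu$ so that $\pi_x(\Gamma_x)=1$. In practice, though, the cleanest route is probably not to redo the argument but simply to invoke Lemma \ref{help2} and the proof of Proposition \ref{car_etoile}: apply Lemma \ref{help2} to the pair $(x^+,y^-)$ (resp.\ $(x^-,y^+)$) and $(x',y')$ when $y'\neq x^\pm$, handling the diagonal subcase by the atom argument of Case 2 verbatim. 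Thus the proof is essentially ``Proposition \ref{car_etoile} with $x$ split into $x^-\le x^+$'', and the short way to write it is to observe that the two conditions define the same property because Lemma \ref{help2} only ever uses \emph{one} of the two upper/lower points at a time against $(x',y')$, so allowing the two points to have distinct $x$-coordinates $x^-\le x^+<x'$ costs nothing.
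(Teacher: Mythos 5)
Your proposal is correct and follows essentially the same route as the paper: reduce to Proposition \ref{car_etoile} when $x^-=x^+$, and when $x^-<x^+$ apply Lemma \ref{help2} to $(x',y')$ paired with $(x^-,y^+)$ or $(x^+,y^-)$. The one remark worth making is that the ``diagonal subcase'' you anticipate as the main obstacle is in fact vacuous: if $x^-<x^+$ one cannot have both $x^-\geq y'$ and $x^+\leq y'$ (that would force $x^-=x^+=y'$), so at least one of the two applications of Lemma \ref{help2} (requiring $x^-<y'<y^+$, resp.\ $x^+>y'>y^-$) is always available, and no atom argument \`a la Case~2 is needed outside the $x^-=x^+$ case already delegated to Proposition \ref{car_etoile}.
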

\begin{proof}
If $\pi$ satisfies this condition, it also satisfies the sufficient condition in Proposition \ref{car_etoile} so that it is the left-curtain coupling. For the other inclusion, let us consider $\pi$ and three points $(x^-,y^+)$, $(x^+,y^-)$ and $(x',y')$ in $\spt^*(\pi)$ as in the statement but such that $y'\in]y^-,y^+[$. We prove that it is not a left-curtain coupling. If $x^-=x^+$, we simply use the necessary condition in Proposition \ref{car_etoile}. Hence we assume $x^-<x^+$. We can now apply Lemma \ref{help2} with $(x',y')$ and $(x^-,y^+)$ or $(x^+,y^-)$ depending on whether $x^-<y'$ or $x^+>y'$.
\end{proof}

With the last statement we can now implement the strategy of Theorem 5.20 in \cite{Vi2} in order to prove the continuity of the curtain coupling $\curt$. Actually with Corollary \ref{car_etoile2} it is by now possible to consider triples of points that are typical for the measure $\pi^{\otimes 3}$ instead of vectors $(x,y^-,y^+,x',y')$ in $\R^5$.

Recall that in paragraph \ref{quanti} we will prove the Lipschitz continuity of $\curt$ for a specific semimetric $Z$ by using another method.
\begin{them}\label{corstab}
We consider the mapping $\curt:(\mu,\nu)\in \mathcal{D}_{\leqc} \mapsto \pi_\lc$, where $\mathcal{D}_{\leqc}=\{(\mu,\nu)\in \p^2: \mu\leqc \nu\}$. This mapping is continuous from $\mathcal{D}_{\leqc}$ to $\p(\R^2)$.
\end{them}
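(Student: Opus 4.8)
The plan is to follow the pattern of \cite[Theorem 5.20]{Vi2}, using the characterisation of left-monotonicity through the reduced support (Corollary \ref{car_etoile2}) to pass to the limit. Let $(\mu_n,\nu_n)\to(\mu,\nu)$ in $\mathcal{D}_{\leqc}$; since $\leqc$ is preserved by $\mathcal{T}_1$-limits, $(\mu,\nu)\in\mathcal{D}_{\leqc}$. Put $\pi_n=\curt(\mu_n,\nu_n)$. The marginal families $\{\mu_n\}_n\cup\{\mu\}$ and $\{\nu_n\}_n\cup\{\nu\}$ are uniformly integrable, so by Remark \ref{rem_topo} the family $\{\pi_n\}_n$ is uniformly integrable and tight, hence relatively compact in $(\p(\R^2),\mathcal{T}_1)$. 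It therefore suffices to show that every $\mathcal{T}_1$-cluster point $\pi$ of $(\pi_n)_n$ equals $\curt(\mu,\nu)$; so I would fix such a $\pi$ and a subsequence with $\pi_{n_k}\to\pi$. Then $\pi$ has marginals $\mu,\nu$, and $\pi\in\M(\mu,\nu)$: for bounded continuous $f$ the function $(x,y)\mapsto f(x)(y-x)$ grows at most linearly, hence is an admissible test function along our uniformly integrable subsequence, and $\int f(x)(y-x)\,\dd\pi=\lim_k\int f(x)(y-x)\,\dd\pi_{n_k}=0$, i.e.\ $\E(Y\mid X)=X$. By Proposition \ref{synthese} it then remains to prove that $\pi$ is left-monotone, for then $\pi=\curt(\mu,\nu)$ and, the cluster point being unique, the theorem follows.

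Next I would argue by contradiction, assuming $\pi$ is not left-monotone. By the necessary part of Corollary \ref{car_etoile2} there are points $(x^-,y^+),(x^+,y^-),(x',y')\in\spt^*(\pi)$ with $x^-\le x^+<x'$, $y^-<y^+$ and $y'\in{]y^-,y^+[}$. Since $\spt^*(\pi)\subset\spt(\pi)$, each of these three points is a limit of points of $\spt(\pi_{n_k})$: any open ball around a point of $\spt(\pi)$ has positive $\pi$-mass, hence positive $\pi_{n_k}$-mass for $k$ large (portmanteau), hence meets $\spt(\pi_{n_k})$, and a diagonal argument over shrinking balls produces the approximating sequences. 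The goal is then to use these approximants to contradict the left-monotonicity of $\pi_{n_k}=\curt(\mu_{n_k},\nu_{n_k})$ (Proposition \ref{synthese}).

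In all cases except $x^-=x^+=y'$, a short inspection produces a pair of points of $\spt(\pi_{n_k})$ (for $k$ large) to which Lemma \ref{help2} applies: if $y'>x^-$ one uses the approximants of $(x^-,y^+)$ and of $(x',y')$, whose coordinates satisfy $x^-_{n_k}<x'_{n_k}$, $x^-_{n_k}<y'_{n_k}$, $y'_{n_k}<y^+_{n_k}$ for $k$ large; if $y'\le x^-$ then $y'<x^+$ and one uses the approximants of $(x^+,y^-)$ and of $(x',y')$ instead; all relevant inequalities are strict in the limit, so Lemma \ref{help2} forces $\pi_{n_k}$ not to be left-monotone, a contradiction. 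When $x^-=x^+=y'=:x$ and $\pi({]-\infty,x[}\times{]x,\infty[})>0$ one gets an extra point $(x_1,y_1)\in\spt(\pi)$ with $x_1<x<y_1$ and applies the same reasoning to $(x_1,y_1)$ and $(x',x)$. The genuinely remaining case is $x^-=x^+=y'=:x$ with $\pi({]-\infty,x[}\times{]x,\infty[})=0$, so that the bad triple is $(x,y^+),(x,y^-),(x',x)$ with $y^-<x<y^+$ and $x<x'$. Here $x\in A:=\{z:\pi({]-\infty,z[}\times{]z,\infty[})=0\}$, and since $(x,y^+)\in\spt(\pi)$ with $y^+>x$ there is no point of $A$ immediately to the right of $x$, so $x\in A^-$; the definition of $\spt^*(\pi)$ then forces $\mu(\{x\})>0$ and $y^-,y^+\in\spt(\pi_x)$. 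To conclude in this case I would exploit $\mu(\{x\})>0$: the measures $\mu_{n_k}$ carry mass at least $\mu(\{x\})+o(1)$ on arbitrarily small neighbourhoods of $x$, and because $\pi_{n_k}$ is left-monotone this mass is fanned out by $\pi_{n_k}$ to heights approaching both $y^-$ and $y^+$; combining this with the approximant of $(x',x)$ and one further application of Lemma \ref{help} should again produce the forbidden configuration for $\pi_{n_k}$.

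I expect this last, ``diagonal'', case to be the main obstacle. It is exactly the situation that the plain support $\spt(\pi)$ fails to detect (cf.\ the discussion around Example \ref{countex}), and it is to handle it that the reduced support $\spt^*$ together with the technical Lemmas \ref{help} and \ref{help2} were introduced; making the mass-accumulation argument at the atom rigorous — in particular controlling how the left-monotone couplings $\pi_{n_k}$ split mass that concentrates onto the atom of $\mu$ in the limit — is the delicate point.
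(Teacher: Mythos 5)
Your overall architecture is exactly that of the paper: reduce to a convergent subsequence $\pi_{n_k}\to\pi$, check $\pi\in\M(\mu,\nu)$, assume by contradiction a forbidden triple in $\spt^*(\pi)$ via Corollary \ref{car_etoile2}, approximate by points of $\spt(\pi_{n_k})$, and dispose of every configuration except the diagonal one $x^-=x^+=y'$ by Lemma \ref{help2} (your case split on the position of $y'$ relative to $x^\pm$ is a harmless reorganisation of the paper's Cases 1 and 2.1, and it has the mild advantage of never needing the approximants to lie in $\spt^*(\pi_{n_k})$ rather than $\spt(\pi_{n_k})$). The reduction of the diagonal case to $x\in A^-$, hence $\mu(\{x\})>0$ and $y^\pm\in\spt(\pi_x)$, is also exactly the paper's argument.

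The one genuine gap is the step you yourself flag: how the atom at $x$ forces the forbidden configuration for $\pi_{n_k}$. Your proposed mechanism --- that left-monotonicity fans the mass near $x$ out to heights approaching \emph{both} $y^-$ and $y^+$ --- is not the right one and is probably not provable as stated: nothing forces $\pi_{n_k}$ to send mass near the atom anywhere near $y^-$ specifically. The argument that works uses only one side plus the martingale constraint. By portmanteau, $\liminf_k\pi_{n_k}\bigl(]x-\eps,x+\eps[\times]{(x+y^+)/2},+\infty[\bigr)\geq \mu(\{x\})\,\pi_x\bigl(]{(x+y^+)/2},+\infty[\bigr)=:m>0$, while $\limsup_k\mu_{n_k}([x-\eps,x+\eps])\leq\mu([x-\eps,x+\eps])<2\mu(\{x\})$ for $\eps$ small. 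Hence for large $k$ there is a set $B\subset]x-\eps,x+\eps[$ of positive $\mu_{n_k}$-measure on which the disintegration $(\pi_{n_k})_s$ gives mass at least a fixed fraction $c>0$ to $]{(x+y^+)/2},+\infty[$. Since $(\pi_{n_k})_s$ has barycenter $s$, it must then charge $]-\infty,s-a[$ with $a=c\,\bigl((x+y^+)/2-s\bigr)$ bounded away from $0$ uniformly for $s\in B$ and $\eps$ small. For any Borel $\Gamma$ of full $\pi_{n_k}$-measure this produces $(s,t^-),(s,t^+)\in\Gamma$ with $t^-<x-\eps$ and $t^+>(x+y^+)/2$, and together with a point of $\Gamma$ near $(x',x)$ (whose ordinate then lies in $]t^-,t^+[$ and whose abscissa exceeds $s$) this violates Definition \ref{lm}. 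So the missing idea is the quantitative use of the barycenter condition to push mass strictly below the atom; once you substitute it for the ``fanning to $y^-$'' heuristic, your proof coincides with the paper's.
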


Theorem \ref{corstab} is proven for the usual weak convergence of probability measures in $\p$ as well as in $\p(\R^2)$ for the range. Nevertheless, according to Remark \ref{rem_topo}, one may also consider the two reinforced topologies (with finite first moment) induced by $W$ and $W^{\R^2}$


\begin{proof}[Proof of Theorem \ref{corstab}]
Let us introduce a sequence $(\mu_n,\nu_n)$ converging to $(\mu,\nu)$. We assume that for every $n\in\N$, $\pi_n$ is a left-monotone coupling of $\mu_n$ and $\nu_n$. We will prove that $\pi_n$ has a limit $\pi$ and that it is also left-monotone. Due to Prokhorov's theorem on compactness and the uniqueness of a left-monotone martingale coupling with given marginals, we can reduce the proof to the case we know that $\pi_n$ converges to $\pi$.

We introduce the set $E=\{(x^-,y^+,x^+,y^-,x',y')\in\R^6|\,x^-\leq x^+<x'\text{ and }y^-<y'<y^+\}$. Assume that there is a vector $v\in E\cap (\spt^*(\pi))^3$, which according to Corollary \ref{car_etoile2} is equivalent to the fact that $\pi$ is not left-monotone. We will see that it implies that some $\pi_n$ is not left-monotone. Before we proceed to the proof, let us stress that $\pi_n^{\otimes 3}$ converges to $\pi^{\otimes 3}$ and as $v\in (\spt(\pi^{\otimes 3}))=(\spt(\pi))^3$ we obtain a sequence $v_n$ with $v_n\in (\spt^*(\pi_n))^3$ and $v_n\to v$. Our goal will be to prove $v_n\in E$ or directly that $\pi_n$ is not left-monotone.

We distinguish two main cases.

\emph{Case 1: $v\in E^\circ$.} As $E^\circ$ is an open set, $v_n\in E^\circ$ for $n$ sufficiently large, which provides the contradiction with the fact that $\pi_n$ is left-monotone.


\emph{Case 2: $v\in\partial E$.} We have $x^-=x^+$ and denote this real number simply by $x$. The arguments for the different subcases that we will distinguish are very similar to the ones in the proof of Proposition \ref{car_etoile}. The cases 2.1 and 2.2 corresponds to Case 1 and Case 2 of this proposition. 

Recall that $v_n=(x^-_n, y^+_n,x^+_n,y^-_n,x'_n,y'_n)\in\spt^*(\pi_n)^3$ tends to $v$. If $x_n^-\leq x_n^+$ we are done because with Corollary \ref{car_etoile2} this implies $v_n\in E$. Hence we must assume $x_n^-> x_n^+$. 

{\it Case 2.1: $y'\neq x$.} This is not possible. If for instance $x<y'$, the relations $x^-_n<x'_n$ and $<x^-_n<y_n'<y^+_n$ hold if $n$ is sufficiently large so that we can use Lemma \ref{help2} for the points $(x_n,y^+_n),\,(x'_n,y'_n)\in\spt(\pi_n)$. Hence we contradict that $\pi_n$ is left-monotone.

{\it Case 2.2.} Hence up to now we have assumed $x=x^-=x^+$ and $x_n^->x_n^+$ and $y'=x$. We show now that $x\in A(\pi)$. Indeed if it is not true there exists a sequence $(s_n,t_n)\in\spt(\pi_n)$ converging to $(s,t)\in\spt\pi\cap(]-\infty,x[\times]x,+\infty[)$. Thus, if $n$ is sufficiently large, recalling that $y'_n$ tends to $x$ we can apply Lemma \ref{help2} to $(s_n,t_n)$ and $(x'_n,y'_n)$. Indeed we have $s_t<x'_n$ and $s_n<y'_n<t_n$.

Let us see that $x\in A\backslash A^-$ is impossible. Actually $(x,y^+)\in\spt^*\pi$ is an element of $\spt\pi$ and $x<y^+$. It follows $\pi(]-\infty,x_1[\times]x_1,+\infty[)>0$ for any $x_1\in]x,y^+[$. Hence $x\in A^-$. It follows $\mu(x)>0$.

We assume for simplicity that $x=0$. We denote $\mu(x)\cdot\pi_x(]y^+/2,+\infty[)$ by $m$. It is not zero because $y^+\in \spt(\pi_x)$. Let $\eps>0$ be strictly smaller than $\min(x',y^+/2)$. We also assume that it is sufficiently small to satisfy $a=(y^+/2-\eps)(\pi_x(]y^+/2,+\infty[)/8)-\eps>\eps$ and $\mu([-\eps,\eps])<2\mu(x)$. We know that
\begin{align*}
\liminf&\pi_n(]-\eps,+\eps[\times]y^+/2,+\infty[)\\
\geq &\pi(]-\eps,+\eps[\times]y^+/2,+\infty[)\\
\geq &\mu(x)\cdot\pi_x(]y^+/2,+\infty[)=m.
\end{align*}
and for $\mu_n=\proj^x_\#\pi_n$,
$$\limsup\mu_n([-\eps,\eps])\leq\mu([-\eps,\eps])$$
Hence, there is $n$ such that $\pi_n(]-\eps,+\eps[\times]y^+/2,+\infty[)>  m/2$ and $\mu_n([-\eps,\eps])< 2\mu([-\eps,\eps])<4\mu(x)$. Therefore on a set $B\subseteq]-\eps,\eps[$ of positive $\mu_n|_{]-\eps,+\eps[}(\dd s)$ the measure $(\pi_n)_s(]y^+/2,+\infty[)$ is greater than $\mu_n(]-\eps,\eps[)^{-1}\cdot m/2>\pi_x(]y^+/2,+\infty[)/8$. Hence for $s\in B$, using the fact that the barycenter of $(\pi_n)_s$ is $s$, we have 
$$(\pi_n)_s(]-\infty,-a[)>0$$
where $a=(y^+/2-s)(\pi_x(]y^+/2,+\infty[)/8)-s$. Remind that $-a<-\eps<s$. Let $\Gamma$ be a Borelian set of $\R^2$ such that $\pi_n(\Gamma)=1$. As for almost every $s$, we have $(\pi_n)_s(\{t\in\R|\,(s,t)\in\Gamma)>0\})$, we obtain that there are $(s,t^-)$ and $(s,t^+)$ in $\Gamma$, with $t^+>y^+/2$ and $t^-<-\eps$, and $(s',t')\in \Gamma$ close to $(x',0)$ such that $t'\in]t^-,t^+[$. We conclude with Definition \ref{lm} that $\pi_n$ is not be left-monotone, which contradicts our assumptions.
\end{proof}

\begin{rem}%
Theorem \ref{corstab} provides a more direct and intuitive introduction of $\pi_\lc=\curt(\mu,\nu)$ than Definition \ref{lc}. In this alternative presentation relying on \cite[Section 2]{BJ} (see also Lemma \ref{ahyo}) one considers a sequence of atomic measures $\mu_n$ that converges to $\mu$ (see for instance point 3 in the proof of Proposition \ref{pro_un}). We may assume $\mu_n\leqc \mu$ in order to satisfy $(\mu_n,\nu)\in\mathcal{D}_{\leqc}$. The left-curtain couplings $\pi_n=\curt(\mu_n,\nu)$ can be described easily, as is done for instance in the proof of Lemma \ref{ahyo}. For that purpose it is not necessary to introduce the shadows in full generality but only to know what is the shadow of an atom. According to the theory $\pi_\lc$ is the limit of $(\pi_n)_n$.

Note that without the theory from \cite{BJ}, Theorem \ref{corstab} can only prove that the accumulation points of the sequence $(\pi_n)_n$ are all left-monotone couplings. Without Proposition \ref{synthese} it is not known that the left-monotone elements of  $\M(\mu,\nu)$ are reduced to $\{\pi_\lc\}$. Hence the alternative presentation explained in the present remark can not be seen as a definition. 
\end{rem}

We end the paragraph on qualitative continuity with two results on the continuity of the shadows that will be useful in section \ref{quanti}.

\begin{lem}[Role of  the mass of $\nu$ close to $\pm\infty$]\label{charge_a_droite}
Let  $\mu$ and $\nu$ be measures of $\m$ such that $\mu\leqcp \nu$. Let $(\nu_n)_n$ such that $\inf(\spt\nu_n)$ tends to $+\infty$.
The sequence $S^{\nu+\nu_n}(\mu)$ tends to $S^\nu(\mu)$ in $\m$.

The similar statements hold if $\sup(\spt\nu_n)$ tends to $-\infty$ or $\nu_n([a_n,b_n])=0$ with $-a_n,\, b_n\to+\infty$.
\end{lem}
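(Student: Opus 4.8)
The plan is to run a compactness argument: show that the family $\bigl(S^{\nu+\nu_n}(\mu)\bigr)_n$ is relatively compact in $(\m,W)$ and that its only accumulation point is $S^{\nu}(\mu)$; once this is done, the sequence converges to $S^{\nu}(\mu)$. The first step will be to record a monotonicity of the shadow in its second argument. If $\nu\leqp\nu'$ and $\mu\leqcp\nu$, then $\mu\leqcp\nu'$ as well (for nonnegative convex $\phi$, $\int\phi\,\dd\mu\leq\int\phi\,\dd\nu\leq\int\phi\,\dd\nu'$), and $S^{\nu}(\mu)$ satisfies $\mu\leqc S^{\nu}(\mu)\leqp\nu\leqp\nu'$, so it is admissible in Definition \ref{shad_thm} for $S^{\nu'}(\mu)$; hence $S^{\nu'}(\mu)\leqc S^{\nu}(\mu)$. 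Applying this with $\nu'=\nu+\nu_n$ gives $S^{\nu+\nu_n}(\mu)\leqc S^{\nu}(\mu)$ for all $n$. Since $x\mapsto(|x|-R)_+$ is convex with at most linear growth, $\int(|x|-R)_+\,\dd S^{\nu+\nu_n}(\mu)\leq\int(|x|-R)_+\,\dd S^{\nu}(\mu)\to 0$ as $R\to\infty$ uniformly in $n$, and since $\int_{|x|>2R}|x|\,\dd\theta\leq 2\int(|x|-R)_+\,\dd\theta$ this yields uniform integrability. As all these measures have the common mass $\mu(\R)$, Prokhorov's theorem gives relative compactness for the weak topology, and by Remark \ref{rem_topo} weak accumulation points are $W$-accumulation points; so it suffices to identify an arbitrary weak subsequential limit $\eta$ of $\bigl(S^{\nu+\nu_{n_k}}(\mu)\bigr)_k$.

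Next I would check the three defining properties of $S^{\nu}(\mu)$ for $\eta$. For any convex $\phi$ of at most linear growth, uniform integrability gives $\int\phi\,\dd S^{\nu+\nu_{n_k}}(\mu)\to\int\phi\,\dd\eta$; together with $\mu\leqc S^{\nu+\nu_{n_k}}(\mu)\leqc S^{\nu}(\mu)$ this gives $\mu\leqc\eta$ and $\eta\leqc S^{\nu}(\mu)$. The remaining point is $\eta\leqp\nu$. Here I would fix a continuity point $a$ of $\eta$; since $\inf(\spt\nu_{n})\to+\infty$, for $k$ large $\spt\nu_{n_k}\subset\,]a,+\infty[$, so $(\nu+\nu_{n_k})|_{]-\infty,a]}=\nu|_{]-\infty,a]}$, and from $S^{\nu+\nu_{n_k}}(\mu)\leqp\nu+\nu_{n_k}$ (Definition \ref{shad_thm}) and the fact that $\leqp$ is just the pointwise order of positive measures one gets $S^{\nu+\nu_{n_k}}(\mu)|_{]-\infty,a]}\leqp\nu|_{]-\infty,a]}\leqp\nu$. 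Letting $k\to\infty$, the restricted measures converge weakly to $\eta|_{]-\infty,a]}$ because $a$ is a continuity point, and testing the inequality against nonnegative bounded continuous functions (then all nonnegative Borel functions by monotone approximation) passes the bound to the limit: $\eta|_{]-\infty,a]}\leqp\nu$. Finally, uniform integrability forces $\sup_n S^{\nu+\nu_n}(\mu)(\{|x|>a\})\to 0$ as $a\to\infty$, hence $\eta(\{|x|>a\})\to 0$, so $\eta|_{]-\infty,a]}$ increases to $\eta$ along continuity points $a\to\infty$; by monotone convergence $\eta\leqp\nu$.

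With $\mu\leqc\eta\leqp\nu$ in hand, minimality in Definition \ref{shad_thm} gives $S^{\nu}(\mu)\leqc\eta$, and combined with $\eta\leqc S^{\nu}(\mu)$ and antisymmetry of $\leqc$ this yields $\eta=S^{\nu}(\mu)$. Thus every subsequential limit is $S^{\nu}(\mu)$ and the whole sequence converges. For the variant with $\sup(\spt\nu_n)\to-\infty$ the argument is symmetric; for $\nu_n([a_n,b_n])=0$ with $-a_n,b_n\to+\infty$ it goes through verbatim, the only use of the location of $\spt\nu_n$ being that for each fixed bounded interval $[-a,a]$ one has $(\nu+\nu_n)|_{[-a,a]}=\nu|_{[-a,a]}$ for $n$ large, which holds equally here.

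I expect the main obstacle to be the verification of $\eta\leqp\nu$ — more precisely, ruling out that a positive residue of the mass introduced by $\nu_n$ survives "near $+\infty$" in the limit. The key leverage is exactly the convex-order domination $S^{\nu+\nu_n}(\mu)\leqc S^{\nu}(\mu)$ established at the outset, which provides the uniform integrability that both compactifies the family and kills the escaping mass; without it the restriction-and-pass-to-the-limit step would not close.
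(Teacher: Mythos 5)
Your argument is correct, and it takes a genuinely different route from the one in the paper. You proceed by compactness: the antitonicity $\nu\leqp\nu'\Rightarrow S^{\nu'}(\mu)\leqc S^{\nu}(\mu)$ (which is indeed immediate from Definition \ref{shad_thm}, and which the paper itself uses in passing in its step 3) gives the uniform domination $S^{\nu+\nu_n}(\mu)\leqc S^{\nu}(\mu)$, hence uniform integrability, relative compactness, and the coincidence of weak and $W$-accumulation points via Remark \ref{rem_topo}; you then identify every subsequential limit $\eta$ through the three defining properties of the shadow, the only delicate point being $\eta\leqp\nu$, which you obtain by localising the inequality $S^{\nu+\nu_{n_k}}(\mu)\leqp\nu+\nu_{n_k}$ on sets where $\nu_{n_k}$ puts no mass and passing to the limit at continuity points. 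The paper instead argues quantitatively on potential functions: it truncates $\mu$ to a submeasure $\mu'$ of mass $\mu(\R)-\delta$ whose shadow has bounded support (this is where Lemma \ref{ombre_a_droite} enters), observes that $S^{\nu+\nu_n}(\mu')=S^{\nu}(\mu')$ exactly once $\inf\spt\nu_n$ exceeds $\sup\spt S^{\nu}(\mu')$, and controls the contribution of the residual mass $\mu-\mu'$ through the additivity formula of Proposition \ref{shadowsum}, concluding by pointwise convergence of $u_{S^{\nu+\nu_n}(\mu)}$. Your approach buys economy of means --- it needs neither Lemma \ref{ombre_a_droite} nor Proposition \ref{shadowsum}, only the variational definition of the shadow and standard weak-convergence facts, and it treats all three variants of the hypothesis uniformly since the support assumption is used only to identify $\nu+\nu_n$ with $\nu$ on compact sets. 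The paper's proof buys an explicit $\varepsilon$--$\delta$ control and exact stabilisation of the shadow of the truncated measure, and it exercises the structural lemmas that are needed elsewhere in the chapter. Both close the argument by the antisymmetry of $\leqc$ combined with the minimality clause in Definition \ref{shad_thm}.
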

\begin{proof}
1. We will prove that the potential function of $S^{\nu+\nu_n}(\mu)$ pointwise converges to the potential function of $S^\nu(\mu)$. Remind that it was introduced after $W$ at the beginning of Chapter \ref{premier}. Fix $a\in \R$ and $\eps>0$ and let $\delta>0$ be such that any measure $\alpha\leqp \nu$ of mass $\alpha(\R)\leq \delta$ satisfies $\int|x-a|\dd \alpha\leq \eps$. Let $\mu'$ be the leftmost measure smaller than $\mu$ and  of mass $\mu'(\R)=\mu(\R)-\delta$. 

2. It is enough to prove that for any $n$ satisfying $\inf\spt\nu_n> \sup [\spt S^\nu(\mu')]$ we have 
\begin{align}\label{fin}
|u_{S^\nu(\mu)}(a)-u_{S^{\nu+\nu_n}(\mu)}(a)|=\left|\int |x-a| \dd S^\nu(\mu)-\int |x-a| \dd S^{\nu+\nu_n}(\mu)\right|\leq \eps.
\end{align}
Before we state this inequality, we have to prove that $\sup [\spt S^\nu(\mu')]$ is finite. Actually, the shadow of $\mu'$ restricted on $]\sup\spt\mu',+\infty[$ (more precisely $(S^\nu(\mu')-\mu')_+|_{[\sup(\spt \mu'),+\infty][}$) is made of the leftmost quantiles of $\nu|_{]\sup(\spt\mu'),+\infty[}$ as is proved in Lemma \ref{ombre_a_droite}. As some mass must remain for the shadow of $\mu-\mu'$, as explained in Proposition \ref{shadowsum}, this can not be the full $\nu|_{]\sup(\spt\mu'),+\infty[}$. 

3. As $S^\nu(\mu')\leqp \nu+\nu_n$ and $\mu'\leqc S^\nu(\mu')$, we have $S^{\nu+\nu_n}(\mu')\leqc S^\nu(\mu')$. Considering Strassen's Theorem (Proposition \ref{strassen1}), we obtain the corollary that $\sup(\spt S^{\nu+\nu_n}(\mu'))\leq \sup(\spt S^\nu(\mu'))$. With the hypothesis made in 2. on the support of $\nu_n$ this proves $S^{\nu+\nu_n}(\mu')\leqp \nu$. Hence $S^{\nu+\nu_n}(\mu')\geqc S^\nu(\mu')$. Finally $S^{\nu+\nu_n}(\mu')= S^\nu(\mu')$.

4. We denote by $\sigma$ the latter measure. Applying Proposition \ref{shadowsum} to the shadow of the sum $\mu'+(\mu-\mu')$ we get
\begin{align*}
\int |x-a| \dd S^{\nu+\nu_n}(\mu)=\int |x-a| \dd\sigma+\int |x-a| \dd S^{\nu+\nu_n-\sigma}(\mu-\mu')
\end{align*}
and
\begin{align*}
\int |x-a| \dd S^{\nu}(\mu)&=\int |x-a| \dd\sigma+\int |x-a| \dd S^{\nu-\sigma}(\mu-\mu').\\
\end{align*}
As the shadow of $\mu-\mu'$ in $\nu+\nu_n-\sigma$ is smaller in the convex order than its shadow in $\nu-\sigma$ and reminding the choices made in 1. we get
\begin{align*}
0\leq\int |x-a| \dd S^{\nu+\nu_n-\sigma}(\mu-\mu')\leq\int |x-a| \dd S^{\nu-\sigma}(\mu-\mu')\leq\eps,
\end{align*}
so that \eqref{fin} is established.

5. In the case $\sup(\spt\nu_n)$ tends to $-\infty$ we just do the symmetric proof. If $\nu_n([a_n,b_n])=0$ with $-a_n,\, b_n\to+\infty$, we implement a similar proof with the following modification: at step 1. $\mu'$ is chosen in the middle of $\mu$ so that its shadow has a compact support (adapt the argument in 2. that relies on Lemma \ref{ombre_a_droite}). Steps 3. and 4. go in the same way. 
\end{proof}

With Theorem \ref{corstab} we obtain the corollary. 
\begin{cor}
Under one of the three hypotheses of Lemma \ref{charge_a_droite}, we have
$\curt(\mu,S^{\nu+\nu_n}(\mu))\stackrel{\m(\R^2)}{\longrightarrow} \curt(\mu,S^\nu(\mu))$.
\end{cor}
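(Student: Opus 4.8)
The plan is to obtain this corollary by directly chaining the two results just established: Lemma~\ref{charge_a_droite} gives convergence of the second marginals, $S^{\nu+\nu_n}(\mu)\to S^\nu(\mu)$, and Theorem~\ref{corstab} transfers this to convergence of the associated left-curtain couplings, the first marginal $\mu$ being held fixed throughout. No new idea is needed; the proof reduces to checking that the hypotheses of Theorem~\ref{corstab} are met and that the topologies line up.

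First I would record the elementary facts that make the composition legitimate. The shadows $S^{\nu+\nu_n}(\mu)$ are well defined for every $n$: since $\mu\leqcp\nu$ and $\nu\leqp\nu+\nu_n$, the chain property of $\leqcp$ in Theorem~\ref{echelle} yields $\mu\leqcp\nu+\nu_n$. Next, $\mu\leqc S^\nu(\mu)$ and $\mu\leqc S^{\nu+\nu_n}(\mu)$ (first bullet of Definition~\ref{shad_thm}) force $\mu$, $S^\nu(\mu)$ and $S^{\nu+\nu_n}(\mu)$ to share the same mass $m:=\mu(\R)$ and the same barycenter, upon testing against the affine functions $\pm 1$ and $\pm x$. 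If $m=0$ every measure in sight is the null measure and there is nothing to prove, so assume $m>0$. Since the orders $\leqc$ and $\leqp$, hence the shadow operator and the left-curtain coupling (Definitions~\ref{shad_thm} and~\ref{lc}), are positively homogeneous, one has $S^{c\nu}(c\mu)=cS^\nu(\mu)$ and $\curt(c\mu,c\eta)=c\,\curt(\mu,\eta)$ for any $c>0$ and any $\eta$ with $\mu\leqc\eta$; dividing everything by $m$ we may therefore assume $m=1$, i.e. that all the measures involved are probability measures.

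The argument then concludes in one line. By Lemma~\ref{charge_a_droite}, $S^{\nu+\nu_n}(\mu)\to S^\nu(\mu)$ in $\m$, which for probability measures of equal mass is exactly convergence for the Kantorovich metric $W$, hence in $\mathcal{T}_1$ and a fortiori weakly. Because $\mu$ is fixed and $\mu\leqc S^{\nu+\nu_n}(\mu)$, the pairs $\bigl(\mu,S^{\nu+\nu_n}(\mu)\bigr)$ lie in $\mathcal{D}_{\leqc}$ and converge there to $\bigl(\mu,S^\nu(\mu)\bigr)$; applying Theorem~\ref{corstab}---and, since we want convergence in $\m(\R^2)$ and not merely weakly, its reinforced form for $\mathcal{T}_1$ and $\mathcal{T}_1(\R^2)$ noted right after its statement and justified by Remark~\ref{rem_topo}---gives $\curt\bigl(\mu,S^{\nu+\nu_n}(\mu)\bigr)\to\curt\bigl(\mu,S^\nu(\mu)\bigr)$ in $\p(\R^2)$ for $\mathcal{T}_1(\R^2)$. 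Multiplying back by $m$ yields the asserted convergence in $\m(\R^2)$.

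As for the main obstacle: there is none of substance---this is a corollary in the literal sense. The only points needing a moment's care are the topological matchings (Lemma~\ref{charge_a_droite} is phrased in $\m$, Theorem~\ref{corstab} is stated for weak convergence but upgrades to $\mathcal{T}_1$, and the target $\m(\R^2)$ carries the $\mathcal{T}_1(\R^2)$-topology), all reconciled through Remark~\ref{rem_topo}, together with the routine homogeneity bookkeeping used to pass from $\m$ to $\p$. The three hypotheses of Lemma~\ref{charge_a_droite} need not be handled separately, since that lemma already delivers the same conclusion $S^{\nu+\nu_n}(\mu)\to S^\nu(\mu)$ under each of them.
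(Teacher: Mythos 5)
Your proof is correct and follows exactly the route the paper intends: the corollary is obtained by composing Lemma \ref{charge_a_droite} (convergence of the shadows) with the continuity statement of Theorem \ref{corstab}. The normalisation to probability measures and the topological matching via Remark \ref{rem_topo} that you spell out are the only (routine) details the paper leaves implicit.
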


We remind another result of stability from \cite{BJ}.
\begin{pro}[Proposition 4.15 in \cite{BJ}]\label{limit_measure}
Assume that $(\mu_n)_n$ is increasing in the convex order and $\mu_n\leqc\mu\leqcp \nu$ for every $n\in\N$. Then both $(\mu_n)_n$ and $(S^\nu(\mu_n))_n$ converge in $\m$. If we call $\mu_\infty$, respectively $S_\infty$ the limits, then the measure $S_\infty$ is the shadow of $\mu_\infty$ in $\nu$. 
\end{pro}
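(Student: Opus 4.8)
The plan is to deduce the two convergences from the monotone behaviour of the potential functions, and then to identify $S_\infty$ as $S^\nu(\mu_\infty)$ by checking the three defining properties of Definition~\ref{shad_thm}.

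\emph{Convergence of $(\mu_n)_n$.} From $\mu_n\leqc\mu_{n+1}$, integrating against $\pm1$ and $\pm x$ (affine, hence convex, and within the growth constraint) shows that all the $\mu_n$ share a common mass $m$ and barycenter $b$. Since $x\mapsto|x-t|$ is convex, the potential functions satisfy $u_{\mu_n}\leq u_{\mu_{n+1}}\leq u_\mu$ pointwise, while $u_{\mu_n}(t)\geq m|t-b|$ by Jensen. Thus $u_{\mu_n}$ increases pointwise to a convex function $u_\infty$ squeezed between $t\mapsto m|t-b|$ and $u_\mu$; as $u_\mu(t)-m|t-b|\to0$ when $|t|\to\infty$, the same holds for $u_\infty$, and a convex function with this behaviour at infinity is the potential function of a unique $\mu_\infty\in\m$ of mass $m$ and barycenter $b$. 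The converse of the potential-function criterion recalled in Section~\ref{premier} (see also \cite[Proposition~4.2]{BJ}) then gives $\mu_n\to\mu_\infty$ in $\m$, and $\mu_n\leqc\mu_\infty\leqc\mu$ since these measures share mass and barycenter and have ordered potential functions.

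\emph{Convergence of $(S^\nu(\mu_n))_n$.} First observe that the shadow is monotone in its first variable for $\leqc$: if $\gamma\leqc\gamma'$ with $\gamma,\gamma'\leqcp\nu$, then $\gamma\leqc\gamma'\leqc S^\nu(\gamma')\leqp\nu$, so $S^\nu(\gamma')$ is an admissible competitor in the definition of $S^\nu(\gamma)$ and minimality in the convex order forces $S^\nu(\gamma)\leqc S^\nu(\gamma')$. Here all the $\mu_n$ satisfy $\mu_n\leqc\mu\leqc S^\nu(\mu)\leqp\nu$, hence $\mu_n\leqcp\nu$ and $S^\nu(\mu_n)$ is well defined; applying the monotonicity along the sequence and to each pair $(\mu_n,\mu)$, the family $(S^\nu(\mu_n))_n$ is non-decreasing for $\leqc$, bounded above by $S^\nu(\mu)$, with all terms of mass $m$ and barycenter $b$. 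The argument of the previous paragraph applies verbatim and produces $S_\infty\in\m$ with $S^\nu(\mu_n)\to S_\infty$ in $\m$ and $S^\nu(\mu_n)\leqc S_\infty\leqc S^\nu(\mu)$.

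\emph{Identification and conclusion.} Convergence in $\m$ preserves integrals of continuous functions of at most linear growth, hence preserves $\leqc$ and $\leqp$ (whose test functions all obey the growth constraint). Passing to the limit in $\mu_n\leqc S^\nu(\mu_n)$ and in $S^\nu(\mu_n)\leqp\nu$ gives $\mu_\infty\leqc S_\infty$ and $S_\infty\leqp\nu$; by Theorem~\ref{echelle} this also gives $\mu_\infty\leqcp\nu$, so $S^\nu(\mu_\infty)$ is defined. Finally, for any $\eta'$ with $\mu_\infty\leqc\eta'\leqp\nu$ one has $\mu_n\leqc\mu_\infty\leqc\eta'\leqp\nu$ for every $n$, hence $S^\nu(\mu_n)\leqc\eta'$ by minimality, and letting $n\to\infty$ gives $S_\infty\leqc\eta'$. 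Thus $S_\infty$ meets the three requirements of Definition~\ref{shad_thm} for the shadow of $\mu_\infty$ in $\nu$, so $S_\infty=S^\nu(\mu_\infty)$ by uniqueness. The only delicate point is the first paragraph --- turning the increasing, $u_\mu$-bounded family of potential functions into an actual convergence in $\m$: one must see that the pointwise limit is again a potential function (the upper bound $u_\mu$ is exactly what keeps mass from escaping to $\pm\infty$) and then invoke the converse potential-function criterion, where the common mass and barycenter are used. Everything after that is soft, relying only on the minimality of shadows and the closedness of $\leqc$ and $\leqp$ under convergence in $\m$.
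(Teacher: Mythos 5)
Your proof is correct. Note that the paper itself gives no argument for this statement: it is imported verbatim as Proposition 4.15 of \cite{BJ}, so there is nothing internal to compare against. Your route is the natural one and it holds up: equality of masses and barycenters from testing against $\pm1,\pm x$; monotone convergence of the potential functions $u_{\mu_n}$, squeezed between $t\mapsto m|t-b|$ and $u_\mu$, so that the convex pointwise limit has the right asymptotics and is itself a potential function; the converse potential-function criterion (recalled in Section \ref{premier}) to upgrade this to convergence in $\m$; the observation that $\gamma\leqc\gamma'$ forces $S^\nu(\gamma)\leqc S^\nu(\gamma')$ because $S^\nu(\gamma')$ is an admissible competitor, which makes $(S^\nu(\mu_n))_n$ increasing and bounded by $S^\nu(\mu)$ so the same compactness argument applies; and finally the verification of the three defining properties of the shadow for $S_\infty$, using that $\leqc$ and $\leqp$ are closed under $\mathcal{T}_1$-convergence and that every competitor for $\mu_\infty$ is a competitor for each $\mu_n$. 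You correctly isolate the one genuinely delicate point, namely that the increasing limit of the potential functions is again a potential function; everything else is soft. I have no corrections.
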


Again with Theorem \ref{corstab} we obtain a corollary.
\begin{cor}
Under the hypotheses and notations of Proposition \ref{limit_measure}, we have
$\curt(\mu_n,S^{\nu}(\mu_n))\stackrel{\m(\R^2)}{\longrightarrow} \curt(\mu_\infty,S_\infty)$.
\end{cor}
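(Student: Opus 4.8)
The plan is to obtain the statement as an immediate consequence of the continuity theorem \ref{corstab}, the only point requiring care being that \ref{corstab} is phrased for probability measures whereas here we work in $\m(\R^2)$. First I would record that the convex order preserves total mass, so that all the $\mu_n$, and hence $\mu_\infty$, share a common mass $m>0$; moreover $\mu_n\leqc S^\nu(\mu_n)$ and $\mu_\infty\leqc S_\infty=S^\nu(\mu_\infty)$ by the first defining property of the shadow (Definition \ref{shad_thm} together with Proposition \ref{limit_measure}), so the measures $S^\nu(\mu_n)$ and $S_\infty$ also have mass $m$. Thus every pair in sight lies in the fixed-mass slice $m\p\times m\p$. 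Since the martingale constraint and the shadow conditions in Definitions \ref{shad_thm} and \ref{lc} are homogeneous of degree one, one checks that $S^{c\nu}(c\mu')=c\,S^\nu(\mu')$ and $\curt(c\mu',c\nu')=c\,\curt(\mu',\nu')$ for $c>0$; hence it is enough to treat the renormalised pairs $(m^{-1}\mu_n,m^{-1}S^\nu(\mu_n))$ and $(m^{-1}\mu_\infty,m^{-1}S_\infty)$ in $\p\times\p$.

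Next I would verify the hypotheses of Theorem \ref{corstab} for these renormalised pairs. They belong to $\mathcal{D}_{\leqc}$ because $\mu_n\leqc S^\nu(\mu_n)$ and $\mu_\infty\leqc S_\infty$, as just noted. By Proposition \ref{limit_measure}, $\mu_n\to\mu_\infty$ and $S^\nu(\mu_n)\to S_\infty$ in $\m$, that is in the relevant slice $(m\p,W)$; dividing by $m$ gives $m^{-1}\mu_n\to m^{-1}\mu_\infty$ and $m^{-1}S^\nu(\mu_n)\to m^{-1}S_\infty$ in $(\p,W)$, a fortiori for the weak topology. Theorem \ref{corstab} then yields $\curt(m^{-1}\mu_n,m^{-1}S^\nu(\mu_n))\to\curt(m^{-1}\mu_\infty,m^{-1}S_\infty)$ in $\p(\R^2)$, and multiplying back by $m$ gives the convergence $\curt(\mu_n,S^\nu(\mu_n))\to\curt(\mu_\infty,S_\infty)$ in $\m(\R^2)$.

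There remains only the matter of which topology on $\m(\R^2)$ this last convergence takes place in: Theorem \ref{corstab} delivers it for the weak topology, while the statement is for $\m(\R^2)$ with its first-moment (Kantorovich) topology. To close this gap I would invoke Remark \ref{rem_topo}: the marginals $m^{-1}\mu_n$ and $m^{-1}S^\nu(\mu_n)$, together with their limits, form uniformly integrable families in $\p$ (being $W$-convergent sequences with their limits), so the couplings $m^{-1}\curt(\mu_n,S^\nu(\mu_n))$ form a uniformly integrable family on $\R^2$, on which the weak and the $W^{\R^2}$ topologies coincide; hence the convergence also holds in the first-moment topology. I do not expect a genuine obstacle anywhere: the entire content is the reduction to probability measures by normalisation (the same device underlying the preceding corollary), plus the invocations of Theorem \ref{corstab} and Remark \ref{rem_topo}.
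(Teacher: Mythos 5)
Your proof is correct and follows exactly the route the paper intends: the paper states this corollary with no written proof beyond ``with Theorem \ref{corstab} we obtain a corollary,'' i.e.\ apply the continuity of $\curt$ to the convergences $\mu_n\to\mu_\infty$ and $S^\nu(\mu_n)\to S_\infty$ supplied by Proposition \ref{limit_measure}. Your normalisation to probability measures and the appeal to Remark \ref{rem_topo} for the first-moment topology simply make explicit the routine reductions the paper leaves implicit (the latter point is already noted in the paragraph following Theorem \ref{corstab}).
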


\subsection{Lipschitz continuity of the shadow, quantitative estimates.}\label{quanti}

In this section we give a quantitative version of Theorem \ref{corstab} by using other methods. We start with definitions.
\subsubsection{Top and down measures}
\begin{defi}\label{defi_top}
Let $\mu$ and $\nu$ satisfy $\mu(\R)=\nu(\R)$ and call $t$ this constant. We define the top and the down measures of $\mu$ and $\nu$ as
$$\Top(\mu,\nu)=\max(G_\mu,G_\nu)_\#\lambda_{[0,t]}\quad\text{and}\quad\down(\mu,\nu)=\min(G_\mu,G_\nu)_\#\lambda_{[0,t]}.$$\end{defi}

\begin{ex}\label{topatom}
i) If $\mu$ and $\nu$ are probability measures and $\law(X,Y)$ is the quantile coupling of these measures, $\Top(\mu,\nu)$ and $\down(\mu,\nu)$ are simply the laws of $\max(X,Y)$ and $\min(X,Y)$.

ii) If $\mu=\sum_{i=1}^n \delta_{x_i}$ and $\nu=\sum_{i=1}^n \delta_{y_i}$ with $(x_i)_i,\,(y_i)_i$ non decreasing sequences, then $\Top(\mu,\nu)=\sum_{i=1}^n \delta_{\max(x_i,y_i)}$ and $\down(\mu,\nu)=\sum_{i=1}^n \delta_{\min(x_i,y_i)}$.
\end{ex}
\begin{lem}\label{topkanto}
Let $\mu$ and $\nu$ be as in Definition \ref{defi_top}. We have
\begin{align*}
W(\mu,\nu)&=W(\mu,\down(\mu,\nu))+W(\down(\mu,\nu),\nu).\\
&=W(\mu,\Top(\mu,\nu))+W(\Top(\mu,\nu),\nu).
\end{align*}
\end{lem}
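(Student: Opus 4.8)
The statement to prove is Lemma~\ref{topkanto}: for $\mu,\nu\in\m$ of equal mass $t$, one has
\[
W(\mu,\nu)=W(\mu,\down(\mu,\nu))+W(\down(\mu,\nu),\nu)=W(\mu,\Top(\mu,\nu))+W(\Top(\mu,\nu),\nu).
\]
The plan is to reduce everything to the $L^1$-formula for the Kantorovich distance in terms of quantile functions, namely $W(\rho,\sigma)=\|G_\rho-G_\sigma\|_1=\int_0^t|G_\rho-G_\sigma|\,\dd\lambda$, which is exactly Lemma~\ref{stokanto}. The key observation is that, by Definition~\ref{defi_top}, $G_{\down(\mu,\nu)}=\min(G_\mu,G_\nu)$ and $G_{\Top(\mu,\nu)}=\max(G_\mu,G_\nu)$ pointwise on $[0,t]$; this uses the fact that $\min$ and $\max$ of two non-decreasing left-continuous functions are again non-decreasing left-continuous, so the pushforward measures $\min(G_\mu,G_\nu)_\#\lambda_{[0,t]}$ and $\max(G_\mu,G_\nu)_\#\lambda_{[0,t]}$ indeed have these functions as their quantile functions (invoking $\rho=(G_\rho)_\#\lambda_{[0,\rho(\R)]}$ from Paragraph~\ref{plusgrand}).

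First I would record this identification of quantile functions. Then the three Kantorovich distances appearing in the ``down'' equality become, via Lemma~\ref{stokanto},
\begin{align*}
W(\mu,\nu)&=\int_0^t|G_\mu-G_\nu|\,\dd\lambda,\\
W(\mu,\down(\mu,\nu))&=\int_0^t|G_\mu-\min(G_\mu,G_\nu)|\,\dd\lambda=\int_0^t(G_\mu-G_\nu)_+\,\dd\lambda,\\
W(\down(\mu,\nu),\nu)&=\int_0^t|\min(G_\mu,G_\nu)-G_\nu|\,\dd\lambda=\int_0^t(G_\nu-G_\mu)_+\,\dd\lambda.
\end{align*}
The desired equality is then the pointwise identity $|G_\mu-G_\nu|=(G_\mu-G_\nu)_++(G_\nu-G_\mu)_+$ integrated over $[0,t]$. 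The $\Top$ equality is entirely symmetric: $|G_\mu-\max(G_\mu,G_\nu)|=(G_\nu-G_\mu)_+$ and $|\max(G_\mu,G_\nu)-G_\nu|=(G_\mu-G_\nu)_+$, and the same pointwise identity closes it.

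Honestly, there is no real obstacle here; the only point requiring a word of care is the justification that $\min(G_\mu,G_\nu)$ and $\max(G_\mu,G_\nu)$ are genuinely the quantile functions of the top and down measures — one should note they are left-continuous and non-decreasing, hence coincide $\lambda$-a.e.\ (in fact everywhere) with the generalised inverses of the cumulative functions of the respective pushforwards, so that Lemma~\ref{stokanto} applies verbatim. Everything else is the one-line pointwise arithmetic identity for absolute values. If one prefers a probabilistic phrasing (as in Example~\ref{topatom}(i)) one can instead couple all measures simultaneously on $([0,1],\B,\lambda)$ through $G_\mu$ and $G_\nu$ and their min/max, and use that for this simultaneous coupling $|G_\mu-G_\nu|=|G_\mu-\min|+|\min-G_\nu|$ holds surely because $\min(G_\mu,G_\nu)$ always lies between $G_\mu$ and $G_\nu$; taking expectations and invoking the optimality of the quantile coupling from Lemma~\ref{stokanto} for each of the three pairs gives the claim. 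I would present the quantile-function version as it is the most economical.
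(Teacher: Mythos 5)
Your proposal is correct and follows essentially the same route as the paper: reduce all three distances to $\int|G_\cdot-G_\cdot|\,\dd\lambda$ via Lemma \ref{stokanto} and conclude from the pointwise identity $|G_\nu-G_\mu|=(G_\mu-G)+(G_\nu-G)=(G'-G_\mu)+(G'-G_\nu)$ with $G=\min(G_\mu,G_\nu)$, $G'=\max(G_\mu,G_\nu)$. Your extra remark justifying that $\min(G_\mu,G_\nu)$ and $\max(G_\mu,G_\nu)$ are genuinely the quantile functions of $\down(\mu,\nu)$ and $\Top(\mu,\nu)$ is a point the paper leaves implicit, and is a reasonable addition.
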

\begin{proof}
The proof simply relies on the formula $W(\mu,\nu)=\int|G_\nu-G_\mu|\dd \lambda$ in Lemma \ref{stokanto} and $|G_\nu-G_\mu|=(G_\mu-G)+(G_\nu-G)=(G'-G_\mu)+(G'-G_\nu)$ where $G=\min(G_\mu,G_\nu)$ and $G'=\max(G_\mu,G_\nu)$.
\end{proof}
\begin{lem}\label{Top_com}
$\Top(\mu,\mu')+\nu=\Top(\mu+\nu,\mu'+\nu)$
\end{lem}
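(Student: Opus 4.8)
The plan is to reduce everything to the corresponding identity for quantile functions, since $\Top$ is defined via $\max(G_\mu, G_\nu)$ pushed forward by Lebesgue measure. First I would note that all three measures $\mu$, $\mu'$ and $\nu$ appearing in the statement must have the same mass; otherwise $\Top(\mu,\mu')$ and $\Top(\mu+\nu,\mu'+\nu)$ are not both defined. Actually, re-reading Definition~\ref{defi_top}, the hypothesis is $\mu(\R)=\mu'(\R)$, and for the right-hand side to make sense we also need $(\mu+\nu)(\R)=(\mu'+\nu)(\R)$, which is automatic. So the only implicit assumption is $\mu(\R)=\mu'(\R)=:t$, and $\nu$ has some mass $s$, so both sides are measures of mass $t+s$.

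The key obstacle is that the quantile function of a sum, $G_{\mu+\nu}$, is \emph{not} simply expressible as a pointwise combination of $G_\mu$ and $G_\nu$ on $[0,t+s]$; rather, $G_{\mu+\nu}$ is the nondecreasing rearrangement of the ``concatenation'' of $G_\mu$ and $G_\nu$. So a direct pointwise argument on quantile functions fails. Instead I would argue at the level of measures, using the characterization that $\rho = \Top(\alpha,\beta)$ is the unique measure of mass $t$ with $F_\rho = \min(F_\alpha, F_\beta)$ pointwise (dually to $\down$, which satisfies $F_{\down(\alpha,\beta)} = \max(F_\alpha, F_\beta)$). Indeed, $\max(G_\alpha,G_\beta)_\#\lambda_{[0,t]}$ has cumulative function $x\mapsto \lambda(\{u\in[0,t]: \max(G_\alpha(u),G_\beta(u))\le x\}) = \lambda(\{u: G_\alpha(u)\le x\text{ and }G_\beta(u)\le x\}) = \min(F_\alpha(x),F_\beta(x))$, using $\lambda(\{G_\alpha\le x\}) = F_\alpha(x)$ and that the two events are nested or can be intersected via $\min$. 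Hence the statement $\Top(\mu,\mu')+\nu = \Top(\mu+\nu,\mu'+\nu)$ is equivalent, at the level of cumulative functions, to
\begin{align*}
\min(F_\mu, F_{\mu'}) + F_\nu = \min(F_{\mu+\nu}, F_{\mu'+\nu}) = \min(F_\mu + F_\nu, F_{\mu'} + F_\nu),
\end{align*}
and this is just the elementary identity $\min(a,b) + c = \min(a+c, b+c)$ applied pointwise in $x$ with $a = F_\mu(x)$, $b = F_{\mu'}(x)$, $c = F_\nu(x)$.

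So the proof structure is: (1) record the hypothesis $\mu(\R)=\mu'(\R)$ and observe all relevant $\Top$'s are defined; (2) prove the lemma that for measures $\alpha,\beta$ of equal mass, $F_{\Top(\alpha,\beta)} = \min(F_\alpha,F_\beta)$, via the pushforward computation above (and note $F_{\alpha+\beta} = F_\alpha + F_\beta$); (3) combine with the scalar identity $\min(a+c,b+c) = \min(a,b)+c$ to conclude that both sides of the claimed equation have the same cumulative distribution function, hence are equal. The main subtlety worth being careful about is step (2): verifying $\lambda(\{u\in[0,t] : G_\alpha(u)\le x\}) = F_\alpha(x)$ (which follows from $\alpha = (G_\alpha)_\#\lambda_{[0,t]}$ and left-continuity of $G_\alpha$, i.e. $G_\alpha(u)\le x \iff u \le F_\alpha(x)$), and that the event $\{\max(G_\alpha,G_\beta)\le x\}$ is exactly $\{G_\alpha\le x\}\cap\{G_\beta\le x\}$, whose $\lambda$-measure — since both are initial intervals $[0,F_\alpha(x)]$ and $[0,F_\beta(x)]$ of $[0,t]$ — equals $\min(F_\alpha(x),F_\beta(x))$. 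Everything else is routine. There is no real ``hard part'' here; the only risk is obscuring the argument by trying to manipulate quantile functions directly instead of passing to cumulative functions.
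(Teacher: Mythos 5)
Your proposal is correct and follows essentially the same route as the paper: identify $F_{\Top(\alpha,\beta)}=\min(F_\alpha,F_\beta)$ and conclude via the pointwise identity $\min(a,b)+c=\min(a+c,b+c)$ applied to cumulative distribution functions. You simply spell out the pushforward verification of the first step in more detail than the paper, which states it as a one-line check.
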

\begin{proof}
One can check with the definition of $\Top$ by random variables that $F_{\Top(\mu,\mu')}(t)=\min(F_\mu(t),F_{\mu'}(t))$. Hence
for every $t\in\R$.
\begin{align*}
F_{\Top(\mu,\mu')+\nu}(t)&=\min(F_\mu(t),F_{\mu'}(t))+F_\nu(t)\\
&=\min(F_\mu(t)+F_\nu(t),F_{\mu'}(t)+F_\nu(t)).
\end{align*}
But this is also the cumulative distribution function of $\Top(\mu+\nu,\mu'+\nu)$. If $\mu,\mu'$ are not probability measures, the result follows from their normalisation.
\end{proof}

The shadow $S^\nu(\mu)$ is only defined for $\mu\leqcp\nu$, which may be restrictive for some proofs. But if, roughly speaking, we add mass close to infinity to $\nu$ this shadow can exist. The two next lemmas permit us to implement this idea, which for instance plays a role in the final proof of Section \ref{lips_esti}.

\begin{lem}[adding mass at $\pm\infty$]\label{add_queue}
If $\mu\leqcps \nu$, for any $a\in \R$ there exists $\nu'$ with $\nu'$ concentrated on $]-\infty,a]$ and $\mu\leqcp \nu+\nu'$.

Similarly, if no assumption is done on $\mu$, for any $a,\,b\in\R$, there exists $\nu'$ with $\nu'(]a,b[)=0$ and $\mu\leqcp \nu'$.
\end{lem}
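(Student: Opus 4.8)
The plan is to read the three orders at hand off of the left- and right-tail functions. For $\rho\in\m$ and $t\in\R$ set
$$P_\rho(t)=\int(t-x)^+\,\dd\rho(x)\AND C_\rho(t)=\int(x-t)^+\,\dd\rho(x);$$
both are finite, convex and non-negative, $P_\rho$ is non-decreasing, $C_\rho$ is non-increasing. Since every convex non-negative function with at most linear growth is a non-negative combination (more precisely, a monotone limit of such) of the constant $1$ and of the functions $x\mapsto(x-t)^+$ and $x\mapsto(t-x)^+$ ($t\in\R$), and those among them that are moreover non-decreasing involve only $1$ and the $x\mapsto(x-t)^+$, evaluating the defining inequalities $P(E)$ on these generators yields
\begin{align*}
\mu\leqcps\nu&\iff \mu(\R)\leq\nu(\R)\ \text{ and }\ C_\mu\leq C_\nu\ \text{ on }\R,\\
\mu\leqcp\nu&\iff \mu(\R)\leq\nu(\R)\ \text{ and }\ C_\mu\leq C_\nu\ \text{ and }\ P_\mu\leq P_\nu\ \text{ on }\R.
\end{align*}
Hence, starting from $\mu\leqcps\nu$, to obtain $\mu\leqcp\nu+\nu'$ it is enough to produce $\nu'$ concentrated on $]-\infty,a]$ with $P_{\nu'}\geq P_\mu$: then $P_{\nu+\nu'}=P_\nu+P_{\nu'}\geq P_\mu$ (as $P_\nu\geq0$), while $C_\mu\leq C_\nu\leq C_{\nu+\nu'}$ and $\mu(\R)\leq\nu(\R)\leq(\nu+\nu')(\R)$ come for free from $\mu\leqcps\nu$.

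For the first assertion I take $\nu':=(\min(\cdot,a))_{\#}\mu$, the image of $\mu$ under the $1$-Lipschitz contraction $x\mapsto\min(x,a)$. It lies in $\m$ (its first moment is bounded by $\int(|x|+|a|)\,\dd\mu$) and is concentrated on $]-\infty,a]$. For every $t$ the map $u\mapsto(t-u)^+$ is non-increasing and $\min(x,a)\leq x$, hence $(t-\min(x,a))^+\geq(t-x)^+$ and therefore $P_{\nu'}(t)=\int(t-\min(x,a))^+\,\dd\mu(x)\geq P_\mu(t)$. By the reduction above, $\mu\leqcp\nu+\nu'$.

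For the second assertion one may assume $a<b$ (if $a\geq b$ then $]a,b[=\varnothing$ and $\nu'=\mu$ works since $\mu\leqc\mu$). Decompose $\mu=\mu|_{\R\setminus]a,b[}+\mu|_{]a,b[}$ and dilate the inner part onto the two endpoints: put
$$\nu':=\mu|_{\R\setminus]a,b[}+\int_{]a,b[}\Bigl(\tfrac{b-x}{b-a}\,\delta_a+\tfrac{x-a}{b-a}\,\delta_b\Bigr)\,\dd\mu(x).$$
Then $\nu'\in\m$ and $\nu'(]a,b[)=0$. For $a<x<b$ one has $\delta_x\leqc\tfrac{b-x}{b-a}\delta_a+\tfrac{x-a}{b-a}\delta_b$ (same mass, same barycenter, Jensen); mixing this relation against $\mu|_{]a,b[}$ and adding $\mu|_{\R\setminus]a,b[}$ to both sides yields $\mu\leqc\nu'$, and a fortiori $\mu\leqcp\nu'$.

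The only substantial step is the first paragraph: one must recognise that the gap between $\leqcps$ and $\leqcp$ is carried entirely by the left-tail (``put'') inequalities, and that these — contrary to the ``call'' ones — cannot decrease when mass is appended arbitrarily far to the left, a fact captured by pushing $\mu$ forward through the contraction $\min(\cdot,a)$. Granting this, together with the classical cone description of convex non-negative functions with at most linear growth, both parts are immediate; no appeal to the structure of shadows or to Theorem \ref{echelle} is needed.
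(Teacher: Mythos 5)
Your proof is correct, and it follows a genuinely different route from the paper's. The paper handles the first assertion probabilistically: it invokes Theorem \ref{echelle} to split $\mu\leqcps\nu$ into $\mu\leqc\mu'\leqps\nu$, represents $\mu'\leqps\nu$ by random variables $X\leq Y$, and then uses an auxiliary uniform randomisation to send part of the mass of $Y$ down to the point $a$ while preserving the martingale property, so that the added measure is an atom at $a$; the second assertion is reduced to the first by first pushing all of $\mu|_{]-\infty,b]}$ onto $\delta_b$ (which gives $\mu\leqs\nu$) and then appending mass on $]-\infty,a]$. You instead work entirely on the dual side: you characterise $\leqcps$ and $\leqcp$ through the call and put functions $C_\rho,P_\rho$ together with the total masses (a correct cone-generation argument, made rigorous by Tonelli once $\phi$ is written as $m+\int(x-t)^+\,\dd\gamma_+(t)+\int(t-x)^+\,\dd\gamma_-(t)$ with $\gamma_\pm$ finite), observe that passing from $\leqcps$ to $\leqcp$ only requires supplying the put inequalities, and supply them with the explicit measure $\nu'=(\min(\cdot,a))_\#\mu$, which is concentrated on $]-\infty,a]$ and satisfies $P_{\nu'}\geq P_\mu$; your second construction, dilating $\mu|_{]a,b[}$ onto the endpoints, even yields the stronger conclusion $\mu\leqc\nu'$. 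Both arguments deliver what is needed downstream (namely that $\sup\spt\nu'$ can be pushed to $-\infty$, as required when Lemma \ref{charge_a_droite} is applied). Your version is shorter, self-contained, and gives explicit formulas for $\nu'$, at the cost of importing the standard integral representation of non-negative convex functions of linear growth; the paper's version is longer but stays within the Strassen-type machinery of Chapter 1 and reuses Theorem \ref{echelle}, which it has already established.
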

\begin{proof}
1. We start to prove the first result in the special case $\mu\leqps \nu$. For any $a\in \R$, we will find $\nu'$ with $\nu'(]a,+\infty[)=0$ and $\mu'\leqcp \nu+\nu'$. Let assume without loss of generality that $\mu$ is a probability measure and applying Theorem \ref{echelle} let $X\leq Y$ be random variables with laws $\mu=\law(X)$ and $\law(Y)\leqp \nu$. Let us fix $a\in\R$ and consider in a first time the case $\mu(]-\infty,a])=0$. In this case we introduce $U$ a random variable independent from $X$ and define $Z$
\begin{align*}
Z=\begin{cases}
Y&\text{if }U\geq\frac{Y-X}{Y-a}\\
a&\text{otherwise.}
\end{cases}
\end{align*}
Therefore $\E(Z\mid X)=X$. Observe that $\law(Z)\leqp \nu+\P(Z=a)\delta_a$ so that for $\nu'=\delta_a$ we have $\mu\leqcp\nu+\nu'$.

In the second case we can write $\mu=\mu_1+\mu_2$ where $\mu_1$ is concentrated on $A=]-\infty,a]$ and $\mu_2(A)=0$. By using the result just proved for $\mu_2$, we obtain $\nu'_2$ concentrated on $A$ such that $\mu_2\leqcp\nu+\nu'_2$. We set $\nu'_1=\mu_1$ so that we also have $\mu_1\leqcp \nu'_1$. As $\nu'=\nu'_1+\nu'_2$ is concentrated on $A$, we are done.

2. If $\mu\leqcps \nu$, According to Theorem \ref{echelle} there exists $\mu'$ such that $\mu\leqc \mu'$ and $\mu'\leqps \nu$. Therefore with part 1. we find for every $a\in \R$ a measure $\nu'$ concentrated on $]-\infty,a]$ such that $\mu'\leqcp \nu+\nu'$. But $\mu\leqcp\mu'$ so that $\mu\leqe \nu+\nu'$ is also satisfied.

3. For the second statement fix $a$ and $b$ and set $\nu=\mu(]-\infty,b])\delta_b+\mu|_{]b,\infty[}$. As $\mu\leqs \nu$, one can apply part 1. of the present proof.
\end{proof}

\subsubsection{Semimetric $Z$ on the space of transport plans.}
\begin{defi}\label{defz}
Let $\pi$ and $\pi'$ be two transport plans.
We define $Z(\pi,\pi')$ as
\begin{align*}
Z(\pi,\pi')&=\inf_{(\pi^s,\,\pi'^s)_{s\in[0,1]}}\sup_{s\in[0,1]} \max\left(W(\mu^s,\mu'^s), W(\nu^s,\nu'^s)\right).\\
&=\max\left(W(\mu,\mu')\,;\inf_{(\pi^s,\,\pi'^s)_{s\in[0,1]}}\sup_{s\in[0,1]}W(\nu^s,\nu'^s)\right)
\end{align*}
where $\mu^s,\,\nu^s$ are the marginals of $\pi^s$ and $\mu'^s,\,\nu'^s$ the marginals of $\pi'^s$. The infimum is taken among all the families $(\pi^s)_{s\in[0,1]},\,(\pi'^s)_{s\in[0,1]}$ that satisfy
\begin{enumerate}
\item $\forall s\in[0,1],\,\pi^s(\R^2)=s$,
\item $\forall s\in[0,1],\,\exists x\in [-\infty,+\infty],\,\pi_{]-\infty,x[\times \R} \leqp\pi^s\leqp \pi_{]-\infty,x]\times \R}$ (in fact $x=G_\mu(s)$),
\item if $s\leq t$, then $\pi^s\leqp \pi^t$.
\end{enumerate}
\end{defi}
\begin{rem}\label{first}
If the first marginals of $\pi$ and $\pi'$ are continuous, there is no freedom in the choice of $(\pi^s)_{s\in[0,1]},\,(\pi'^s)_{s\in[0,1]}$. Hence $Z$ can be reformulated as
\begin{align*}
Z(\pi,\pi')=\max
\left\{
\begin{aligned}
&W(\mu,\mu')\\
&\sup_{F_\mu(x)=F_{\mu'}(x')}W(\proj^y_\#\pi|_{]-\infty,x]\times \R},\proj^y_\#\pi'|_{]-\infty,x']\times \R})
\end{aligned}
\right.
\end{align*}
\end{rem}
\begin{pro}\label{semimetric}
$Z$ is a semimetric on $\p(\R^2)$ and the triangle inequality is satisfied on the subspace of measures with continuous first marginal. Moreover if $Z(\pi_n,\pi)\to 0$ we have $\pi_ n\to\pi$ for the topology $\mathcal{T}_1(\R^2)$.

\end{pro}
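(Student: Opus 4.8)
The plan is to verify the three claims in turn: symmetry and non-negativity of $Z$ (making it a semimetric), the triangle inequality on the subspace with continuous first marginals, and the topological statement $Z(\pi_n,\pi)\to 0 \implies \pi_n\to\pi$ in $\mathcal{T}_1(\R^2)$. Symmetry is immediate from the symmetric form $\max(W(\mu^s,\mu'^s),W(\nu^s,\nu'^s))$ in Definition \ref{defz}, and non-negativity is clear. To see that $Z(\pi,\pi')=0$ forces $\pi=\pi'$ one uses that $Z(\pi,\pi')\geq W(\mu,\mu')$ already, so the first marginals agree; then any admissible family has $\mu^s=\mu'^s$ for all $s$, and since condition (2) pins down $\mu^s$ as a truncation of $\mu$ at the quantile $G_\mu(s)$, the second marginals $\nu^s=S^{?}$ — more simply, letting $s\to 1$ along the family forces $\nu=\nu'$, and combined with the nestedness in (3) this recovers $\pi$ from the curve $(\nu^s)_s$; I would phrase this as: the family $(\pi^s)$ is forced (at continuity points of $F_\mu$) to be $\pi|_{]-\infty,G_\mu(s)]\times\R}$, so $\pi$ is determined by $(\pi^s)_{s}$ and hence $Z(\pi,\pi')=0$ gives $\pi=\pi'$.

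For the triangle inequality on measures with continuous first marginal I would use the reformulation in Remark \ref{first}, where there is no freedom left in the choice of $(\pi^s)_s$: the competitor curve is canonically $s\mapsto \proj^y_\#\pi|_{]-\infty,G_\mu(s)]\times\R}$. Given $\pi,\pi',\pi''$ with continuous first marginals, for each $s$ one compares the three canonical truncated second marginals; the triangle inequality for $W$ on $\R$ applied pointwise in $s$, followed by taking suprema over $s$, yields $Z(\pi,\pi'')\le Z(\pi,\pi')+Z(\pi',\pi'')$ — here one must be slightly careful that the ``matching'' of the parameter $s$ across the three plans is consistent, but since in the continuous case $s$ is intrinsically the mass of the truncation and $G_\mu(s),G_{\mu'}(s),G_{\mu''}(s)$ are determined by $s$, the same $s$ can be used throughout, and $W(\mu,\mu'')\le W(\mu,\mu')+W(\mu',\mu'')$ handles the first coordinate of the $\max$.

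The substantive part is the topological implication, and I expect it to be the main obstacle. The goal is: if $Z(\pi_n,\pi)\to 0$ then $\pi_n\to\pi$ in $\mathcal{T}_1(\R^2)$, i.e. weak convergence together with convergence of first moments. First I would extract from $Z(\pi_n,\pi)\to 0$ that $W(\mu_n,\mu)\to 0$ and $W(\nu_n,\nu)\to 0$ (the latter by taking $s=1$ in the admissible families), so the marginals converge in $\mathcal{T}_1(\R)$; by Remark \ref{rem_topo} this already gives uniform integrability of $\{\pi_n\}\cup\{\pi\}$, hence it suffices to prove weak convergence, and moreover any weak limit point $\tilde\pi$ automatically has the right marginals $\mu,\nu$ and finite first moment. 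So it remains to identify the limit: I would show that for every $x$ that is a continuity point of $F_\mu$, $\proj^y_\#\pi_n|_{]-\infty,x_n]\times\R}\to \proj^y_\#\pi|_{]-\infty,x]\times\R}$ weakly, where $x_n$ is chosen so that $F_{\mu_n}(x_n)=F_\mu(x)=:s$ (possible since truncations are parametrised by mass), using that $Z(\pi_n,\pi)\to 0$ controls exactly $W$ between these truncated $y$-marginals. Passing to a weakly convergent subsequence $\pi_{n_k}\to\tilde\pi$, this forces the $y$-marginal of $\tilde\pi|_{]-\infty,x]\times\R}$ to equal that of $\pi|_{]-\infty,x]\times\R}$ for all continuity points $x$ of $F_\mu$, hence for all $x$ outside a countable set; since a coupling in $\Pi(\mu,\nu)$ is determined by the map $x\mapsto \proj^y_\#(\cdot|_{]-\infty,x]\times\R})$ (this is precisely the kind of disintegration-determining-the-plan argument used for Definition \ref{lc}), we get $\tilde\pi=\pi$. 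As every subsequence has a further subsequence converging to $\pi$, the whole sequence converges, completing the proof. The delicate points to be careful about are the handling of the parameter $s$ versus the truncation point $x$ when $F_{\mu_n}$ or $F_\mu$ has jumps (dealt with via condition (2) and by restricting to continuity points), and justifying that the truncated-marginal data determines $\pi$.
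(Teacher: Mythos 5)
Your proof is correct in substance. For the semimetric property and the triangle inequality you argue exactly as the paper does (which is equally terse there): the key point that the forced truncated second marginals $\proj^y_\#\pi|_{]-\infty,x]\times\R}$ determine $\pi$ through its joint distribution function is the right one, and Remark \ref{first} plus the triangle inequality for $W$ handles the continuous-first-marginal case. Where you diverge is the topological implication. You run a compactness argument: tightness from convergence of the marginals, extraction of a weak limit point $\tilde\pi$, identification of $\tilde\pi$ with $\pi$ by matching truncated $y$-marginals at continuity points, and the subsequence principle. The paper avoids Prokhorov altogether: it builds an explicit intermediate coupling $\pi'_n=\law(X,Y_n)$ by coupling $\mu$ and $\mu_n$ comonotonically through a common uniform variable while keeping the conditional structure of $\pi_n$, gets the clean bound $W^{\R^2}(\pi_n,\pi'_n)\le W(\mu_n,\mu)\le Z(\pi_n,\pi)$, and then proves $\pi'_n\to\pi$ by pointwise convergence of joint distribution functions, $F_{\pi'_n}(x,y)=\nu_n^{F_\mu(x)}(]-\infty,y])\to\nu^{F_\mu(x)}(]-\infty,y])$ at continuity points --- which is where the observation shared with your argument enters: at $s=F_\mu(x)$ condition (2) of Definition \ref{defz} leaves no freedom for $\nu^s$, so $W(\nu_n^{F_\mu(x)},\nu^{F_\mu(x)})\to 0$ for every $x$. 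Both proofs then upgrade weak convergence to $\mathcal{T}_1(\R^2)$ via Remark \ref{rem_topo}. The paper's recoupling buys a quantitative handle on the first-marginal discrepancy and sidesteps the $s$-versus-$x$ bookkeeping you rightly flag as delicate: an $x_n$ with $F_{\mu_n}(x_n)=F_\mu(x)$ need not exist when $\mu_n$ has an atom straddling that level, so one must work with $\nu_n^s$ itself, pinned only up to condition (2), rather than with an exact truncation of $\pi_n$. Your route is equally valid once those boundary cases are written out.
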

\begin{proof}
1. It is clear that $Z$ is symmetric and $Z(\pi,\pi')=0$ if and only if $\pi=\pi'$. This principle is used in the definition of the left-curtain coupling (see Definition \ref{lc}): the measures $\proj^y_\#\pi|_{]-\infty,x]\times \R}$ completely determine $\pi$.
The triangle inequality on the subspace of measures with continuous first marginal follows from the triangle inequality of $W$ and Remark \ref{first}.

2. Assume that $Z(\pi_n,\pi)$ tends to $0$ and for every $n$ introduce families $(\pi^s_n)_{s\in[0,1]}$ that together with some families decomposing $\pi$ satisfy the limit condition. We introduce a sequence $\pi'_n$ of transport plans defined as intermediate measure between the measures $\pi_n$ and $\pi$. Their first marginals are $\proj^x_\#\pi'_n=\mu$ like $\pi$ and one associates them with the same family $\nu_n^s=\proj^y_\#\pi_n^s$ like $\pi_n$. More precisely, let $(U,X,(X_n,Y_n))_{n\in\N}$ be such that  
\begin{itemize}
\item $\law(U)=\lambda_{[0,1]}$
\item $(U,X,X_n)$ is comonotonic (i.e. $X=G_X(U)$ and $X_n=G_{X_n}(U)$ almost surely),
\item $\law(X)=\mu$,
\item $\law(X_n,Y_n)=\pi_n$,
\item $\nu_n^s=s\law(Y_n\mid U\leq s)$,
\end{itemize}
then $\pi_n'$ is the law of $(X,Y_n)$ and $\pi_n=\law(X_n,Y_n)$.

For any $s\in[0,1]$, let $\nu^s$ be $\proj^y_\#\pi^s$ with $(\pi^s)_{s\in[0,1]}$ some family admissible in the sense of Definition \ref{defz} (1)--(3). We do not necessarily assume $W(\nu^s_n,\nu^s)\to 0$ for every $s$ but we will need the following remark in our proof: for any $x\in\R$, the measure $\nu^{F_\mu(x)}$ is completely characterised and it is $\proj^y_\#(\pi|_{]-\infty,F_\mu(x)]\times \R})$. Indeed this is the only measure of mass $F_\mu(x)$ satisfying point (2) in Definition \ref{defz}. Hence for any $x\in \R$, $W(\nu_n^{F_\mu(x)},\nu^{F_\mu(x)})\to 0$ because the family $(\nu_n^s)_{s\in [0,1]}$ corresponding to $(\pi_n^s)_{s\in [0,1]}$ has been properly chosen for the convergence.

We can now proceed to the proof. We want to prove $W^{\R^2}(\pi_n,\pi'_n)\to 0$ and $W^{\R^2}(\pi'_n,\pi)\to 0$. On the one hand we have $W^{\R^2}(\pi_n,\pi'_n)\leq \E(|X-X_n|+|Y_n-Y_n|)=W(\mu_n,\mu)\leq Z(\pi_n,\pi)\to 0$. On the other hand, we prove that at any continuity point $(x,y)\in \R^2$ of $F_\pi$, the sequence $F_{\pi_n}$ pointwise converges to $F_\pi$. According to a classical characterisation (see e.g. \cite[Example 2.3]{Bi}), this will imply $\pi_n\to\pi$ in the weak topology $\mathcal{T}_{\mathrm{cb}}(\R^2)$. Moreover, as $W(\nu,\nu_n)=W(\nu^1,\nu_n^1)\to 0$ and $\mu$ is the first marginal of both $\pi'_n$ and $\pi$, one can apply Remark \ref{rem_topo}. Therefore the weak convergence $\pi_n'\to\pi$ implies the weak convergence with finite first moment $W^{\R^2}(\pi'_n,\pi)\to 0$.

Let $(x,y)$ be a continuity point of $F_\pi$. We have $F_\pi(x,y)=\nu^{F_\mu(x)}(]-\infty,y])$ and also $F_{\pi'_n}(x,y)=\pi'_n(]-\infty,x]\times]-\infty,y])=\nu_n^{F_\mu(x)}(]-\infty,y])$. This tends to $\nu^{F_\mu(x)}(]-\infty,y])$ because $W(\nu^{F_\mu(x)},\nu_n^{F_\mu(x)})\to 0$ and $y$ is a continuity point of $\nu^{F_\mu(x)}$. We conclude that $\pi'_n$ weakly converges to $\pi$.  Therefore
$$W^{\R^2}(\pi_n,\pi)\leq W^{\R^2}(\pi_n,\pi'_n)+W^{\R^2}(\pi'_n,\pi)\to 0.$$
\end{proof}

The following estimate is one of our main theorems. It provides a quantitative estimate on the Lipschitz continuity of the shadow projection $(\mu,\nu)\mapsto S^\nu(\mu)$.
\begin{them}\label{lips}
Let $\mu,\mu'$ and $\nu,\nu'$ be elements $\m$. We assume $\mu\leqcp \nu$ and $\mu'\leqcp \nu'$ respectively. We assume also
$\mu(\R)=\mu'(\R)$ and $\nu(\R)=\nu'(\R)$. The following relation holds
$$W(S^\nu(\mu),S^{\nu'}(\mu'))\leq W(\mu,\mu')+2W(\nu,\nu')$$
\end{them}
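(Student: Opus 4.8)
The plan is to establish the inequality in two stages, first controlling the effect of changing $\nu$ while keeping $\mu$ fixed, then changing $\mu$ while keeping $\nu$ fixed, and finally combining by the triangle inequality for $W$:
\[
W(S^\nu(\mu),S^{\nu'}(\mu'))\le W(S^\nu(\mu),S^{\nu'}(\mu))+W(S^{\nu'}(\mu),S^{\nu'}(\mu')).
\]
So it suffices to prove the two estimates $W(S^\nu(\mu),S^{\nu'}(\mu))\le 2W(\nu,\nu')$ and $W(S^{\nu'}(\mu),S^{\nu'}(\mu'))\le W(\mu,\mu')$. A subtlety here is that $S^{\nu'}(\mu)$ need not be defined, since we only know $\mu\leqcp\nu$, not $\mu\leqcp\nu'$; to handle this I would invoke Lemma~\ref{add_queue} to add a small amount of mass to $\nu'$ far out at $\pm\infty$ so that the shadow exists, carry out the estimates there, and then use Lemma~\ref{charge_a_droite} and its corollary (continuity of the shadow under pushing added mass to infinity) to pass to the limit. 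Alternatively, one replaces $\nu'$ by an auxiliary measure $\hat\nu=\Top(\nu,\nu')$ or a common upper bound and reduces to the monotone case.

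For the $\mu$-dependence estimate $W(S^{\nu}(\mu),S^{\nu}(\mu'))\le W(\mu,\mu')$, the natural tool is Lemma~\ref{ahyo} on monotonicity of the shadow with respect to the stochastic order, combined with the additivity of shadows (Proposition~\ref{shadowsum}). Writing $\mu$ and $\mu'$ via their quantile functions on a common probability space $[0,m]$, decompose both into the common part and the discrepancy; using Lemma~\ref{stokanto} the Wasserstein distance $W(\mu,\mu')=\|G_\mu-G_{\mu'}\|_1$ measures exactly the mass that must be moved. The key point is that the shadow operation is an order-preserving and $W$-nonexpansive map in its first argument: if $\mu\leqs\mu'$ then one can show $W(S^\nu(\mu),S^\nu(\mu'))$ is controlled by the mass moved, and a general $\mu,\mu'$ is bracketed between $\down(\mu,\mu')$ and $\Top(\mu,\mu')$ using Lemma~\ref{topkanto}. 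I would prove the infinitesimal statement first — moving one small bundle of mass of $\mu$ from quantile position to a neighboring one changes the shadow by at most the same amount in $W$ — then integrate using Proposition~\ref{shadowsum} to telescope along an interpolation between $\mu$ and $\mu'$.

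For the $\nu$-dependence estimate $W(S^\nu(\mu),S^{\nu'}(\mu))\le 2W(\nu,\nu')$ (hence the factor $2$), again reduce to a monotone situation by comparing both $\nu$ and $\nu'$ to $\Top(\nu,\nu')$ (using Lemma~\ref{Top_com} for compatibility with the additive structure). One shows in the case $\nu\leqp\nu'$ that $S^\nu(\mu)\leqc S^{\nu'}(\mu)$ — since $S^\nu(\mu)$ is admissible (lies $\leqp\nu'$) in the variational problem defining $S^{\nu'}(\mu)$ — and then that the $W$-distance between the two shadows is at most $W(\nu,\nu')$; passing through the top measure and using Lemma~\ref{topkanto} accounts for the two legs $W(\nu,\Top(\nu,\nu'))$ and $W(\Top(\nu,\nu'),\nu')$, giving the coefficient $2$. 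The additivity of shadows is again crucial: it lets one express the change $S^{\nu'}(\mu)-S^\nu(\mu)$ as a shadow of the "extra'' part of $\nu'$ over $\nu$ against a residual measure, whose $W$-size is bounded by the size of $\nu'-\nu$.

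The main obstacle I anticipate is the $\nu$-dependence estimate, specifically establishing that enlarging $\nu$ in the positive order moves its shadow by no more than the enlargement measures in $W$. The shadow is defined by a nonlinear variational (convex-order-minimality) condition and is genuinely hard to compute, so the argument has to be structural: one must combine Lemma~\ref{ombre_a_droite} (the tail behaviour of shadows as leftmost/rightmost quantiles), Proposition~\ref{shadowsum} (to localize the discrepancy), and the monotonicity Lemma~\ref{ahyo}, while carefully tracking masses so that everything stays within $\m$ and the shadows remain defined — which is exactly where Lemma~\ref{add_queue} and Lemma~\ref{charge_a_droite} earn their keep. Handling non-probability measures adds only bookkeeping since the hypotheses fix equal masses and one may normalize. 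Once the two one-sided estimates are in hand, assembling the final bound is immediate.
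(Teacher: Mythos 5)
Your overall architecture is exactly that of the paper: split by the triangle inequality into a $\mu$-variation estimate with constant $1$ and a $\nu$-variation estimate with constant $2$, handle the possible non-existence of $S^{\nu'}(\mu)$ by adding mass near $-\infty$ via Lemma~\ref{add_queue} and passing to the limit with Lemma~\ref{charge_a_droite}, and prove the $\mu$-estimate by reducing to atomic measures, bracketing with $\down(\mu,\mu')$, and invoking the stochastic monotonicity of the shadow (Lemma~\ref{ahyo}) together with Lemmas~\ref{stokanto} and~\ref{topkanto}. That half of your plan is sound and coincides with Proposition~\ref{pro_un}; the one mechanism worth making explicit there is that for $\tilde\mu=\down(\mu,\mu')\leqs\mu$ one gets the \emph{equality} $W(\mu,\tilde\mu)=W(S^\nu(\mu),S^\nu(\tilde\mu))$, because shadows preserve barycenters and, for stochastically ordered measures, $W$ is just the difference of the integrals of $x$ (Lemma~\ref{stokanto}).

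The genuine gap is in the $\nu$-variation estimate, and you have in fact flagged it yourself as the "main obstacle" without closing it. Two concrete problems. First, your monotone reduction is to the wrong order: since $\nu(\R)=\nu'(\R)$, the comparison of $\nu$ and $\nu'$ with $\Top(\nu,\nu')$ produces relations in the \emph{stochastic} order $\leqs$, not in $\leqp$ (equal-mass measures satisfying $\nu\leqp\nu'$ are equal), so the admissibility argument "$S^\nu(\mu)\leqp\nu'$, hence it competes in the variational problem for $S^{\nu'}(\mu)$" is not available, and the convex-order comparison it would yield does not translate into a $W$ bound anyway (convex-order-comparable measures share a barycenter, which says nothing quantitative about $W$). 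Second, your account of where the factor $2$ comes from is off: by Lemma~\ref{topkanto} the two legs through $\Top(\nu,\nu')$ sum to $W(\nu,\nu')$, so they cannot by themselves produce the coefficient $2$; the factor $2$ must already be present in the monotone case. The paper's Proposition~\ref{pro_deux} obtains it as follows for $\nu\leqs\nu'$ and atomic $\mu$: writing $S^\nu(\mu)=(G_\nu)_\#\lambda_{J_n}$ via Example~\ref{more_atoms}, one transports the \emph{same quantile set} $J_n$ to $\nu'$, setting $S'=(G_{\nu'})_\#\lambda_{J_n}$; the converse part of Example~\ref{more_atoms} identifies $S'$ as $S^{\nu'}(\mu')$ for an explicit auxiliary atomic measure $\mu'\geqs\mu$ with $W(\mu,\mu')\leq W(\nu,\nu')$. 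One then pays $W(S^\nu(\mu),S')\leq W(\nu,\nu')$ for changing the target on a fixed quantile set, and a second $W(S^{\nu'}(\mu'),S^{\nu'}(\mu))\leq W(\mu,\mu')\leq W(\nu,\nu')$ by the already-proved $\mu$-estimate --- whence the $2$. Without this (or an equivalent) construction, your $\nu$-side estimate, and hence the theorem, is not established.
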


The proof of the theorem is postponed at the end of the section. It relies on all results in between including Proposition \ref{pro_un} and Proposition \ref{pro_deux}. The first proposition is concerned with $\nu=\nu'$ and the second with $\mu=\mu'$. Before we start with this program let us state a corollary of Theorem \ref{lips} that gives a quantitative turn to Theorem \ref{corstab} in terms of the semimetric $Z$. A similar result can not be satisfied with $W^{\R^2}$ in place of $Z$ as is explained later in Example \ref{Wass_plouf}

\begin{cor}\label{doubidou}
Consider the mapping $\curt:(\mu,\nu)\in \mathcal{D}_{\leqc} \mapsto \pi_\lc$, where $\mathcal{D}_{\leqc}=\{(\mu,\nu)\in \p^2: \mu\leqc \nu\}$. This mapping is continuous from $\mathcal{D}_{\leqc}$ to $\M$ equipped with the topology $\mathcal{T}_1(\R^2)$. More precisely
$$Z(\curt(\mu,\nu),\curt(\mu',\nu'))\leq W(\mu,\mu')+2W(\nu,\nu')$$
\end{cor}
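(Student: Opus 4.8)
The plan is to exhibit, for each of the two left-curtain couplings, the canonical admissible family in the sense of Definition \ref{defz}, and to control its second-marginal flow pointwise in the parameter $s$ by Theorem \ref{lips}. Recall the second expression for $Z$ in Definition \ref{defz}:
$$Z(\pi,\pi')=\max\left(W(\mu,\mu')\,;\,\inf_{(\pi^s,\pi'^s)}\sup_{s\in[0,1]}W(\nu^s,\nu'^s)\right),$$
so it suffices to produce \emph{one} admissible pair of families (one decomposing $\curt(\mu,\nu)$, one decomposing $\curt(\mu',\nu')$) for which $\sup_{s}W(\nu^s,\nu'^s)\leq W(\mu,\mu')+2W(\nu,\nu')$; since $W(\mu,\mu')$ is itself bounded by the right-hand side, the inequality of the corollary follows.

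Next I would identify the canonical family for $\pi=\curt(\mu,\nu)$, obtained by filling the fibers of $\pi_\lc$ from left to right. Condition (2) of Definition \ref{defz}, with $x=G_\mu(s)$, forces $\mu^s=\mu_{]-\infty,x[}+(s-\mu(]-\infty,x[))\delta_x=(G_\mu)_\#\lambda_{[0,s]}$, the leftmost measure of mass $s$ below $\mu$. Inside the atom over $x$ one fills the fiber of $\pi_\lc$ following the iterated shadow decomposition; then by Definition \ref{lc} together with the structure of shadows (Proposition \ref{shadowsum}, and Example \ref{un_atome} for the atomic step) one gets $\nu^s=\proj^y_\#\pi^s=S^\nu(\mu^s)$ for every $s\in[0,1]$, while $\pi^s(\R^2)=s$ and monotonicity in $s$ hold by construction, so the family is admissible. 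Symmetrically $\mu'^s=(G_{\mu'})_\#\lambda_{[0,s]}$ and $\nu'^s=S^{\nu'}(\mu'^s)$.

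Then, for each fixed $s$, I would apply Theorem \ref{lips} to the pairs $(\mu^s,\nu)$ and $(\mu'^s,\nu')$. The hypotheses hold: $\mu^s\leqp\mu\leqc\nu$ gives $\mu^s\leqcp\nu$ by Theorem \ref{echelle} (and likewise $\mu'^s\leqcp\nu'$), and $\mu^s(\R)=s=\mu'^s(\R)$, $\nu(\R)=1=\nu'(\R)$. Hence
$$W(\nu^s,\nu'^s)=W(S^\nu(\mu^s),S^{\nu'}(\mu'^s))\leq W(\mu^s,\mu'^s)+2W(\nu,\nu')\leq W(\mu,\mu')+2W(\nu,\nu'),$$
the last step using $W(\mu^s,\mu'^s)=\int_0^s|G_\mu-G_{\mu'}|\,\dd\lambda\leq\int_0^1|G_\mu-G_{\mu'}|\,\dd\lambda=W(\mu,\mu')$ from Lemma \ref{stokanto}. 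Taking $\sup_s$ and inserting into the formula for $Z$ yields $Z(\curt(\mu,\nu),\curt(\mu',\nu'))\leq W(\mu,\mu')+2W(\nu,\nu')$. Continuity follows at once: if $(\mu_n,\nu_n)\to(\mu,\nu)$ in $\mathcal{D}_{\leqc}$, i.e. $W(\mu_n,\mu)\to0$ and $W(\nu_n,\nu)\to0$, the Lipschitz bound forces $Z(\curt(\mu_n,\nu_n),\curt(\mu,\nu))\to0$, whence $\curt(\mu_n,\nu_n)\to\curt(\mu,\nu)$ in $\mathcal{T}_1(\R^2)$ by Proposition \ref{semimetric}.

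The step I expect to cost the most care is the identification $\nu^s=S^\nu(\mu^s)$ for the canonical family, and in particular the treatment of atoms of $\mu$: one must check that truncating \emph{within} a single fiber of $\pi_\lc$ (rather than cutting at a value of $x$) still produces the shadow $S^\nu(\mu^s)$, and that this truncation can be chosen nested in $s$ so that condition (3) of Definition \ref{defz} is respected. This is exactly where Proposition \ref{shadowsum} and the description of the shadow of an atom (Example \ref{un_atome}) are invoked; the remainder is bookkeeping with the definition of $Z$ and the monotonicity of $s\mapsto\int_0^s|G_\mu-G_{\mu'}|\,\dd\lambda$.
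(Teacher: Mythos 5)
Your proposal is correct and follows essentially the same route as the paper: the paper also introduces $\mu^s=(G_\mu)_\#\lambda_{[0,s]}$ and $\nu^s=S^\nu(\mu^s)$, takes the unique $\leqp$-increasing family $\pi^s$ with these marginals, and applies Theorem \ref{lips} pointwise in $s$. Your additional care about admissibility of the family (the atoms of $\mu$) and the bound $W(\mu^s,\mu'^s)\leq W(\mu,\mu')$ via Lemma \ref{stokanto} simply makes explicit what the paper leaves to the reader.
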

\begin{proof}
Like in paragraph \ref{plusgrand} we have $\mu=(G_\mu)_\#\Lg_{[0,1]}$. We introduce $\mu^s=(G_\mu)_\#\Lg_{[0,s]}$ and $\nu^s=S^\nu(\mu^s)$. In a similar way to Definition \ref{lc} of $\pi_\lc$, we obtain a unique family $\pi^s$, increasing for $\leqp$, and with marginals $\mu^s$ and $\nu^s$. We proceed in the same way for $\mu'$ and $\nu'$. We obtain the wanted estimate by applying Theorem \ref{lips} to these measures.
\end{proof}

We start with the preliminaries of the proof of Theorem \ref{lips}.
\subsubsection{Variations in $\mu$.}\label{mu}
\begin{lem}[Important lemma]\label{ahyo}
Let $n$ an integer and $\mu,\mu'$ be two measures that are the sum of $n$ atoms of the same mass and such that $\mu\leqs \mu'$. If $\nu\in\m$ satisfies $\mu\leqe \nu$ and $\mu'\leqe \nu$ we have $S^\nu(\mu)\leqs S^{\nu}(\mu')$.
\end{lem}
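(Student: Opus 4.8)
The plan is to reduce the problem to the case $n=1$ (a single atom moved to the right), handle that base case explicitly using Example~\ref{un_atome}, and then propagate along the atoms using the structure of shadows in Proposition~\ref{shadowsum}. First I would set up an induction on $n$. Write $\mu=\sum_{i=1}^n\alpha\delta_{x_i}$ and $\mu'=\sum_{i=1}^n\alpha\delta_{x_i'}$ with $x_i\le x_i'$ after ordering the atoms (here $\mu\leqs\mu'$ forces, after sorting both sequences increasingly, that $x_i\le x_i'$ for each $i$; this is exactly the quantile characterisation of $\leqs$ recalled in Proposition~\ref{ordre_sto} and Lemma~\ref{stokanto}). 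Because moving from $\mu$ to $\mu'$ can be done one atom at a time through intermediate atomic measures that remain $\leqe\nu$ (each intermediate still being dominated by $\nu$ in the extended order, since we only move mass to the right and $\nu$ is fixed), it suffices to treat the case where $\mu$ and $\mu'$ differ in exactly one atom, say the $j$-th, with $x_j<x_j'$ and all other atoms equal.

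For that reduced situation I would peel off the atoms to the left and to the right of the moving one. Let $\gamma^-=\sum_{i<j}\alpha\delta_{x_i}$ (atoms below $x_j$, unchanged) and $\gamma^+=\sum_{i>j}\alpha\delta_{x_i}$. Applying Proposition~\ref{shadowsum} twice,
\begin{align*}
S^\nu(\mu)&=S^\nu(\gamma^-)+S^{\nu_1}(\alpha\delta_{x_j})+S^{\nu_2}(\gamma^+),\\
S^\nu(\mu')&=S^\nu(\gamma^-)+S^{\nu_1}(\alpha\delta_{x_j'})+S^{\nu_2'}(\gamma^+),
\end{align*}
where $\nu_1=\nu-S^\nu(\gamma^-)$, $\nu_2=\nu_1-S^{\nu_1}(\alpha\delta_{x_j})$ and $\nu_2'=\nu_1-S^{\nu_1}(\alpha\delta_{x_j'})$. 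The first summand is common. For the middle summand, Example~\ref{un_atome} says $S^{\nu_1}(\alpha\delta_{x_j})$ and $S^{\nu_1}(\alpha\delta_{x_j'})$ are restrictions of $\nu_1$ between two quantiles, $(G_{\nu_1})_\#\lambda_{]s,s+\alpha[}$ and $(G_{\nu_1})_\#\lambda_{]s',s'+\alpha[}$ respectively, where the barycenters are $x_j<x_j'$; since the barycenter of $(G_{\nu_1})_\#\lambda_{]t,t+\alpha[}$ is nondecreasing in $t$, we get $s\le s'$, hence $S^{\nu_1}(\alpha\delta_{x_j})\leqs S^{\nu_1}(\alpha\delta_{x_j'})$ — these are ``windows'' in the same quantile function, and the right one sits to the right. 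Consequently $\nu_2'\leqs\nu_2$ fails in general but the \emph{leftover} $\nu_2'$ is obtained from $\nu_1$ by deleting a window situated further right, so $\nu_2$ and $\nu_2'$ are comparable in a way that will let us compare their shadows of the common $\gamma^+$.

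The key step — and the one I expect to be the main obstacle — is showing $S^{\nu_2}(\gamma^+)\leqs S^{\nu_2'}(\gamma^+)$ given that $\nu_2'$ is $\nu_1$ minus a right-shifted window relative to $\nu_2$. This is a monotonicity-of-shadow-in-the-target-measure statement, and it is genuinely the heart of the lemma; it should follow from Lemma~\ref{ombre_a_droite} (which controls the tail of a shadow as the stochastically leftmost measure under the available mass) combined with a careful quantile bookkeeping: removing mass located further to the right from the common reservoir $\nu_1$ leaves, at every quantile level relevant to $\gamma^+$'s shadow, at least as much mass available on the left, and shadows are built greedily from the left (again Lemma~\ref{ombre_a_droite} and Example~\ref{un_atome} applied atom by atom to $\gamma^+$ via Proposition~\ref{shadowsum}). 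Once the three pieces are compared, adding the common first summand and the $\leqs$-comparison of the middle summands (using that $\leqs$ is compatible with addition of measures, which is immediate from Definition~\ref{orders} since $E_\sto$ is a cone closed under the relevant operations) yields $S^\nu(\mu)\leqs S^\nu(\mu')$. Chaining the one-atom moves through the intermediate measures completes the induction.
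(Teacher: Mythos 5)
Your reduction to single-atom moves is fine (the intermediate measures $\mu^{(k)}$ do satisfy $\mu^{(k)}\leqe\nu$: split a non-negative convex $\phi$ at its minimiser into a non-increasing and a non-decreasing non-negative convex part and use $\mu\leqs\mu^{(k)}\leqs\mu'$), and the comparison of the middle summands is correct. The problem is the step you yourself flag as the heart of the matter: the claim $S^{\nu_2}(\gamma^+)\leqs S^{\nu_2'}(\gamma^+)$ is not just unproven, it is false in general, so the strategy of comparing the three summands separately and adding cannot be repaired. Take $\nu=\delta_{-1}+\delta_0+\delta_1+\delta_2$, $\mu=\delta_{-1/2}+\delta_{1/2}$, $\mu'=\delta_0+\delta_{1/2}$ (so $n=2$, $\alpha=1$, $j=1$, $\gamma^-=0$, $\gamma^+=\delta_{1/2}$, $x_1=-1/2<x_1'=0$). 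Then $S^\nu(\delta_{-1/2})=\tfrac12\delta_{-1}+\tfrac12\delta_0$, hence $\nu_2=\tfrac12\delta_{-1}+\tfrac12\delta_0+\delta_1+\delta_2$ and $S^{\nu_2}(\delta_{1/2})=\tfrac12\delta_0+\tfrac12\delta_1$; while $S^\nu(\delta_0)=\delta_0$, hence $\nu_2'=\delta_{-1}+\delta_1+\delta_2$ and $S^{\nu_2'}(\delta_{1/2})=\tfrac14\delta_{-1}+\tfrac34\delta_1$. These two third summands are incomparable for $\leqs$: the second puts mass $1/4$ strictly to the left of the support of the first, yet the first has more mass in $]-\infty,0]$. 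The conclusion of the lemma nevertheless holds here, $\tfrac12\delta_{-1}+\delta_0+\tfrac12\delta_1\leqs\tfrac14\delta_{-1}+\delta_0+\tfrac34\delta_1$, because the deficit in the third summand is compensated by the surplus in the second; a piecewise argument cannot see this compensation.

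This is precisely why the paper's proof does not decompose atom by atom at the comparison stage. It runs an induction adding the atoms from left to right with a strengthened hypothesis: the shadows of the partial sums are recorded as quantile sets $\bar J\subset\,]0,m]$ (unions of intervals of integer length) satisfying $\lambda_{\bar J}\leqs\lambda_{\bar J'}$; one shows that the right endpoints $q,q'$ of the newly added quantile windows satisfy $q\le q'$ via a barycenter monotonicity argument; and the final comparison of $J$ and $J'$ is made by cutting both sets at the mass of the rightmost interval of $J'$, not along the atom decomposition. If you want to keep your outline, the missing ingredient is some such global bookkeeping of the quantile sets; the termwise stochastic comparison of the summands produced by Proposition \ref{shadowsum} is not available.
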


\begin{proof}
Without loss of generality, we can assume that the atoms are all Dirac masses (the mass is 1). We write $\mu=\sum_{i=1}^n\delta_{x_i}$ with $x_{i}\leq x_{i+1}$ for any $i<n$ and use the same notations for $\mu'$. As $\mu\leqs \mu'$, one has $x_i\leq x'_i$ for any $i\leq n$.

The proof relies on the description of the shadow of a measure concentrated in one point as $G_\#\Lg_{]q,q+\alpha]}$ where $G$ is the inverse cumulative function of the target measure $G_\nu$ and $\alpha$ is the mass of the atom (see Example \ref{un_atome}). It relies also on the decomposition Proposition \ref{shadowsum} as in Example \ref{more_atoms}: if $\bar{\mu}$ (resp. $\bar{\mu}'$) is the restriction of $\mu$ (resp. $\mu'$) to $\{x_1,\ldots,x_{n-1}\}$ (resp. $\{x'_1,\ldots,x'_{n-1}\}$) we have
$$S^\nu(\mu)=S^\nu(\bar\mu)+S^{\nu-S^\nu(\bar\mu)}(\delta_{x_{n}}).$$
and the similar equation holds for $\mu',\bar\mu'$ and $\delta_{x'_n}$.

We will prove the result by induction on $n$, the number of atoms, not greater than $m=\nu(\R)$. For $n=1$, the statement is obvious. Actually, denoting by $G$ the inverse cumulative function of $\nu$ (it satisfies $G_\#\Lg_{]0,m]}=\nu$) the shadow measures can be written as $G_\#\lambda_{]p,p+1]}$ and $G_\#\lambda_{]p',p'+1]}$ where $p\leq p'$.

If $n\geq 2$ we adopt the notations $\bar{\mu},\,\bar{\mu}'$ introduced above and we assume a statement stronger than the lemma that we call $\mathcal{H}_{n-1}$: there exists two sets $\bar J,\bar J'\subset ]0,m]$ that satisfy
\begin{itemize}
\item the masses of $\bar{J}$ and $\bar{J'}$ are $n-1$,
\item $\bar{J}$ is a disjoint union of intervals of type $]a,b]$. The same holds for $\bar{J'}$,
\item these intervals have integer length,
\item  $\Lg_{\bar J}\leqs\Lg_{\bar J'}$ (during this proof we note it simply $\bar J\leqs \bar{J'}$). In particular, $\max(\bar J)\leq \max(\bar{J'})$,
\item $G_\#\Lg_{\bar J}=S^\nu(\bar{\mu})$ and $G_\#\Lg_{\bar{J'}}=S^\nu(\bar{\mu}')$ (in particular $S^\nu(\bar\mu)\leqs S^\nu(\bar{\mu}')$).
\end{itemize}
As $\nu$ may possess atoms of mass $>1$, the sets $\bar J$ and $\bar J'$ may not be unique. We assume that $\max\bar{J}$ and $\max\bar{J'}$ are as small as possible: no other set  satisfying the five conditions before has a smaller maximum. Note also that we proved $\mathcal{H}_1$ in the paragraph above.

Starting from $\bar J$ and $\bar{J'}$ as in $\mathcal{H}_{n-1}$ where $n\leq m$,  we now construct sets $J\supset \bar J$ and $J'\supset \bar J'$ that satisfy $\mathcal{H}_n$ where $\mu,\,\mu'$ replace $\bar\mu,\,\bar\mu'$.  
In fact we follow the way described at the beginning of the proof and look for the shadow of $\delta_{x_n}$   
(resp. $\delta_{x'_n}$) in $\nu-S^\nu(\bar\mu)=G_\#\Lg_{]0,m[\setminus \bar J}$ (resp. $\nu-S^\nu(\bar\mu')$). We obtain the restriction of these measures to a ``quantile interval'', which can be described as $G_\#\Lg_{I\setminus \bar J}$ and $G_\#\Lg_{I'\setminus \bar J'}$, where $I=]p,q]$ and $I'=]p',q']$ are intervals and $I\setminus \bar J$ and $I'\setminus \bar J'$ are pseudo-intervals (see Example \ref{more_atoms}) of Lebesgue measure $1$. 
If the shadow is a Dirac mass, several choices of $I$ may be available. We choose the smallest possible $\max I$, respectively $\max I'$. Finally the sets $J=\bar J\cup I$ and $J'$ are the union of intervals of integer length. Moreover $S^\nu(\mu)=G_\#\Lg_J$ and $S^\nu(\mu')=G_\#\Lg_{J'}$ as we want.

We still have to prove the relation $J\leqs J'$. Our first step is to see $\max I> \max \bar J$ (and $\max I'>\max \bar{J'}$, which can be shown in the same way). Indeed it is clear if $x_n> x_{n-1}$. If $x_n=x_{n-1}$, a problem may happen if the shadows of both $\delta_{x_{n-1}}$ and $\delta_{x_n}$ are $\delta_{x_n}$, but we recall that $\max \bar J$ was the smallest possible value coherent with $\mathcal{H}_{n-1}$ so that $\max I> \max \bar J$ as we want. Therefore $q:=\max I$ and $\bar{J}$ completely determine $J$. Indeed, one obtains $J$ in adding the greatest real numbers in $]0,m]\setminus \bar{J}$ that are smaller than $q$. One proceeds until the set has measure $1$. The result can also be written $J=\bar{J}\cup]p,q]$.

One can see the barycenter of $\mu$ as a continuous and increasing function of $q$ and the same is true for $\mu'$ and $q'=\sup{J'}$. Let us fix $q'$ and consider $q$ as a variable. As $\bar{J}\leqs \bar{J}'$, one has also $(]0,m]\setminus \bar J')\leqs (]0,m]\setminus \bar J)$. Hence if for the shadows of $x_n$ and $x'_n$ in $\nu-S^\nu(\bar{\mu})$ and $\nu-S^\nu(\bar{\mu}')$ respectively, we start from the same value $q=q'>\max \bar{J'}\geq \max \bar J$ at the right of the interval $]0,m]$ and collect to the left the real numbers in $]0,m]\setminus\bar{J}$ and $]0,m]\setminus\bar{J}'$ respectively until one has a set of mass $1$, the set that we obtain for $S^{\nu-S^\nu(\mu)}(\delta_{x_n})$ is stochastically greater as the one for $S^{\nu-S^\nu(\bar{\mu}')}(\delta_{x'_n})$. In other words $I'\setminus \bar J'\leqs I\setminus \bar J$. This relation still holds after the push-forward $G_\#$. Taking the barycenters, we obtain $x'_{n}\leq  x_n$. But the hypothesis of the lemma states $x_{n}\geq  x'_n$. Having in mind the continuity and the monotonicity of $x_n$ with respect to $q$ we see that the correct position of $q$ satisfies $q\leq q'$.

The length of  the rightmost interval of $J'$ is an integer that we denote by $k$. As $q'\geq q$ the upper part of mass $k$ of $S^\nu(\mu')$ in the stochastic order is greater, for the same order, than the corresponding measure part of $S^\nu(\mu)$. The rest of $J'$ is included in $\bar{J}'$. Due to the induction, it is greater than the corresponding left part of $\bar{J}$ of mass $n-k$. This left part of $\bar{J}$ is greater (in the stochastic order) than the left part of $J$, that is the most left part of mass $n-k$, because $\bar{J}\subseteq J$. Thus $J\leqs J'$ and this pair fulfils $\mathcal{H}_n$.
\end{proof}

\begin{pro}\label{pro_un}
Let $\mu,\mu'\in\m$ with the same mass. Assume $\nu\in\m$ satisfies $\mu\leqcp \nu$ and $\mu'\leqcp\nu$. We have
\begin{align}\label{estim}
W(S^{\nu}(\mu),S^\nu(\mu'))\leq W(\mu,\mu')
\end{align}
\end{pro}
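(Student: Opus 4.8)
The plan is to reduce the general inequality \eqref{estim} to the special case of measures supported on finitely many atoms of equal mass, where Lemma \ref{ahyo} is available. First I would note that by normalisation we may assume $\mu,\mu'$ are probability measures, so $W(\mu,\mu')=\|G_\mu-G_{\mu'}\|_1=\int_0^1|G_\mu-G_{\mu'}|\,\dd\lambda$ by Lemma \ref{stokanto}. The idea is to introduce the top and down measures $\Top(\mu,\mu')$ and $\down(\mu,\mu')$ of Definition \ref{defi_top} as intermediate points: by Lemma \ref{topkanto} one has $W(\mu,\mu')=W(\mu,\down(\mu,\mu'))+W(\down(\mu,\mu'),\mu')$ (and similarly through $\Top$). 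Since $\down(\mu,\mu')\leqs\mu$ and $\down(\mu,\mu')\leqs\mu'$, and $\mu\leqs\Top(\mu,\mu')$, $\mu'\leqs\Top(\mu,\mu')$, passing through these measures turns the problem into a chain of comparisons in which at each step one of the two measures stochastically dominates the other. So it suffices to prove the estimate under the extra assumption $\mu\leqs\mu'$ — but then $\Top(\mu,\mu')$ may have mass exceeding $\nu$, so I first need to enlarge $\nu$: by Lemma \ref{add_queue} (applied as needed, using that $\mu,\mu'\leqcp\nu$) one can find $\tilde\nu\supseteq\nu$ with $\Top(\mu,\mu')\leqcp\tilde\nu$, and Lemma \ref{charge_a_droite} together with its corollary lets me recover the inequality for $\nu$ from the one for $\tilde\nu$ by a limiting argument, since adding mass far out at $\pm\infty$ perturbs the shadow (hence its potential function, hence $W$) arbitrarily little.

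So the heart of the matter is: assuming $\mu\leqs\mu'$ and $\mu,\mu'\leqcp\nu$, show $W(S^\nu(\mu),S^\nu(\mu'))\leq W(\mu,\mu')$. Here I would approximate $\mu$ and $\mu'$ by sums of $n$ atoms of mass $1/n$: take $\mu_n=\tfrac1n\sum_{k=1}^n\delta_{G_\mu((k-1)/n)}$ and $\mu'_n$ defined analogously (or any discretisation preserving $\mu_n\leqs\mu'_n$ and the convex-order relation to $\nu$ — one may need again to enlarge $\nu$ slightly and take a limit, or choose $\mu_n\leqc\mu$, $\mu'_n\leqc\mu'$ so that $\mu_n,\mu'_n\leqcp\nu$ automatically and invoke Proposition \ref{limit_measure} for convergence of the shadows). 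For these atomic measures Lemma \ref{ahyo} gives $S^\nu(\mu_n)\leqs S^\nu(\mu'_n)$. Now, crucially, since the shadow preserves the barycenter — $\int x\,\dd S^\nu(\mu_n)=\int x\,\dd\mu_n$ and likewise for $\mu'_n$ — and since for measures in stochastic order the Kantorovich distance equals the difference of barycenters (last display of Lemma \ref{stokanto}), I get
\begin{align*}
W(S^\nu(\mu_n),S^\nu(\mu'_n))&=\int x\,\dd S^\nu(\mu'_n)-\int x\,\dd S^\nu(\mu_n)\\
&=\int x\,\dd\mu'_n-\int x\,\dd\mu_n=W(\mu_n,\mu'_n),
\end{align*}
using $\mu_n\leqs\mu'_n$ for the last equality. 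Letting $n\to\infty$, $W(\mu_n,\mu'_n)\to W(\mu,\mu')$ and the shadows converge (Proposition \ref{limit_measure}, or Theorem \ref{corstab} via the left-curtain coupling, or Lemma \ref{charge_a_droite}), so $W(S^\nu(\mu),S^\nu(\mu'))\le W(\mu,\mu')$ in the stochastically ordered case.

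The main obstacle I anticipate is bookkeeping around the convex-order constraint: the intermediate measures $\Top$, $\down$, and the atomic approximants need not themselves be dominated by the given $\nu$ in the extended order, so one must repeatedly enlarge $\nu$ by mass at $\pm\infty$ and then pass to the limit using Lemma \ref{charge_a_droite} to remove that extra mass — keeping track that each enlargement is harmless requires care. A secondary technical point is ensuring the discretisations can be chosen to simultaneously respect $\mu_n\leqs\mu'_n$, respect (after enlargement) the convex order against the target, and converge in $W$; choosing quantile-based discretisations and possibly arranging $\mu_n\leqc\mu$ should handle this, with Proposition \ref{limit_measure} supplying the convergence of shadows. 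Once these reductions are in place, the equal-barycenter identity makes the stochastically ordered case almost immediate, which is the satisfying core of the argument.
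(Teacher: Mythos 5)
Your proposal is correct and follows essentially the same route as the paper: reduce via $\down(\mu,\mu')$ (resp.\ $\Top$) and the additivity of $W$ along stochastically ordered chains to the case $\mu\leqs\mu'$, discretise into equal-mass atoms so that Lemma \ref{ahyo} gives $S^\nu(\mu_n)\leqs S^\nu(\mu'_n)$, conclude by the barycenter-preservation of shadows together with Lemma \ref{stokanto}, and clean up the extended-order bookkeeping with Lemmas \ref{add_queue} and \ref{charge_a_droite} and Proposition \ref{limit_measure}. The only cosmetic slip is the phrase ``$\Top(\mu,\mu')$ may have mass exceeding $\nu$'' --- the mass is unchanged; the actual issue, which you nonetheless resolve correctly, is that the intermediate measure need only satisfy $\leqcps\nu$ rather than $\leqcp\nu$.
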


\begin{proof}
1. We first assume that $\mu$ and $\mu'$ are made of finitely many atoms of the same mass. We also assume $\down(\mu,\mu')\leqcp \nu$ and we denote this measure by $\tilde{\mu}$. As explain in Example \ref{topatom}, $\tilde{\mu}$ is a measure of the same type as $\mu$ and $\mu'$. Hence we can apply Lemma \ref{ahyo} to the pairs $(\mu,\tilde{\mu})$ and $(\mu',\tilde{\mu})$. Using Lemma \ref{stokanto} and Lemma \ref{topkanto} we can compute as follows
\begin{align*}
W(\mu,\mu')&=W(\mu,\tilde\mu)+W(\mu',\tilde\mu)\\
&=W(S^\nu(\mu),S^\nu(\tilde\mu))+W(S^\nu(\mu'),S^\nu(\tilde\mu))\\
&\geq W(S^\nu(\mu),S^\nu(\mu'))
\end{align*}

2. If $\tilde{\mu}\leqcp \nu$ does not hold, we have $\tilde{\mu}\leqcps \nu$ so that there exists $(\nu_n)_n$ as in Lemma \ref{add_queue}. We have $\tilde{\mu}\leqcp \nu+\nu_n$ where $\sup\spt\nu_n$ tends to $-\infty$ and the computation above leads to $W(S^{\nu+\nu_n}(\mu),S^{\nu+\nu_n}(\mu'))\leq W(\mu,\mu')$. Therefore with Lemma \ref{charge_a_droite}, we obtain \eqref{estim}.

3. For general $\mu,\mu'$ of the same mass $m$, that we assume to be 1, we approach them in $\p$ by measures $\mu_n\leqc\mu$ and $\mu'_n\leqc \mu'$ with the same barycenter, obtained as the sum of $2^n$ atoms of mass $m/2^n$. We do it in the following way: $\mu_n$ is defined as $\sum_{k=1}^{2^n}\frac{1}{2^n}\delta_{x_k}$ where 
$$x_k=2^n\int_{k/2^n}^{(k+1)/2^n} G_\mu(t)\dd \Lg(t).$$
The quantile function associated with $\mu_n$ is constant on each $]k/2^n,(k+1)/2^n]$ with value the mean of $G_\mu$ on this interval. We recognise for the filtration made of the dyadic intervals of $]0,m]$, the martingale associated with the random variable $G_\mu\in L^1(]0,1])$, the $L^1$-norm being the Kantorovich distance between the measures, as explained in Lemma \ref{stokanto}. Hence $(\mu_n)_n$ is non decreasing for the convex order and $\mu_n\longrightarrow \mu$ in $\m$. Thus applying Proposition \ref{limit_measure} we obtain the wanted estimate as $n$ goes to $\infty$.
\end{proof}
\begin{rem}
One can release the assumption to have atomic measures in Lemma \ref{ahyo} by using the approximation detailed in point 3 of the proof of Proposition \ref{pro_un}. Indeed, the stochastic order is stable in the weak topology so that $S^\nu(\mu)\leqs S^\nu(\mu')$ is true for general measures $\mu\leqs \mu'$.
\end{rem}

\subsubsection{Variations in $\nu$ and conclusion.}
\begin{pro}\label{pro_deux}
Let $\mu$ and $\nu,\nu'$ be elements of $\m$ such that $\mu\leqe \nu$, $\mu\leqe \nu'$ and $\nu(\R)=\nu'(\R)$. It holds
$$W(S^\nu(\mu),S^{\nu'}(\mu))\leq 2 W(\nu,\nu').$$
\end{pro}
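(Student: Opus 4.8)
The plan is to reduce to the case where $\mu$ is a sum of finitely many atoms, and then to bridge the two shadows through an auxiliary shadow in $\nu'$, recognising a ``change of target'' as a ``change of source'' so that Proposition~\ref{pro_un} applies.

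\emph{Reduction.} First I would approximate $\mu$ exactly as in point~3 of the proof of Proposition~\ref{pro_un}: let $\mu_n\leqc\mu$ be the atomic dyadic measures, so that $(\mu_n)_n$ is non-decreasing in the convex order and $\mu_n\to\mu$ in $\m$. Since $\mu_n\leqc\mu\leqe\nu$ and $\mu_n\leqc\mu\leqe\nu'$, both $S^{\nu}(\mu_n)$ and $S^{\nu'}(\mu_n)$ are defined, and Proposition~\ref{limit_measure} gives $W(S^{\nu}(\mu_n),S^{\nu}(\mu))\to 0$ and $W(S^{\nu'}(\mu_n),S^{\nu'}(\mu))\to 0$. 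Hence it suffices to prove the estimate with $\mu$ replaced by each $\mu_n$, and I may assume $\mu=\sum_{i=1}^{N}\alpha_i\delta_{x_i}$.

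\emph{Both shadows as quantile restrictions.} Running the recursion of Example~\ref{more_atoms} for $\mu$ and $\nu$ produces an increasing chain $J_1\subseteq\cdots\subseteq J_N=:J$ of subsets of $]0,m]$ (with $m=\nu(\R)=\nu'(\R)$) such that $J_k\setminus J_{k-1}$ is a pseudo-interval of $]0,m]\setminus J_{k-1}$ of mass $\alpha_k$ and $S^{\nu}(\mu)=(G_\nu)_\#\Lg_{J}$; similarly $S^{\nu'}(\mu)=(G_{\nu'})_\#\Lg_{J'}$ for the analogous set $J'$ attached to $\mu$ and $\nu'$. The observation that drives the argument is that the admissibility condition for the chain $(J_i)_i$ — that each $J_k\setminus J_{k-1}$ is a pseudo-interval of the complement of $J_{k-1}$ — does not involve the target measure. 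So $(J_i)_i$ is admissible for $\nu'$ as well, and the converse part of Example~\ref{more_atoms} applied to $\nu'$ gives $(G_{\nu'})_\#\Lg_{J}=S^{\nu'}(\tilde\mu)$, where $\tilde\mu=\sum_{i=1}^{N}\alpha_i\delta_{\tilde x_i}$ and $\tilde x_i$ is the barycenter of $(G_{\nu'})_\#\Lg_{J_i\setminus J_{i-1}}$.

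\emph{The bridge.} I would then insert $(G_{\nu'})_\#\Lg_{J}$ between the two shadows:
\[
W(S^{\nu}(\mu),S^{\nu'}(\mu))\leq W\bigl((G_\nu)_\#\Lg_J,(G_{\nu'})_\#\Lg_J\bigr)+W\bigl((G_{\nu'})_\#\Lg_J,(G_{\nu'})_\#\Lg_{J'}\bigr).
\]
For the first term, the comonotone plan $(G_\nu,G_{\nu'})_\#\Lg_J$ gives the bound $\int_{J}|G_\nu-G_{\nu'}|\,\dd\Lg\leq\|G_\nu-G_{\nu'}\|_1=W(\nu,\nu')$ by Lemma~\ref{stokanto}. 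For the second term, the previous paragraph identifies it with $W(S^{\nu'}(\tilde\mu),S^{\nu'}(\mu))$, which Proposition~\ref{pro_un} bounds by $W(\tilde\mu,\mu)\leq\sum_{i}\alpha_i|x_i-\tilde x_i|$; since $\alpha_i|x_i-\tilde x_i|$ is the absolute value of $\int_{J_i\setminus J_{i-1}}(G_\nu-G_{\nu'})\,\dd\Lg$, it is at most $\int_{J_i\setminus J_{i-1}}|G_\nu-G_{\nu'}|\,\dd\Lg$, and summing over the disjoint sets $J_i\setminus J_{i-1}\subseteq\,]0,m]$ again yields at most $W(\nu,\nu')$. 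Adding the two estimates gives $W(S^{\nu}(\mu),S^{\nu'}(\mu))\leq 2W(\nu,\nu')$, which together with the reduction step proves the proposition.

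\emph{Where the difficulty lies.} The one genuinely non-routine point is the identification $(G_{\nu'})_\#\Lg_J=S^{\nu'}(\tilde\mu)$: it is what turns the variation in $\nu$ into a variation in the source, so that Proposition~\ref{pro_un} becomes available, and it is precisely what produces the constant $2$. Making this rigorous requires checking that the quantile chain produced for $\nu$ can be run backwards through Example~\ref{more_atoms} for $\nu'$, together with the order hypotheses $\mu_n\leqe\nu$, $\mu_n\leqe\nu'$, $\tilde\mu\leqe\nu'$ needed to invoke Propositions~\ref{pro_un} and~\ref{limit_measure} (all immediate from transitivity of $\leqc$ and $\leqp$, since e.g. $\tilde\mu\leqc(G_{\nu'})_\#\Lg_J\leqp\nu'$).
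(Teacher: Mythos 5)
Your proof is correct, and although its engine is the same as the paper's first step --- run Example \ref{more_atoms} forwards for $(\mu,\nu)$ to get the chain $(J_i)_i$, run it backwards for $\nu'$ to realise $(G_{\nu'})_\#\Lg_J$ as the shadow in $\nu'$ of an auxiliary atomic measure, then apply the triangle inequality and Proposition \ref{pro_un} --- it is a genuine streamlining of the paper's three-step argument. The paper first proves the estimate under the extra hypothesis $\nu\leqs\nu'$, where the stochastic orderings $G_\nu\leq G_{\nu'}$ and $\mu\leqs\mu'$ allow it to compute both terms of the triangle inequality exactly as $\int_{J}(G_{\nu'}-G_\nu)\,\dd\Lg$ via Lemma \ref{stokanto}; it then reduces the general case to this one by passing through $\Top(\nu,\nu')$ (Lemma \ref{topkanto}), which in turn forces an extra step of adding mass near $-\infty$ (Lemma \ref{add_queue}) and a limiting argument (Lemma \ref{charge_a_droite}), since $\mu\leqe\Top(\nu,\nu')$ need not hold. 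You bypass all of this by replacing the exact identities with the one-sided bounds $W\bigl((G_\nu)_\#\Lg_J,(G_{\nu'})_\#\Lg_J\bigr)\leq\int_J|G_\nu-G_{\nu'}|\,\dd\Lg$ (comonotone coupling restricted to $J$) and $\alpha_i|x_i-\tilde x_i|=\bigl|\int_{J_i\setminus J_{i-1}}(G_\nu-G_{\nu'})\,\dd\Lg\bigr|\leq\int_{J_i\setminus J_{i-1}}|G_\nu-G_{\nu'}|\,\dd\Lg$, neither of which requires any ordering between $\nu$ and $\nu'$. The pivotal identification $(G_{\nu'})_\#\Lg_J=S^{\nu'}(\tilde\mu)$ is legitimate exactly for the reason you give: admissibility of the chain is a purely set-theoretic condition on the $J_i$, and the hypotheses $\tilde\mu\leqc(G_{\nu'})_\#\Lg_J\leqp\nu'$ and $\mu\leqe\nu'$ needed to invoke Proposition \ref{pro_un} hold automatically. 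Nothing is lost, and what is gained is a proof with no case distinction and no appeal to Lemmas \ref{topkanto}, \ref{add_queue} or \ref{charge_a_droite}.
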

\begin{proof}
1. We make first the additional assumption $\nu\leqs \nu'$ and we will prove $W(S^\nu(\mu),S^{\nu'}(\mu))\leq 2 W(\nu,\nu')$ in this case. Because of Proposition \ref{limit_measure}, we can assume without loss of generality that $\mu$ is of type $\sum_{i=1}^n\alpha_i\delta_{x_i}$ by using the same method as at step 3. of Proposition \ref{pro_un}. We can describe $S^\nu(\mu)$ as it is done in Example \ref{more_atoms} and introduce for this purpose a sequence $J_1\subseteq\cdots\subseteq J_n$. We have $S^\nu(\mu)=(G_\nu)_\#\lambda_{J_n}$ and for any $k$, $S^\nu(\sum^k\alpha_i\delta_{x_i})=(G_\nu)_\#\lambda_{J_i}$. We introduce now $S'=(G_{\nu'})_\#\lambda_{J_n}$ and $\mu'=\sum^k \alpha_i\delta_{x'_i}$ where $x_i'$ is the barycenter of $(G_{\nu'})_\#\lambda_{J_i\setminus J_{i-1}}$. As $\nu\leqs \nu'$, we have $G_\nu\leq G_{\nu'}$ and $S^\nu(\mu)\leqs S'$. Of course $x_i\leq x'_i$ so that $\mu\leqs \mu'$. According to the converse statement in Example \ref{more_atoms} we also have $S'=S^{\nu'}(\mu')$. Therefore using Proposition \ref{pro_un} for $\mu,\mu'\leqcp\nu'$.
\begin{align*}
W(S^\nu(\mu),S^{\nu'}(\mu))&\leq W(S^\nu(\mu),S^{\nu'}(\mu'))+W(S^{\nu'}(\mu'),S^{\nu'}(\mu))\\
&\leq W(\nu,\nu')+W(\mu,\mu')\leq 2W(\nu,\nu').
\end{align*}
Indeed, due to $\mu\leqs\mu'$ and $S^{\nu}(\mu)\leqs S^{\nu'}(\mu')$ we have
$$W(\mu,\mu')=W(S^\nu(\mu),S^{\nu'}(\mu'))=\int_{J_n}(G_{\nu'}-G_\nu)\dd \lambda\leq\int_0^{\nu(\R)}(G_{\nu'}-G_\nu)\dd \lambda=W(\nu,\nu').$$

2. We assume now $\mu\leqe \Top(\nu,\nu')$. In this case we use the triangle inequality, point 1 and Lemma \ref{topkanto} so that we can establish
$$W(S^\nu(\mu),S^{\nu'}(\mu))\leq 2 W(\nu,\nu').$$

3. Let us assume that $\mu\leqe \Top(\nu,\nu')$ does not hold. According to Lemma \ref{add_queue}, there exists $\gamma$ such that $\mu\leqe \Top(\nu,\nu')+\gamma$. But as stated in Lemma \ref{Top_com}, $\Top(\nu,\nu')+\gamma$ is also $\Top(\nu+\gamma,\nu'+\gamma)$ and $\mu\leqe \nu+\gamma$ as well as $\mu\leqe \nu'+\gamma$. Therefore according to the previous paragraph
$$W(S^{\nu+\gamma}(\mu),S^{\nu'+\gamma}(\mu))\leq 2 W(\mu,\mu').$$
Note that in Lemma \ref{add_queue}, $\sup(\spt(\gamma))$, that is the upper bound on the support of $\gamma$ can be chosen arbitrary close to $- \infty$. Hence, letting $\sup(\spt \gamma)$ go to $-\infty$, Lemma \ref{charge_a_droite} permits us to conclude in the more general case.
\end{proof}

\begin{proof}[Proof of Theorem \ref{lips}]
We combine Proposition \ref{pro_un} and Proposition \ref{pro_deux} and simply use the triangle inequality
\begin{align*}
W(S^\nu(\mu),S^{\nu'}(\mu'))&\leq W(S^\nu(\mu),S^{\nu'}(\mu))+W(S^{\nu'}(\mu),S^{\nu'}(\mu')).\\
&\leq 2W(\nu,\nu')+W(\mu,\mu')
\end{align*}
This can only be written doing the further assumption $\mu\leqe\nu'$. 

In the general case, let $\gamma$ be such that $\mu\leqe\nu'+\gamma$. Of course we still have $\mu\leqe \nu+\gamma$ and $\mu'\leqe \nu'+\gamma$. The previous computation holds and writes
$$W(S^{\nu+\gamma}(\mu),S^{\nu'+\gamma}(\mu'))\leq 2W(\nu+\gamma,\nu'+\gamma)+W(\mu,\mu').$$
But $W(\nu+\gamma,\nu'+\gamma)=W(\nu,\nu')$. We conclude using the same method as at the end of the proof of Proposition \ref{pro_deux}. With the end of Lemma \ref{add_queue} we obtain a suitable sequence $(\gamma_n)_{n}$ and we use the end of Lemma \ref{charge_a_droite} for the convergence $W(S^{\nu+\gamma_n}(\mu),S^{\nu'+\gamma_n}(\mu'))\to W(S^{\nu}(\mu),S^{\nu'}(\mu'))$.
\end{proof}

The following example shows that the left-curtain $\curt$ is not Lipschitzian when considering the Kantorovich distances $W$ and $W^{\R^2}$. In other words a result like Corollary \ref{doubidou} does not hold.

\begin{ex} \label{Wass_plouf}
For any integer $n\geq 1$ and $\eps<1$, we consider four measures of mass $n$. The first two ones employ Dirac masses at some points $k\in\N$ and $\eps>0$.
\begin{align*}
\mu=\sum_{k=0}^{n-1}\delta_k\quad\text{and}\quad\mu'=\delta_{\eps}+\sum_{k=1}^{n-1}\delta_k.
\end{align*}
and the two others  are made of uniform measures
\begin{align*}
\nu=\Lg_{[-1/2,n-1/2]}\quad\text{and}\quad\nu'=\Lg_{[-1/2+\eps/n,n-1/2+\eps/n]}.
\end{align*} Note that $\mu\leqs\mu'$, $\nu\leqs\nu'$ and $W(\mu,\mu')=W(\nu,\nu')=\eps$. As the measures are pairwise in convex order we can define the curtain couplings $\pi=\curt(\mu,\nu)$ and $\pi'=\curt(\mu',\nu')$. We have
$$\pi=\sum_{k=0}^{n-1}\delta_k\otimes\Lg_{[k-1/2,k+1/2]}.$$
The expression of $\pi'_n$ is more intricate.
$$
\pi'_n=\delta_{\eps}\otimes\Lg_{[-1/2+\eps,1/2+\eps]}+\sum_{k=1}^{n-1}\delta_k\otimes\Lg_{A_{k,1}\cup A_{k,2}}
$$
where for any $k\leq n-1$, the set $A_{k,1}\cup A_{k,2}=[-1/2+\eps/(k+1),\,-1/2+\eps/k)]\cup[k-1/2+\eps/k,\,k+1/2+\eps/{k+1}]$ is the union of an interval of length $\frac\eps{k(k+1)}$ close to $-1/2$ and the interval of length $1- \frac\eps{k(k+1)}$ with barycenter close to $k$.

in $\R^2$ the set $\{k\}\times A_{k,1}$ is part of the support of $\pi'$. It has mass $\eps/{k(k+1)}$ and distance to $\spt\pi$ greater than $k/2$ (for the $\ell^1$ norm $\|(x,y)\|=|x|+|y|$. It is in fact close to $k$.). If follows 
$$W^{\R^2}(\pi,\pi')>\sum_{k=1}^{n-1}\frac{\eps}{2(k+1)}=\frac12\left(\sum_{k=2}^n\frac1{k}\right)\max(W(\mu,\mu'),W(\nu,\nu')).$$
Note that we can normalise in mass and space and get the same estimate for families of probability measures close to $\Lg_{[0,1]}$. After this normalisation, the sequences $\pi_n$ and $\pi'_n$ both converge to $(\id\otimes\id)_\#\Lg_{[0,1]}$ but the ratios $W^{\R^2}(\pi_n,\pi'_n)/W(\mu_n,\mu'_n)$ and $W^{\R^2}(\pi_n,\pi'_n)/W(\nu_n,\nu'_n)$ go to infinity faster than $\ln(n)/2$. This make it impossible to find an estimate like Corollary \ref{doubidou} for the Kantorovich distance in place of $Z$.
\end{ex}

\section{The peacock problem, examples and counterexamples}
\subsection{Definitions}
Following Hirsch, Profeta, Roynette, and Yor \cite{HPRY} we call peacock a path $(\mu_t)_t$ of measures of $\p$ that is non-decreasing in the convex order. The origin of the name is the translation from the French ``Processus Croissant pour l'Ordre Convexe", in short PCOC, that is pronounced ``peacock'' in English. More precisely in this chapter we are concerned with peacocks indexed on $[0,1]$ and we will assume the paths are right-continuous in the topology $\mathcal{T}_1$. Note that due to the transformation $\mu\mapsto u_\mu$ and its relationship with the topology and the convex order, it is the same as assuming that the path is c\`adl\`ag (``continu \`a droite, limitu \`a gauche'', right-continuous with left limit). Note also that the set of discontinuous times is countable. 
Moreover, as explained in Remark \ref{rem_topo}, when restricted to $\{\mu_t\in\p:\,t\in[0,1]\}$, due to the uniform integrability of the measures in this set, the topology $\mathcal{T}_1$ is the same as the topology $\mathcal{T}_{\mathrm{cb}}$. Still according to this remark, the same holds when one considers joint laws of measures in this set.

 The problems raised in \cite{HPRY} are the existence and construction of processes $(M_t)_{t\in[0,1]}$ that are martingales in the canonical filtration $(\mathcal{F}_t)_{t\in[0,1]}$ (generated by $M$) and with $\law(M_t)=\mu_t$ for any $t$. More precisely, in the original problem the 1-dimensional marginals should be given as the marginals of an auxiliary process $(X_t)_t$ that is not a martingale. In this case the filtration to be considered may be that generated by $X$. 
In keeping with the practice of other authors in related papers like \cite{HiRo12, HTT}, we prefer to start with $(\mu_t)_{t\in[0,1]}$ as initial data. The answer to the question of existence predated the birth of peacocks by almost 40 years. In \cite{Ke72}, Kellerer established that a martingale may be associated with any peacock and that this martingale may be assumed to be Markovian. This aspect is important in the work by Kellerer. Indeed he stresses the fact that the Markov composition of two joint laws along a common marginal is not a continuous operator. Set $f_k(x)=|x|+1/k$ defined on $[-1,1]$. The Markov process
$$(\omega,t)\in\{0,1\}\times[-1,1]\mapsto (1-\omega)f_k(t)$$
converges to a non-Markovian process as $k$ tends to infinity. In fact, in the limit process, the future from position $0$ at time $0$ depends on the past (see also the counterexample to Satz 14 in \cite{Ke72} where the marginals are constant). Kellerer stresses the fact that the replicated Strassen theorem may associate a Markovian martingale with a peacock indexed on $\N$ but because of the lack of continuity that we already mentioned, the result can not be easily carried over to $\R^+$ (or $[0,1]$). In this chapter we insist on the Markov property in Kellerer's theorem. We construct processes using the left-curtain coupling and the Markov composition. We address the questions of existence, uniqueness and Markovianity. 

Let us cite the very recent contribution by Henry-Labord\`ere, Tan, and Touzi \cite{HTT} that is closely related to ours. These authors establish the uniqueness of the limit ``curtain'' process provided by the left-curtain coupling. This is done under some conditions that permit them in particular to include the peacock associated with a continuous peacock made of Gaussian measures (the resulting martingale is not the Brownian motion but a pure downward jump local Levy model). Under this set of assumptions, they identify the limit process with the minimiser of a functional on the set of processes. Another important result with respect to uniqueness is that of M. Pierre \cite[Theorem 6.1]{HPRY}, presented in the context of peacocks by Hirsch and Roynette \cite{HiRo12}. It does not rely on the left-curtain coupling but, for a class of peacocks, on the uniqueness of a martingale satisfying a type of Fokker-Planck equation, where the $1$-marginals are prescribed (the process is even Markovian). Another important contributor is Lowther \cite{Low, Low1, Low2}. In his papers, the author introduces the so-called {\it Lipschitz-Markov property}, close to the original conception of Kellerer \cite[Definition 2 and 3]{Ke72}. His work is revisited in \cite{HiRoYo}. 

In this chapter we conduct a study of some representative examples of peacocks, both continuous (Proposition \ref{pro_uniform}) and discrete (Theorem \ref{Poisson_fini}). For some of them, we prove the existence and uniqueness of a limit ``curtain'' process in the Skorokhod topology, for arbitrary sequences of partitions. For the others (Paragraphs \ref{threepoints} and \ref{nopoints}) we prove that there exist non-Markovian limits. We conclude by suggesting some open questions.

\subsubsection{Notations}
We consider a peacock $[\mu]:=(\mu_t)_{t\in[0,1]}$ and we build a process using a discretisation. More precisely, for any interval partition $\sigma=\{t_0,\ldots,t_Q\}$ of $[0,1]$ with $0=t_0<\cdots<t_Q=1$, there is a unique measure on the Skorokhod space $D([0,1])$ denoted by $P^{\mu,\sigma}$ such that the canonical process $X^{\mu,\sigma}_t:x\in(D,P^{\mu,\sigma})\mapsto x(t)\in\R$ is (almost surely) constant on every $[t_{k},t_{k+1}[$ and for every $k< Q$ the transport plan $(X_{t_k}\otimes X_{t_{k+1}})_\#P^{\mu,\sigma}$ is the left-curtain coupling between $\mu_{t_k}$ and $\mu_{t_{k+1}}$. In order to completely define $P^{\mu,\sigma}$ we need to specify that the canonical process $X^{\mu,\sigma}$ is a Markov process. The last important point is that $(X_t^{\mu,\sigma})_t$ is a martingale in the canonical filtration. Indeed, for $s\leq t$, $\E(X_t|X_s)=X_t$ because the transitions are martingale transport plans and $\E(X_t|\mathcal{F}_s)=\E(X_t|X_s)$ because the process is Markovian.

For a sequence of interval partitions $(\sigma^{(p)})_{p\in \N}$ with the mesh $|\sigma^{(p)}|$ going to $0$ as $p$ tends to $+\infty$, the sequence $P^{\mu,p}:=P^{\mu,\sigma^{(p)}}$ may converge in the weak topology of measures on the space $D([0,1])$ equipped with the Skorokhod topology. We denote by $\LIM([\mu])$ the set of possible limit measures on the Skorokhod space and call them \emph{limit curtain processes}. Actually any element $P\in\LIM([\mu])$ is relevant for the peacock problem because the canonical process $(X_t)_{t\in[0,1]}$ --- (i) satisfies $\law(X_t)=\mu_t$ for every $t$, (ii) is a martingale (see Proposition \ref{relevant} for the two points). One of our goals is to provide sufficient conditions for the set of possible limit curtain processes to be reduced to one element or to be not empty. The other goal is to examine whether there are Markovian or non-Markovian processes among the limit processes.

\subsubsection{Transformations}\label{transfo}

Given the peacock $[\mu]$ and a sequence $(\sigma^{(p)})_p$, if $(P^{\mu,p})_p$ converges as $p$ goes to infinity, the same holds for the peacock $[\phi_\#\mu]$ and $(P^{\phi_\#\mu,\sigma^{(p)}})_p$ where $\phi$ is the transformation $\phi:x\mapsto ax+b$ with $a>0$. The relation between the limits can be read in the canonical processes that are $(X_t)_t$ and $(aX_t+b)_t$.

Let us now consider another natural transformation $t\mapsto\mu_{\tau(t)}$ where $\tau:\R\to \R$ is a continuous non-decreasing function from $[0,1]$ to $[0,1]$. Using the uniform continuity of $\tau$ we see that $\LIM([\mu_\tau])$ is exactly the set of measures with canonical process $X_{\tau(t)}$ where $(X_t)_t$ is the canonical process of some $P\in\LIM([\mu_t])$.


\subsubsection{Finite dimensional topology}

In a similar way as $\LIM([\mu])$ we introduce $\LFD([\mu])$. For a right-continuous peacock $[\mu]$, it is the set of measures on $\R^{[0,1]}$ that are obtained for $(\sigma^{(p)})_{p\in\N}$ a sequence of partitions with mesh $|\sigma^{(p)}|$ going to $0$, as the limit  of $(P^{\mu,p})_p$ in the finite dimensional convergence.

Note with the following example that a limit curtain process for this topology may not satisfy the condition $\law(X_t)=\mu_t$. It may also be a measure on $\R^{[0,1]}$ that is not concentrated on $D([0,1])$. 

\begin{ex}\label{tiers}
Both assumptions can be shown with the peacock 
\begin{align*}
\mu_t=
\left\{
\begin{aligned}
&\delta_0&\text{ if }t<1/3,\\
 &(\delta_{-1}+\delta_1)/2&\text{ if }t\geq 1/3
 \end{aligned}
 \right.
\end{align*}
and $\sigma^{(p)}=\{k/2^p:\,k=0,\ldots,2^p\}$. One can see that for the corresponding limit process of $\LFD([\mu])$ satisfies $\law(X_{1/3})=\delta_0$, which make it impossible to be represented by a measure on c\`adl\`ag pathes.
\end{ex}

Nevertheless an element of $\LFD([\mu])$ is always a martingale as can be seen by adapting the proof Lemma 4.5 in \cite{HiRo12} by Hirsch and Roynette. In the next proposition, we adapt another proof by the same authors \cite[Theorem 3.2]{HiRo13} that relies on a Cantor diagonal argument and the classical regularisation of martingales (I first learnt this method from Beiglb\"ock \cite{Be_private}, see also \cite{Do}).
\begin{pro}\label{reprise}
Let $[\mu]=(\mu_t)_{t\in[0,1]}$ be a right continuous peacock. The set $\LFD([\mu])$ is not empty and contains at least one c\`adl\`ag process that is relevant for the peacock problem.

 More precisely, for any increasing (in the sense of inclusion) sequence of partitions $(\sigma^{(p)})_{p\in \N}$ such that $\cup_p\sigma^{(p)}$ contains the points of discontinuity of $t\mapsto \mu_t$, there exists a measure $P\in\LFD([\mu])$ and a subsequence of $(P^{\mu,p})$ converging to $P$ such that the canonical process associated with $P$ is a (maybe non-Markovian) c\'adl\'ag martingale with 1-dimensional marginals $\mu_t$ at any time $t\in[0,1]$.  

\end{pro}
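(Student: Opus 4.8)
The plan is to follow the method of \cite[Theorem 3.2]{HiRo13}: extract a finite-dimensional limit along the countable index set $D:=\bigcup_p\sigma^{(p)}$ by a Cantor diagonal argument, regularise it into a c\`adl\`ag martingale, and finally identify the finite-dimensional limits at \emph{every} time, not only at times of $D$. Write $\phi_p(t)=\max(\sigma^{(p)}\cap[0,t])$, so that $X^{\mu,p}_t=X^{\mu,p}_{\phi_p(t)}$ and $\law(X^{\mu,p}_t)=\mu_{\phi_p(t)}\leqc\mu_t\leqc\mu_1$; note $D$ is dense, contains $0,1$ and, by hypothesis, every discontinuity time of $t\mapsto\mu_t$. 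A first remark is that $\{\nu\in\p:\nu\leqc\mu_1\}$ is uniformly integrable, since for the convex function $x\mapsto(|x|-R/2)_+$ one has $\int_{\{|x|\ge R\}}|x|\,\dd\nu\le 2\int(|x|-R/2)_+\,\dd\nu\le 2\int(|x|-R/2)_+\,\dd\mu_1\to0$. Hence for every finite $F\subset[0,1]$ the laws $\{P^{\mu,p}|_F\}_p$ are tight on $\R^F$, and I may extract a subsequence $(p_j)$ along which $(X^{\mu,p_j}_q)_{q\in G}$ converges in law for every finite $G\subset D$; the limits are consistent and define a process $(Y_q)_{q\in D}$ with $\law(Y_q)=\mu_q$ for $q\in D$ (for such $q$, $\law(X^{\mu,p_j}_q)=\mu_q$ eventually).

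The process $(Y_q)_{q\in D}$ is a martingale for its natural filtration: for $q_1<\dots<q_k<q_{k+1}$ in $D$ and $g$ bounded continuous, $\E[g(X^{\mu,p_j}_{q_1},\dots,X^{\mu,p_j}_{q_k})(X^{\mu,p_j}_{q_{k+1}}-X^{\mu,p_j}_{q_k})]=0$ once all $q_i\in\sigma^{(p_j)}$, because $X^{\mu,p_j}$ is a martingale in its canonical filtration; letting $j\to\infty$ and using uniform integrability of $\{X^{\mu,p_j}_{q_{k+1}}-X^{\mu,p_j}_{q_k}\}_j$, the identity passes to $(Y_q)$. Since $\sup_q\E|Y_q|\le\int|x|\,\dd\mu_1<\infty$, Doob's upcrossing inequality over the countable set $D$ yields an a.s.\ c\`adl\`ag process $\bar M_t:=\lim_{q\downarrow t,\,q\in D}Y_q$ ($t<1$), $\bar M_1:=Y_1$, which is a martingale for the right-continuous filtration it generates, hence also for its own filtration. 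Two points are then settled by the same elementary rigidity --- if $Y=\E[X\mid\mathcal G]$ and $\law(Y)=\law(X)$ has finite first moment, then $Y=X$ a.s., because $\phi(x)=\sqrt{1+x^2}$ is strictly convex with $\int\phi\,\dd\law(X)<\infty$ and $\E\phi(Y)\le\E\phi(X)$ with equality forcing $Y=X$. First, $\bar M_q=Y_q$ a.s.\ for $q\in D$: the reverse martingale theorem gives $\bar M_q=\E[Y_1\mid\mathcal G_{q+}]$, whence $Y_q=\E[\bar M_q\mid\mathcal G_q]$, while $\law(\bar M_q)=\lim_{r\downarrow q}\mu_r=\mu_q=\law(Y_q)$ by right-continuity of the peacock. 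Second, $\law(\bar M_t)=\mu_t$ for every $t$: this is clear on $D$, and for $t\notin D$ the a.s.\ convergence $Y_q\to\bar M_t$ ($q\downarrow t$) is in $L^1$ by uniform integrability, so $\law(\bar M_t)=\lim_{q\downarrow t}\mu_q=\mu_t$.

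It remains to identify the finite-dimensional limits of $(P^{\mu,p_j})$ at arbitrary times; set $P:=\law(\bar M)$. Fix a finite $F\subset[0,1]$; by tightness it suffices to show that any subsequential weak limit $R$ of $\law((X^{\mu,p_j}_t)_{t\in F})$ equals $\law((\bar M_t)_{t\in F})$. Realise $R$ along a sub-subsequence $(p_{j_l})$ as the law of $((\bar M_q)_{q\in F\cap D},(W_t)_{t\in F\setminus D})$, the $D$-coordinates being already determined by the construction. Fix $t\in F\setminus D$; then $\mu$ is continuous at $t$, and for every $q^-<t$ in $D$ one has $\phi_{p_{j_l}}(t)\in[q^-,t]$ for $l$ large, so $\mu_{q^-}\leqc\mu_{\phi_{p_{j_l}}(t)}\leqc\mu_t$; squeezing the potential functions between $u_{\mu_{q^-}}$ and $u_{\mu_t}$ and letting $q^-\uparrow t$ gives $\mu_{\phi_{p_{j_l}}(t)}\to\mu_t$, whence $\law(W_t)=\mu_t$. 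On the other hand, for $q^+\in D$ with $q^+>t$ and $l$ large, $X^{\mu,p_{j_l}}_t=X^{\mu,p_{j_l}}_{\phi_{p_{j_l}}(t)}$ and $(X^{\mu,p_{j_l}}_q)_{q\in F\cap D,\,q<t}$ are $\mathcal F^{p_{j_l}}_{\phi_{p_{j_l}}(t)}$-measurable while $\E[X^{\mu,p_{j_l}}_{q^+}-X^{\mu,p_{j_l}}_t\mid\mathcal F^{p_{j_l}}_{\phi_{p_{j_l}}(t)}]=0$, so $\E[(X^{\mu,p_{j_l}}_{q^+}-X^{\mu,p_{j_l}}_t)\,g((X^{\mu,p_{j_l}}_q)_{q\in F\cap D,\,q<t},X^{\mu,p_{j_l}}_t)]=0$ for bounded continuous $g$. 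Passing to the limit (uniform integrability, plus a further diagonal extraction to accommodate a sequence $q^+\downarrow t$) and then letting $q^+\downarrow t$, so that $\bar M_{q^+}\to\bar M_t$ in $L^1$, one obtains $\E[(\bar M_t-W_t)\,g((\bar M_q)_{q\in F\cap D,\,q<t},W_t)]=0$, i.e.\ $W_t=\E[\bar M_t\mid\sigma((\bar M_q)_{q\in F\cap D,\,q<t},W_t)]$. Since $\law(W_t)=\mu_t=\law(\bar M_t)$, the rigidity above forces $W_t=\bar M_t$ a.s. As $t\in F\setminus D$ was arbitrary, $R=\law((\bar M_t)_{t\in F})$, so $P^{\mu,p_j}\to P$ in the finite-dimensional sense; thus $P\in\LFD([\mu])$, it is reached along a subsequence of $(P^{\mu,p})$, and the canonical process of $P$ is a c\`adl\`ag martingale with one-dimensional marginals $\mu_t$, hence relevant for the peacock problem (Proposition \ref{relevant}).

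The main obstacle is precisely this last identification: nothing a priori forces the $t$-coordinate of a crude finite-dimensional limit to coincide with $\bar M_t$ at a time $t\notin D$ --- recall Example \ref{tiers}, where the limit even fails to have the right marginal. It is the conjunction of right-continuity of the peacock, the convex-order sandwich $\mu_{q^-}\leqc\mu_{\phi_p(t)}\leqc\mu_t$ (which pins down $\law(W_t)$), the martingale property of the approximating processes, and the strict-convexity rigidity that resolves it; and that same rigidity is what guarantees that the regularisation does not move the values at the times of $D$. Let me stress that no property of $\curt$ beyond ``$X^{\mu,p}$ is a piecewise constant martingale in its own filtration with marginal $\mu_{t_k}$ at partition times'' is used.
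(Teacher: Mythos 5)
Your proof is correct, and its overall skeleton is the one the paper also borrows from Hirsch--Roynette: diagonal extraction of finite-dimensional limits along the countable set $D=\bigcup_p\sigma^{(p)}$, passage of the martingale identity to the limit by uniform integrability (which you justify cleanly via the convex-order domination $\nu\leqc\mu_1$), and Doob regularisation into a c\`adl\`ag martingale. Where you genuinely diverge is in the decisive step, the identification of the finite-dimensional limits at times $t\notin D$. The paper proves a uniform statement (Lemma \ref{lemaux}): at a continuity time $r$ of the peacock, \emph{every} element of $\M(\nu_s,\nu_t)$ with $s,t$ near $r$ is close to $(\id\otimes\id)_\#\nu_r$, by compactness and the fact that $\M(\nu,\nu)$ is a singleton; this uniformity in the coupling (hence in $p$) is exactly what lets the paper commute the iterated limits $\lim_p\lim_i u_{i,p}=\lim_i\lim_p u_{i,p}$ in the Prokhorov metric. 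You instead take an arbitrary subsequential limit $((\bar M_q)_{q\in F\cap D},(W_t)_{t\in F\setminus D})$, pin down $\law(W_t)=\mu_t$ by squeezing potential functions between $u_{\mu_{q^-}}$ and $u_{\mu_t}$, derive the conditional-expectation identity $W_t=\E[\bar M_t\mid\sigma(\ldots,W_t)]$ from the discrete martingale property, and conclude by the strict-convexity rigidity ($Y=\E[X\mid\mathcal G]$ and $\law(Y)=\law(X)$ force $Y=X$). The two mechanisms rest on the same underlying fact --- equality in conditional Jensen for a strictly convex function, equivalently $\M(\nu,\nu)=\{(\id\otimes\id)_\#\nu\}$ --- but your deployment is a posteriori on the limit objects rather than uniform on the approximating couplings; this buys you a proof with no compactness-of-couplings argument and no commutation of limits (at the price of the extra sub-subsequence bookkeeping for $q^+\downarrow t$), and it makes explicit, as the paper leaves implicit, that the same rigidity is what guarantees $\bar M_q=Y_q$ a.s.\ at the times of $D$. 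Both routes use only that $X^{\mu,p}$ is a piecewise constant martingale with prescribed marginals at partition times, so neither depends on specific properties of the curtain coupling.
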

\begin{proof}[Sketch of proof] We mostly follow the proof in \cite[Theorem 3.2]{HiRo13} that is done for dyadic nets. The only important difference concerning the partitions is that we include the times of discontinuity to our partition. This is important regarding to the finite dimensional convergence that we prove in the end of the proof after Lemma \ref{lemaux} but it is not for Theorem 3.2 in \cite{HiRo13} that is a pure existence theorem. At step (2) we adapt the proof as follows. We state $X^{(p)}_t=X^{(p)}_{\lfloor t\rfloor^p}$ instead of $0$ where $\lfloor t\rfloor^p$ is the greatest element of $\sigma^{(p)}$ that is smaller than $t$. Also we take the left-curtain transition with Markov transitions as explained in the beginning of this part instead of an arbitrary martingale provided by Strassen's Theorem. With our notations after step (2) we are really considering $P^{\mu,p}$ and the process $X^{\mu,p}$ where $\sigma^{(p)}$ may be non-dyadic and includes some discontinuities of $t\mapsto \mu_t$. We can follow the steps of Hirsch an Roynette until the end. We obtain a a kind of limit $P$ to a subsequence $(P^{\mu,\varphi(p)})_p$ of $(P^{\mu,p})_p$. The convergence happens as described now: when finitely many times of $\cup_p\sigma^{(p)}$ are selected the joint projection of $P^{\mu,\varphi(p)}$ on these marginals should converge to the corresponding projection of $P$. There is a unique c\`adl\`ag process that can be associated with $P$ seen as a measure on countably many copies of $\R$ (see step (5)). It is a martingale and the $1$-marginals are $\mu_t$ also at times $t\notin\cup_p\sigma^{(p)}$. 

The only element that must be added to Hirsch and Roynette's proof is the finite dimensional convergence for general times. For this purpose we need a lemma.
\begin{lem}\label{lemaux}
Let $(\nu_t)_{t\in[0,1]}$ be a peacock continuous at time $r$. For every $\eps>0$, there exists $\alpha$ such if $\max(|s-r|,|t-r|)<\alpha$, the sets $\M(\nu_{s},\nu_t)$ and $\M(\nu_t,\nu_{s})$ are contained in the ball of centre $(\id\otimes\id)_{\#}\nu_{r}$ and radius $\eps$ (for a distance metrifising $\Pi$).
\end{lem}
\begin{proof}
The result is due to the fact that $\M(\nu_{r},\nu_{r})=\{(\id\otimes\id)_\#\nu_{r}\}$. If $(\pi_n)_n$ is a sequence of martingale transport plans with both marginals converging to $\nu_r$, due to Prokhorov theorem it will converge to $(\id\otimes\id)_\#\nu_{r}$. The lemma follows from this remark.\end{proof}

Take $r_1,\ldots,r_j$ real times that are points of continuity of $t\mapsto\mu_t$, and for every $i\in\N$, $q_1^i,\ldots,q_j^i$ elements of $\cup_p\sigma^{(p)}$ such that for every $k\leq j$, $(q_k^i)_i$ is a non-increasing sequence converging to $r_k$. The sequence $u_{i,p}=\law(X^{\mu,\varphi(p)}_{q^i_1},\ldots,X^{\mu,\varphi(p)}_{q^i_j})$ depends on $i$ and on $p$. If $i$ tends to infinity for a fixed $p$, because paths of $D([0,1])$ are right continuous, the sequence $u_{i,p}$ converges to $\law(X^{\mu,\varphi(p)}_{r_1},\ldots,X^{\mu,\varphi(p)}_{r_j})$. If $p$ tends to infinity, for a fixed $i$, it converges to $\law(X_{q^i_1},\ldots,X_{q^i_j})$ where $X$ is the canonical process associated with $P$.  Moreover $\law(X_{q^i_1},\ldots,X_{q^i_j})$ tends to $\law(X_{r_1},\ldots,X_{r_j})$ when $i$ goes to infinity. We need to state the commutativity of limits $\lim_p(\lim_i u_{i,p})=\lim_i(\lim_p u_{i,p})$. Lemma \ref{lemaux} will provide the required uniformity. We consider it with the Prokhorov distance \cite[Section 6]{Bi} on the space of probability measures on $(\R^j,\|.\|_\infty)$ that we denote denote it by $d$. Take $\eps>0$. We claim that for sufficiently large numbers $p$ (greater than some $p_\eps$) the inequality 
$$d(\law(X^{\mu,\varphi(p)}_{r_1},\ldots,X^{\mu,\varphi(p)}_{r_j}),\law(X^{\mu,\varphi(p)}_{q^i_1},\ldots,X^{\mu,\varphi(p)}_{q^i_j}))<4\eps.$$
holds as soon as $i$ is sufficiently large (greater than some $i$ that does not depend on $p$). Indeed, for every $k=1,\ldots,j$, $\law(X^{\mu,\varphi(p)}_{r_k}, X^{\mu,\varphi(p)}_{q^i_k})$ is an element of $\M(\mu_{s},\mu_{t})$ where $s$ and $t$ have distance to $r_k$ smaller than $\max(|\sigma^{\varphi(p)}|,|q^i_k-r_k|)$. With the same notation $\alpha$ as in Lemma \ref{lemaux} we choose $p_{\eps,k}$ sufficiently large so that $|\sigma^{\varphi(p)}|\leq \alpha$ if $p>p_\eps$. If now we also have $|q^i_k-r_k|\leq \alpha$, the Prokhorov distance associated with the norm $\|.\|_{\infty}$ of $\R^2$ between $(X_{r_k}\otimes X_{q^i_k})_\#P^{\mu,\varphi(p)}$ and $(\id\otimes\id)_\#\mu_{r_k}$ is smaller than $\eps$. Hence considering a coupling as ensured by the Strassen-Dudley theorem \cite[Theorem 6.9]{Bi} one can couple these measures in a close way. Therefore $P^{\mu,\varphi(p)}(|X_{r_k}-X_{q^i_k}|<2\eps)\geq  1-\eps$, which means that for $p>p_\eps:=\max_k{p_{\eps,k}}$ the event $\{\max\limits_k|X_{r_k}-X_{q^i_k}|<2\eps\}$ holds with $P^{\mu,\varphi(p)}$-probability greater that $1-\eps$. We conclude using the triangle inequality that for such a value of $p$
\begin{align*}
&d(\law(X^{\mu,\varphi(p)}_{r_1},\ldots,X^{\mu,\varphi(p)}_{r_j}),\law(X_{r_1},\ldots,X_{r_j}))\\
\leq&d(\law(X^{\mu,\varphi(p)}_{r_1},\ldots,X^{\mu,\varphi(p)}_{r_j}),\law(X^{\mu,\varphi(p)}_{q^i_1},\ldots,X^{\mu,\varphi(p)}_{q^i_j}))\\
&+d(\law(X^{\mu,\varphi(p)}_{q^i_1},\ldots,X^{\mu,\varphi(p)}_{q^i_j}),\law(X_{q^i_1},\ldots,X_{q^i_j}))\\
&+d(\law(X_{q^i_1},\ldots,X_{q^i_j}),\law(X_{r_1},\ldots,X_{r_j}))\\
\leq&2\eps+\eps+\eps
\end{align*}
holds as soon as $i$ is sufficiently large. This proves the finite dimensional convergence.
\end{proof}

Similar arguments as the ones in Proposition \ref{reprise} permit us to prove that the elements of $\LIM([\mu])$ are all relevant.
\begin{pro}\label{relevant}
Let $P^{\mu,p}$ converge to $P$ in the Skorokhod topology. The canonical process $(X_t)_{t\in[0,1]}$ satisfies $\law(X_t)=\mu_t$ for every $t\in[0,1]$ and it is a martingale.
\end{pro}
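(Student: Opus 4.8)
The plan is to transfer, one time at a time, the two defining features of the approximating Markov martingales $X^{\mu,p}$ to the canonical process $X$ of $P$, using that Skorokhod convergence forces convergence of the finite-dimensional distributions at all but countably many times, and then to push everything to every $t\in[0,1]$ by right-continuity. First I would fix the co-countable set $T\subseteq[0,1]$ of times at which the coordinate map $x\mapsto x(t)$ on $D([0,1])$ is $P$-almost surely continuous (so that $\law(X^{\mu,p}_{t_1},\dots,X^{\mu,p}_{t_k})\to\law(X_{t_1},\dots,X_{t_k})$ whenever $t_1<\dots<t_k$ lie in $T$; recall $T^c$ is at most countable and $0,1\in T$, see \cite[Section 12]{Bi}), and set $T'=T\setminus(D_\mu\cup\bigcup_p\sigma^{(p)})$, where $D_\mu$ is the countable set of discontinuity times of the c\`adl\`ag peacock $[\mu]$; $T'$ is again co-countable.

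For the marginals, I would use that $X^{\mu,p}$ is constant on each interval of $\sigma^{(p)}$ with the right-continuous convention, so that for $t\notin\bigcup_p\sigma^{(p)}$ one has $\law(X^{\mu,p}_t)=\mu_{\lfloor t\rfloor^p}$ with $\lfloor t\rfloor^p\to t$, $\lfloor t\rfloor^p\le t$; for $t\in T'$ the peacock is continuous at $t$, so $\mu_{\lfloor t\rfloor^p}\to\mu_t$, while the Skorokhod limit gives $\law(X^{\mu,p}_t)\to\law(X_t)$, whence $\law(X_t)=\mu_t$ for every $t\in T'$. To reach an arbitrary $t$, pick $t_n\downarrow t$ in $T'$; since $P$ is carried by $D([0,1])$ one has $X_{t_n}\to X_t$ $P$-a.s., hence $\law(X_{t_n})\to\law(X_t)$ weakly, while $\law(X_{t_n})=\mu_{t_n}\to\mu_t$ by right-continuity of the peacock. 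Therefore $\law(X_t)=\mu_t$ for all $t\in[0,1]$, and in particular $\E|X_t|=\int|x|\,\dd\mu_t<\infty$.

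For the martingale property, recall that each $X^{\mu,p}$ is a martingale in its natural filtration (the curtain transitions are martingale couplings and $X^{\mu,p}$ is Markov, by construction of $P^{\mu,p}$), so for $s_1<\dots<s_k\leq s<t$ and any bounded continuous $g\colon\R^k\to\R$,
$$\E\bigl[(X^{\mu,p}_t-X^{\mu,p}_s)\,g(X^{\mu,p}_{s_1},\dots,X^{\mu,p}_{s_k})\bigr]=0.$$
I would first take all of $s_1,\dots,s_k,s,t$ in $T'$ and pass to the limit in $p$: the vector $(X^{\mu,p}_{s_1},\dots,X^{\mu,p}_{s_k},X^{\mu,p}_s,X^{\mu,p}_t)$ converges in law to $(X_{s_1},\dots,X_{s_k},X_s,X_t)$, and since the laws $\mu_{\lfloor s\rfloor^p}$ and $\mu_{\lfloor t\rfloor^p}$ of the last two coordinates are convergent, hence uniformly integrable, families (Remark \ref{rem_topo}), the integrands are uniformly integrable, so the expectation passes to the limit and yields $\E[(X_t-X_s)g(X_{s_1},\dots,X_{s_k})]=0$. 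Approximating arbitrary $s_1<\dots<s_k\leq s<t$ from the right by points of $T'$, using once more a.s.\ right-continuity of $X$ together with uniform integrability of $\{\mu_r\}$ over a right neighbourhood of $s$ and of $t$, extends the identity to all such times. Since bounded continuous functions of $(X_{s_1},\dots,X_{s_k})$ with $s_i\le s$ generate $\mathcal{F}_s=\sigma(X_u:u\leq s)$, a monotone class argument gives $\E[X_t\mid\mathcal{F}_s]=X_s$, i.e.\ $(X_t)_{t\in[0,1]}$ is a martingale. Alternatively all of this can be obtained by adapting the proof of \cite[Lemma 4.5]{HiRo12} along the lines of the proof of Proposition \ref{reprise}.

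The step I expect to be the main obstacle is the passage to the limit for the \emph{unbounded} functional $(X_t-X_s)\,g(\cdots)$: weak Skorokhod convergence alone does not suffice, and one must exploit that the one-dimensional marginals are pinned to the $\mu_t$ and hence form uniformly integrable families — which is exactly what Remark \ref{rem_topo} supplies. The remaining bookkeeping (restricting to the continuity set $T$ of $P$, avoiding the countably many partition points, and extending from $T'$ to every time via right-continuity) is routine but must be carried out with some care.
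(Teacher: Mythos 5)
Your proposal is correct and follows essentially the same route as the paper: Skorokhod convergence gives finite-dimensional convergence at all but countably many times, the martingale property is transferred there by the test-function/uniform-integrability argument (the paper delegates this to the adaptation of \cite[Lemma 4.5]{HiRo12}), and everything is extended to all of $[0,1]$ by right-continuity of the paths and of $t\mapsto\mu_t$ (the paper's right-regularisation step). You merely spell out the details the paper leaves to the cited references.
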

\begin{proof}
With the convergence in the Skorokhod space, the finite dimensional convergence is also true for finitely many times selected in a set $E=[0,1]\setminus D$ where $D$ is countable (and $1\in E$). It follows that $(X_t)_{t\in E}$ is a martingale by using the same argument as before adapted from \cite[Lemma 4.5]{HiRo12}. But $(X_t)_{t\in[0,1]}$ and $(\mu_t)_{t\in[0,1]}$ are both right-continuous. It follows that $\law(X_t)=\mu_t$ is also satisfied for $t\in D$ and also that $(X_t)_{t\in[0,1]}$ is the regularisation on the right of the the martingale indexed on $E$ in the sense of the classical theory of continuous martingales (same argument as step (5) in \cite[proof of Theorem 3.2]{HiRo13}). It is a martingale.
\end{proof}

Finally notice that Example \ref{tiers} illustrates the fact that a sequence $P^{\mu,p}$ may converge to two non-compatible limits for the Skorokhod and the finite dimensional topology.

In the following paragraphs we give examples of what can be $\LIM(\mu)$ for selected classes of peacocks.

\subsection{Uniform measures on intervals}\label{intervals}

\begin{figure}[ht]
\begin{center}
\includegraphics[width=6cm]{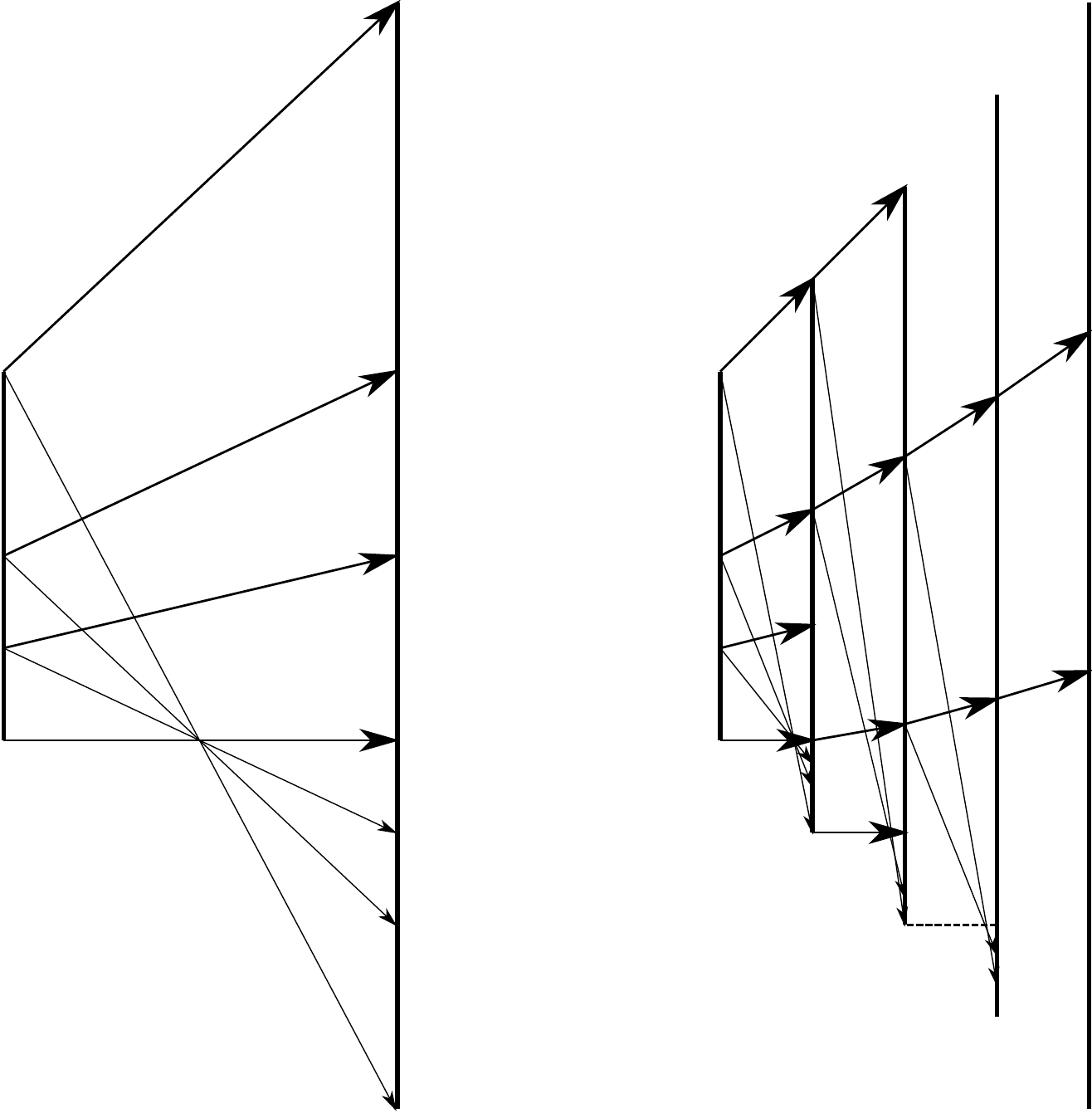}
\caption{Composition of curtain couplings for uniform measures.}\label{uniform_fig}
\end{center}
\end{figure}

The left-curtain coupling  $\pi_\lc$ from one uniform measure on a segment $\mu$ to another $\nu$ can be easily deduced from Definition \ref{lc}. Indeed, the shadows of $\mu|_{]-\infty,x]}$ in $\nu$ are also uniform measures. We give a description of the resulting coupling. As explained in paragraph \ref{transfo} it is invariant by translation and scaling so that we only need to explain it for $\mu=\Lg_{[0,1]}$ and $\nu=\Lg_{[-a,1+a]}$. The left-curtain coupling of those measures can be described with two linear maps. The submeasure $\frac{1+a}{1+2a}\Lg_{[0,1]}\leqp\mu$ is mapped linearly on $[0,1+a]$ and the remainder $\frac{a}{1+2a}$ linearly on $[0,-a]$. The coupling can also be described with random variables. Let $X$ be uniform on $[0,1]$ and $Z$ be an independent Bernoulli variable $Z\hookrightarrow \B(a/(1+2a))$. Then define $Y=(1+a)X$ if $Z=0$ and $Y=-a X$ if $Z=1$. Then $\pi_\lc=\law(X,Y)$. 

We note that the subsequent Proposition \ref{pro_uniform} can also be seen as a
consequence of the theorems in \cite{HTT} where the authors show that under
certain assumptions a continuous peacock gives rise to a unique limit
process that is a pure downward jump local Levy model. However, we
think that our example is worth presenting, because it appears
particularly canonical and is not considered in \cite{HTT}.

\begin{pro}\label{pro_uniform}
For every $t\in[0,1]$, let $\mu_t$ be uniform on $[-\exp(2t)/2,\exp(2t)/2]$. There is a unique curtain limit process to the peacock $[\mu]=(\mu_t)_{t\in[0,1]}$ and it can be described as follows: choose an initial point $X_0$ uniformly on $[-1/2,1/2]$ and independently a one-dimensional Poisson point process of intensity $1$ with times $0=T_0<T_1<\ldots<T_N<1$ where $N\hookrightarrow\mathcal{P}(1)$. The random path is defined as follow
\begin{align*}
X(t)=
\begin{cases}
\exp(2t)/2-(1/2-X_0)\exp(t)&\text{if }t\in[0,T_1[\\
\exp(2t)/2-\exp(t+T_i)&\text{if }t\in[T_i,T_{i+1}[\\
\exp(2t)/2-\exp(t+T_N)&\text{if }t\in[T_N,1]\\
\end{cases}
\end{align*}
\end{pro}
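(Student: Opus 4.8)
The plan is to make the curtain transition between two of the measures $\mu_s,\mu_t$ (with $s<t$) completely explicit, to read off the discrete Markov chain $X^{\mu,\sigma}$ attached to a partition $\sigma$, and to recognise its scaling limit as a deterministic flow punctuated by Poissonian resets. First I would feed the description of $\curt$ between uniform measures given at the beginning of the present subsection through the translation--scaling invariance of Paragraph \ref{transfo}: a short computation then shows that $\curt(\mu_s,\mu_t)$ is the law of $(X,Y)$ where $X\sim\mu_s$ and, conditionally on $X$ and with an independent coin, $Y=R_{s,t}(X)$ with probability $1-p_{s,t}$ while $Y=L_{s,t}(X)$ with probability $p_{s,t}$, where
\[
p_{s,t}=\tfrac12\bigl(1-e^{-2(t-s)}\bigr),\qquad
R_{s,t}(x)=\tfrac{1+e^{2(t-s)}}{2}\,x+\tfrac{e^{2t}-e^{2s}}{4},
\]
\[
L_{s,t}(x)=-\tfrac{e^{2(t-s)}-1}{2}\,x-\tfrac{e^{2t}+e^{2s}}{4};
\]
$R_{s,t}$ is the increasing affine bijection of $[-e^{2s}/2,e^{2s}/2]$ onto $[-e^{2s}/2,e^{2t}/2]$ and $L_{s,t}$ the decreasing affine bijection onto $[-e^{2t}/2,-e^{2s}/2]$, and I would double-check these formulas by verifying the martingale identity $(1-p_{s,t})R_{s,t}(x)+p_{s,t}L_{s,t}(x)=x$, the marginal identity $(R_{s,t})_\#\bigl((1-p_{s,t})\mu_s\bigr)+(L_{s,t})_\#\bigl(p_{s,t}\mu_s\bigr)=\mu_t$, and left-monotonicity (the two branches issued from $x<x'$ do not interleave, whence $\curt$ by Proposition \ref{synthese}). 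Two asymptotics matter: $p_{s,t}=(t-s)+O\bigl((t-s)^2\bigr)$, and $L_{s,t}(x)=-e^{2t}/2+O(t-s)$ uniformly in $x$ on bounded sets, i.e.\ the ``jump'' branch forgets its incoming value and resets it at the left edge of the current support.

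Given this, for a partition $\sigma=\{0=t_0<\cdots<t_Q=1\}$ the chain $(X^{\mu,\sigma}_{t_k})_k$ is built from $X_0\sim\mu_0=\Lg_{[-1/2,1/2]}$ by applying, at step $k$, either $R_{t_k,t_{k+1}}$ or $L_{t_k,t_{k+1}}$ according to an independent Bernoulli variable $\xi_k$ of parameter $p_{t_k,t_{k+1}}$, independently of $X_0$ and of the other coins; in particular $X^{\mu,\sigma}$, viewed in $D([0,1])$, is a deterministic functional $\Phi_\sigma(X_0,N^\sigma)$ of $X_0$ and of the point process $N^\sigma:=\sum_k\xi_k\,\delta_{t_k}$ of jump times. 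As $|\sigma|\to0$ I would then isolate two mechanisms. Since $R_{s,s+h}(x)=x+h\bigl(x+e^{2s}/2\bigr)+O(h^2)(1+|x|)$ is exactly one Euler step of $\dot u=u+e^{2v}/2$, a composition of consecutive right branches over a subinterval $[a,b]\subseteq[0,1]$ converges, uniformly on bounded sets, to the flow of this linear ODE, namely the affine map $x\mapsto e^{b-a}x+\tfrac12(e^{2b}-e^{a+b})$ (being linear, one may equivalently argue with the limiting product of the affine coefficients). And $N^\sigma$ converges in law to a Poisson process of intensity $1$ on $[0,1]$: the array $(\xi_k)_k$ is null because $\max_k p_{t_k,t_{k+1}}\le|\sigma|\to0$, and $\sum_k p_{t_k,t_{k+1}}\,\delta_{t_k}\to\Lg_{[0,1]}$ weakly by a Riemann-sum estimate, so the classical Poisson limit theorem for null arrays applies; this holds jointly with the fixed, independent $X_0$.

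It remains to assemble the two limits. The crux is that whenever jump configurations $m_p$ converge (as point measures on $[0,1]$) to $\{T_1<\cdots<T_N\}\subseteq(0,1)$ with distinct atoms and $x_p\to x$, then $\Phi_{\sigma_p}(x_p,m_p)$ converges, in the Skorokhod topology, to the c\`adl\`ag path that solves $\dot X=X+e^{2t}/2$ on $[0,T_1)$ from $X(0)=x$, jumps to $-e^{2T_i}/2$ at each $T_i$, and re-solves the same ODE on $[T_i,T_{i+1})$; integrating the ODE returns exactly $e^{2t}/2-(1/2-x)e^t$ before $T_1$ and $e^{2t}/2-e^{t+T_i}$ on $[T_i,T_{i+1})$, i.e.\ the announced process. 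Since a rate-$1$ Poisson process on $[0,1]$ a.s.\ has finitely many distinct atoms, none at $0$ or $1$, the extended (converging-functionals) continuous mapping theorem, applied to the joint convergence $(X_0,N^{\sigma_p})\Rightarrow(X_0,\text{Poisson}(1))$, gives $P^{\mu,\sigma_p}\Rightarrow P$ in $D([0,1])$, with the same limit $P$ for every partition sequence with $|\sigma^{(p)}|\to0$; hence $\LIM([\mu])=\{P\}$, and relevance of $P$ (correct marginals, martingale property) follows from Proposition \ref{relevant}. I expect this last step to be the main obstacle: proving the Skorokhod convergence $\Phi_{\sigma_p}(x_p,m_p)\to\Phi(x,\{T_i\})$ with enough uniformity, controlling simultaneously the Euler approximation of the flow between consecutive jumps, the $O(|\sigma_p|)$ mismatch between the discrete reset $L_{t_k,t_{k+1}}(X_{t_k})$ and the exact value $-e^{2T_i}/2$, and the placement of the (eventually at most one per mesh-interval) discrete jumps relative to $T_1,\dots,T_N$. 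What makes the argument close is precisely the position-independence of the jump kernel of $\curt(\mu_s,\mu_t)$: after a jump the evolution forgets the past, so these small errors cannot accumulate.
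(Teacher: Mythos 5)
Your proposal is correct and its skeleton coincides with the paper's: an explicit description of $\curt(\mu_s,\mu_t)$ for uniform marginals, the identification of the ``up'' branch as one Euler step of $\dot x=x+e^{2t}/2$ (handled in the paper by Proposition \ref{Euler}), a Poisson limit for the Bernoulli array of jump indicators, and the observation that the ``down'' branch resets the trajectory at the left edge of the current support up to an $O(t-s)$ error, which is what prevents error accumulation. Your explicit formulas for $R_{s,t}$, $L_{s,t}$ and $p_{s,t}=\tfrac12(1-e^{-2(t-s)})$ are right (and make the sign typo in the paper's $2^{-1}(1-\exp(2h))$ apparent). The one genuine divergence is in how the two convergences are assembled. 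The paper couples everything on a single probability space: Lemma \ref{ppp} realises the Bernoulli jump times and the Poisson times jointly so that they agree in number and are $\eps$-close with high probability, an intermediate process $Y^p$ (jumping at partition times, but following the exact trajectories $g_S$) is inserted between the limit process and $X^{\mu,p}$, and the Prokhorov distance for the Skorokhod metric is bounded directly using Lemma \ref{equiunif} and Proposition \ref{Euler}. You instead prove weak convergence of the jump point process $N^{\sigma_p}$ to a rate-one Poisson process via the null-array criterion and invoke the extended continuous mapping theorem for the functional $\Phi_{\sigma_p}$. Both routes are legitimate; the coupling route makes the required uniformity over jump configurations concrete and quantitative, while yours is cleaner probabilistically but concentrates all the analysis into the deterministic continuity lemma $\Phi_{\sigma_p}(x_p,m_p)\to\Phi(x,\{T_i\})$, which you correctly flag as the main remaining work and for which the ingredients you list (Euler error between jumps, the $O(|\sigma_p|)$ mismatch of the reset value, the placement of at most one discrete jump per converging Poisson atom) are exactly the ones the paper controls.
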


Roughly describing the limit process, after a jump at time $T$, the trajectory starts a new increasing piece from position $-\exp(2T)/2$. Figure \ref{uniform_fig} is an illustration of the discrete process for a linear, instead of exponential, evolution of the length of the segments.  One reason for the space-time normalisation chosen in the proposition for the continuous peacocks made of uniform measures (they are all equivalent according to paragraph \ref{transfo}) is that up to a scaling factor, for any $h>0$ the transition kernel between $\mu_t$ and $\mu_{t+h}$ is independent from $t$. Moreover, for the same times the probability to jump down is $2^{-1}(1-\exp(2h))$, which is equivalent to $h$ as $h$ tends to $0$. 

Our proof relies on the Euler approximation method and the approximation of the classical Poisson point  process by Bernoulli processes.

\subsubsection{Euler approximation method}\label{EulNota}

We consider a continuous transition function $T:(s,t,x)\in [0,1]\times [0,1]\times\R \mapsto \R$ defined for $s\leq t$ such that the limit
$$V(t,x)=\lim_{h\to 0^+}T(t,t+h,x)/h$$
exists. We denote by  $R_T$ the rest of the Taylor expansion
$$R_T(s,t,x)=T(s,t,x)-(t-s)V(s,x).$$
In the next proposition we will compare for a given partition $\sigma:0=t_0<t_1<\cdots<t_N<1=t_{N+1}$ and two initial points $x_0$ and $\bar{x}_0$, the solution $x(t)$ of the ODE
\begin{align}\label{Picard}
\left\{
\begin{aligned}
x(0)&=x(t_0)=x_0\\
\dot{x}(t)&=V(t,x(t))
\end{aligned}
\right.
\end{align}
to the Euler scheme starting in $\bar{x}_0$:

\begin{align*}
\left\{
\begin{aligned}
\bar{x}_0&=x_0\\
\bar{x}_{k+1}&=\bar{x}_k+T(t_k,t_{k+1},\bar{x}_k)
\end{aligned}
\right.
\end{align*}

The comparison can be done at discrete times $t_k$ between $\bar{x}_k$ and $x(t_k)$ but also in continuous time associating the c\`adl\`ag function $\bar{x}$ defined by $\bar{x}(t)=\bar{x}_k$ on $[t_k,t_{k+1}[$  with $(\bar{x}_k)_{k=0,\ldots,T}$. The proof follows the classical line for the convergence of the Euler scheme in numerical analysis.

\begin{pro}\label{Euler}
Let $T$, $V$ and $R_T$ be the functions introduced above and assume that $V$ is continuous, bounded and that there exists $L>0$ such that
$$|V(y,t)-V(x,t)|\leq L|y-x|.$$

Let $R_V$ be the local truncature error in the approximation of the flow at first order. We assume the uniform estimates
$|R_T(s,t,x)|:=|V(x,t+h)-V(x,t)|\leq M(t-s)^2/2$ and $|R_V(t,t+h,x)|\leq Mh^2/2$
for some $M>0$.

Then
$$\|x-\bar{x}\|_\infty\leq F(|x_0-\bar{x}_0|,|\sigma|)$$
for a non-increasing function $F$ with limit $0$ in $(0,0)$. 
\end{pro}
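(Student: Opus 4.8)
The plan is the textbook convergence proof for the explicit Euler scheme, with the two error contributions kept track of separately. Since $V$ is continuous, bounded and $L$-Lipschitz in the space variable uniformly in time, the Cauchy--Lipschitz theorem provides a unique solution of \eqref{Picard} on $[0,1]$ together with a well-defined flow $\Phi(s,t,\cdot)$; in particular $x(t_{k+1})=\Phi(t_k,t_{k+1},x(t_k))$, so the exact trajectory is itself an ``exact Euler scheme''. Writing $h_k=t_{k+1}-t_k$ (so that $\sum_k h_k=1$ and $\max_k h_k=|\sigma|$) and $e_k=x(t_k)-\bar x_k$, the goal is a one-step recursion for $|e_k|$, then a discrete Gr\"onwall argument, then passage from the grid to continuous time. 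The dependence of $F$ on $|x_0-\bar x_0|$ is kept from the outset by allowing $e_0=x_0-\bar x_0$.

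First I would expand one step of each scheme to first order with explicit remainders. On the exact side,
\[
x(t_{k+1})-x(t_k)=\Phi(t_k,t_{k+1},x(t_k))-x(t_k)=h_k\,V(t_k,x(t_k))+R_V(t_k,t_{k+1},x(t_k)),
\]
and by the definitions of $T$ and $R_T$,
\[
\bar x_{k+1}-\bar x_k=T(t_k,t_{k+1},\bar x_k)=h_k\,V(t_k,\bar x_k)+R_T(t_k,t_{k+1},\bar x_k).
\]
Subtracting, using $|V(t_k,x(t_k))-V(t_k,\bar x_k)|\le L|e_k|$ together with the two remainder bounds $|R_V|,|R_T|\le M h_k^2/2$, one gets
\[
|e_{k+1}|\le(1+L h_k)\,|e_k|+M h_k^2\le(1+L h_k)\,|e_k|+M|\sigma|\,h_k .
\]

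Next I would iterate this recursion and bound $\prod_j(1+L h_j)\le\exp\bigl(L\sum_j h_j\bigr)\le e^{L}$ — here $\sum_j h_j\le 1$ is exactly what makes the constant independent of $\sigma$ — obtaining
\[
\max_{0\le k\le N+1}|e_k|\le e^{L}|e_0|+M|\sigma|\,e^{L}\sum_j h_j\le e^{L}\bigl(|e_0|+M|\sigma|\bigr).
\]
Finally, for $t\in[t_k,t_{k+1}[$ one has $\bar x(t)=\bar x_k$ and, since $|\dot x|=|V|\le\|V\|_\infty$, also $|x(t)-x(t_k)|\le\|V\|_\infty h_k\le\|V\|_\infty|\sigma|$; hence
\[
\|x-\bar x\|_\infty\le e^{L}\bigl(|x_0-\bar x_0|+M|\sigma|\bigr)+\|V\|_\infty|\sigma|=:F\bigl(|x_0-\bar x_0|,|\sigma|\bigr),
\]
and $F(a,\delta)=e^{L}a+(e^{L}M+\|V\|_\infty)\delta$ is monotone in each argument and vanishes at $(0,0)$, which is what is claimed.

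There is no real obstacle: the argument is routine numerical analysis. The two points that need a little attention are attributing the consistency errors correctly — $R_V$ controls the gap between the true flow over $[t_k,t_{k+1}]$ and its first-order expansion around $t_k$, while $R_T$ controls the analogous gap for the transition function $T$ actually used by the scheme — and the final interpolation step, which is precisely where the boundedness of $V$ is used (it plays no role in the estimate at the grid points).
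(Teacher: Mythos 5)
Your proof is correct and follows essentially the same route as the paper: the same one-step expansion with the two remainders $R_T$ and $R_V$, the same discrete Gr\"onwall iteration via $\prod_k(1+Lh_k)\le\exp(L\sum_k h_k)$, and the same final interpolation step using $|\dot x|\le\|V\|_\infty$ to pass from the grid points to the sup norm. The explicit form $F(a,\delta)=e^{L}a+(e^{L}M+\|V\|_\infty)\delta$ matches the bound obtained in the paper.
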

Note that the hypothesis on $V$ ensure that \eqref{Picard} is in the scope of Picard-Lindel\"of Theorem.
\begin{proof}
We consider the one-step operation starting from $\bar{x}_k$ and $x_k$ on the interval $[t_k,t_{k+1}[$:
$$\bar{x}_{k+1}=\bar{x}_k+(t_{k+1}-t_k)V(t_k,\bar{x}_k)+R_T(t_k,t_k+1,\bar{x}_k)$$
and
\begin{align*}
x(t_{k+1})=x(t_k)+\int_{t_k}^{t_{k+1}}V(s,x(s))\dd s=x_k+(t_{k+1}-t_k)V(t_k,x_k)+R_V(t_k,t_{k+1},x_k)
\end{align*}
We take the difference and obtain
\begin{align*}
|\bar{x}_{k+1}-x(t_{k+1})|
&\leq |\bar{x}_k-x(t_k)|+(t_{k+1}-t_k)|V(t_k,\bar{x}_k)-V(t_k,x_k)|+|R_V|+|R_T|\\
&\leq |\bar{x}_k-x(t_k)| (1+L(t_{k+1}-t_k))+M(t_{k+1}-t_k)^2
\end{align*}
Using the fact that $\prod_{k=1}^n(1+h_k)\leq \exp(h_1+\cdots+h_n)$ for positive real numbers $h_k$. It follows for $n\leq N$
\begin{align*}
|\bar{x}_n-x_n|&\leq |\bar{x}_0-x_0|\exp(L.t_n)+\sum_{k=1}^n M(t_k-t_{k-1})^2\exp(L(t_n-t_{k}))\\
&\leq |\bar{x}_0-x_0|\exp(L.t_n)+M|\sigma|\exp(L.t_n)\leq (|\bar{x}_0-x_0|+M|\sigma|)\exp(L.t_n)
\end{align*}
Hence $\|\bar{x}-x\|_\infty\leq (|\bar{x}_0-x_0|+M|\sigma|)\exp(L.t_n)+\|V\|_{\infty}.|\sigma|$
\end{proof}

\subsubsection{Poisson point process}

We state the following result without proof. It states that it is possible to couple a Bernoulli process and a Poisson process. We invite the reader to consult \cite{BHJ} on the Poisson approximation.

\begin{lem}\label{ppp}
Let $(\sigma^{(p)})_{p\in \N}$ be a sequence of interval partitions with the mesh $|\sigma^{(p)}|$ going to $0$ as $p$ tends to $+\infty$. There exists a probability space on which one can define an increasing sequence of random variables $(T_i)_{i\in\N^*}$ and for every $n\in\N$, an increasing sequence $(T_i^{(p)})_{i\in\N^*}$, such that
\begin{itemize}
\item $\{T_i:\,T_i<1\}$ realises a Poisson point process of intensity $1$ on $[0,1]$, say $T_0=0$, the $(T_{i}-T_{i-1})_{i\in\N^*}$ are independent and have the exponential distribution of parameter $1$.  

\item for every $p>0$, $\{T^{(p)}_i:\,T^{(p)}_i\leq 1\}$ is a Bernoulli process defined as follow. For the partition  $\sigma^{(p)}=\{t_0,\ldots,t_{Q_p}\}$ where $0= t_0<\cdots<t_{Q_p}=1$,  let $(B_k^{(n)})_k$ be a sequence of independent Bernoulli variables of parameter $2^{-1}(1-\exp(2(t_k-t_{k-1})))$. The time $T^{(p)}_i$ is $t_k$ where $k$ is the range of the $i$-th variable $B_k=1$. If such a range does not exist $T^{(p)}_i=+\infty$.

\item For every $\eps>0$, if $N=\#\{T_i:\,T_i<1\}$ and $N^{(p)}=\#\{T^{(p)}_i:\,T^{(p)}_i\leq 1\}$ we have
\begin{align*}
\P\left(\{N=N^{(p)}\}\text{ and }\{i\leq N\Rightarrow|T^{(p)}_i-T_i|\leq \eps\}\right)\longrightarrow_{n\to\infty} 1.
\end{align*}
\end{itemize}
\end{lem}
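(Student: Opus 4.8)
The plan is to realise every process on one probability space built from a single rate‑one Poisson process together with, for each $p$, an independent auxiliary family of uniform variables used only for a cell‑by‑cell adjustment. The Poisson process will drive the limit objects $(T_i)$, and each Bernoulli process $(T_i^{(p)})$ will be obtained by reading off, cell by cell, whether that cell contains a Poisson point, after a tiny correction to get the exact prescribed Bernoulli law.

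\textbf{Construction.} Let $(E_i)_{i\geq 1}$ be i.i.d.\ exponential random variables of parameter $1$, set $T_0=0$ and $T_i=E_1+\cdots+E_i$; then $\{T_i:\,T_i<1\}$ is a Poisson point process of intensity $1$ on $[0,1]$ and the interarrival property of the first bullet holds by construction. Fix a partition $\sigma^{(p)}=\{t_0,\dots,t_{Q_p}\}$, write $\Lambda_k=\Lambda_k^{(p)}=t_k-t_{k-1}$, and let $M_k=\#\{i\geq 1:\,t_{k-1}\leq T_i<t_k\}$ be the number of Poisson points in the $k$-th cell. The $M_k$ are independent with $M_k\sim\mathrm{Pois}(\Lambda_k)$, so $\mathbf 1[M_k\geq 1]$ is Bernoulli with parameter $q_k:=1-e^{-\Lambda_k}=\Lambda_k+O(\Lambda_k^2)$, whereas the prescribed parameter $p_k=p_k^{(p)}$ also satisfies $p_k=\Lambda_k+O(\Lambda_k^2)$, hence $|p_k-q_k|=O(\Lambda_k^2)$ uniformly for $\Lambda_k\leq 1$. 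Using an independent uniform $U_k^{(p)}$ I would set $B_k=B_k^{(p)}$ by the monotone coupling of Bernoulli laws: if $p_k\geq q_k$, keep $B_k=1$ whenever $M_k\geq 1$ and, on $\{M_k=0\}$, put $B_k=1$ iff $U_k^{(p)}<(p_k-q_k)/(1-q_k)$; if $p_k<q_k$, keep $B_k=0$ on $\{M_k=0\}$ and, on $\{M_k\geq1\}$, put $B_k=1$ iff $U_k^{(p)}<p_k/q_k$. In either case $B_k^{(p)}$ is Bernoulli$(p_k)$, the family $(B_k^{(p)})_k$ is independent (each $B_k^{(p)}$ depends only on $(M_k,U_k^{(p)})$, and these pairs are independent across $k$), and
\begin{align}\label{planeq}
\P\bigl(B_k^{(p)}\neq\mathbf 1[M_k\geq1]\bigr)=|p_k-q_k|=O(\Lambda_k^2).
\end{align}
Finally $T_i^{(p)}$ is defined as the right endpoint $t_k$ of the $i$-th cell for which $B_k^{(p)}=1$; this is precisely the Bernoulli process of the second bullet.

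\textbf{The good event.} Let $\mathcal E_p$ be the event that some cell of $\sigma^{(p)}$ contains at least two Poisson points, or that $B_k^{(p)}\neq\mathbf 1[M_k\geq1]$ for some $k\leq Q_p$. On $\mathcal E_p^c$ each Poisson point of $[0,1)$ is the unique point in its cell, so the cells carrying a success are exactly the cells carrying a point, listed in the same order; hence $N^{(p)}=N$, the $i$-th success cell is the cell containing $T_i$, and $|T_i^{(p)}-T_i|\leq\Lambda_k\leq|\sigma^{(p)}|$ for every $i\leq N$. Thus, for any $p$ with $|\sigma^{(p)}|\leq\eps$, the event in the third bullet contains $\mathcal E_p^c$, so it suffices to show $\P(\mathcal E_p)\to 0$.

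\textbf{The estimate and the obstacle.} A union bound together with \eqref{planeq} and the elementary inequality $\P(\mathrm{Pois}(\lambda)\geq2)\leq\lambda^2/2$ gives
\begin{align*}
\P(\mathcal E_p)\leq\sum_{k=1}^{Q_p}\P(M_k\geq2)+\sum_{k=1}^{Q_p}\P\bigl(B_k^{(p)}\neq\mathbf 1[M_k\geq1]\bigr)\leq C\sum_{k=1}^{Q_p}\Lambda_k^2\leq C\,|\sigma^{(p)}|\sum_{k=1}^{Q_p}\Lambda_k=C\,|\sigma^{(p)}|,
\end{align*}
where $\Lambda_k\leq|\sigma^{(p)}|$ and $\sum_k\Lambda_k=1$; this tends to $0$ as $p\to\infty$, which finishes the proof. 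I do not expect a genuine analytic difficulty here, since every discarded error term is quadratic in the mesh; the two points that require care are the two-case coupling that forces $(B_k^{(p)})_k$ to carry \emph{exactly} the prescribed independent Bernoulli law while remaining pinned to the Poisson counts, and the index bookkeeping on $\mathcal E_p^c$ matching the label $i$ of the Poisson point $T_i$ with the label of its success cell (including the last cell, whose right endpoint is $1$).
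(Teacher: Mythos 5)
Your proof is correct and complete. Note, however, that the paper gives no proof of this lemma at all: it is stated ``without proof'' with a pointer to the Poisson-approximation literature (Barbour--Holst--Janson), so your argument is not a variant of the paper's but a self-contained replacement for an external citation. Your route --- realise the $T_i$ from i.i.d.\ exponentials, pin each Bernoulli indicator $B_k^{(p)}$ to the Poisson cell count $\mathbf 1[M_k\geq 1]$ by a maximal coupling driven by an auxiliary uniform, and then control the bad event by a union bound in which every error term (two points in a cell, or a coupling mismatch) is $O(\Lambda_k^2)$ and hence sums to $O(|\sigma^{(p)}|)$ --- is exactly the elementary core of the Poisson approximation and proves precisely the three bullets, including the exact law of $(B_k^{(p)})_k$ and the order-preserving matching of success cells to Poisson points on the good event. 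Two small remarks. First, you have silently corrected a sign typo in the statement: the parameter $2^{-1}(1-\exp(2(t_k-t_{k-1})))$ is negative as written and must be read as $2^{-1}(1-\exp(-2(t_k-t_{k-1})))=\Lambda_k+O(\Lambda_k^2)$, which is what your estimate $|p_k-q_k|=O(\Lambda_k^2)$ uses; it is worth saying so explicitly. Second, with that corrected parameter one computes $p_k-q_k=-\tfrac12(1-e^{-\Lambda_k})^2\leq 0$, so only the branch $p_k<q_k$ of your two-case coupling is ever active --- harmless, but it slightly simplifies the construction.
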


The next lemma concerns the trajectories of the limit process suggested in Proposition \ref {pro_uniform}. These are for $S\in [0,1]$,
$$g_{S}:t\in[S,1]\mapsto \frac12\exp(2t)-\exp(S+t)\in\R.$$

\begin{lem}\label{equiunif}
Consider $S<T$ and $S'<T'$, four times in $[0,1]^2$.
We have
$$\max_{u\in[0,1]}\left|g_{S}(S+u(T-S))-g_{S'}(S'+u(T'-S'))\right|\leq 10(|S'-S|+|T'-T|).$$
\end{lem}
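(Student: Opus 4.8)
The plan is to recognise both parametrised trajectories as values of a single smooth function of $(S,T)$ and to estimate the difference by a one–dimensional mean value argument. Writing $\Phi(S,T,u):=g_S\bigl(S+u(T-S)\bigr)$ and using $g_S(t)=\tfrac12 e^{2t}-e^{S+t}$, one gets the closed form
\[
\Phi(S,T,u)=\tfrac12 e^{W}-e^{V},\qquad W:=2S+2u(T-S),\quad V:=2S+u(T-S),
\]
and the quantity to bound is $\sup_{u\in[0,1]}\bigl|\Phi(S,T,u)-\Phi(S',T',u)\bigr|$. For fixed $u\in[0,1]$ I would move the point $(S,T)$ to $(S',T')$ along the straight segment in $\R^2$; the key remark is that the region $\Delta:=\{(a,b)\in[0,1]^2:\ a\le b\}$ is convex and contains both endpoints (because $S<T$ and $S'<T'$), hence the whole segment, on which $\Phi(\cdot,\cdot,u)$ is real-analytic. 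By the fundamental theorem of calculus along this segment,
\[
\bigl|\Phi(S,T,u)-\Phi(S',T',u)\bigr|\le \Bigl(\sup_{\Delta}|\partial_S\Phi|\Bigr)\,|S-S'|+\Bigl(\sup_{\Delta}|\partial_T\Phi|\Bigr)\,|T-T'|,
\]
so it suffices to bound the two partial derivatives uniformly over $(a,b)\in\Delta$ and $u\in[0,1]$.

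The next step is to record the elementary bounds $0\le V\le W\le 2$ valid on $\Delta$: indeed $W=2\bigl((1-u)S+uT\bigr)$ is twice a convex combination of $S,T\in[0,1]$, so $0\le W\le 2$; and $0\le V=(2-u)S+uT\le (2-u)T+uT=2T\le 2$ (here $S\le T$ is used), while $W-V=u(T-S)\ge 0$. Differentiating, $\partial_T\Phi=u\,(e^{W}-e^{V})$, which lies in $[0,e^2-1]$, so $|\partial_T\Phi|\le e^2$. For $\partial_S\Phi=(1-u)e^{W}-(2-u)e^{V}$ the naive triangle bound $3e^2$ is too large; instead I would rewrite $\partial_S\Phi=(1-u)(e^{W}-e^{V})-e^{V}$, where the first summand is in $[0,e^2-1]$ and $e^{V}\in[1,e^2]$, so $\partial_S\Phi\in[-e^2,e^2-2]$ and $|\partial_S\Phi|\le e^2$.

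Plugging these into the displayed inequality gives $\bigl|\Phi(S,T,u)-\Phi(S',T',u)\bigr|\le e^2\bigl(|S-S'|+|T-T'|\bigr)$ for every $u\in[0,1]$, and since $e^2<10$ the lemma follows after taking the supremum over $u$. The only genuinely delicate point is choosing the regrouping of $\partial_S\Phi$ that exploits the cancellation between $(1-u)e^{W}$ and $e^{V}$; everything else is a routine exponential estimate.
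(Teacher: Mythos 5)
Your proof is correct and follows essentially the same route as the paper, which simply asserts that the partial derivatives of $g_S(S+u(T-S))$ with respect to $S$ and $T$ are bounded in norm by $\e^2$ and concludes by a Lipschitz bound. You have merely supplied the explicit computation the paper omits, including the regrouping $\partial_S\Phi=(1-u)(e^{W}-e^{V})-e^{V}$ needed to get the constant $e^2$ rather than the too-crude $3e^2$.
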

\begin{proof}
One can consider the partial derivatives in $S$ and $T$ of $g_S(S+u(T-S))$ for a fixed $u\in[0,1]$. The norm of these derivatives is bounded by $\e^2$ and the relation holds if $S=S'$ and $T=T'$. It is enough for the Lipschitz bound. 
\end{proof}

\begin{proof}[Proof of Proposition \ref{pro_uniform}]
We prove that for a sequence $\sigma^{(p)}$, the measure $P^{\mu,p}$ converge in the Skorokhod topology to the law $P$ of the process described in the statement of the proposition. Our strategy is to use the Prokhorov distance associated to the Skorokhod distance. In other words for every $\eps>0$, we want to couple $P$ and $P^{\mu,p}$ in $D([0,1])\times D([0,1])$ using a coupling $\Theta$ such that with probability greater that $1-\eps$ the (Skorokhod) distance between $(X_t)_{t\in[0,1]}$ and $(X^{\mu,p}_t)_{t\in[0,1]}$ is smaller than $\eps$. 

It is also correct to perform the coupling in another probability space and this is what we will do with the probability space of Lemma \ref{ppp} together with a uniform random random value $X_0\hookrightarrow\mathcal{U}([-1/2,1/2])$ independent from this space. We construct the process $X$ as explained in the statement of the proposition, that is a random path starts in $X_0$ at time $0$ and jump at times $T_1,\ldots,T_N$. After the $i$-th jump, the trajectory is $g_{T_i}$.

Before we describe the piecewise constant process $X^{\mu,p}$, let us introduce an intermediate process $Y^p$. A random path starts at point $X_0$ and jumps at each time $t_k$ if and only if the interval $[t_{k-1},t_k[$ contains some $T_i$. After a jump the trajectory is $g_{t_k}$.

The process $X^{\mu,p}$ does not directly follows the trajectories $g_S$ but it is a discretisation of those trajectories in the sense of Proposition \ref{Euler}. A random path starts in $X_0$. It is constant on each interval $[t_{k-1},t_k[$. At time $t_k$ it jumps down into $[g_{t_{k-1}}(t_{k-1}),g_{t_{k}}(t_{k})]$ with probability $2^{-1}(1-\exp(2(t_k-t_{k-1})))$, which is small. In the other case it does a small jump up from $x=X^{\mu,p}_{t_{k-1}}$ to
$$T(t_{k-1},t_k,x)=(x+2^{-1}\exp(2t_{k-1}))\frac{1+\exp(2\delta t_k)}{2}-2^{-1}\exp(2t_{k-1})$$
where $\delta t_k=t_k-t_{k-1}$. The vector field $V$ corresponding to this transition $T$ with respect to the definitions of paragraph \ref{EulNota} is $V(t,x)=x+2^{-1}\exp(2t)$. Note that $V$ is $1$-Lipschitz in $x$ and continuous in $t$. The solutions of the ODE \eqref{Picard} are of the form $\exp(2t)/2-C\exp(t)$ where $C$ is a constant. For $C=\exp(S)$ we recover $g_S$. The trajectories of the flow starting from $[-1/2,1/2]$ at time $0$ or from $-2^{-1}\exp(2S)$ for some $S\in[0,1]$ at time $S$ are bounded and $V$ is also bounded for $(t,x)$ in a bounded set.

We can now conclude explaining that with high probability the trajectories of $X$ are close to the ones of $Y$ and that the trajectories of $Y$ are close to the one of $X^{\mu,p}$. Of course this holds if $p$ is sufficiently large. For the first estimate we consider the event $\{N=N^{(p)}\}\cap\{i\leq N\Rightarrow|T^{(p)}_i-T_i|\leq \eps\}$ and define $\lambda:[0,1]\mapsto [0,1]$ as the piecewise linear and continuous change of time that fixes $\{0,1\}$ and maps each $T_i$ for $i\leq N$ on $T^{(p)}_i$. With this $\lambda$ used in the definition of the Skorokhod distance and Lemma \ref{equiunif} we see that the Prokhorov distance between $\law(X)$ and $\law(Y^p)$ is smaller than $10\eps$. As $\eps$ can be chosen arbitrary, this distance tends to $0$ as $p$ tends to infinity.

The distance between $\law(Y^p)$ and $\law(X^{\mu,p})$ also tends to zero because we can use Proposition \ref{Euler} in order to compare without time wiggling the piecewise constant trajectories to the pieces $g_{T_i^{(p)}}$ of $Y^p$. The beginning of the first trajectories lies in $[g_{t_{k-1}}(t_{k-1}),g_{t_{k}}(t_{k})]$ and the other starts in $g_{t_k}(t_k)$ so that the distance between these points tends to zero together with $|\sigma^{(p)}|$ and the precise expression of $F$ at the end of the proof of Proposition \ref{Euler} permits us to certify that the upper bound is uniform over all pieces of $Y^p$.
\end{proof}

\subsection{Finitely supported measures}\label{sec_finite}

Let $\V$ be the set of vectors 
$$(X;A)=(x_1,\ldots,x_n,a_1,\ldots a_n)\in\R^{2n}$$ such that $\sum a_i x_i=0$ and $\sum a_i=1$. Every vector of $\V$ can be associated with a signed measure $\sum_{i=1}^n a_i\delta_{x_i}$. Let $(x_1,\ldots,x_n,a_1,\ldots,a_n)$ and $(Y;B)=(y_1,\ldots,y_n,b_1,\ldots b_n)$ be two elements of $\V$. As a function of $(X;A)$ and $(Y;B)$ let now $\Gamma\subset M_{n\times n}(\R)$ be the subspace of matrices satisfying
\begin{align}\label{systeme}
\left\{
\begin{aligned}
&M1=A,\\
&1^\mathrm{T}M=B^\mathrm{T},\\
&MY=\diag(a)X.
\end{aligned}
\right.
\end{align}
where $1$ stays for the vector $(1,\dots,1)^\mathrm{T}$, $\diag(a)$ is the diagonal matrix with entries $a_1,\ldots,a_n$ and $A,\,B,\,Y$ and $X$ are columns.
\begin{lem}\label{Grasm}
With the notations above, assume that the entries of $X$ are all different and that the same holds for $Y$. 
The affine space $\Gamma\subset M_{n\times n}(\R)$ has dimension $(n-1)(n-2)$ and the map
\begin{align*}
f:((X;A),(Y;B))\in\V^2\mapsto \Gamma(X,A,Y,B)\subset M_{n\times n}(\R)
\end{align*}
is analytic. Here $f$ is a map into the affine Grassmanian of affine spaces of dimension $(n-1)(n-2)$ included in $\R^{n\times n}\equiv M_{n\times n}(\R)$ .
\end{lem}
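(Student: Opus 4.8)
The plan is to realise $\Gamma$ as the affine solution set of the system \eqref{systeme}, written compactly as $L(Y)\,M=v$, where $L(Y)\colon M_{n\times n}(\R)\to\R^{3n}$, $L(Y)M=(M1,\,1^{\mathrm{T}}M,\,MY)$, and $v=(A,B,\diag(a)X)$. Here $L$ depends linearly, hence analytically, on $Y$ alone, while $v$ is a polynomial in the whole datum $((X;A),(Y;B))$. The $3n$ scalar equations are carried by the functionals $r_i(M)=\sum_jM_{ij}$, $c_j(M)=\sum_iM_{ij}$ and $e_i(M)=(MY)_i=\sum_jy_jM_{ij}$, which satisfy the two identities $\sum_ir_i=\sum_jc_j$ and $\sum_ie_i=\sum_jy_jc_j$ for every $Y$. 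The first point to establish is that these are the \emph{only} linear relations among the $3n$ functionals: if $\sum_i\alpha_ir_i+\sum_j\beta_jc_j+\sum_i\gamma_ie_i=0$ on $M_{n\times n}(\R)$, then $\alpha_i+\beta_j+\gamma_iy_j=0$ for all $i,j$, so for each fixed $i$ the map $j\mapsto\beta_j$ is the value at $y_j$ of the affine function $t\mapsto-\alpha_i-\gamma_it$; since the $y_j$ are pairwise distinct (for $n\geq2$ there are at least two distinct values), this affine function, hence $(\alpha_i,\gamma_i)$, is independent of $i$, and the relation lies in the two-dimensional span of the two displayed identities. Therefore $L(Y)$ has rank $3n-2$ throughout the domain (only the distinctness of the $y_j$ is used; that of the $x_i$ is a standing hypothesis with no role here).

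Next I would verify that the system is consistent: evaluated on the right-hand side, the two identities read $\sum_ia_i=\sum_jb_j$ and $\sum_ia_ix_i=\sum_jy_jb_j$, both true because $(X;A),(Y;B)\in\V$. Hence $\Gamma=\{M:L(Y)M=v\}$ is a non-empty affine subspace of $M_{n\times n}(\R)$ of dimension $n^2-(3n-2)=(n-1)(n-2)$, which is the first assertion. For the analyticity of $f$, fix a point $p_0=((X_0;A_0),(Y_0;B_0))$ of the domain. Discard the equations carried by $r_n$ and $e_n$, and keep the remaining $3n-2$ equations (carried by $r_1,\dots,r_{n-1},c_1,\dots,c_n,e_1,\dots,e_{n-1}$); by the two identities these $3n-2$ functionals already span the $(3n-2)$-dimensional span of all $3n$, hence form a basis of it, and — using the identities together with the $\V$-constraints exactly as in the consistency check — the two discarded equations are automatically implied by the kept ones on the whole domain, so $\Gamma=\{M:L_R(Y)M=v_R\}$, where $L_R(Y)$ is the $(3n-2)\times n^2$ matrix of the kept equations and $v_R$ the matching block of $v$. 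Choose a set $C$ of $3n-2$ matrix entries for which the square block $L_{R,C}(Y_0)$ is invertible; since $\det L_{R,C}(Y)$ is a polynomial in $Y$, invertibility persists near $p_0$, and there
\[
M_C=L_{R,C}(Y)^{-1}v_R-\bigl(L_{R,C}(Y)^{-1}L_{R,C^{c}}(Y)\bigr)M_{C^{c}}.
\]
Thus near $p_0$ the affine space $\Gamma$ is the graph, over the coordinate $(n-1)(n-2)$-plane indexed by the complementary entries $C^{c}$, of an affine map whose linear part $-L_{R,C}(Y)^{-1}L_{R,C^{c}}(Y)$ and translation $L_{R,C}(Y)^{-1}v_R$ are, by Cramer's rule, analytic functions of the datum. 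These are precisely the coordinates of $\Gamma$ in the chart of the affine Grassmannian of $(n-1)(n-2)$-planes in $M_{n\times n}(\R)$ associated with the splitting $C\sqcup C^{c}$; since the point $p_0$ was arbitrary, $f$ is analytic.

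The step I expect to be the main obstacle is the exact redundancy count in the first paragraph — showing the $3n$ equations drop rank by exactly two, no more and no less — which is precisely where the distinctness of the $y_j$ enters, together with arranging the consistency bookkeeping so that one genuinely produces a chart of the affine Grassmannian rather than a merely set-theoretic description of $\Gamma$. The rest is routine linear algebra and Cramer's formulas.
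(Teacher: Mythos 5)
Your proof is correct, but it is organised differently from the paper's. The paper argues by explicit elimination: it shows that the projection sending $M\in\Gamma$ to its upper-left $(n-1)\times(n-2)$ block is an affine bijection onto $M_{(n-1)\times(n-2)}(\R)$ --- for each of the first $n-1$ rows the last two entries are recovered from the row-sum and barycenter constraints by solving a $2\times2$ system whose determinant is $y_n-y_{n-1}\neq 0$ (the paper's display writes $x_{n-1},x_n$ here, but since the third constraint is $MY=\diag(a)X$ the relevant determinant involves the $y$'s, consistent with the hypothesis on $Y$), the bottom row is then filled in from the column sums, and the two leftover equations on the bottom row are checked using $\sum a_i=\sum b_j=1$ and $\sum a_ix_i=\sum b_jy_j=0$. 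You instead compute the rank of the full $3n\times n^2$ constraint map abstractly, by classifying all linear relations among the functionals $r_i,c_j,e_i$ (your argument that $\alpha_i+\beta_j+\gamma_iy_j=0$ forces $(\alpha_i,\gamma_i)$ to be independent of $i$ is exactly where pairwise distinctness of the $y_j$ enters, and is correct), verify consistency against the two-dimensional left kernel, and then produce a chart of the affine Grassmannian by a generic choice of $3n-2$ pivot entries plus Cramer's rule. The two redundant equations are handled by the same $\V$-identities in both arguments. What the paper's route buys is a single explicit chart valid on the whole domain (the free entries are always the upper-left $(n-1)\times(n-2)$ block); what yours buys is a cleaner justification that the rank is exactly $3n-2$ --- no more and no less --- and the observation, left implicit in the paper, that distinctness of the $x_i$ plays no role in this lemma. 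Your charts are only local, but since analyticity is a local property this costs nothing.
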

\begin{proof}
We can prove that the application that maps $M\in \Gamma$ to the submatrix made of the $n-1$ upper rows and the $n-2$ left-more columns is an affine bijection with $M_{(n-1)\times (n-2)}$. Indeed there always exists a way to complete such a matrix to an element of $W$ and this way is unique. We first consider the $n-1$ upper rows together with the first and third constraint of \eqref{systeme}. On each line we obtain a $2\times2$ linear system to solve and the solution is unique because of $|\begin{smallmatrix}1&x_{n-1}\\1&x_n\end{smallmatrix}|\neq 0$. We complete the row in the unique possible way according to the second constraint and we have still two relations on the lower row that need to be checked. These relations rely on the definition of $\V$. First, we already have $\sum\sum m_{ij}=\sum b_j=1=\sum a_i$ and $\sum_j m_{ij}=a_i$ for every $i\leq n-1$. It follows $\sum m_{nj}=a_n$. Second, we have $\sum_j m_{ij} y_j=a_i x_i$ for every $i\leq n-1$ and we want to prove it for $i=n$. This follows by subtracting these $n-1$ relations to $\sum b_j y_j=\sum a_i x_i$.    
\end{proof}

In this section we are interested in defining a limit curtain coupling for a peacock 
$$\mu_t=\sum a_i(t)\delta_{x_i(t)}$$
where the entries of the vector $(x_1,\ldots,x_n,a_1,\ldots,a_n)(t)\in \V$ are real analytic functions of time and furthermore satisfy $a_1(t),\ldots,a_n(t)>0$ and $x_1(t)<\ldots<x_n(t)$ for every $t$.

The fact that the measures are in the convex order implies that for $s\leq t$ the subspace $\Gamma_{st}$ associated with $(X_s;A_s)$ and $(X_t;A_t)$ contains a matrix with non-negative entries. Indeed, the conditions defining $\Gamma_{st}$ are equivalent to the ones of $\M(\mu_s,\mu_t)$. More precisely the affine map
$$\sum_{1\leq i,j\leq n} m_{ij}\delta_{(x_i(s),x_j(t))}\in \M(\mu_s,\mu_t)\mapsto (m_{ij})_{1\leq i,j\leq n}\in\Gamma_{st}$$
is onto and has image $\Gamma_{st}\cap (\R^+)^{n\times n}$, the subset of non-negative matrices of $\Gamma_{st}$. Hence we can identify it with $\Pi_M(\mu_s,\mu_t)$.

According to Proposition \ref{synthese}, the matrix corresponding to the left-curtain coupling is the unique minimiser for fixed $\mu_s$ and $\mu_t$ of the transport cost function $C:M\mapsto\sum_{ij} m_{ij} (1+\tanh(-x_i))\sqrt{1+y_j^2}$ to be minimised on $\Gamma_{st}\cap (\R^+)^{n\times n}\equiv \Pi_M(\mu_s,\mu_t)$. We denote the corresponding matrix by $M(s,t)$.  Note that $(s,t)\mapsto M(s,t)$ is defined on $\mathcal{T}=\{(s,t)\in[0,1]^2:\,s\leq t\}$ and that it is continuous on $\mathcal{T}$, for instance because of Theorem \ref{corstab}. We also have $M(s,s)=\diag(a_1(s),\ldots,a_n(s))$. As $C$ is linear on $\Gamma_{st}$, the matrix $M(s,t)$ is an extreme point of $\Pi_M(\mu_s,\mu_t)\equiv \Gamma_{st}\cap (\R^+)^{n\times n}$. Hence we can deduce that $M(s,t)$ satisfies at least $(n-1)(n-2)$ relations of type $m_{i,j}=0$ that are independent from the ones defining $\Gamma_{st}$.

We also notice that the chain
\begin{align*}
\sum_{1\leq i,j\leq n} m_{ij}(s,t)\delta_{(x_i(s),x_j(t))}&\mapsto M(s,t)\\
&\mapsto \tilde{M}(s,t):= \diag(a)^{-1}(s)M(s,t)
\end{align*}
permits us to replace a martingale transport plan with a stochastic matrix with $n$ states because the sum of the entries on the $i$-th row is no longer $a_i(s)$ but $1$. Moreover $a(t)^\mathrm{T}=a(s)^\mathrm{T}\tilde{M}(s,t)$. Each state $i=1,\ldots, n$ represents a trajectory $t\mapsto x_i(t)$. 

Given an interval partition $\sigma^{(p)}=\{t_0,\ldots,t_{Q_p}\}$ of $[0,1]$ with $0=t_0<\cdots<t_{Q_p}=1$ of the interval $[0,1]$, we introduce the coherent family of $(\bar{A}^{(p)}(s,t))_{0\leq s\leq t\leq1}$ in the following way. If $s\in[t_i,t_{i+1}[$ and $t\in[t_j,t_{j+1}[$ the transition matrix between those times is
$\bar{A}^{(p)}(s,t)=\tilde{M}(t_i,t_{i+1})\cdots \tilde{M}(t_{j-1},t_{j})$. It sends the distribution of mass $(a)(t_i)$ to $(a)(t_{j})$. We will prove in Proposition \ref{Poisson_fini} that $\bar{A}^{(p)}(s,t)$ converges to a certain $A_{st}$ when $|\sigma^{(p)}|$ goes to zero. This will in particular prove that $A_{st}$ is a stochastic matrix that sends the distribution $a(s)$ to $a(t)$.

\begin{lem}\label{lemlocal}
There exists a constant $C$ such that for every $\xi^-,\,\xi^+$ and any finite increasing sequence $(\theta_k)_{k=0}^K$ in $[\xi^-,\xi^+]$, one has 
\begin{align*}
\left\|\tilde M(\theta_{0},\theta_{1})\cdots \tilde M(\theta_{K-1},\theta_{K})-\id_n\right\|<C(\xi^+-\xi^-)
\end{align*}
\end{lem}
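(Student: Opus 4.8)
The plan is to reduce the estimate to a one-step bound on $\tilde M(\theta_{k-1},\theta_k)-\id_n$ and then multiply. First I would establish the \emph{local} estimate: there is a constant $c$ such that $\|\tilde M(s,t)-\id_n\|\leq c(t-s)$ for all $(s,t)\in\mathcal T$ with $t-s$ small. This is the heart of the matter. Since $\tilde M(s,s)=\id_n$ and $(s,t)\mapsto \tilde M(s,t)$ is continuous on the compact set $\mathcal T$ (by Theorem \ref{corstab}, after passing through the identification of $\Pi_M(\mu_s,\mu_t)$ with matrices and dividing by $\diag(a(s))$, which is invertible with entries bounded below since the $a_i(t)$ are continuous and positive on $[0,1]$), one only needs a quantitative modulus of continuity near the diagonal. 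The natural way to get a \emph{Lipschitz} modulus is to use the quantitative Theorem \ref{lips}: the left-curtain coupling $M(s,t)$, viewed via its second marginal and the shadow family, satisfies $Z(\curt(\mu_s,\mu_t),\curt(\mu_s,\mu_s))\leq W(\mu_s,\mu_s)+2W(\mu_t,\mu_s)=2W(\mu_s,\mu_t)$, and $\curt(\mu_s,\mu_s)=(\id\otimes\id)_\#\mu_s$. Because the support points $x_i(t)$ and weights $a_i(t)$ are real-analytic (hence Lipschitz) in $t$ on $[0,1]$, and bounded away from coalescing ($x_i(t)<x_{i+1}(t)$ uniformly, $a_i(t)>0$ uniformly on the compact interval), one has $W(\mu_s,\mu_t)\leq L|t-s|$ for a uniform $L$. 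Combining, $Z$-closeness forces the matrix entries $m_{ij}(s,t)$ to be within $O(|t-s|)$ of $m_{ij}(s,s)=a_i(s)\delta_{ij}$; dividing by $a_i(s)$ gives $\|\tilde M(s,t)-\id_n\|\leq c\,(t-s)$ for a uniform constant $c$.

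Second, given the one-step bound, I would telescope. Write $\tilde M(\theta_{k-1},\theta_k)=\id_n+E_k$ with $\|E_k\|\leq c(\theta_k-\theta_{k-1})$ (using an operator norm, or any submultiplicative matrix norm — passing to such a norm costs only a fixed constant in dimension $n$). Then
\begin{align*}
\Bigl\|\prod_{k=1}^K(\id_n+E_k)-\id_n\Bigr\|\leq \prod_{k=1}^K(1+\|E_k\|)-1\leq \exp\Bigl(\sum_{k=1}^K\|E_k\|\Bigr)-1\leq \exp\bigl(c(\xi^+-\xi^-)\bigr)-1.
\end{align*}
Finally, using $e^x-1\leq x e^x\leq x\,e^c$ for $0\leq x\leq c\cdot 1\leq c$ (since $\xi^+-\xi^-\leq 1$), this is at most $C(\xi^+-\xi^-)$ with $C=c\,e^{c}$ (adjusted by the norm-equivalence constant). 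This gives the stated inequality.

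\textbf{Main obstacle.} The delicate point is obtaining the \emph{Lipschitz} (not merely continuous) one-step bound $\|\tilde M(s,t)-\id_n\|\leq c(t-s)$ uniformly in $(s,t)\in\mathcal T$. Plain continuity from Theorem \ref{corstab} is not enough; I would genuinely need the quantitative Theorem \ref{lips}, together with care in translating a bound on $Z(\curt(\mu_s,\mu_t),(\id\otimes\id)_\#\mu_s)$ — which controls the second marginals of the truncations $\curt(\mu_s,\mu_t)|_{]-\infty,x]\times\R}$ — into a bound on the individual matrix entries $m_{ij}(s,t)$. For finitely supported marginals with separated atoms this translation is clean: the masses of the truncated second marginals on each atom $x_j(t)$ are Lipschitz functions of the $W$-distances involved, because the atoms of $\mu_s,\mu_t$ stay uniformly separated. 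One also must check that $\mu_s\leqc\mu_t$ genuinely holds for all $s\leq t$ (it does, since $[\mu]$ is a peacock) so that $\curt(\mu_s,\mu_t)$ is defined throughout $\mathcal T$. Everything else is bookkeeping with matrix norms and the elementary inequality $\prod(1+h_k)\leq\exp(\sum h_k)$ already used in the proof of Proposition \ref{Euler}.
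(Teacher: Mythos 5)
Your proof is correct, and the overall skeleton (a one-step Lipschitz bound on $\tilde M(s,t)-\id_n$ followed by telescoping with submultiplicativity and $\prod_k(1+h_k)\leq\exp(\sum_k h_k)$) coincides with the paper's. Where you genuinely diverge is in how the one-step bound $\|\tilde M(s,t)-\id_n\|\leq c(t-s)$ is obtained. The paper does \emph{not} invoke Theorem \ref{lips} here: it argues structurally, comparing barycentres of the shadows of the partial sums $\sum_{l\leq i}a_l(\xi^-)\delta_{x_l(\xi^-)}$ with the corresponding partial sums at time $\xi^+$, and using Lemma \ref{ombre_a_droite} to localise where the excess $O(h)$ of mass can land; this yields both the quantitative bound and the exact zero pattern $\tilde M_{ij}=0$ for $j>i+1$ when $h$ is small, a structural fact that is reused later in the proof of Lemma \ref{grandO}. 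You instead apply the quantitative Theorem \ref{lips} (equivalently Corollary \ref{doubidou}) to the truncations $\mu_s|_{]-\infty,x_i(s)]}$, obtaining $W\bigl(S^{\mu_t}(\mu_s|_{]-\infty,x_i(s)]}),\mu_s|_{]-\infty,x_i(s)]}\bigr)\leq 2W(\mu_s,\mu_t)\leq 2L(t-s)$, and then convert this Wasserstein bound on cumulative second marginals into entrywise bounds via the uniform separation of the atoms (a tent-function test against each $x_j(s)$ does this cleanly, and differencing consecutive truncations recovers the rows $m_{ij}$). Your route is a more systematic use of the paper's main quantitative result and requires no new structural analysis, but it delivers only $|\tilde m_{ij}-\delta_{ij}|=O(t-s)$ rather than the sharper support information; it also needs the small caveat (shared implicitly by the paper) that for $\xi^+-\xi^-$ bounded away from zero the claim is trivial because a product of stochastic matrices has entries in $[0,1]$, so the atom-clustering step need only be carried out for $t-s$ below a fixed threshold.
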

\begin{proof}
Let us first prove the lemma in the case where the finite sequence $\theta_n$ is just $\theta_0=\xi^+$ and $\theta_1=\xi^-$. The product of transition matrices is $\tilde{M}(\xi^-,\xi^+)$ that we simply note $\tilde{M}$. Therefore due to the shape of the left-curtain couplings we can claim $\tilde{M}_{ij}=0$ for $j>i+1$ if $h:=(\xi^+-\xi^-)$ is sufficiently small. Indeed the shadow of $\sum_{l=1}^i a_l(\xi^-) \delta_{x_l(\xi^-)}$ must be close to $\sum_{l=1}^i a_l(\xi^+) \delta_{x_l(\xi^+)}$ when $h$ is small. More precisely considering the centres of mass of these measures only a mass of $O(h)$ is sent to the atoms $a_{i+1}(\xi^+)\delta_{x_{i+1}(\xi^+)},\ldots,a_{n}(\xi^+)\delta_{x_n(\xi^+)}$ and this bound $O(h)$ can be chosen uniformly in $\xi^-$. Because of Proposition \ref{ombre_a_droite} if $h$ is sufficiently small this part of the shadow can only be in $x_{i+1}$. Hence the claim on $\tilde{M}_{ij}$ holds. With similar arguments and using what has already been proved we obtain that for $j<i$ the entry $\tilde{M}_{ij}$ is also $O(h)$ uniformly in $\xi^-$. Indeed the measure $a_i\delta_{x_i}$ is transported to a measure of barycenter $a_i(\xi^-)$, only $O(h)$ is transported to $a_{i+1}$ and no mass goes on upper atoms. Therefore we have proved that for a given peacock, there exists some constant $c>0$ such that
$$\left\|\tilde{M}(\xi^-,\xi^+)-\id_n\right\|\leq c(\xi^+-\xi^-).$$

In the general case where $(\theta_k)_k$ is not reduced to two times, using the submultiplicativity of the operator norm, the estimate $1+x\leq \exp(x)$ and a telescopic sum we obtain
\begin{align*}
&\|(\tilde{M}(\theta_0,\theta_1)\cdots \tilde{M}(\theta_{K-1},\theta_K)-\id_n\cdots\id_n\|\\&\leq \sum_{k=0}^{n-1} c(\theta_{k+1}-\theta_k)\exp(c(\theta_K-\theta_0))\\
&\leq c.\e^c(\xi^+-\xi^-)
\end{align*} 
\end{proof}

We now prepare the identification of the limit curtain peacock with a family of transitions defined by ordinary differential equations in the space of stochastic matrices. However we start with transport matrices instead of stochastic matrices.
\begin{lem}\label{grandO}
There exists a countable and closed set $E\subset [0,1]$ such that for every $t\in[0,1]\setminus E$ and $h\geq 0$, there exists a matrix $N(t)=(n_{ij}(t))_{ij}$ that satisfies
\begin{align}\label{huit}
M(t,t+h)=\diag(a)(t)+hN(t)+O(h^2).
\end{align}
The sums of its entries on the rows is $0$ and on the $j$-th column is $\dd a_j/\dd t$. It also satisfies  $n_{ij}(t)=0$ for at least $(n-1)(n-2)$ entries $(i,j)$ among which those with $j>i+1$. The entries on the diagonal are non-negative and the other entries are non-positive. Furthermore $\|N(t)\|$ is uniformly bounded on $[0,1]\setminus E$.

Moreover, $E$ has finitely many accumulation points so that between two such points the elements  $(\theta_k)_k$ of $E$ are isolated. The map $t\mapsto N(t)$ is analytic on every $]\theta_k,\theta_{k+1}[$ and for every segment $S\subset ]\theta_k,\theta_{k+1}[$, there exists $C>0$ with the uniform estimate
\begin{align}\label{neuf}
\|M(t,t+h)-\diag(a)(t)-hN(t)\|\leq Ch^2
\end{align}
for $t\in S$ and $h>0$.
\end{lem}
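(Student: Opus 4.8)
The plan is to treat $h\mapsto M(t,t+h)$, for fixed $t$, as the output of a parametric linear program whose data depend real-analytically on $h$: the martingale constraints \eqref{systeme} for the pair $(\mu_t,\mu_{t+h})$ and the cost coefficients $c_{ij}=(1+\tanh(-x_i(t)))\sqrt{1+x_j(t+h)^2}$. For each $Z\subset\{1,\dots,n\}^2$ of cardinality $(n-1)(n-2)$ such that adjoining the relations $m_{ij}=0$, $(i,j)\in Z$, to the affine system of $\Gamma_{t,t+h}$ yields an invertible $n^2\times n^2$ system, Cramer's rule provides a matrix $M^Z(t,h)$ that is a ratio of real-analytic functions of $h$. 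Since $M(t,t+h)\to\diag(a)(t)$ as $h\to0^+$ (continuity of $M$ on $\mathcal T$, Theorem \ref{corstab}), whenever $M^Z(t,h)$ equals $M(t,t+h)$ along a sequence $h\to0^+$ its singularity at $h=0$ is removable, so $M^Z(t,\cdot)$ is real-analytic near $0$ with value $\diag(a)(t)$ there. There are finitely many admissible $Z$, and for each the two conditions ``all entries of $M^Z(t,h)$ are non-negative'' and ``$M^Z(t,h)$ has least cost among the competitors'' are governed by real-analytic functions of $h$, hence hold, or fail, throughout a punctured right neighbourhood of $0$. Because the left-curtain coupling is the unique minimiser of the linear functional $M\mapsto\sum_{ij}c_{ij}m_{ij}$ over $\Gamma_{t,t+h}\cap(\R^+)^{n\times n}\cong\M(\mu_t,\mu_{t+h})$ (Proposition \ref{synthese}), it is a vertex of this polytope and thus of the form $M^Z(t,h)$ for each $h$; combining this with the finiteness just mentioned, a single pattern $Z=Z(t)$ obeys $M(t,t+h)=M^{Z(t)}(t,h)$ for all $h\in\,]0,\varepsilon_t[$. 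Set $N(t):=\partial_hM^{Z(t)}(t,h)\big|_{h=0}$; then \eqref{huit} holds and a Taylor expansion of the analytic function $M^{Z(t)}(t,\cdot)$ gives \eqref{neuf}.

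The algebraic properties of $N(t)$ follow from the constraints and from earlier results. Differentiating $M(t,t+h)\mathbf 1=a(t)$ and $\mathbf 1^{\mathrm{T}}M(t,t+h)=a(t+h)^{\mathrm{T}}$ at $h=0$ shows that the rows of $N(t)$ sum to $0$ and that its $j$-th column sums to $\dd a_j/\dd t$. The signs of the entries are then forced: $M(t,t+h)$ has non-negative entries, its off-diagonal entries vanish at $h=0$, and $M(t,t+h)\mathbf 1=a(t)$ makes $m_{ii}(t,t+h)\le a_i(t)$, which pins the diagonal and off-diagonal of $N(t)$ to opposite signs. For the vanishing entries, Lemma \ref{lemlocal} shows that for small $h$ the left-curtain structure forces $m_{ij}(t,t+h)=0$ whenever $j>i+1$; since these $(n-1)(n-2)/2$ relations are independent from those defining $\Gamma_{t,t+h}$, while $M(t,t+h)$ is an extreme point and therefore satisfies at least $(n-1)(n-2)$ relations $m_{ij}=0$ independent from $\Gamma$, the stabilised zero set $Z(t)$ contains $\{(i,j):j>i+1\}$ and has at least $(n-1)(n-2)$ elements; consequently $n_{ij}(t)=0$ at all these places.

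Finally, the map $t\mapsto Z(t)$ takes finitely many values. For each admissible $Z$ the ``degenerate'' times --- those $t$ for which the Cramer denominator of $M^Z(t,\cdot)$ vanishes identically in $h$ --- form a finite set (some $h$-Taylor coefficient of that denominator is a non-trivial analytic function of $t$), and away from these finitely many points the set $\{t:Z(t)=Z\}$ is cut out by finitely many real-analytic (in)equalities in $t$, so its boundary is locally finite there. Letting $E$ be the union, over the finitely many $Z$, of these boundaries together with the degenerate times, $E$ is closed and countable with finitely many accumulation points; on each interval $]\theta_k,\theta_{k+1}[$ complementary to $E$ the pattern $Z(t)$ is constant, so $t\mapsto N(t)$ is real-analytic there, and the uniform bounds on $\|N(t)\|$ over $[0,1]\setminus E$ and on the constant $C$ in \eqref{neuf} over a segment $S\subset\,]\theta_k,\theta_{k+1}[$ come from compactness. \emph{The main obstacle is exactly this control of the parametric linear program}: proving that for fixed $t$ the combinatorial type of the left-curtain coupling stabilises as $h\to0^+$, and that it changes with $t$ only across a set $E$ of the mild structure required --- everything else being bookkeeping layered on Lemma \ref{lemlocal}, Theorem \ref{corstab} and Proposition \ref{synthese}.
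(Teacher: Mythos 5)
Your proposal is correct and follows essentially the same route as the paper: both arguments enumerate the finitely many zero-patterns of cardinality $(n-1)(n-2)$ whose associated linear relations cut $\Gamma_{t,t+h}$ in a single point, use the fact that the left-curtain coupling is an extreme point of $\Gamma_{t,t+h}\cap(\R^+)^{n\times n}$ and is therefore realised by one of these analytic selections, and then show that the selected pattern stabilises as $h\to0^+$ and changes only across an exceptional set $E$ of the required structure. The organisational difference is that the paper works from the start with the two-variable analytic maps $(s,t)\mapsto M_k(s,t)$ and controls everything at once via the structure of zero sets of analytic functions of two variables (the reference to \cite{KP02}), whereas you first freeze $t$ and argue in the single variable $h$, and only afterwards vary $t$. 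The one step that is thinner than it should be is the assertion that, away from the degenerate times, $\{t:Z(t)=Z\}$ is ``cut out by finitely many real-analytic (in)equalities in $t$'': the defining conditions (eventual feasibility and eventual optimality as $h\to0^+$) are sign conditions on the \emph{first non-vanishing} $h$-Taylor coefficient of jointly analytic functions, so taken literally they involve infinitely many coefficients. To repair this you must run, for these sign conditions, the same stabilisation argument you already use for the degenerate times --- each coefficient is analytic in $t$, and the decreasing sequence of their common zero sets consists of analytic subsets of $[0,1]$, hence stabilises after finitely many steps, each stage being either finite or all of $[0,1]$ --- after which the claim does reduce to finitely many analytic (in)equalities plus a finite exceptional set; this is precisely the two-variable structure theory the paper invokes. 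Finally, note that your derivation yields $n_{ii}\leq 0$ and $n_{ij}\geq 0$ for $i\neq j$ (since $0\leq m_{ij}(t,t+h)$ and $m_{ii}(t,t+h)\leq a_i(t)$ with equality at $h=0$); the signs in the statement of the lemma are written the other way round, but the explicit expansion in paragraph \ref{threepoints} and the interpretation of $\tilde N$ as a Markov generator in Theorem \ref{Poisson_fini} confirm your version.
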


\begin{proof}
We introduce an index $k\geq 1$ such that every $k\leq \binom{n^2}{(n-1)(n-2)}$ is associated with a subset $I_k$ of $(n-1)(n-2)$ entries of the matrices of $M_{n\times n}(\R)$. Moreover for every $s,t\in[0,1]^2$ we only consider the subsets $I_k$ such that the vectorial space $\Delta_k$ of matrices with the entries zero on $I_k$ is in direct sum with the vectorial part of $\Gamma_{st}$. The spaces $\Gamma_{st}$ are parallel for different values of $s$ so that we can denote the set of theses indices by $\mathcal{I}(t)$. The question whether $\Gamma_{st}$ is in direct sum with $\Delta_k$ is just depending analytically on the values of the functions $(x_j)_j$ at time $t$. The index $k$ will be an element of $\mathcal{I}(t)$ if and only if a certain determinant does not vanish at time $t$. Hence either $k$ is not an element of $\mathcal{I}(t)$ for every $t$ or it is, except finitely many times on $[0,1]$. 

For $k\in\mathcal{I}(t)$ we can now introduce the analytic map $(s,t)\mapsto M_k(s,t)$ where $\{M_k(s,t)\}=\Gamma_{st}\cap\Delta_k$ is the single point at the intersection. 

Let $]\xi^-,\xi^+[$ be an interval such that $\mathcal{I}(t)$ is the same for every $t$. With respect to the statement $\xi^-,\xi^+$ are elements of $E$ such that $S\cap E$ will be finite for every segment $S\subset ]\xi^-, \xi^+[$. The fact that there are finitely many points $\xi$ is due to the analyticity. We simply denote $\mathcal{I}(t)$ by $\mathcal{I}$. Let $t_0$ be in the interval. For every $(s,t)$ in a neighbourhood of $(t_0,t_0)$, the matrix $M(s,t)$ is continuous and it equals at least one $M_k(s,t)$ for $k\in\mathcal{I}$. We have also $M(t,t)=\diag(a)(t)$ because $\Gamma_{tt}\cap(\R^+)^{n\times n}$ is reduced to one point. Moreover the maps $(s,t)\mapsto M_k(s,t)-M_l(s,t)$ are analytic and the locus where they vanish close to $(t_0,t_0)$ is accordingly well-known (see for instance \cite[Chapter 6]{KP02}). Hence we deduce that there exists a neighbourhood of $t_0$ such that for every $s$ in this neighbourhood, there exist $k(s)$ and $\eps(s)>0$ with $M(s,s+h)=M_{k(s)}(s,s+h)$ for every $h\in[0, \eps(s)]$. Moreover the neighbourhood can be restricted so that $k$ is constant both for $s<t_0$ and for $s>t_0$. Finally the function $\eps$ can be chosen to be continuous. Using the compactness of the segments $S\subset ]\xi^-,\xi^+[$ we see that there exists at most finitely many accident times $\theta_k$ on $S$. Between two such times there exists $k\in\mathcal{I}$ with $M(t,t+h)=M_k(t,t+h)$ if $h$ is sufficiently small. The bound may be chosen uniformly on every segment included in $]\theta_k,\theta_{k+1}[$. Hence we obtain \eqref{huit} and \eqref{neuf} for $N=\dd M_k(t,t+h)/\dd h|_{h=0^+}$.

The statements on $N$ now follow from the system \eqref{systeme}, equation \eqref{huit}, the definition of $M_k$, Lemma \ref{lemlocal} and the structure of the zeros of $\tilde{M}(t,t+h)$ for small $h$ stated in the proof of this lemma.
\end{proof}

\begin{them}\label{Poisson_fini}
For every finite peacock $[\mu]$ concentrated on $n$ injective and analytic curves $t\in[0,1]\mapsto x_i(t)$ with analytic weight $a_i(t)$ (remind the setting after Lemma \ref{Grasm}), there is a unique limit curtain process and this process is Markovian. 

More precisely for $\tilde{N}_t=\diag(a_1(t),\ldots,a_n(t))^{-1}N_t$, the family $(A_{st})_{s\leq t}$ associated with the differential equations 
\begin{align}\label{semig}
\left\{
\begin{aligned}
\frac{\dd A_{su}}{\dd t}&=A_{su}\tilde{N}(u)\\
A_{ss}&=\id
\end{aligned}
\right.
\end{align}
defines a set of coherent transition matrices on a space of $n$ states. Together with the initial measure $(a_1,\ldots,a_n)(0)$ on this space, it defines a Markov process with c\`adl\`ag trajectories. This process is the limit in both the finite dimensional and the Skorokhod topology of any sequence $P^{\mu,p}$ associated with a sequence $(\sigma^{(p)})_p$ of partitions with mesh going to zero.

\end{them}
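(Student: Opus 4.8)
The plan is to reduce the statement to the convergence of the products of transition matrices $\bar A^{(p)}(s,t)$ to the flow of the linear ODE \eqref{semig}, and then to lift this to the level of processes. First I would pass from transport matrices to stochastic matrices. Since $a_i(t)>0$ on the compact interval $[0,1]$, the diagonal matrices $\diag(a)(t)$ are uniformly invertible, so Lemma \ref{grandO} gives, off the exceptional set $E$, the one-step expansion $\tilde M(t,t+h)=\id+h\tilde N(t)+O(h^2)$, with the $O(h^2)$ uniform on every segment contained in one of the intervals $]\theta_k,\theta_{k+1}[$, and with $\|\tilde N\|$ uniformly bounded on $[0,1]\setminus E$; moreover each $\tilde M(s,t)$ is a stochastic matrix and, being built from martingale couplings, satisfies $\sum_j\tilde M(s,t)[i,j]\,x_j(t)=x_i(s)$. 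Since $E$ is a Lebesgue-null set, $\tilde N\in L^\infty([0,1])$, so \eqref{semig} has a well-defined, continuous solution $(A_{st})_{s\le t}$, which is stochastic (a limit of products of stochastic matrices) and satisfies the identity $A_{su}=A_{st}A_{tu}$ because the flow of a linear ODE is multiplicative.

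The core step is to prove $\bar A^{(p)}(s,t)\to A_{st}$ uniformly on $\mathcal{T}=\{(s,t):s\le t\}$. On a segment $S\subset\,]\theta_k,\theta_{k+1}[$ the map $\tilde N$ is analytic, hence Lipschitz and bounded on $S$, and the uniform estimate \eqref{neuf} of Lemma \ref{grandO} together with this Lipschitz bound gives a one-step local truncation error $\|\tilde M(t_i,t_{i+1})-(\id+(t_{i+1}-t_i)\tilde N(t_i))\|=O((t_{i+1}-t_i)^2)$; a telescoping argument with the multiplicative stability factor $\exp(\|\tilde N\|_\infty)$, exactly as in Proposition \ref{Euler}, then yields $\|\bar A^{(p)}(s,t)-A_{st}\|=O(|\sigma^{(p)}|)$ for $s\le t$ in $S$. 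Because $E$ has only finitely many accumulation points and is otherwise isolated, for each $\delta>0$ the complement of the union of the $\delta$-neighbourhoods of those accumulation points meets $E$ in finitely many points; excising also $\delta$-neighbourhoods of those, one covers $[0,1]$ by finitely many good segments separated by finitely many short intervals of length $\le 2\delta$. On each short interval $[\xi^-,\xi^+]$, Lemma \ref{lemlocal} bounds $\|\bar A^{(p)}(\xi^-,\xi^+)-\id\|$ by $C(\xi^+-\xi^-)\le 2C\delta$, and likewise $\|A_{\xi^-\xi^+}-\id\|\le C'\delta$. Concatenating these estimates by submultiplicativity of the operator norm, using that all the matrices in sight are uniformly bounded, and then letting $p\to\infty$ and afterwards $\delta\to0$, gives the asserted uniform convergence.

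The rest is bookkeeping. The family $(A_{st})$ together with the initial law $a(0)$ on the $n$-state space defines a time-inhomogeneous Markov chain; carrying the states along the curves $x_i$ produces a process with $\law(X_t)=\mu_t$ (pass to the limit in $a(\lfloor s\rfloor^p)^{\top}\bar A^{(p)}=a(\lfloor t\rfloor^p)^{\top}$), which is a martingale (the martingale identity above passes to the limit) and which, since $\int_0^1\|\tilde N(t)\|\,\dd t<\infty$, has almost surely finitely many jumps and a c\`adl\`ag modification $P$. For fixed $s_1\le\cdots\le s_k$ the joint law of $(X^{\mu,p}_{s_1},\dots,X^{\mu,p}_{s_k})$ is the image, under the curves, of the chain with one-step matrices $\bar A^{(p)}(\lfloor s_i\rfloor^p,\lfloor s_{i+1}\rfloor^p)$ started from $a(\lfloor s_1\rfloor^p)$; by the core step and the continuity of $(s,t)\mapsto A_{st}$ and of the $x_i,a_i$, it converges to the joint law of $P$ at $s_1,\dots,s_k$, independently of the sequence of partitions, so $\LFD([\mu])=\{P\}$ and $P$ is Markovian. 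For the Skorokhod topology it suffices, the limiting finite-dimensional laws already being those of the c\`adl\`ag process $P$, to prove tightness of $(P^{\mu,p})_p$ in $D([0,1])$: compact containment holds because all paths lie in $\bigcup_i x_i([0,1])$, and the probability that $X^{\mu,p}$ changes state during a step is $O(t_{i+1}-t_i)$ (each off-diagonal entry of $\tilde M(t_i,t_{i+1})$ is $O(t_{i+1}-t_i)$ by Lemma \ref{lemlocal}), so the probability of two state changes inside a window of length $\delta$ is $O(\delta^2)$ and, summed over $O(1/\delta)$ windows, $O(\delta)$, while between consecutive state changes the path is a discretisation of a single continuous curve $x_i$ and oscillates by $O(|\sigma^{(p)}|)$; this controls the Skorokhod modulus and yields tightness, hence $\LIM([\mu])=\{P\}$. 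Alternatively one may mimic the proof of Proposition \ref{pro_uniform}, coupling $X^{\mu,p}$ with $X$ by matching state-change times and types through a Bernoulli-to-Poisson coupling as in Lemma \ref{ppp} and comparing the drift pieces via Proposition \ref{Euler} with $x_i$ in place of $g_S$.

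The main obstacle is the core step: the quantitative Euler-type estimate is only available on segments bounded away from the exceptional set $E$, whose accumulation points must be absorbed into short intervals controlled by the cruder universal bound of Lemma \ref{lemlocal}. Making the resulting error $O(|\sigma^{(p)}|)+O(\delta)$ genuinely uniform over all partitions — in particular handling partition points that fall inside the short intervals — and then taking the limits in the right order is where the argument needs the most care.
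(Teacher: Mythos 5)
Your proposal is correct and follows essentially the same route as the paper: the same reduction to stochastic matrices via Lemma \ref{grandO}, the same Euler-scheme comparison (Proposition \ref{Euler}) on segments avoiding $E$ glued together with the crude bound of Lemma \ref{lemlocal} on short intervals around $E$, the same reduction of finite-dimensional convergence to two-time marginals, and the same one-jump-per-small-window estimate for the Skorokhod topology. The only cosmetic difference is that you phrase the Skorokhod step as tightness plus f.d.d.\ convergence, whereas the paper directly bounds the Prokhorov distance by a coupling; the underlying estimates are identical.
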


\begin{proof}
When written for the stochastic matrices, the equation \eqref{huit} becomes
$$\tilde M(t_0,t_0+h)=\id_n+h\tilde N(t_0)+O(h^2)$$
for $h\geq 0$ going to $0$ where $\tilde{N}(t)=\diag(a_1,\ldots,a_n)^{-1}(t)N(t)$. Lemma \ref{grandO} testifies that $\|\tilde{N}\|$ is uniformly bounded on $[0,1]\setminus E$ by some constant $c$. It follows that the system of differential equations \eqref{semig} is well defined, with $A_{st}=\id_n+\int_{s}^t A_{su} \tilde{N}_u \dd u$ and
$$\|A_{st}-\id\|\leq c(\e -1)(t-s)$$
because of the convexity of $\exp$ and $t-s\leq 1$. As $\tilde{N}(t)$ is uniformly bounded, it is the infinitesimal generator of a non-homogeneous Markov chain with states the curves $(x_i)_{i=1\ldots n}$. At time $t$, the rate for jumping from curve $x_i$ to curve $x_j$ is $\tilde{n}_{i,j}(t)$.

Recall that given an interval partition $\sigma^{(p)}=\{t_0,\ldots,t_{Q_p}\}$ of $[0,1]$ with $0=t_0<\cdots<t_{Q_p}=1$, we have introduced the coherent family of $(\bar{A}^{(p)}(s,t))_{0\leq s\leq t\leq1}$  before Lemma \ref{lemlocal}. Our first task is to prove that $\bar{A}^{(p)}(s,t)$ converges to $A_{st}$ when $|\sigma^{(p)}|$ tends to zero. This proves in particular that $A_{st}$ is a stochastic matrix that sends the mass row $(a)(s)$ to $(a)(t)$. Note that due to Lemma \ref{lemlocal} we also have $\|\bar{A}^{(p)}_{\xi,\xi+h}-\id\|\leq C(h+|\sigma^{(p)}|)$ for some constant $C$ only depending on the peacock.

 With respect to the notations introduced in paragraph \ref{EulNota} we can fix $s\in[0,1]$ and denote $\bar{A}^{(p)}(s,u)$ by $x(u)$. We obtain $T(u,u+h,x)=x(u)\tilde M(u,u+h)$ and $V(x,u)=x\tilde{N}(u)$. Proposition \ref{Euler} basically requires that $V$ is continuous in $u$ and Lipschitz continuous in $x$. The second condition is satisfied but the first one may not be true on every $[s,t]$. We introduce the set $E\subset [0,1]$ as in Lemma \ref{grandO}.
 
We first consider the case $[s,t]\subset ]\theta_k,\theta_{k+1}[$ with $(\theta_k)_k$ as in this lemma. Hence up to a time rescaling we can apply Proposition \ref{Euler}. We obtain that $\bar{x}(u)=\bar{A}_p(s,u)$ uniformly converges to $A_{su}$ for every $u\in [s,t]$ as $p$ goes to infinity. There is one difficulty to overcome that concerns the fact that the partition $\sigma^{(p)}$ may avoid the starting and end times $s,t$. This problem is fixed by the estimates of $\|A_{\xi,\xi+h}-\id\|$ above and $\|\bar{A}^{(p)}_{\xi,\xi+h}-\id\|$ (see Lemma \ref{lemlocal}) when $h$ is small. More precisely if $s'$ and $t'$ are respectively the greatest and smallest times in the partition $\sigma^{(p)}$ that satisfy $s'\leq s$ and $t\leq t'$, the matrices $A_{s't'}$ and $\bar A^{(p)}(s',t')$ tends to $A_{st}$ and $\bar A^{(p)}(s,t)$ respectively.

If now $E\cap [s,t]$ is not empty, due to the structure of $E$, it is possible to find finitely many $[s_k,t_k]$ that do not intersect $E$ such that the cumulated length $\sum (s_{k+1}-t_k)$ is arbitrarily small. Writing now $A_{st}=A_{s,s_1}A_{s_1t_1}A_{t_1s_2}\cdots A_{s_K t_K}A_{t_K t}$ and $\bar{A}^{(p)}(s,t)$ in a similar manner we obtain the estimate
\begin{align*}
&\|A_{st}-\bar{A}^{(p)}(s,t)\|\leq\\
& C\sum_{k=1}^K \|A_{s_k t_k}-\bar{A}^{(p)}(s_k,t_k)\|+C[(s_1-s)+(t_K-t)+\sum_{k=1}^{K-1}(s_{k+1}-t_k)].
\end{align*}
It follows from the fact that the first term tends to zero and the second can be chosen arbitrarily small that $\bar{A}^{(p)}(s,t)$ tends to  $A_{st}$ as $p$ goes to infinity.

For proving the convergence of $P^{\mu,n}$ in the finite dimensional topology {\it it is enough to prove the convergence for two times marginals}, which is what we have already done. This is a simple consequence of continuity of the product of finitely many real numbers.
 
In this sequel of this proof we explain how to prove the convergence in the Skorokhod topology. We prove below that for every $\eps>0$, if $n$ is sufficiently large, there exists $\Theta$ a measure on $D([0,1])\times D([0,1])$ with marginals $P^{\mu,p}$ and $P$ (the Poisson like process generated by $\tilde {N}$) such that with probability greater that $1-\eps$, the Skorokhod distance between the canonical marginal random elements of $D([0,1])^2$ is smaller than $\eps$ for the joint law $\Theta$. In other words we prove that the Prokhorov distance between $P^{\mu,p}$ and $P$ tends to zero. With a slight abuse we say and consider that a $P^{\mu,p}$-random trajectory $x$ is constant on $[s,t]$ if for some $k$ it starts close to $x_k$ at time $s$ and all the transitions are done from state $k$ to itself. In the case of a $P$-random trajectory, $x$ is constant on $[s,t]$ if it is continuous. In this case $x=x_k$ for some $k$. A more concrete way to justify this abuse is to introduce $\alpha>0$ and consider only partitions with a sufficiently small mesh so that $|x_k(t)-\bar{x}(t)|<\eps$ holds at any time for some $k$ and a unique $k$. In fact, due to the uniform continuity of the trajectories $(x_k)_{k=1,\ldots,K}$, the real $\eps$ may be chosen as small as one wants. Finally it is the same to prove that the Prokhorov distance tends to zero with $\R$ or $\{1,\ldots,n\}$ as state space. Consider a finite set $S\subset [0,1]$ and using the convergence in the finite dimensional topology, consider a sufficiently large $n$ and a coupling $\Theta$ such that with probability greater than $1-\eps/10$, we have $\|X_t-X_t^{\mu,p}\|\leq \eps$ for every time $t\in S$. Here $X$ and $X^{\mu,p}$ are the canonical processes and $\eps$ is sufficiently small to characterise the state in $\{1,\ldots,n\}$. Let us call jump the discontinuities of $X_t$ and the discontinuities in state of $X^{\mu,p}$. Lemma \ref{lemlocal} and the fact that $\tilde{N}_t$ is bounded permit us to claim that if the mesh of $S\cup\{0,1\}$ is sufficiently small, the probability that $X$ or $X^{\mu,p}$ has two or more jumps on some interval of the partition is smaller than $\eps/10$. We can also assume that this mesh is smaller than $\eps$, which is important for the horizontal distortion in the definition of the Skorokhod distance. On these conditions, we can easily prove that $\Theta$ is a convenient coupling for proving that the Prokhorov distance associated with the Skorokhod distance is smaller than $\eps$.

\end{proof}
 
\subsection{A discrete counterexample}\label{threepoints}
We show that not every element of $\LIM([\mu])$ is Markovian and that this set may have cardinal $\geq 2$.
Here we take the setting of the last paragraph with $n=3$ and $a_1=2a_2=2a_3=1/2$ but we do not assume $x_1<x_2<x_3$. In fact
\begin{align*}
x_1(t)&=9-t\\
x_2(t)&=8+2t\\
x_3(t)&=10
\end{align*}
so that $x_1<x_2<x_3$ on $[1/2,1[$ and $x_1<x_3<x_2$ on $]1,3/2]$. In this paragraph we prefer to parametrise the peacock on $[1/2,3/2]$ and we can do that because it has no theoretic importance as explained in paragraph \ref{transfo}. At time $1$, the points $x_2$ and $x_3$ meet in $10$. We will see that sequences of partitions not including the time $1$ all generate the same process independently of the sequence and that this is not a Markov process. On the contrary if the sequence includes the time $1$ (at least asymptotically) there exists a unique limit process independent of the sequence and this process is Markov. 

A computation permits us to state
\begin{align*}
M(t-h,t+h)&=
\left(
\begin{matrix}
1/2\frac{20-h}{20+h}&0&1/2\frac{2h}{20+h}\\
\frac{20h+5h^2}{(20+h)(40+6h)}&1/4\frac{2h}{20+3h}&1/4\frac{20-3h}{20+h}\\
1/4\frac{2h}{20+3h}&1/4\frac{20+h}{20+3h}&0
\end{matrix}
\right)\\
&=
\left(
\begin{matrix}
1/2&0&0\\
0&0&1/4\\
0&1/4&0
\end{matrix}
\right)+2h
\left(
\begin{matrix}
-1/40&0&1/40\\
1/80&1/80&-1/40\\
1/80&-1/80&0
\end{matrix}
\right)
+o(h),
\end{align*}
so that
$$\tilde{M}(t-h,t+h)=\left(\begin{matrix}
1/2&0&0\\
0&1/4&0\\
0&0&1/4
\end{matrix}
\right)^{-1} M(t-h,t+h)=\left(
\begin{matrix}
1&0&0\\
0&0&1\\
0&1&0
\end{matrix}
\right)+O(h)$$
Hence at the order zero a subdivision including the interval $[1-h,1+h[$ realises a permutation between the trajectories of $x_2$ and $x_3$. We admits without computation that we obtain the same permutation for any interval $[1-h,1+h'[$ with $h,h'>0$ tending to zero.

Let us now see what happens if we have two intervals $[1-h,1[$ and $[1,1+h'[$. At order zero the transport plan are
\begin{align*}
\left(
\begin{matrix}
1/2&0&0\\
0&1/4&1/4
\end{matrix}
\right)
\AND
\left(
\begin{matrix}
1/2&0\\
0&1/4\\
0&1/4
\end{matrix}
\right),
\end{align*}
which corresponds to transitions
\begin{align*}
\left(
\begin{matrix}
1&0&0\\
0&1/2&1/2
\end{matrix}
\right)
\AND
\left(
\begin{matrix}
1&0\\
0&1/2\\
0&1/2
\end{matrix}
\right),
\end{align*}
and after composition of the transitions we obtain
\begin{align*}
\left(
\begin{matrix}
1&0&0\\
0&1/2&1/2\\
0&1/2&1/2
\end{matrix}
\right)
\neq
\left(
\begin{matrix}
1&0&0\\
0&0&1\\
0&1&0
\end{matrix}
 \right).
\end{align*} 
Let us sum up. For partitions including time $1$ we observe the same Poisson-like process behaviour as in Theorem \ref{Poisson_fini}. On $[1/2,1[$ and $]1,3/2]$ the law of the limit process is exactly the same as the one obtained for the sequences of partitions avoiding time $1$. For the description of the limit process on $[1/2,3/2]$, we need one Bernoulli trial more at time $1$. With probability $1/2$ a locally continuous trajectory will follow $x_2$ when arriving in $1$ and with probability $1/2$ it will follow $x_3$. In the former case the probability for a random trajectory to locally equal $x_2$ or $x_3$ at time $1$ was zero.

\subsection{A continuous counterexample.}\label{nopoints}
The aim of this section is again to show that there may be elements of $\LIM([\mu])$ that are not Markovian processes. With respect to part \ref{threepoints}, the novelty is that all measures $\mu_t$ in this section are absolutely continuous, which means that this possibility is not due to atoms.

\begin{figure}[ht]
\begin{center}
\def\svgwidth{8cm}
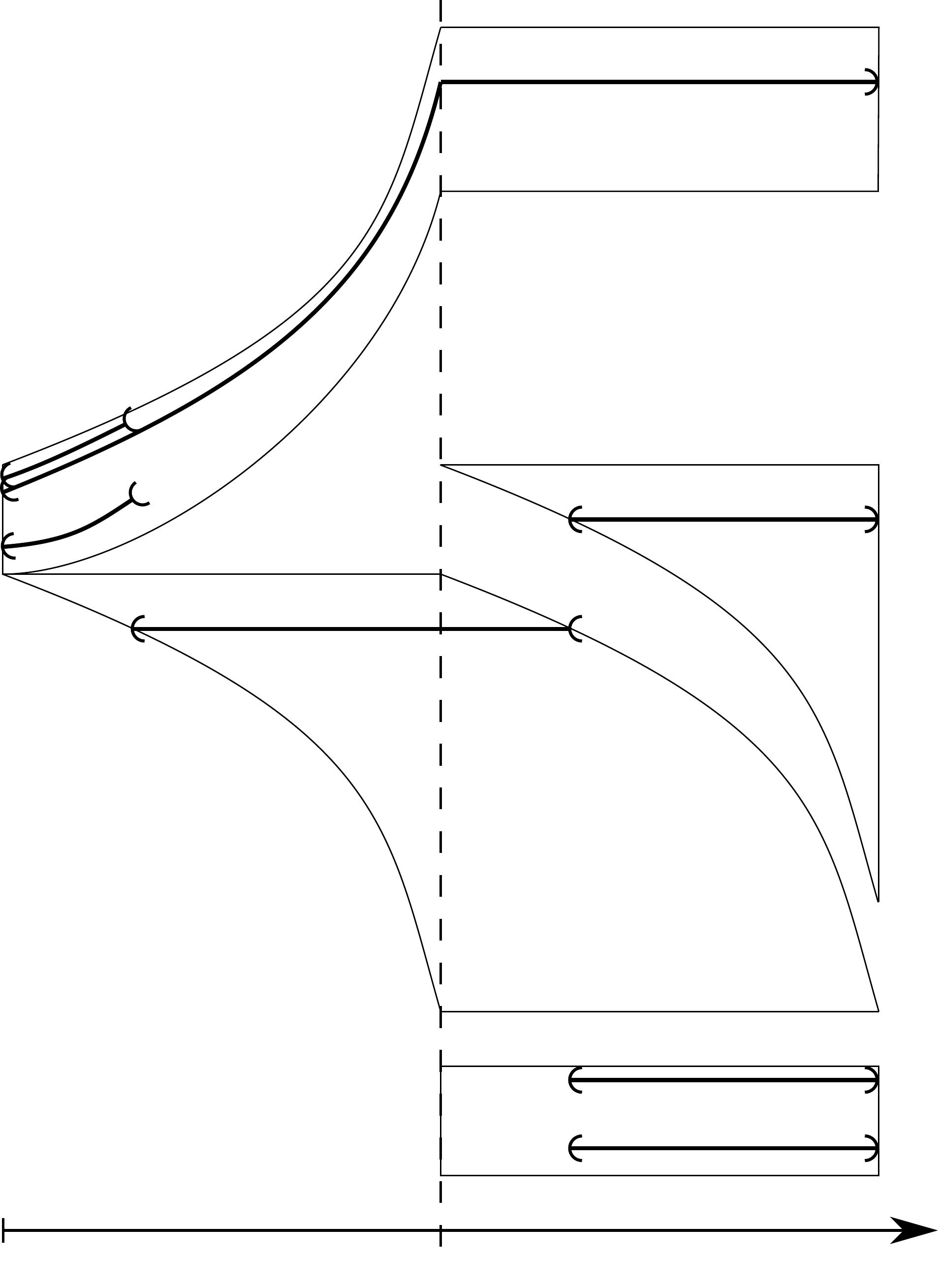
\caption{A non-Markovian limit process associated with a peacock with absolutely continuous $1$-marginals.}\label{counter}
\end{center}
\end{figure}

\subsubsection{First step: stocking}

Consider a peacock of the type $\mu_t=\mu^1_t+\mu^2_t$ where
\begin{itemize}
\item for every $t$, $\spt(\mu_t^1)\subset]-\infty,b]$ and it is the restriction of $\mu_1^1$ to $]a(t),b[$ with $a$ a decreasing function, 
\item for every $t$, $\mu_t^2$ is concentrated on $]b,+\infty[$.
\end{itemize}
We call such a peacock a \emph{stocking peacock}. We give later an example related to part \ref{intervals}. But let us first describe the shape of the left-curtain coupling $\pi_{\lc}$ between $\mu_s$ and $\mu_t$ for $s<t$. Due to $\mu^1_s\leqp\mu^1_t$, we have $S^{\mu^1_t}(\mu^1_s)=\mu^1_s$. It follows that $\pi_\lc$ is $(\id\otimes\id)_\#\mu^1_s+\pi$ where the marginals of $\pi$ are $\mu^2_s$ and $\mu^2_t+(\mu^1_t-\mu^1_s)$. We can conclude that a process $X^{\mu,\sigma}$ associated with a stocking peacock $[\mu]$ and a partition $\sigma$ is constant from the time it meets $]-\infty,b[$. 

\begin{ex}\label{stock}
Consider the limit process defined in paragraph \ref{intervals}. If we stop this martingale after the first jump, it is still a martingale. Considering then the time marginals, we obtain a peacock $\mu_t=\mu^1_t+\mu^2_t$ with 
$$\mu^1_t(\dd x)=\frac{1}{(-2x)^{3/2}}\Lg_{[-\e^{2t}/2,-1/2]}\dd x\quad\text{ and }\quad\mu^2_t=\e^{-2t}\Lg_{[\e^{2t}/2-\e^t,\e^{2t}/2]}.$$
This is clearly a stocking peacock for $a(t)=-\exp(2t)/2$ and $b=-1/2$. See the left part of Figure \ref{counter} for an illustration.
\end{ex}

We claim without details that this peacock has a unique curtain limit and that it is the stopped martingale itself. This fact is specific to this peacock. A proof can be derived from the techniques in paragraph \ref{intervals}.

\subsubsection{Second step: destocking}
Consider a peacock of the following type: $\mu_t=\mu_t^1+\mu_t^2+\mu_t^3$ where
\begin{itemize}
\item for every $t$, $\spt(\mu_t^1)\subset]-\infty,a[$ and $s\leq t\Rightarrow \mu^1_s\leqp\mu^1_t$,
\item $\mu_t^2$ is concentrated on $[a,b]$ and it is the restriction of $\mu_0^2$ to $[a,b(t)]$ with $b$ a decreasing function,
\item for every $t$, $\spt(\mu_t^3)\subset]b,+\infty[$ and it is the restriction of $\mu_1^3$ to $[c(t),+\infty[$ with $c$ a decreasing function.
\item for every $s,t$ if $s\leq t$ we have $\mu^2_s-\mu^2_t\leqc (\mu^1_t-\mu^1_s)+(\mu^3_t-\mu^3_s)$.
\end{itemize}
We call such a peacock a \emph{destocking peacock}. Note that it may also be a stocking peacock for the writing $\mu^1_t+(\mu^2_t+\mu^3_t)$. The name indicates that one is ``destocking'' the mass in $\mu^2_t$. Let us describe the transition between $\mu_s$ and $\mu_t$ given by the left-curtain coupling. For $x\leq b(t)$ we have $\mu_s|_{]-\infty,x]}\leqp\mu_t|_{]-\infty,x]}$ so that $S^{\mu_t}(\mu_s|_{]-\infty,x]})=\mu_s|_{]-\infty,x]}$. Note in particular $S^{\mu_t}(\mu_s|_{]-\infty,b(t)]})=\mu_s^1+\mu_t^2$ because $\mu_s|_{]-\infty,b(t)]}=\mu^1_s+\mu^2_t\leqp\mu_t$.  Recall $b(s)\geq b(t)$ and consider now the shadow of $\mu_s|_{]-\infty,b(s)]}=\mu_s^1+\mu_s^2$. Using Proposition \ref{shadowsum} and Lemma \ref{ombre_a_droite} we obtain
\begin{align*}
S^{\mu_t}(\mu_s^1+\mu^2_s)&=S^{\mu_t}(\mu_s^1+\mu^2_t)+S^{\mu_t-S^{\mu_t}(\mu_s^1+\mu^2_t)}(\mu^2_s-\mu^2_t)\\ 
&=(\mu^1_s+\mu^2_t)+S^{(\mu^1_t-\mu^1_s)+\mu^3_t}(\mu^2_s-\mu^2_t)\\
&=(\mu^1_s+\mu^2_t)+[(\mu^1_t-\mu^1_s)+(\mu^3_t-\mu^3_s)]\\
&=\mu_t-\mu^3_s.
\end{align*}
From these computation it follows that the left-curtain coupling is $(\id\otimes\id)_\#(\mu^1_s+\mu^2_t+\mu^3_s)+\pi$ where $\pi$ is a martingale coupling of marginals $\mu^2_s-\mu^2_t$ and $(\mu^1_t-\mu^1_s)+(\mu^3_t-\mu^3_s)$.

See the right part of Figure \ref{counter} for an illustration of a locally destocking peacock. The support of $\mu_t^3$ is the union of two intervals. The peacock is not globally destocking because there is no possible value of $b$ that satisfies $b<c(t)$ for every $t$. Nevertheless the behaviour is the same.

\begin{lem}
Let $[\mu]$ be a destocking peacock. With the same notations as above for $a,b,c$ and $(\mu_t^i)_{i\in\{1,2,3\}}$, we assume moreover that
\begin{itemize}
\item the functions $t\mapsto b(t)$ and $t\mapsto c(t)$ are smooth,
\item For every $i=1,2,3$, $t\mapsto \mu^i_t(\R)$ is smooth with non-zero derivative,
\item $t\mapsto \mu^1_t$ is smooth,
\item $t\mapsto\mu^2_t(\R)$ decreases from $1$ to $0$.
\end{itemize}

There is a unique limit curtain process in $\LIM([\mu])$. This is a locally constant process with exactly one jump.
\end{lem}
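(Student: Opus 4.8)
The strategy is to reduce the analysis of $P^{\mu,p}$ to a problem on trajectories that is essentially governed by a single "destocking event", exploiting the explicit description of the left-curtain transition for a destocking peacock established just above the lemma. First I would record the structure: for any partition $\sigma^{(p)}=\{t_0,\dots,t_{Q_p}\}$ the process $X^{\mu,p}$ splits according to where the initial point lies. A path starting in $\spt(\mu_0^1)$ stays frozen forever (since $S^{\mu_t}(\mu_s|_{]-\infty,x]})=\mu_s|_{]-\infty,x]}$ for $x\le b(t)$, as shown above), so on the component of mass $\mu_0^1(\R)$ the limit is deterministic and continuous. A path starting in $\mu_0^2$ is frozen at its starting point $x$ until the \emph{first} time $t_k$ with $b(t_k)<x$, at which point the mass $\mu^2_{t_{k-1}}-\mu^2_{t_k}$ released around $x$ is redistributed via the martingale coupling $\pi$ of $\mu^2_s-\mu^2_t$ onto $(\mu^1_t-\mu^1_s)+(\mu^3_t-\mu^3_s)$; once dislodged, the path then behaves like a path of the stocking peacock $\mu^1_t+(\mu^2_t+\mu^3_t)$ and, by Example~\ref{stock}-type reasoning, is constant thereafter. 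So each trajectory has \emph{at most one jump}, occurring at the (random, initial-point-determined) destocking time $\tau(x)=\inf\{t: b(t)<x\}$. The smoothness and strict monotonicity of $t\mapsto b(t)$ and $t\mapsto\mu_t^2(\R)$ guarantee $\tau$ is a continuous strictly decreasing bijection of the support of $\mu_0^2$ onto a time interval, and that the law of the jump time has a density.

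Next I would identify the candidate limit process $P$ explicitly: sample $X_0\sim\mu_0$; if $X_0\in\spt(\mu_0^1)$, set $X_t\equiv X_0$; if $X_0=x\in\spt(\mu_0^2)$, keep $X_t=x$ on $[0,\tau(x))$ and at time $\tau(x)$ jump to a point drawn from the disintegration kernel of the limiting martingale coupling of the infinitesimally released mass, then stay constant. The limiting kernel is well-defined because the family of left-curtain couplings $\curt(\mu_s,\mu_t)$ depends continuously on $(\mu_s,\mu_t)$ (Theorem~\ref{corstab}, Corollary~\ref{doubidou}), and the destocking identity lets us pass to the limit as $s\uparrow\tau(x)\downarrow t$ in a controlled way. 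Then I would prove convergence $P^{\mu,p}\to P$ in the Skorokhod topology by the Prokhorov-distance coupling method already used in the proof of Proposition~\ref{pro_uniform} and Theorem~\ref{Poisson_fini}: couple $X_0$ once, let the discrete path follow the same frozen piece, note that its jump time $\tau_p(X_0)$ (the first partition point past $b^{-1}(X_0)$) satisfies $|\tau_p(X_0)-\tau(X_0)|\le|\sigma^{(p)}|$, and that the post-jump discrete evolution converges in the finite-dimensional sense by the two-time convergence $\curt(\mu_{t_{k-1}},\mu_{t_k})\circ\cdots\to$ the limit composition, which follows from Theorem~\ref{corstab} and an Euler/telescoping estimate analogous to Lemma~\ref{lemlocal}. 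A piecewise-linear time change sending $\tau_p(X_0)$ to $\tau(X_0)$ then brings the two càdlàg paths within $\eps$ with probability $\ge 1-\eps$. Uniqueness of the limit in $\LIM([\mu])$ follows since every subsequential limit must, by Proposition~\ref{relevant}, have marginals $\mu_t$ and be a martingale, and the destocking structure forces the one-jump form and pins down both the jump time law and the jump kernel.

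The main obstacle I expect is the passage to the limit in the destocking kernel: one must show that the discrete, iterated left-curtain compositions of the small released masses $\mu^2_{t_{k-1}}-\mu^2_{t_k}$ onto the growing reservoir $(\mu^1-\mu^1)+(\mu^3-\mu^3)$ converge to a single well-defined transition, and that this transition is \emph{itself} a left-curtain coupling of $\mu^2_s-\mu^2_t$ onto the released mass — i.e. that the composition of left-curtain couplings along the destocking stays left-curtain in the limit, despite the general failure of continuity of Markov composition emphasized in the introduction. Here the point is that on the frozen component nothing is composed (the identity coupling composed with anything is trivial), so the only genuine composition happens \emph{after} a path is dislodged, where the peacock is a stocking peacock and the transitions are again of product-plus-coupling form; iterating, the relevant compositions reduce to those of a one-jump structure, which is where the explicit computations and the continuity Theorem~\ref{corstab} suffice. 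I would also need the elementary fact that the jump time $\tau(X_0)$ has a continuous distribution (so that $P$ charges no fixed time with a jump and the Skorokhod limit is clean), which follows from strict monotonicity and smoothness of $b$ together with absolute continuity of $\mu_0^2$.
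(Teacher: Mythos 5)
Your proposal is correct and follows essentially the same route as the paper: identify that every trajectory is frozen except for a single destocking jump at the (initial-point-determined) time $b^{-1}(X_0)$, write down the explicit one-jump candidate limit with the martingale jump kernel onto $c\circ b^{-1}(X_0)$ or the density $\dd\mu^1_t/\dd t$, and conclude by coupling the discrete and limit trajectories so that the Prokhorov distance for the Skorokhod metric tends to zero. The only cosmetic difference is that the paper first invokes Billingsley's Theorem 12.6 on the subspace of one-jump c\`adl\`ag paths to reduce Skorokhod convergence to finite-dimensional convergence, whereas you work directly with a piecewise-linear time change; both are valid and your observation that the at-most-one-jump structure neutralises the discontinuity of Markov composition is exactly the point the paper relies on implicitly.
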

\begin{proof}
For proving the convergence in the Skorokhod topology, we apply Theorem 12.6 of \cite{Bi}. First we notice that the processes are concentrated on the path $x_{j,k,l}:t\in[0,1]\to \R$ defined by $x_{j,k,l}(t)=k\chi_{[0,j[}(t)+l\chi_{[j,1]}(t)$ where $0<j<1$, $k\in[a,b]$ and $l\notin[a,b]$. In the subspace made of the latter c\`adl\`ag paths, pointwise convergence on a countable, dense subset of $[0,1]$ provides convergence in the Skorokhod topology. 

Therefore according to \cite[Theorem 12.6]{Bi} it is enough to check the finite dimensional convergence of the processes of the sequence to some limit process. The description of the left-curtain coupling between $\mu_s$ and $\mu_t$ together with the assumptions of the lemma, provide a candidate limit process that we describe now: start from a point $k\in[a,b]$ according to $\mu^2_0=\mu^2$, be constant until time $b^{-1}(k)$. At this time start a second constant trajectory, either at point $c\circ b^{-1}(k)$ or in a point uniformly chosen according to $\dd\mu_t^1/\dd t|_{t={b^{-1}(k)}}$ with the proper probabilities making this transition a martingale kernel. Given finitely many times $t_1,\ldots, t_j$ and a partition $\sigma$, it is enough to consider the trajectories that jump outside the intervals containing the times $t_i$. These trajectories can easily be coupled with the trajectories of the candidate limit process. As is the proof of Theorem \ref{Poisson_fini}, this proves that the Prokhorov distance associated to the Skorokhod distance on $D([0,1])$ tends to zero when the mesh $|\sigma|$ tends to zero.
\end{proof}
\subsubsection{Putting the two steps together}
We consider a peacock parametrised on $[0,2]$ that we illustrate on Figure \ref{counter}. When restricted to $[0,1]$, it is simply the peacock of Example \ref{stock}. On $[1,2]$ it is a (locally) destocking peacock. It is made of four terms $\mu_t=\mu^1_t+\mu^2_t+(\mu^3_t+\mu'^3_t)$. We start to define the easy parts
$$\mu'^3_t=\e^{-2}\Lg_{[\e^{2}/2-\e,\,\e^{2}/2]}\AND\mu^2_t(\dd x)=\frac{1}{(-2x)^{3/2}}\Lg_{[-\e^{2}/2,\,-\e^{2(t-1)}/2]}(\dd x)$$
that continue or restrict the two parts of $\mu_t$ after $t=1$. We have also
$$\mu^1_t=f(t,x)\Lg_{[-6,\,-5]}\AND \mu^3(\dd x)=g(x)\lambda_{[1-\e^{2(t-1)}/2,\,1/2]}(\dd x).$$
where $f$ and $g$ are chosen in a way that the mass of $-\dd\mu^2_t=\mu^2_{t+\dd t}-\mu^2_t$ that is at first order $\e^{-3(t-1)}\lambda_{[-\e^{2(t-1)}/2-\e^{2(t-1)}\dd t,-\e^{2(t-1)}/2]}$ is mapped on the atom $\dd\mu^3_t\approx \e^{2(t-1)}\dd t\,g(1-\exp(2(t-1))/2)\delta_{2^{-1}(1-\exp(2(t-1)))}$ for the upper part of the left-curtain coupling or \emph{linearly} on $\dd\mu^1_t\approx (\dd f(t,x)/\dd t)\lambda_{[-6;-5]}\dd t$ for the down part. This is obtained for the functions
$$f(t,x)=\int_1^t\frac{\e^{-(u-1)}}{1-x-\e^{-2(u-1)}/2}\dd u\AND g(x)=[1+\ln(1-(x-6)^{-1})](-2(x-1))^{3/2}.$$

Let $(\sigma^{(p)})$ be a sequence of partitions of $[0,1]$ with mesh $|\sigma^{(p)}|$ going to $0$. We associated any $\sigma^{(p)}$ with the partition of $[0,2]$ made of the times $t$ of $\sigma^{(p)}$ together with the times $t+1\in[1,2]$. We denote the latter by $\bar{\sigma}^{(p)}$. Note that $1$ is a time of this partition. We have seen in the two former paragraphs that when restricted to $[0,1]$ or $[1,2]$ the peacock $[\mu]$ has a unique limit curtain process and one can check that it is Markovian in the two cases. For the peacock on $[0,2]$ and the sequence $(\bar{\sigma}^{(p)})$ one also obtains a limit curtain process but rather surprisingly it is not a Markovian process. 

The proof is technical and not really different from the one of Proposition \ref{pro_uniform}. Instead of providing the details we will explain what happens. If $t_{k-1}$ and $t_k$ are elements of $\sigma^{(p)}$, different trajectories of $X^{\mu,p}$ are jumping down at time $t_k$. They are mapped linearly from $[\exp(2t_{k-1})/2-\exp(t_{k-1}), \exp(2t_{k-1})/2]$ to the small interval $[-\exp(2t_{k-1})/2,-\exp(2t_{k})/2]$. Between $t_k$ and $t_{k-1}+1$ nothing can happen to these trajectories because the left-curtain couplings is identity in their regions. At time $t_k+1$ all the mass contained in the small interval $[-\exp(2t_{k-1})/2,-\exp(2t_{k})/2]$ must jump again either to the neighbourhood of $1-\exp(2t_k)/2$ or somewhere down into the interval $[-6,-5]$. We have parametrised the masses $\mu^1_t$ and $\mu^3_t$ in such a way that the jump down becomes linear when $|t_k-t_{k-1}|\leq |\bar\sigma^{p}|$ is small. Therefore the mapping of the positions between time $t_{k-1}$ and $1+t_k$ is almost linear. More precisely the left-curtain transitions reverses the orientation at the first jump and make it right again at the second. In the limit curtain process, there are two types of trajectories. The first type is made of the continuous trajectories 
\begin{align*}
t\mapsto \min\left(\exp(2t)/2-(1/2-X_0)\exp(t),\exp(2)/2-(1/2-X_0)\exp(1)\right).
\end{align*}
 We are interested in the second type of trajectories that start in the same way but  jump after a duration $T<1$ (an exponential random time). After the jump the trajectory has value $-\exp(2T)/2$ on $[T,T+1[$. There is a second jump at time $T+1$ either to $-\exp(2T)/2+1$ or to a point of $[-6,-5]$ that depends of the past in the simplest manner. Indeed, this point is $X_0-(6-1/2)$. Hence the limit curtain process is not Markovian.

\subsection{Open questions}
We address the following open problems. Given a right-continuous peacock $[\mu]$, we ask

\begin{itemize}
\item Is the set $\LIM([\mu])$ not empty?
\item How many Markov process may $\LIM([\mu])$ contain? We conjecture that there is exactly one.
\end{itemize}

One may address the same questions for $\LFD([\mu])$ or for peacocks without continuity assumptions.
Also the same questions make sense for other couplings than the left-curtain coupling, still using the Markov composition. With respect to the Kremel-Kamae Theorem \cite{KK}, the case of the quantile coupling seems of interest.

\subsection*{Acknowledgements}
I wish to thank Mathias Beiglb\"ock for helpful discussions from the elaboration of our previous article until now. I thank my colleague Vincent Vigon for reading the paper and valuable suggestions as well as Ana Rechtman, Jean B\'erard and Pierre Py for providing references. A lot of thanks go to Harold Gretton for his English and editorial advice. I also wish to thank Xiaolu Tan and Michel \'Emery for making me aware of \cite{HTT} and \cite{HiRoYo} respectively.

\def\cprime{$'$}

\end{document}